\author[Raghavan]{Dilip Raghavan}
\address[Raghavan]{Department of Mathematics\\
 National University of Singapore\\
 Singapore 119076.}
\email{\href{dilip.raghavan@protonmail.com}{dilip.raghavan@protonmail.com}}
\urladdr{\url{https://dilip-raghavan.github.io/}}
\author[Xiao]{Ming Xiao}
\thanks{Both authors were partially supported by Singapore Ministry of Education's research grant number MOE2017-T2-2-125}
\address[Xiao]{School of Mathematical Sciences\\ Nankai University\\ No 94.\@ Weijin Road\\ Tianjin 300071, China.}
\email{\href{ming.xiao@nankai.edu.cn}{ming.xiao@nankai.edu.cn}}
\urladdr{\url{https://logic.nankai.edu.cn/2022/0829/c29601a469780/page.htm}}
\date{\today}
\subjclass[2020]{03E15, 05C20, 06A07, 05C63, 03E35}
\keywords{dichromatic number, Borel graph, Borel ordering, ${G}_{0}$-dichotomy, order dimension, Turing degrees}
\title{Borel Order Dimension}
\newtheorem{Theorem}{Theorem}[section]
\newtheorem{Lemma}[Theorem]{Lemma}
\newtheorem{Corollary}[Theorem]{Corollary}
\newtheorem{Question}[Theorem]{Question}
\theoremstyle{definition}
\newtheorem{Definition}[Theorem]{Definition}
\newtheorem{nota}[Theorem]{Notation}
\theoremstyle{remark}
\newcommand{\forces}{\Vdash}
\newcommand{\restrict}{\mathord\upharpoonright}
\renewcommand{\[}{\left[}
\renewcommand{\]}{\right]}
\newcommand{\PP}{\mathbb{P}}
\newcommand{\PPP}{\mathcal{P}}
\newcommand{\QQ}{\mathbb{Q}}
\newcommand{\lc}{\left|}
\newcommand{\rc}{\right|}
\newcommand\ZFC{\mathrm{ZFC}}
\newcommand\PFA{\mathrm{PFA}}
\newcommand\Fn{\mathrm{Fn}}
\newcommand{\BS}{{\omega}^{\omega}}
\DeclareMathOperator{\proj}{Proj}
\DeclareMathOperator{\otp}{otp}
\DeclareMathOperator{\cov}{cov}
\DeclareMathOperator{\dom}{dom}
\DeclareMathOperator{\ran}{ran}
\DeclareMathOperator{\succc}{succ}
\DeclareMathOperator{\cf}{cf}
\DeclareMathOperator{\odim}{\mathtt{odim}}
\DeclareMathOperator{\homo}{\mathtt{Hom}}
\DeclareMathOperator{\di}{\mathtt{diam}}
\newcommand{\dicr}{\vec{\chi}}
\newcommand{\wbdicr}{{\vec{\chi}}^{\mathit{w}}_{\mathtt{B}}\mkern-2mu}
\newcommand{\bdicr}{{\vec{\chi}}^{}_{\mathtt{B}}\mkern-1mu}
\newcommand{\wbcr}{{\chi}^{\mathit{w}}_{\mathtt{B}}\mkern-2mu}
\newcommand{\bcr}{{\chi}^{}_{\mathtt{B}}\mkern-1mu}
\newcommand{\llex}{{\mathord\leq}_{\mathtt{lex}}}
\newcommand{\Pset}{\mathcal{P}}
\newcommand{\MMM}{\mathcal{M}}
\newcommand{\mm}{\mathbbm{m}}
\newcommand{\nn}{\mathbbm{n}}
\newcommand{\BBB}{\mathcal{B}}
\newcommand{\SSS}{\mathcal{S}}
\newcommand{\AAA}{{\mathcal{A}}}
\newcommand{\DDD}{\mathcal{D}}
\newcommand{\G}{{\mathscr{G}}}
\newcommand{\GGG}{{\mathcal{G}}}
\newcommand{\HHH}{{\mathcal{H}}}
\newcommand{\sH}{{\mathscr{H}}}
\newcommand{\FFF}{{\mathcal{F}}}
\newcommand{\F}{{\mathscr{F}}}
\newcommand{\V}{{\mathbf{V}}}
\newcommand{\VG}{{{\mathbf{V}}[G]}}
\newcommand{\RR}{\mathbb{R}}
\newcommand{\fSS}{\mathbb{S}}
\newcommand{\RRR}{\mathcal{R}}
\newcommand{\KKK}{\mathcal{K}}
\newcommand{\pr}[2]{\left\langle #1, #2 \right\rangle}
\newcommand{\seq}[4]{\left\langle {#1}_{#2}: #2 #3 #4 \right\rangle}
\newcommand{\pc}[2]{{\[#1\]}^{#2}}
\newcommand{\lv}[2]{{\mathtt{Lev}}_{#2}{\mathord{\left(#1\right)}}}
\newcommand{\cat}[2]{{#1}^{\frown}{#2}}
\newcommand{\next}[2]{{#1}^{\frown}{\langle #2 \rangle}}
\newcommand{\ins}[3]{{#1}^{\frown}{{\langle #2 \rangle}^{\frown}{#3}}}
\newcommand{\lex}[2]{#1 \; {\leq}_{\mathtt{lex}} \; #2}
\begin{document}
\begin{abstract}
 We introduce and study a notion of Borel order dimension for Borel quasi orders.
 It will be shown that this notion is closely related to the notion of Borel dichromatic number for simple directed graphs.
 We prove a dichotomy, which generalizes the ${\GGG}_{0}$-dichotomy, for the Borel dichromatic number of Borel simple directed graphs.
 By applying this dichotomy to Borel quasi orders, another dichotomy that characterizes the Borel quasi orders of uncountable Borel dimension is proved.
 We obtain further structural information about the Borel quasi orders of countable Borel dimension by showing that they are all Borel linearizable.
 We then investigate the locally countable Borel quasi orders in more detail, paying special attention to the Turing degrees, and produce models of set theory where the continuum is arbitrarily large and all locally countable Borel quasi orders are of Borel dimension less than the continuum.
 Combining our results here with earlier work shows that the Borel order dimension of the Turing degrees is usually strictly larger than its classical order dimension.
 %In this work we study the notion of Dushnik-Miller order dimension that are specifically witnessed by Borel posets. We describe the basic settings, give some examples and prove some basic properties. We also study a closely related notion ``cycle-free chromatic number", give a $G_0$ type dichotomy in this theory and use it to prove a weak dichotomy for Borel order dimension.
\end{abstract}
\maketitle
\section{Introduction} \label{sec:intro}
Order dimension is a measure of the complexity of a partial order.
It was first considered by Dushnik and Miller in \cite{DM} and it is now well-studied in combinatorics (e.g.\@ \cite{MR0025541}, \cite{MR1169299}, \cite{MR0300944}, \cite{MR0070681}) and in computer science (e.g.\@ \cite{MR0666860}, \cite{MR3601672}).
In this paper, we will consider a Borel analog of this notion for Borel quasi orders and their quotients.
Our work falls into the general framework of Borel combinatorics, where one studies Borel analogs of classical combinatorial notions (see e.g.\@ \cite{borelorder}, \cite{MR1057041}, \cite{MR1667145}).
Like other works in this genre, one of our main aims is to prove a dichotomy theorem characterizing those Borel quasi orders whose Borel dimension is uncountable.
In such dichotomy theorems, one identifies a collection of canonical basic objects together with appropriate morphisms, and shows that any object with a sufficiently complicated structure admits a morphism from one of the basic objects.
A major example is the theorem of Harrington, Kechris, and Louveau~\cite{MR1057041} characterizing the non-smooth Borel equivalence relations in terms of the existence of a continuous reduction from the basic object ${E}_{0}$.
Another major example is the ${\GGG}_{0}$-dichotomy of Kechris, Solecki, and Todorcevic~\cite{MR1667145} characterizing the analytic graphs of uncountable Borel chromatic number in terms of the existence of a continuous graph homomorphism from the basic object ${\GGG}_{0}$.

Our first task here will be to provide a robust definition of the Borel order dimension of a Borel quasi order.
This concept requires careful definition because, unlike the classical case, most interesting Borel quasi orders do not admit any Borel linear extensions.
We will first show that an appropriate definition of this notion connects it to the Borel analog of the concept of dichromatic number of a digraph.
This concept was first defined by Neumann-Lara~\cite{MR0693366} in the late 1970s and has since become important in finite combinatorics, where it is often seen as the appropriate generalization of the concept of chromatic number to digraphs.
With the relationship between Borel order dimension and Borel dichromatic number established in Section \ref{sec:prelim}, we are naturally led to seek a dichotomy theorem that characterizes Borel and analytic digraphs of uncountable Borel dichromatic number.
Two such dichotomies are established in Section~\ref{sec:dichotomy}.
We consider two types of morphisms: continuous digraph homomorphisms as well as those that preserve certain non-edges, namely continuous digraph homomorphisms that map every induced copy of the directed cycle of length $n$ to another such induced copy.
The latter are known as \emph{minimal homomorphisms}.
The basic objects of our dichotomies are digraphs generalizing the graph ${\GGG}_{0}$ of \cite{MR1667145}.
But instead of one basic object, we require a family that is uniformly parametrized by a real, so there are continuum many basic objects.
Our dichotomies characterize the Borel digraphs of uncountable Borel dichromatic number in terms of the existence of a continuous minimal homomorphism from one of the basic objects.
In the case of analytic digraphs of uncountable Borel dichromatic number, we can characterize them by a continuous homomorphism from one of the basic objects, though we are unable to guarantee minimality.
Further, it is proved in Section \ref{sec:dichotomy} that the ${\GGG}_{0}$-dichotomy of \cite{MR1667145} is a special case of these dichotomies.

Having obtained a dichotomy for the Borel dichromatic number, we associate a Borel quasi order with each of its basic objects to obtain another parametrized family of continuum many basic Borel quasi orders for the third dichotomy of Section \ref{sec:dichotomy}.
Each basic object of the third dichotomy is a locally countable Borel quasi order of height $2$.
We are able to characterize the Borel quasi orders of uncountable Borel order dimension in terms of the existence of a continuous map from one of these basic objects that is order preserving and also preserves the incomparability of certain elements.
Additionally, it is shown in Section \ref{sec:dichotomy} that every Borel quasi order whose Borel order dimension is countable admits a Borel linear extension.

The classical order dimension of locally countable orders, especially the Turing degrees, was investigated in \cite{posetdim} and \cite{MR4228343}.
Obtaining Borel analogs of some of the results from those works was one of the original motivations for this paper.
Using the results in Section \ref{sec:dichotomy} and some ideas from the papers \cite{posetdim, MR4228343, MR4518086}, we obtain several results about the Borel order dimension of locally finite and locally countable Borel quasi orders in Section \ref{sec:consistency}.
It was proved in \cite{posetdim} that the classical order dimension of the Turing degrees is usually strictly smaller than the continuum.
Combining this result with the dichotomy theorems of Section \ref{sec:dichotomy} shows that the classical order dimension of the Turing degrees is usually strictly smaller than its Borel order dimension.
In \cite{MR4228343}, the classical order dimension of the Turing degrees was characterized in terms of families of partial orders on the reals that separate countable sets of reals from points.
By using a Borel analog of this notion, we obtain upper bounds on the Borel order dimension of locally finite and locally countable Borel quasi orders, in particular, it is shown that all locally finite Borel quasi orders are of countable Borel order dimension.
Finally, by combining a forcing notion from \cite{MR4518086} with Harrington's generic ${G}_{\delta}$ forcing, we show that the Borel order dimension of all locally countable Borel quasi orders can consistently be strictly smaller than the continuum.
It is worth noting that non-trivial interactions between the definable and non-definable aspects of a concept are also common in the study of Tukey reducibility, which is another notion of complexity for ordered sets.
Important examples where the definable aspect of the theory informs the non-definable case and vice versa can be found in \cite{basic}, \cite{tukey}, \cite{suslincombdic}, \cite{nextbestpaper} among many others.
A similar confluence of ideas can also be seen in the definable and non-definable cases of the study of chain conditions in forcing notions, e.g.\@ \cite{MR4075403} and \cite{MR973109}.
\section{Preliminaries} \label{sec:prelim}
Different authors use the terms ``partial order'' and ``quasi order'' differently.
We fix our terminology once and for all with the following definition.
\begin{Definition} \label{def:orders}
 We define the following:
 \begin{enumerate}
  \item
  $\leq$ is a \emph{quasi order on $P$} if $\leq$ is a reflexive and transitive relation on $P$.
  $\PPP = \pr{P}{\leq}$ is said to be a \emph{quasi order} if $\leq$ is a quasi order on $P$.
  \item
  $<$ is a \emph{partial order on $P$} if $<$ is an irreflexive and transitive relation on $P$.
  $\PPP = \pr{P}{<}$ is a \emph{partial order} if $<$ is a partial order on $P$.
  \item
  A quasi order $\leq$ on $P$ is \emph{linear} or \emph{total} if for any $x, y \in P$, $(x \leq y \vee y \leq x)$.
  $\PPP = \pr{P}{\leq}$ is a \emph{total quasi order} or a \emph{linear quasi order} if $\leq$ is a total quasi order on $P$.
  \item
  A partial order $<$ on $P$ is \emph{linear} or $\emph{total}$ if for any $x, y \in P$, $(x < y \vee y < x \vee x = y)$.
  $\PPP = \pr{P}{<}$ is called a \emph{linear order} or a \emph{total order} if $<$ is a linear order on $P$.
  \item
  For a quasi order $\leq$ on $P$, \emph{${E}_{\leq}$} is the equivalence relation on $P$ defined by
  \begin{align*}
   p \; {E}_{\leq} \; q \iff (p \leq q \wedge q \leq p).
  \end{align*}
  \item
  For a quasi order $\leq$, \emph{$x < y$} means $(x \leq y \wedge y \nleq x)$.
  It is easily checked that $<$ is a partial order.
  For a partial order $<$, \emph{$x \leqq y$} means $(x < y \vee x = y)$.
  Then $\leqq$ is a quasi order with ${E}_{\leqq}$ being equality.
  \item
  For a quasi order $\leq$ on $P$, the relation $<$ induces a partial order on $P \slash {E}_{\leq}$, which will also be denoted \emph{$<$}.
  Thus for any $x, y \in P$,
  \begin{align*}
   {\[x\]}_{{E}_{\leq}} < {\[y\]}_{{E}_{\leq}} \iff x < y.
  \end{align*}
  When there is no risk of ambiguity, we will omit the subscript from ${\[x\]}_{{E}_{\leq}}$.
 \end{enumerate}
\end{Definition}
The main focus of this paper is on quasi orders and partial orders that are nicely definable on some Polish space.
The final section of the paper will study locally finite and locally countable examples of such orders, like the Turing degrees.
\begin{Definition} \label{def:borelorders}
 A quasi order $\PPP = \pr{P}{\leq}$ is called a \emph{Borel (analytic) quasi order} if $P$ is a Polish space and $\leq$ is a Borel (analytic) subset of $P \times P$.
\end{Definition}
\begin{Definition} \label{def:locally}
 A quasi order $\PPP = \pr{P}{\leq}$ is said to be \emph{locally countable} (\emph{locally finite}) if for every $x \in P$, $\{ y \in P: y \leq x \}$ is countable (finite).
\end{Definition}
The following notion was first considered by Dushnik and Miller in 1941.
Roughly speaking, it asks how many linear orders are necessary to resolve each incomparability in a partial order in both directions.
\begin{Definition}[Dushnik--Miller~\cite{DM}, 1941]
 Let $\PPP = \pr{P}{<}$ be a partial order.
 The \emph{order dimension} (or simply
 \emph{dimension}) \emph{of~$\PPP$}, denoted \emph{$\odim(\PPP)$}, is the smallest cardinality of a collection of linear orders on $P$ whose intersection is $<$.
\end{Definition}
The use of the word ``dimension'' is justified by the following theorem proved by Dushnik and Miller in the same paper where they introduced their concept.
\begin{Theorem}[Dushnik--Miller~\cite{DM}, 1941]
 For any partial order $\PPP$, $\odim(\PPP)$ is the minimal cardinal $\kappa$ such that $\PPP$ embeds into a product of $\kappa$ many linear orders with the coordinatewise ordering on the product.
\end{Theorem}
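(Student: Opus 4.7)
The plan is to establish both inequalities between $\odim(\PPP)$ and the cardinal $\lambda$, defined as the least cardinality of a family of linear orders into whose coordinatewise product $\PPP$ order-embeds. For $\lambda \leq \odim(\PPP)$, fix linear orders $\{<_i : i \in I\}$ on $P$ with $|I| = \odim(\PPP)$ whose intersection equals $<$. The diagonal map $f \colon P \to \prod_{i \in I} \pr{P}{<_i}$ given by $f(x) = \seq{x}{i}{\in}{I}$ is an order embedding: if $x < y$ in $\PPP$, then $x <_i y$ for every $i$, and hence $f(x) < f(y)$ in the coordinatewise order; conversely, if $f(x) < f(y)$, then $x \neq y$ and $x \leqq_i y$ for all $i$, so $x <_i y$ for all $i$, placing $(x,y)$ in the intersection of the $<_i$, which equals $<$.

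For the reverse inequality $\odim(\PPP) \leq \lambda$, fix an order embedding $f \colon P \to \prod_{i \in I} \pr{L_i}{<_i}$ with $|I| = \lambda$. For each $i \in I$, the fibers of the map $x \mapsto f(x)(i)$ partition $P$, and on each fiber $F$ the relation $<$ restricts to a partial order; I would extend this restriction via Szpilrajn's theorem to a linear order $<_i^F$ on $F$. Glue these fiberwise extensions to the pullback to define a linear order $<_i^{\#}$ on $P$ by declaring $x <_i^{\#} y$ iff either $f(x)(i) <_i f(y)(i)$, or $f(x)(i) = f(y)(i)$ and $x <_i^F y$ in the common fiber $F$. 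A short case analysis verifies that each $<_i^{\#}$ is transitive and total, and it extends $<$ because $f$ is an embedding and each $<_i^F$ extends the restriction of $<$ to its fiber. To check that $\bigcap_{i \in I} {<_i^{\#}} \; \subseteq \; <$, take $x \neq y$ with $x <_i^{\#} y$ for all $i$. Injectivity of $f$ yields some $j$ with $f(x)(j) \neq f(y)(j)$, forcing $f(x)(j) <_j f(y)(j)$, while at every other coordinate the projections either coincide or are strictly increasing. Hence $f(x) < f(y)$ in the product, and $x < y$ in $\PPP$ by the embedding property.

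The main obstacle is the construction of the linear orders $<_i^{\#}$ in the hard direction: the naive pullback of $<_i$ along the $i$-th projection is only a total quasi order on $P$, and one must linearize the ties within each fiber in a way that does not spoil the property that the intersection over $i$ picks out precisely $<$. The key observation that makes arbitrary fiberwise tie-breaking harmless is that any two points distinguished by $f$ are strictly separated at some coordinate, so ties broken within one fiber are always overridden by a strict comparison at another coordinate. Szpilrajn's extension theorem, and hence the Axiom of Choice, is invoked to handle fibers on which $<$ has non-trivial structure.
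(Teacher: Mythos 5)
The paper states this theorem without proof, citing Dushnik--Miller, so there is no in-paper argument to compare against. Your proof is correct: the easy direction via the diagonal map into $\prod_i \pr{P}{{<}_{i}}$ is standard, and in the hard direction your lexicographic refinement of each pullback quasi order, with fiberwise Szpilrajn extensions breaking the ties, does yield linear orders extending $<$ whose intersection is exactly $<$, by the separation argument you give. One small simplification worth noting: you do not need to linearize fiber by fiber --- a single Szpilrajn linearization ${<}^{\ast}$ of $<$ on all of $P$ can be used as the uniform tie-breaker in every coordinate, since it in particular extends the restriction of $<$ to each fiber; this avoids gluing and makes the verification that each ${<}_{i}^{\#}$ is a linear order extending $<$ a one-line lexicographic-product observation. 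Everything else, including the check that the intersection reflects back to $<$ via injectivity of the embedding, is exactly as it should be.
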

\begin{Definition} \label{def:extendpartial}
 Suppose ${<}_{0}$ and $<$ are both partial orders on $P$.
 We say \emph{$<$ extends ${<}_{0}$} if ${<}_{0} \; \subseteq \; <$.
 In other words, $<$ extends ${<}_{0}$ if and only if for any $x, y \in P$, $x \; {<}_{0} \; y \implies x < y$.
\end{Definition}
The following equivalent characterization of the order dimension of a partial order can be easily proved by appealing to the fact that Zorn's lemma allows every partial order to be extended to a linear order.
\begin{Lemma} \label{lem:dimpartial}
 For any partial order $\PPP$, $\odim(\PPP)$ is the minimal cardinal $\kappa$ such that there exists a sequence $\seq{<}{i}{\in}{\kappa}$ of partial orders on $P$ extending $<$ such that
 \begin{align} \label{cond:partial}
  \forall x, y \in P \exists i \in \kappa\[y \; {<}_{i} \;x \vee x < y \vee x = y\].
 \end{align}
\end{Lemma}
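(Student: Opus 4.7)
The plan is to establish both inequalities between $\odim(\PPP)$ and the invariant $\kappa_{\mathrm{part}}$ defined as the minimal cardinality of a sequence of partial order extensions of $<$ satisfying condition (\ref{cond:partial}).

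The inequality $\kappa_{\mathrm{part}} \leq \odim(\PPP)$ is essentially immediate. I would fix a witnessing family $\langle \prec_i : i \in \odim(\PPP) \rangle$ of linear orders on $P$ whose intersection equals $<$. Each $\prec_i$ is in particular a partial order extending $<$, so it suffices to verify (\ref{cond:partial}). Given $x, y \in P$ with $x \neq y$ and $x \not< y$, the intersection hypothesis supplies some index $i$ with $x \not\prec_i y$, and linearity of $\prec_i$ then yields $y \prec_i x$.

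For the reverse inequality $\odim(\PPP) \leq \kappa_{\mathrm{part}}$, I would start from a witnessing sequence $\langle <_i : i \in \kappa_{\mathrm{part}} \rangle$ of partial order extensions of $<$ and invoke the fact, recalled just before the lemma, that Zorn's lemma allows each partial order $<_i$ to be extended to a linear order $\prec_i$ on $P$. Since each $\prec_i$ extends $<$, the intersection $\bigcap_i \prec_i$ contains $<$. For the reverse inclusion, suppose $x \prec_i y$ for every $i$ and $x \neq y$; assuming toward a contradiction that $x \not< y$, condition (\ref{cond:partial}) produces an index $i$ with $y <_i x$, hence $y \prec_i x$, which together with $x \prec_i y$ and the transitivity and irreflexivity of the partial order $\prec_i$ yields the contradiction $x \prec_i x$.

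No step presents a real obstacle; the proof is a routine dimension-style argument. The only mildly delicate point is that extending the $<_i$ to linear orders $\prec_i$ could \emph{a priori} enlarge the intersection beyond $<$, but condition (\ref{cond:partial}) is tailored precisely to prevent this: for any pair $(x, y)$ with $x \neq y$ and $x \not< y$, it supplies a reverse witness $y <_i x$ within some $<_i$, and this witness persists in the extension $\prec_i$, thereby keeping $(x, y)$ out of $\bigcap_i \prec_i$.
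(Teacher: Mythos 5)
Your proof is correct and is exactly the routine argument the paper has in mind: the paper omits the proof of this lemma, remarking only that it "can be easily proved by appealing to the fact that Zorn's lemma allows every partial order to be extended to a linear order," which is precisely the linear-extension step in your second direction. Both directions, including the observation that condition (\ref{cond:partial}) prevents the linearizations from enlarging the intersection beyond $<$, are handled correctly.
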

As mentioned in the introduction, there has been much interest in the order dimension of finite partial orders coming from finite combinatorics and computer science.
There has also been growing interest in the order dimension of infinite partial orders.
The order dimension of $\pr{\pc{\kappa}{< {\aleph}_{0}}}{\subsetneq}$ was studied in \cite{MR1420396}.
More recently, the order dimension of locally countable orders, especially of the Turing degrees, was investigated in \cite{posetdim, MR4228343}.
Although Dushnik and Miller defined their notion only for partial orders, it is natural to extend their definition to quasi orders as follows.
\begin{Definition} \label{def:dimquasi}
 Let $\PPP = \pr{P}{\leq}$ be a quasi order.
 We define the \emph{order dimension} (or simply \emph{dimension}) \emph{of $\PPP$}, denoted \emph{$\odim(\PPP)$}, to be $\odim\left( \pr{P \slash {E}_{\leq}}{<} \right)$.
\end{Definition}
\begin{Definition} \label{def:extendquasi}
 Suppose ${\leq}_{0}$ and $\leq$ are both quasi orders on $P$.
 $\leq$ is said to \emph{extend} ${\leq}_{0}$ if
 \begin{enumerate}
  \item
  $x \; {\leq}_{0} \; y \implies x \leq y$ and
  \item
  $x \; {E}_{{\leq}_{0}} \; y \iff x \; {E}_{\leq} \; y$,
 \end{enumerate}
 for all $x, y \in P$.
 
 It is easily verified that $\leq$ extends ${\leq}_{0}$ iff
 \begin{enumerate}
  \item[(a)]
  $P \slash {E}_{{\leq}_{0}} = P \slash {E}_{\leq}$ and
  \item[(b)]
  $\[x\] \; {<}_{0} \; \[y\] \implies \[x\] \; < \; \[y\]$, for all $x, y \in P$.
 \end{enumerate}
 If $\leq$ is a linear quasi order which extends ${\leq}_{0}$, then we say $\leq$ \emph{linearizes} ${\leq}_{0}$.
\end{Definition}
When $\PPP = \pr{P}{{\leq}_{0}}$ is a Borel quasi order, the quotient $P \slash {E}_{{\leq}_{0}}$ is almost never a nicely definable object.
Furthermore, by the work of Harrington, Marker, and Shelah~\cite{borelorder} and of Kanovei~\cite{MR1607451}, there is no Borel $\mathord\leq$ that linearizes ${\leq}_{0}$ in most cases.
Hence it will be useful to have a characterization of $\odim(\PPP)$ only in terms of quasi orders on $P$ that extend ${\leq}_{0}$ in the sense of Definition \ref{def:extendquasi}.
The next lemma provides such a characterization.
Although its proof is simple, we include it for completeness.
\begin{Lemma} \label{lem:quasidimchar1}
 Let $\PPP = \pr{P}{\leq}$ be a quasi order.
 Then $\odim(\PPP)$ is the minimal cardinal $\kappa$ such that there exists a sequence $\seq{\leq}{i}{\in}{\kappa}$ of quasi orders on $P$ extending $\leq$ such that
 \begin{align} \label{cond:quasi}
  \forall x, y \in P \exists i \in \kappa\[y \; {\leq}_{i} \; x \vee x \leq y\].
 \end{align}
\end{Lemma}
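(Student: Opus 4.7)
The plan is to prove the two inequalities between $\odim(\PPP)$ and the cardinal $\kappa^*$ described in the statement of the lemma by translating back and forth between quasi orders on $P$ extending $\leq$ and partial orders on $P \slash {E}_{\leq}$ extending $<$, using Lemma~\ref{lem:dimpartial} and Definition~\ref{def:dimquasi} as the bridge.

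For the inequality $\kappa^* \leq \odim(\PPP)$, fix a sequence $\seq{<}{i}{\in}{\odim(\PPP)}$ of partial orders on $P \slash {E}_{\leq}$ extending $<$ and witnessing condition~\eqref{cond:partial} from Lemma~\ref{lem:dimpartial}. I would define ${\leq}_{i}$ on $P$ by declaring $x \; {\leq}_{i} \; y$ iff either ${\[x\]} \; {<}_{i} \; {\[y\]}$ or $x \; {E}_{\leq} \; y$. Reflexivity and transitivity are immediate from the corresponding properties of ${<}_{i}$ and ${E}_{\leq}$. The verification that ${\leq}_{i}$ extends $\leq$ in the sense of Definition~\ref{def:extendquasi} reduces to checking (a) and (b) of that definition: ${E}_{{\leq}_{i}}$ collapses to ${E}_{\leq}$ because ${<}_{i}$ is irreflexive, and clause (b) is exactly the hypothesis that ${<}_{i}$ extends $<$. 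Finally, given $x, y \in P$, choose $i$ so that one of ${\[y\]} \; {<}_{i} \; {\[x\]}$, ${\[x\]} < {\[y\]}$, or ${\[x\]} = {\[y\]}$ holds; these translate respectively to $y \; {\leq}_{i} \; x$, $x \leq y$, and $x \; {E}_{\leq} \; y \implies x \leq y$, so \eqref{cond:quasi} holds.

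For the reverse inequality $\odim(\PPP) \leq \kappa^*$, start from a sequence $\seq{\leq}{i}{\in}{\kappa^*}$ of quasi orders on $P$ extending $\leq$ and witnessing \eqref{cond:quasi}. By clause (a) of Definition~\ref{def:extendquasi}, $P \slash {E}_{{\leq}_{i}} = P \slash {E}_{\leq}$, so the induced strict partial order ${<}_{i}$ from clause (7) of Definition~\ref{def:orders} lives on the same set $P \slash {E}_{\leq}$. Clause (b) of Definition~\ref{def:extendquasi} is exactly the statement that ${<}_{i}$ extends $<$ as a partial order on $P \slash {E}_{\leq}$. To verify \eqref{cond:partial}, given ${\[x\]}, {\[y\]} \in P \slash {E}_{\leq}$, pick $i$ with $y \; {\leq}_{i} \; x$ or $x \leq y$. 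In the first case, either $x \; {\leq}_{i} \; y$ as well, giving ${\[x\]} = {\[y\]}$, or $x \; {\not\leq}_{i} \; y$, giving ${\[y\]} \; {<}_{i} \; {\[x\]}$. In the second case, ${\[x\]} < {\[y\]}$ or ${\[x\]} = {\[y\]}$. Applying Lemma~\ref{lem:dimpartial} and Definition~\ref{def:dimquasi} yields $\odim(\PPP) \leq \kappa^*$.

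There is no real obstacle in this argument: it is a matter of carefully applying Definition~\ref{def:extendquasi} and noting that the strict partial order ${<}_{i}$ on the quotient and the quasi order ${\leq}_{i}$ on $P$ carry exactly the same information once one knows ${E}_{{\leq}_{i}} = {E}_{\leq}$. The only point worth flagging is that one must build the extensions so that they do not collapse any new pairs into the equivalence relation; this is precisely what condition~(2) of Definition~\ref{def:extendquasi} enforces and what makes the definition $x \; {\leq}_{i} \; y \iff {\[x\]} \; {<}_{i} \; {\[y\]} \vee x \; {E}_{\leq} \; y$ the correct choice in the first direction.
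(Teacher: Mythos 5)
Your proposal is correct and follows essentially the same route as the paper: both directions translate between partial orders on $P \slash {E}_{\leq}$ extending $<$ and quasi orders on $P$ extending $\leq$, with the same case analysis to verify \eqref{cond:partial} and \eqref{cond:quasi}. Your definition $x \; {\leq}_{i} \; y \iff {\[x\]} \; {<}_{i} \; {\[y\]} \vee x \; {E}_{\leq} \; y$ is equivalent to the paper's $x \leq y \vee {\[x\]} \; {<}_{i} \; {\[y\]}$ (since ${<}_{i}$ extends $<$), so this is only a cosmetic difference.
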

\begin{proof}
 Let $\lambda = \odim(\PPP)$.
 By Lemma \ref{lem:dimpartial}, fix a sequence $\seq{<}{i}{\in}{\lambda}$ of partial orders on $P \slash {E}_{\leq}$ extending $<$ and satisfying (\ref{cond:partial}) of Lemma \ref{lem:dimpartial} for $P \slash {E}_{\leq}$ and $<$.
 Define ${\leq}_{i}$ on $P$ by
 \begin{align*}
  x \; {\leq}_{i} \; y \iff \left(x \leq y \vee {\[x\]}_{{E}_{\leq}} \; {<}_{i} \; {\[y\]}_{{E}_{\leq}} \right),
 \end{align*}
 for all $x, y \in P$.
 Then ${\leq}_{i}$ is a quasi order on $P$ which extends $\leq$ and the partial order induced by ${\leq}_{i}$ on $P \slash {E}_{\leq} = P \slash {E}_{{\leq}_{i}}$ is ${<}_{i}$.
 Now given any $x, y \in P$, there exists $i \in \lambda$ such that $\left( {\[y\]}_{{E}_{\leq}} \; {<}_{i} \; {\[x\]}_{{E}_{\leq}} \vee {\[x\]}_{{E}_{\leq}} < {\[y\]}_{{E}_{\leq}} \vee {\[x\]}_{{E}_{\leq}} = {\[y\]}_{{E}_{\leq}} \right)$.
 By the definitions, this is easily seen to imply $(y \; {\leq}_{i} \; x \vee x \leq y)$.
 Thus $\seq{\leq}{i}{\in}{\lambda}$ satisfies (\ref{cond:quasi}) of this lemma.
 
 On the other hand suppose that $\seq{\leq}{i}{\in}{\kappa}$ is a sequence of quasi orders on $P$ extending $\leq$ and satisfying (\ref{cond:quasi}) of this lemma.
 Let ${<}_{i}$ be the partial order induced by ${\leq}_{i}$ on $P \slash {E}_{\leq} = P \slash {E}_{{\leq}_{i}}$.
 Then ${<}_{i}$ extends $<$.
 Further, suppose
 \begin{align*}
  {\[x\]}_{{E}_{\leq}}, {\[y\]}_{{E}_{\leq}} \in P \slash {E}_{\leq}.
 \end{align*}
 Let $i \in \kappa$ be so that $(y \; {\leq}_{i} \; x \vee x \leq y)$.
 If $(x \leq y \wedge y \leq x)$, then ${\[x\]}_{{E}_{\leq}} = {\[y\]}_{{E}_{\leq}}$.
 If $(x \leq y \wedge y \nleq x)$, then ${\[x\]}_{{E}_{\leq}} < {\[y\]}_{{E}_{\leq}}$.
 If $(x \nleq y \wedge y \; {\leq}_{i} \; x)$, then ${\[y\]}_{{E}_{\leq}} \; {<}_{i} \; {\[x\]}_{{E}_{\leq}}$.
 Thus (\ref{cond:partial}) of Lemma \ref{lem:dimpartial} is satisfied by $\seq{<}{i}{\in}{\kappa}$ for $P \slash {E}_{\leq}$ and $<$.
\end{proof}
We next provide a characterization of the order dimension of a quasi order in terms of the dichromatic number of a certain associated digraph.
This apparent detour will ultimately payoff by guiding us to the correct definition of Borel order dimension and allowing us to prove a dichotomy theorem that characterizes the Borel quasi orders of uncountable Borel order dimension.
\begin{Definition} \label{def:digraph}
 $\GGG = \pr{X}{R}$ is called a \emph{simple directed graph} or a \emph{digraph} for short if $R \subseteq X \times X$ and $\pr{x}{x} \notin R$, for every $x \in X$.
\end{Definition}
Some authors use the term digraph to allow self edges, but we do not allow this here.
\begin{Definition} \label{def:graph}
 A digraph $\pr{X}{R}$ is called a \emph{graph} if
 \begin{align*}
  \forall x, y \in X\[\pr{x}{y} \in R \leftrightarrow \pr{y}{x} \in R\].
 \end{align*}
\end{Definition}
\begin{Definition} \label{def:boreldigraph}
 A digraph $\GGG = \pr{X}{R}$ is said to be Borel (analytic) if $X$ is a Polish space and $R \subseteq X \times X$ is Borel (analytic).
\end{Definition}
\begin{Definition} \label{def:path}
 For a digraph $\GGG = \pr{X}{R}$ and a subset $Y \subseteq X$, an \emph{$R$-path in $Y$} is a finite non-empty sequence $\langle {y}_{0}, \dotsc, {y}_{k} \rangle$ of elements of $Y$ such that ${y}_{i} R {y}_{i+1}$, for all $i < k$.
 An $R$-path $\langle {y}_{0}, \dotsc, {y}_{k} \rangle$ in $Y$ is an \emph{$R$-cycle in $Y$} if ${y}_{k} R {y}_{0}$.
 The \emph{length} of an $R$-path is one less than the length of the sequence.
 In other words, the length of an $R$-path $\langle {y}_{0}, \dotsc, {y}_{k} \rangle$ is $k$.
 Observe that for any $y \in Y$, $\langle y \rangle$ is an $R$-path of length $0$ in $Y$, but it is not an $R$-cycle because $\pr{y}{y} \notin R$.
 We will write \emph{path} and \emph{cycle} instead of $R$-path and $R$-cycle when $R$ is clear from the context.
\end{Definition}
The next definition associates a digraph with every quasi order.
It will be shown that in all non-trivial cases, the order dimension of a quasi order coincides with the dichromatic number of its associated digraph.
This characterization will point us to the correct definition of Borel order dimension.
\begin{Definition} \label{def:aprp}
 Let $\PPP = \pr{P}{\leq}$ be a quasi order.
 Define
 \begin{align*}
  {\AAA}_{\PPP} = \{\pr{x}{y} \in P \times P: y \nleq x \}.
 \end{align*}
 For $\pr{{x}_{0}}{{y}_{0}}, \pr{{x}_{1}}{{y}_{1}} \in {\AAA}_{\PPP}$, define
 \begin{align*}
  \pr{{x}_{0}}{{y}_{0}} \; {\RRR}_{\PPP} \; \pr{{x}_{1}}{{y}_{1}} \iff {y}_{0} \leq {x}_{1}.
 \end{align*}
 Note that $\pr{{\AAA}_{\PPP}}{{\RRR}_{\PPP}}$ is a digraph.
 Further, if $\PPP$ is a Borel quasi order, then $\pr{{\AAA}_{\PPP}}{{\RRR}_{\PPP}}$ is a Borel digraph.
\end{Definition}
\begin{Lemma} \label{lem:paths}
 Suppose that $\PPP = \pr{P}{\leq}$ is a quasi order and $X \subseteq {\AAA}_{\PPP}$.
 Let $Y$ be the transitive closure of $X$ in $P \times P$.
 For any $\pr{p}{q} \in Y$, there is a path $\langle \pr{{x}_{0}}{{y}_{0}}, \dotsc, \pr{{x}_{k}}{{y}_{k}} \rangle$ in $X$ with $p = {x}_{0}$ and $q = {y}_{k}$.
\end{Lemma}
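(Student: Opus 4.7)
The plan is to unpack the definition of the transitive closure of $X$ (regarded as a binary relation on $P$) and reindex appropriately. Given $\pr{p}{q} \in Y$, by the standard characterization of transitive closure there exist an integer $k \geq 1$ and elements $p_{0}, p_{1}, \dotsc, p_{k} \in P$ with $p_{0} = p$, $p_{k} = q$, and $\pr{p_{i}}{p_{i+1}} \in X$ for all $i < k$. I would then define $\pr{x_{i}}{y_{i}} := \pr{p_{i}}{p_{i+1}}$ for $0 \leq i \leq k-1$.

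Two verifications remain. First, each $\pr{x_{i}}{y_{i}}$ belongs to $X$ by construction. Second, consecutive terms of the sequence are related by $\RRR_{\PPP}$: for $i < k-1$, one has $y_{i} = p_{i+1} = x_{i+1}$, and reflexivity of $\leq$ yields $y_{i} \leq x_{i+1}$, which is exactly the defining condition of $\pr{x_{i}}{y_{i}} \; \RRR_{\PPP} \; \pr{x_{i+1}}{y_{i+1}}$ from Definition \ref{def:aprp}. The endpoint conditions are then immediate from $x_{0} = p_{0} = p$ and $y_{k-1} = p_{k} = q$, so $\langle \pr{x_{0}}{y_{0}}, \dotsc, \pr{x_{k-1}}{y_{k-1}} \rangle$ is the required $\RRR_{\PPP}$-path in $X$.

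There is essentially no genuine obstacle; the lemma is a purely definitional unpacking, and the only thing to be careful about is the trivial case $\pr{p}{q} \in X$, corresponding to $k=1$. In that case the ``path'' is the singleton sequence $\langle \pr{p}{q} \rangle$, which has length $0$ and is a legitimate $R$-path according to Definition \ref{def:path}. This edge case is the reason the lemma is stated for arbitrary $\pr{p}{q}$ in the transitive closure rather than just for those requiring at least one nontrivial chaining step.
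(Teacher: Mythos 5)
Your proof is correct and follows essentially the same route as the paper: both unpack the transitive closure into a finite chain $p = p_0, \dotsc, p_k = q$ with consecutive pairs in $X$, and both observe that reflexivity of $\leq$ makes consecutive pairs $\RRR_{\PPP}$-related. The paper packages this as an induction on the chain length while you give the reindexing directly, but the content is identical, and you correctly handle the base case $k=1$.
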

\begin{proof}
 Let $\pr{p}{q} \in Y$.
 Then there are $l \geq 1$ and a sequence $\langle {r}_{0}, \dotsc, {r}_{l} \rangle$ of elements of $P$ such that $p = {r}_{0}$, $q = {r}_{l}$, and $\pr{{r}_{i}}{{r}_{i+1}} \in X$, for all $i < l$.
 The proof is by induction on this $l$.
 If $l = 1$, then $\langle \pr{p}{q} \rangle$ is the required path in $X$.
 Assume the statement for some $l \geq 1$ and let $\langle {r}_{0}, \dotsc, {r}_{l}, {r}_{l+1} \rangle$ be a sequence of elements of $P$ such that $p = {r}_{0}$, $q = {r}_{l+1}$, and $\pr{{r}_{i}}{{r}_{i+1}} \in X$, for all $i < l+1$.
 By the induction hypothesis, there is a path $\langle \pr{{x}_{0}}{{y}_{0}}, \dotsc, \pr{{x}_{k}}{{y}_{k}} \rangle$ in $X$ with ${x}_{0} = {r}_{0} = p$ and ${y}_{k} = {r}_{l}$.
 Since $\pr{{r}_{l}}{{r}_{l+1}} \in X$ and ${y}_{k} \leq {r}_{l}$, $\pr{{x}_{k}}{{y}_{k}} {\RRR}_{\PPP} \pr{{r}_{l}}{{r}_{l+1}}$ holds.
 Thus $\langle \pr{{x}_{0}}{{y}_{0}}, \dotsc, \pr{{x}_{k}}{{y}_{k}}, \pr{{r}_{l}}{{r}_{l+1}} \rangle$ is the required path in $X$, completing the induction.
\end{proof}
\begin{Lemma} \label{lem:pathsandleq}
 Suppose that $\PPP = \pr{P}{\leq}$ is a quasi order and $X \subseteq {\AAA}_{\PPP}$.
 Let $Z$ be the transitive closure of $\leq \cup \; X$ in $P \times P$.
 For any $\pr{p}{q} \in Z \; \setminus \leq$, there is a path $\langle \pr{{x}_{0}}{{y}_{0}}, \dotsc, \pr{{x}_{k}}{{y}_{k}} \rangle$ in $X$ with $p \leq {x}_{0}$ and ${y}_{k} \leq q$.
\end{Lemma}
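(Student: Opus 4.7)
The plan is to unpack the definition of the transitive closure of $\leq \cup \, X$ and isolate the $X$-steps in a witnessing sequence, using the transitivity and reflexivity of $\leq$ to collapse maximal runs of $\leq$-steps. The statement is more general than Lemma \ref{lem:paths}, but the same idea works once $\leq$-edges are quotiented out.

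First, fix a sequence $\langle {r}_{0}, \dotsc, {r}_{l} \rangle$ of elements of $P$ witnessing $\pr{p}{q} \in Z$, so that ${r}_{0} = p$, ${r}_{l} = q$, and for every $i < l$, either ${r}_{i} \leq {r}_{i+1}$ or $\pr{{r}_{i}}{{r}_{i+1}} \in X$. Note first that $l \geq 1$, since otherwise $p = q$ and reflexivity gives $\pr{p}{q} \in \, \leq$, contradicting the hypothesis $\pr{p}{q} \in Z \setminus \leq$. Second, at least one of the steps must come from $X$; for if every step satisfied ${r}_{i} \leq {r}_{i+1}$, then by transitivity of $\leq$ we would again obtain $p \leq q$, contrary to hypothesis.

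Let ${i}_{0} < {i}_{1} < \dotsb < {i}_{k}$ enumerate the indices $i < l$ for which $\pr{{r}_{i}}{{r}_{i+1}} \in X$, and set ${x}_{j} = {r}_{{i}_{j}}$ and ${y}_{j} = {r}_{{i}_{j}+1}$ for $j \leq k$. Then $\pr{{x}_{j}}{{y}_{j}} \in X$ by construction. For $j < k$, every step strictly between ${i}_{j}+1$ and ${i}_{j+1}$ is a $\leq$-step, so by transitivity (reflexivity, if ${i}_{j}+1 = {i}_{j+1}$) one gets ${y}_{j} = {r}_{{i}_{j}+1} \leq {r}_{{i}_{j+1}} = {x}_{j+1}$, which is exactly the relation $\pr{{x}_{j}}{{y}_{j}} \, {\RRR}_{\PPP} \, \pr{{x}_{j+1}}{{y}_{j+1}}$ required by Definition \ref{def:aprp}. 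Hence $\langle \pr{{x}_{0}}{{y}_{0}}, \dotsc, \pr{{x}_{k}}{{y}_{k}} \rangle$ is a path in $X$.

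It remains to verify the endpoint conditions $p \leq {x}_{0}$ and ${y}_{k} \leq q$. Every step among ${r}_{0}, \dotsc, {r}_{{i}_{0}}$ is a $\leq$-step, so transitivity gives $p = {r}_{0} \leq {r}_{{i}_{0}} = {x}_{0}$ (reflexively, if ${i}_{0} = 0$); symmetrically, every step among ${r}_{{i}_{k}+1}, \dotsc, {r}_{l}$ is a $\leq$-step, giving ${y}_{k} = {r}_{{i}_{k}+1} \leq {r}_{l} = q$ (reflexively, if ${i}_{k}+1 = l$). This completes the construction. There is no serious obstacle; the only thing to watch is the handling of empty boundary segments via reflexivity of $\leq$, which is why $\leq$ (rather than $<$) is the right relation to feature in the conclusion.
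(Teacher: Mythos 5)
Your proof is correct, but it takes a more direct route than the paper. The paper first proves Lemma \ref{lem:paths} (the analogous statement for the transitive closure of $X$ alone) by induction on the length of a witnessing sequence, and then proves Lemma \ref{lem:pathsandleq} by a second induction, splitting at each step according to whether the last edge lies in $\leq$ or in the transitive closure of $X$, and invoking Lemma \ref{lem:paths} to convert the latter into paths that are then concatenated. You instead take a single witnessing sequence $\langle r_0, \dotsc, r_l\rangle$ for membership in $Z$, enumerate the $X$-steps, and collapse each maximal run of $\leq$-steps by transitivity (with reflexivity covering the empty runs); this yields the path in one pass, with the endpoint conditions $p \leq x_0$ and $y_k \leq q$ falling out of the boundary runs. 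Your argument is shorter, avoids the auxiliary Lemma \ref{lem:paths} entirely (indeed it subsumes it as the special case with no $\leq$-steps), and correctly identifies the one delicate point, namely that adjacent $X$-steps and empty boundary segments are handled by reflexivity of $\leq$. Both arguments establish the same statement; the paper's structure has the minor advantage of reusing Lemma \ref{lem:paths}, which is also cited independently elsewhere (e.g.\@ in the proof of Lemma \ref{lem:cyclefreeextends}), but as a standalone proof of this lemma yours is cleaner.
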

\begin{proof}
 Let $Y$ be the transitive closure of $X$ in $P \times P$.
 Then $Z$ is the transitive closure of $\leq \cup \; Y$ in $P \times P$.
 Let $\pr{p}{q} \in Z \; \setminus \leq$.
 Then there are $l \geq 1$ and a sequence $\langle {r}_{0}, \dotsc, {r}_{l} \rangle$ of elements of $P$ such that $p = {r}_{0}$, $q = {r}_{l}$, and either $\pr{{r}_{i}}{{r}_{i+1}} \in \; \leq$ or $\pr{{r}_{i}}{{r}_{i+1}} \in Y$, for all $i < l$.
 The proof is by induction on this $l$.
 First suppose $l = 1$.
 Then since $\pr{p}{q} \notin \; \leq$, $\pr{p}{q} \in Y$.
 So by Lemma \ref{lem:paths}, there is a path $\langle \pr{{x}_{0}}{{y}_{0}}, \dotsc, \pr{{x}_{k}}{{y}_{k}} \rangle$ in $X$ with $p \leq p = {x}_{0}$ and ${y}_{k} = q \leq q$.
 This is as required.
 Now assume the statement for some $l \geq 1$ and let $\langle {r}_{0}, \dotsc, {r}_{l}, {r}_{l+1} \rangle$ be a sequence of elements of $P$ such that $p = {r}_{0}$, $q = {r}_{l+1}$, and either $\pr{{r}_{i}}{{r}_{i+1}} \in \; \leq$ or $\pr{{r}_{i}}{{r}_{i+1}} \in Y$, for all $i < l+1$.
 Then $\pr{{r}_{0}}{{r}_{l}} \in Z$.
 First suppose that $\pr{{r}_{0}}{{r}_{l}} \in \; \leq$.
 If $\pr{{r}_{l}}{{r}_{l+1}} \in \; \leq$, then $p = {r}_{0} \leq {r}_{l} \leq {r}_{l+1} = q$, contradicting $\pr{p}{q} \notin \; \leq$.
 So $\pr{{r}_{l}}{{r}_{l+1}} \in Y$.
 By Lemma \ref{lem:paths}, there is a path $\langle \pr{{x}_{0}}{{y}_{0}}, \dotsc, \pr{{x}_{k}}{{y}_{k}} \rangle$ in $X$ with $p = {r}_{0} \leq {r}_{l} = {x}_{0}$ and ${y}_{k} = {r}_{l+1} = q \leq q$.
 This is as needed.
 Now suppose that $\pr{{r}_{0}}{{r}_{l}} \notin \; \leq$.
 Then the induction hypothesis applies and implies there is a path $\langle \pr{{x}_{0}}{{y}_{0}}, \dotsc, \pr{{x}_{k}}{{y}_{k}} \rangle$ in $X$ with $p = {r}_{0} \leq {x}_{0}$ and ${y}_{k} \leq {r}_{l}$.
 If $\pr{{r}_{l}}{{r}_{l+1}} \in \; \leq$, then ${y}_{k} \leq {r}_{l} \leq {r}_{l+1} = q$, and this is as required.
 So suppose that $\pr{{r}_{l}}{{r}_{l+1}} \in Y$.
 Then by Lemma \ref{lem:paths}, there is a path $\langle \pr{{w}_{0}}{{z}_{0}}, \dotsc, \pr{{w}_{m}}{{z}_{m}} \rangle$ in $X$ with ${r}_{l} = {w}_{0}$ and ${r}_{l+1} = {z}_{m}$.
 Since ${y}_{k} \leq {r}_{l} = {w}_{0}$, $\pr{{x}_{k}}{{y}_{k}} {\RRR}_{\PPP} \pr{{w}_{0}}{{z}_{0}}$ holds.
 Therefore, ${\langle \pr{{x}_{0}}{{y}_{0}}, \dotsc, \pr{{x}_{k}}{{y}_{k}} \rangle}^{\frown}{\langle \pr{{w}_{0}}{{z}_{0}}, \dotsc, \pr{{w}_{m}}{{z}_{m}} \rangle}$ is a path in $X$.
 Since $p \leq {x}_{0}$ and ${z}_{m} = {r}_{l+1} = q \leq q$, this is as required, completing the induction.
\end{proof}
\begin{Lemma} \label{lem:cyclefreeextends}
 Suppose that $\PPP = \pr{P}{\leq}$ is a quasi order and $X \subseteq {\AAA}_{\PPP}$.
 Let $\preceq$ be the transitive closure of $\leq \cup \; X$ in $P \times P$.
 If ${E}_{\leq} \neq {E}_{\preceq}$, then $X$ contains a cycle.
\end{Lemma}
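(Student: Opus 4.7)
The plan is to unpack what $E_\leq \neq E_\preceq$ means and then invoke Lemma \ref{lem:pathsandleq} to extract paths in $X$ whose endpoints force closure into a cycle. Since $\leq \subseteq \preceq$, the inclusion $E_\leq \subseteq E_\preceq$ is automatic, so the assumption yields $p, q \in P$ with $p \preceq q$ and $q \preceq p$ but not both $p \leq q$ and $q \leq p$. Up to swapping $p$ and $q$, I may assume $q \nleq p$, which means $\pr{q}{p} \in \preceq \setminus \leq$ and $\pr{p}{q} \in \AAA_\PPP$.

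From here I split into two cases depending on whether $p \leq q$. If $p \leq q$, I apply Lemma \ref{lem:pathsandleq} to the pair $\pr{q}{p} \in \preceq \setminus \leq$ to obtain a path $\langle \pr{x_0}{y_0}, \dotsc, \pr{x_k}{y_k}\rangle$ in $X$ with $q \leq x_0$ and $y_k \leq p$. Using $p \leq q$, transitivity gives $y_k \leq p \leq q \leq x_0$, so $y_k \leq x_0$, hence $\pr{x_k}{y_k} \; \RRR_\PPP \; \pr{x_0}{y_0}$, showing that the path is an $\RRR_\PPP$-cycle in $X$.

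If instead $p \nleq q$, then both $\pr{q}{p}$ and $\pr{p}{q}$ lie in $\preceq \setminus \leq$. Applying Lemma \ref{lem:pathsandleq} twice I get paths $\langle \pr{x_0}{y_0}, \dotsc, \pr{x_k}{y_k}\rangle$ and $\langle \pr{w_0}{z_0}, \dotsc, \pr{w_m}{z_m}\rangle$ in $X$ with $p \leq x_0$, $y_k \leq q$, $q \leq w_0$, and $z_m \leq p$. Transitivity of $\leq$ yields $y_k \leq w_0$ and $z_m \leq x_0$, so $\pr{x_k}{y_k} \; \RRR_\PPP \; \pr{w_0}{z_0}$ and $\pr{w_m}{z_m} \; \RRR_\PPP \; \pr{x_0}{y_0}$. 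The concatenation of the two paths is therefore an $\RRR_\PPP$-cycle in $X$.

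The proof is essentially bookkeeping once the key observation is made, namely that the witnesses to $E_\preceq$-equivalence but not $E_\leq$-equivalence produce elements of $\preceq \setminus \leq$ to which Lemma \ref{lem:pathsandleq} applies. The only mild subtlety is handling the asymmetric case where only one of $\pr{p}{q}$, $\pr{q}{p}$ lies in $\AAA_\PPP$; this is why I use the given $\leq$-edge between the endpoints to close up the single extracted path into a cycle.
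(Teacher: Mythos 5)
Your proof is correct and follows essentially the same route as the paper's: extract witnesses $p \preceq q \preceq p$ with $q \nleq p$, apply Lemma \ref{lem:pathsandleq} to the pairs in $\preceq \setminus \leq$, and close the resulting path(s) into an ${\RRR}_{\PPP}$-cycle via transitivity of $\leq$. The only cosmetic difference is that your symmetry reduction (swapping $p$ and $q$) collapses the paper's three cases into two.
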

\begin{proof}
 Note that $\preceq$ is a quasi order on $P$ with $\leq \; \subseteq \; \preceq$.
 Hence ${E}_{\preceq}$ is an equivalence relation with ${E}_{\leq} \subseteq {E}_{\preceq}$.
 Suppose $p, q \in P$ and that $p \preceq q \preceq p$.
 Assume first that $p \nleq q$ and $q \nleq p$.
 Then by Lemma \ref{lem:pathsandleq} there exist paths $\langle \pr{{x}_{0}}{{y}_{0}}, \dotsc, \pr{{x}_{k}}{{y}_{k}} \rangle$ and $\langle \pr{{w}_{0}}{{z}_{0}}, \dotsc, \pr{{w}_{l}}{{z}_{l}} \rangle$ in $X$ with $p \leq {x}_{0}$, ${y}_{k} \leq q$, $q \leq {w}_{0}$ and ${z}_{l} \leq p$.
 Thus ${y}_{k} \leq {w}_{0}$ and ${z}_{l} \leq {x}_{0}$, and so, $\pr{{x}_{k}}{{y}_{k}} {\RRR}_{\PPP} \pr{{w}_{0}}{{z}_{0}}$ and $\pr{{w}_{l}}{{z}_{l}} {\RRR}_{\PPP} \pr{{x}_{0}}{{y}_{0}}$ hold.
 It follows that ${\langle \pr{{x}_{0}}{{y}_{0}}, \dotsc, \pr{{x}_{k}}{{y}_{k}} \rangle}^{\frown}{\langle \pr{{w}_{0}}{{z}_{0}}, \dotsc, \pr{{w}_{l}}{{z}_{l}} \rangle}$ is a cycle in $X$.
 Next, suppose that $p \nleq q$, but $q \leq p$.
 Then there exists a path $\langle \pr{{x}_{0}}{{y}_{0}}, \dotsc, \pr{{x}_{k}}{{y}_{k}} \rangle$ in $X$ with $p \leq {x}_{0}$ and ${y}_{k} \leq q$.
 Thus ${y}_{k} \leq q \leq p \leq {x}_{0}$, and so $\pr{{x}_{k}}{{y}_{k}} {\RRR}_{\PPP} \pr{{x}_{0}}{{y}_{0}}$ holds.
 It follows that $\langle \pr{{x}_{0}}{{y}_{0}}, \dotsc, \pr{{x}_{k}}{{y}_{k}} \rangle$ is a cycle in $X$.
 Finally, suppose that $p \leq q$, but $q \nleq p$.
 Then there exists a path $\langle \pr{{w}_{0}}{{z}_{0}}, \dotsc, \pr{{w}_{l}}{{z}_{l}} \rangle$ in $X$ with $q \leq {w}_{0}$ and ${z}_{l} \leq p$.
 Thus ${z}_{l} \leq p \leq q \leq {w}_{0}$, and so $\pr{{w}_{l}}{{z}_{l}} {\RRR}_{\PPP} \pr{{w}_{0}}{{z}_{0}}$ holds.
 It follows that $\langle \pr{{w}_{0}}{{z}_{0}}, \dotsc, \pr{{w}_{l}}{{z}_{l}} \rangle$ is a cycle in $X$.
 Therefore if $X$ contains no cycles, then necessarily, $p \leq q \leq p$.
 This shows ${E}_{\preceq} \; \subseteq \; {E}_{\leq}$.
\end{proof}
The dichromatic number of a digraph was defined by Neumann-Lara~\cite{MR0693366} (see also \cite[pp.\@ 17--20]{MR0593699}).
It has become a well-studied notion in finite combinatorics (e.g.\@ \cite{MR1310883}, \cite{MR4039600}, \cite{MR3593495}) with important connections to various other notions in graph theory.
For instance, the well-known conjecture of Erd{\H{o}}s and Hajnal about cliques or anti-cliques in graphs with forbidden subgraphs is known, by \cite{MR1832443}, to be equivalent to a statement about the dichromatic number of certain tournaments.
Soukup~\cite{MR3818601} has studied the dichromatic number of uncountable digraphs.
In a recent work, Higgins~\cite{arXiv:2405.00991} has proved a Brooks type theorem for the measurable dichromatic number of Borel digraphs.

The next definition collects the various notions of chromatic number and dichromatic number that will be considered in this paper together.
We define both a weak notion and a strong notion of Borel dichromatic number.
The weak notion only requires a cover of the digraph by Borel cycle-free subsets, while the strong notion asks for a Borel mapping into a Polish space such that the preimage of each point is cycle-free.
The difference between the two notions boils down to this: the weak notion sees cardinals between ${\aleph}_{0}$ and ${2}^{{\aleph}_{0}}$ whereas the strong notion does not.
More precisely, the weak notion can consistently take values between ${\aleph}_{0}$ and ${2}^{{\aleph}_{0}}$ (see Section \ref{sec:consistency}), while the strong notion is equal to ${2}^{{\aleph}_{0}}$ as soon as it is uncountable.
Nevertheless, there is a very straightforward relationship between these two notions.
The weak notion is countable if and only if the strong one is, and in this case the two notions coincide.
The weak notion is uncountable if and only if the strong one is equal to ${2}^{{\aleph}_{0}}$.
We will formulate most of our results, including our dichotomy theorems, in terms of the weaker notion, but given the simple relationship between the two, the statements of our results would not change if the stronger notion were substituted everywhere.
In the classical case, there is no difference between the strong notion and the weak one.
\begin{Definition} \label{def:H}
 Let $\GGG = \pr{X}{R}$ be a digraph.
 The \emph{dichromatic number of $\GGG$}, denoted \emph{$\dicr(\GGG)$}, is the minimal cardinal $\kappa$ such that there exists a collection $\{{X}_{\alpha}: \alpha < \kappa\}$ such that $X = {\bigcup}_{\alpha < \kappa}{{X}_{\alpha}}$ and for each $\alpha < \kappa$, there are no $R$-cycles in ${X}_{\alpha}$.
 
 For a graph $\GGG = \pr{X}{R}$, the \emph{chromatic number of $\GGG$}, denoted \emph{$\chi(\GGG)$}, is the minimal cardinal $\kappa$ such that there exists a collection $\{{X}_{\alpha}: \alpha < \kappa\}$ such that $X = {\bigcup}_{\alpha < \kappa}{{X}_{\alpha}}$ and $\forall \alpha < \kappa \forall x, y \in {X}_{\alpha}\[\pr{x}{y} \notin R\]$.
 
 If $\GGG = \pr{X}{R}$ is an analytic digraph, then the \emph{weak Borel dichromatic number of $\GGG$}, denoted \emph{$\wbdicr(\GGG)$}, is the minimal cardinal $\kappa$ such that there exists a collection $\{{X}_{\alpha}: \alpha < \kappa\}$ of Borel subsets of $X$ such that $X = {\bigcup}_{\alpha < \kappa}{{X}_{\alpha}}$ and for each $\alpha < \kappa$, there are no $R$-cycles in ${X}_{\alpha}$.
 
 If $\GGG = \pr{X}{R}$ is an analytic graph, then the \emph{weak Borel chromatic number of $\GGG$}, denoted \emph{$\wbcr(\GGG)$}, is the minimal cardinal $\kappa$ such that there exists a collection $\{{X}_{\alpha}: \alpha < \kappa\}$ of Borel subsets of $X$ such that $X = {\bigcup}_{\alpha < \kappa}{{X}_{\alpha}}$, $\forall \alpha < \beta < \kappa\[{X}_{\alpha} \cap {X}_{\beta} = \emptyset\]$, and $\forall \alpha < \kappa \forall x, y \in {X}_{\alpha}\[\pr{x}{y} \notin R\]$.
 
 For an analytic digraph $\GGG = \pr{X}{R}$, the \emph{Borel dichromatic number of $\GGG$}, denoted $\bdicr(\GGG)$, is the minimal $\kappa$ such that there exist a Polish space $M$ and a Borel function $\varphi: X \rightarrow M$ such that $\lc M \rc = \kappa$ and for every $m \in M$, ${\varphi}^{-1}(\{m\})$ does not contain any $R$-cycles.
 
 For an analytic graph $\GGG = \pr{X}{R}$, the \emph{Borel chromatic number of $\GGG$}, denoted $\bcr(\GGG)$, is the minimal $\kappa$ such that there exist a Polish space $M$ and a Borel function $\varphi: X \rightarrow M$ such that $\lc M \rc = \kappa$ and $\forall x, y \in X\[\varphi(x) = \varphi(y) \implies \pr{x}{y} \notin R\]$.
\end{Definition}
Several remarks are in order.
The definitions of $\dicr(\GGG)$ and $\chi(\GGG)$ would not change if we added the requirement that $\{{X}_{\alpha}: \alpha < \kappa\}$ be a pairwise disjoint family.
Clearly, $\dicr(\GGG) \leq \wbdicr(\GGG) \leq \bdicr(\GGG)$, for any analytic digraph $\GGG$ and $\chi(\GGG) \leq \wbcr(\GGG) \leq \bcr(\GGG)$, for any analytic graph $\GGG$.
For any graph $\GGG$, $\chi(\GGG) = \dicr(\GGG)$ because not containing cycles is the same as not containing edges.
Clearly, any family of at most ${\aleph}_{1}$ many Borel sets can be refined to a pairwise disjoint family of Borel sets with the same union.
Therefore, for any analytic graph $\GGG$, if $\wbdicr(\GGG) \leq {\aleph}_{1}$, then $\wbdicr(\GGG) = \wbcr(\GGG)$.
In particular, for any analytic graph $\GGG$, $\wbdicr(\GGG) > {\aleph}_{0}$ if and only if $\wbcr(\GGG) > {\aleph}_{0}$.
Lastly, $\bdicr(\GGG)$ and $\bcr(\GGG)$ can only take on these values: finite, ${\aleph}_{0}$, or ${2}^{{\aleph}_{0}}$.
We will see in Section \ref{sec:consistency} that $\wbdicr(\GGG)$ and $\wbcr(\GGG)$ have a much larger spectrum of possible values.
Observe, however, that $\wbdicr(\GGG) > {\aleph}_{0}$ if and only if $\bdicr(\GGG) = {2}^{{\aleph}_{0}}$, for an analytic digraph $\GGG$, and $\wbcr(\GGG) > {\aleph}_{0}$ if and only if $\bcr(\GGG) = {2}^{{\aleph}_{0}}$, for an analytic graph $\GGG$.
Furthermore, if $\wbdicr(\GGG) \leq {\aleph}_{0}$, then $\wbdicr(\GGG) = \bdicr(\GGG)$, and if $\wbcr(\GGG) \leq {\aleph}_{0}$, then $\wbcr(\GGG) = \bcr(\GGG)$.
\begin{Lemma} \label{lem:extendscycles}
 Let $\PPP = \pr{P}{\leq}$ be a quasi order.
 Let $\preceq$ be a quasi order on $P$ that extends $\leq$.
 Define
 \begin{align*}
  X(\preceq, \leq) = \{\pr{x}{y} \in P \times P: y \nleq x \wedge x \preceq y\}.
 \end{align*}
 Then $X(\preceq, \leq)$ is a subset of ${\AAA}_{\PPP}$ that does not contain any cycles.
\end{Lemma}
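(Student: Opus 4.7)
The inclusion $X(\preceq, \leq) \subseteq {\AAA}_{\PPP}$ is immediate from the definitions: every pair $\pr{x}{y} \in X(\preceq, \leq)$ satisfies $y \nleq x$, which is exactly the defining condition of ${\AAA}_{\PPP}$. So the real content of the lemma is the cycle-free assertion, which I plan to establish by contradiction.

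Suppose, toward a contradiction, that $\langle \pr{{x}_{0}}{{y}_{0}}, \dotsc, \pr{{x}_{k}}{{y}_{k}} \rangle$ is a cycle in $X(\preceq, \leq)$. The plan is to chain together two kinds of information: from the edge relation ${\RRR}_{\PPP}$ one gets ${y}_{i} \leq {x}_{i+1}$ for each $i < k$ and ${y}_{k} \leq {x}_{0}$ (using the closing edge of the cycle); from membership in $X(\preceq, \leq)$ one gets ${x}_{i} \preceq {y}_{i}$ for each $i \leq k$. Since $\preceq$ extends $\leq$, the relation $\leq$ is contained in $\preceq$, so all of the above inequalities collapse to a single $\preceq$-chain
\begin{align*}
 {x}_{0} \preceq {y}_{0} \preceq {x}_{1} \preceq {y}_{1} \preceq \dotsb \preceq {x}_{k} \preceq {y}_{k} \preceq {x}_{0}.
\end{align*}
By transitivity of $\preceq$, this yields ${x}_{0} \; {E}_{\preceq} \; {y}_{0}$.

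The final step invokes the second clause of Definition \ref{def:extendquasi}: because $\preceq$ extends $\leq$, we have ${E}_{\preceq} = {E}_{\leq}$, and hence ${x}_{0} \; {E}_{\leq} \; {y}_{0}$. In particular, ${y}_{0} \leq {x}_{0}$, which directly contradicts the requirement $\pr{{x}_{0}}{{y}_{0}} \in X(\preceq, \leq)$, namely that ${y}_{0} \nleq {x}_{0}$. This contradiction shows that no such cycle exists.

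I do not expect any serious obstacle in this argument; it is essentially a two-line chase once the right quantities are written down. The only point that must be used carefully is that the notion of ``extends'' in Definition \ref{def:extendquasi} is strong enough to guarantee that the induced equivalence relation does not shrink, i.e.\@ ${E}_{\preceq} \subseteq {E}_{\leq}$, rather than merely ${\leq} \subseteq {\preceq}$. Without this clause one would only obtain ${x}_{0} \; {E}_{\preceq} \; {y}_{0}$ and be unable to derive ${y}_{0} \leq {x}_{0}$. In the write-up I would simply present the $\preceq$-chain, extract the ${E}_{\preceq}$-equivalence, and then cite condition (2) of Definition \ref{def:extendquasi} to finish.
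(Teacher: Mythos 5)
Your proposal is correct and follows essentially the same argument as the paper: assume a cycle, chain the $\preceq$- and $\leq$-relations into ${x}_{0} \preceq {y}_{0} \preceq \dotsb \preceq {y}_{k} \preceq {x}_{0}$, conclude ${x}_{0} \; {E}_{\preceq} \; {y}_{0}$, and use ${E}_{\preceq} = {E}_{\leq}$ (clause (2) of Definition \ref{def:extendquasi}) to contradict ${y}_{0} \nleq {x}_{0}$. Your closing remark correctly identifies the one point where the full strength of ``extends'' is needed.
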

\begin{proof}
 It is clear from the definitions that $X(\preceq, \leq) \subseteq {\AAA}_{\PPP}$.
 Suppose for a contradiction that $\langle \pr{{x}_{0}}{{y}_{0}}, \dotsc, \pr{{x}_{k}}{{y}_{k}} \rangle$ is a cycle in $X(\preceq, \leq)$.
 Then ${x}_{0} \preceq {y}_{0} \leq \dotsb \leq {x}_{k} \preceq {y}_{k} \leq {x}_{0}$.
 It follows that ${x}_{0} \; {E}_{\preceq} \; {y}_{0}$.
 Since ${E}_{\preceq} = {E}_{\leq}$, ${x}_{0} \; {E}_{\leq} \; {y}_{0}$.
 However, this is a contradiction because ${y}_{0} \nleq {x}_{0}$.
\end{proof}
\begin{Lemma} \label{lem:odim=H}
 Let $\PPP = \pr{P}{\leq}$ be a quasi order such that $\lc P \slash {E}_{\leq} \rc > 1$.
 Then $\odim(\PPP) = \dicr\left(\pr{{\AAA}_{\PPP}}{{\RRR}_{\PPP}}\right)$.
\end{Lemma}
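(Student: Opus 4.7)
The plan is to prove both inequalities using the characterization of $\odim(\PPP)$ from Lemma \ref{lem:quasidimchar1} together with Lemmas \ref{lem:cyclefreeextends} and \ref{lem:extendscycles}, which together set up a two-way correspondence between quasi orders on $P$ extending $\leq$ and cycle-free subsets of ${\AAA}_{\PPP}$.

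For $\dicr(\pr{{\AAA}_{\PPP}}{{\RRR}_{\PPP}}) \leq \odim(\PPP)$, I would set $\kappa = \odim(\PPP)$ and use Lemma \ref{lem:quasidimchar1} to fix a sequence $\seq{\leq}{i}{\in}{\kappa}$ of quasi orders on $P$ extending $\leq$ and satisfying (\ref{cond:quasi}). Define ${X}_{i} = X({\leq}_{i}, \leq)$ as in Lemma \ref{lem:extendscycles}; that lemma immediately gives ${X}_{i} \subseteq {\AAA}_{\PPP}$ with no ${\RRR}_{\PPP}$-cycles. The only point to check is that the ${X}_{i}$ cover ${\AAA}_{\PPP}$. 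Given $\pr{x}{y} \in {\AAA}_{\PPP}$, so that $y \nleq x$, I would apply condition (\ref{cond:quasi}) to the \emph{swapped} pair, i.e.\@ with the roles of $x$ and $y$ interchanged, to obtain some $i \in \kappa$ with $x \; {\leq}_{i} \; y \vee y \leq x$. Since $y \nleq x$, the second disjunct fails, leaving $x \; {\leq}_{i} \; y$, and hence $\pr{x}{y} \in X({\leq}_{i}, \leq)$ by definition.

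For the reverse inequality $\odim(\PPP) \leq \dicr(\pr{{\AAA}_{\PPP}}{{\RRR}_{\PPP}})$, let $\kappa = \dicr(\pr{{\AAA}_{\PPP}}{{\RRR}_{\PPP}})$ and fix a covering $\{{X}_{i} : i \in \kappa\}$ of ${\AAA}_{\PPP}$ by subsets containing no ${\RRR}_{\PPP}$-cycles. For each $i$, let ${\leq}_{i}$ be the transitive closure of $\leq \cup \; {X}_{i}$ in $P \times P$; this is automatically a reflexive, transitive relation, hence a quasi order, and it contains $\leq$. Cycle-freeness of ${X}_{i}$ together with Lemma \ref{lem:cyclefreeextends} gives ${E}_{{\leq}_{i}} = {E}_{\leq}$, so ${\leq}_{i}$ extends $\leq$ in the sense of Definition \ref{def:extendquasi}. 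To verify (\ref{cond:quasi}), take $x, y \in P$: either $x \leq y$ and we are done, or $x \nleq y$, in which case $\pr{y}{x} \in {\AAA}_{\PPP}$, so $\pr{y}{x} \in {X}_{i}$ for some $i$, yielding $y \; {\leq}_{i} \; x$. Lemma \ref{lem:quasidimchar1} then gives $\odim(\PPP) \leq \kappa$.

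The hypothesis $\lc P \slash {E}_{\leq} \rc > 1$ is used only to avoid the degenerate case where ${\AAA}_{\PPP} = \emptyset$ but $\odim(\PPP) = 1$. There is no genuine obstacle here: once Lemmas \ref{lem:quasidimchar1}, \ref{lem:cyclefreeextends}, and \ref{lem:extendscycles} are in hand, the proof is a matter of combining them correctly. The main bookkeeping point is getting the orientation right when turning a pair $\pr{x}{y} \in {\AAA}_{\PPP}$ (a non-edge of the quasi order viewed in one direction) into an $\leq_i$-witness in the partner direction.
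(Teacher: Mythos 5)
Your proposal is correct and follows essentially the same route as the paper's proof: both directions are obtained by passing between quasi orders extending $\leq$ and cycle-free subsets of ${\AAA}_{\PPP}$ via Lemmas \ref{lem:quasidimchar1}, \ref{lem:extendscycles}, and \ref{lem:cyclefreeextends}, with the hypothesis $\lc P \slash {E}_{\leq} \rc > 1$ used exactly as you indicate to guarantee a nonempty index set. The orientation point you flag (applying (\ref{cond:quasi}) to the swapped pair) is handled identically, if more tersely, in the paper.
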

\begin{proof}
 Suppose $\seq{\leq}{i}{\in}{\kappa}$ is a sequence of quasi orders on $P$ extending $\leq$ such that (\ref{cond:quasi}) of Lemma \ref{lem:quasidimchar1} is satisfied.
 For each $i \in \kappa$, define ${X}_{i} = X\left( {\leq}_{i}, \leq \right)$.
 By Lemma \ref{lem:extendscycles}, ${X}_{i} \subseteq {\AAA}_{\PPP}$ and it does not contain any cycles.
 Suppose $\pr{x}{y} \in {\AAA}_{\PPP}$.
 Then there is $i \in \kappa$ so that $x \; {\leq}_{i} \; y$, whence $\pr{x}{y} \in {X}_{i}$.
 Therefore ${\AAA}_{\PPP} = {\bigcup}_{i \in \kappa}{{X}_{i}}$.
 It follows that $\dicr\left(\pr{{\AAA}_{\PPP}}{{\RRR}_{\PPP}}\right) \leq \odim(\PPP)$.
 
 For the other direction, suppose ${\AAA}_{\PPP} = {\bigcup}_{i \in \kappa}{{X}_{i}}$, where each ${X}_{i}$ contains no cycles.
 The hypothesis that $\lc P \slash {E}_{\leq} \rc > 1$ implies that ${\AAA}_{\PPP} \neq \emptyset$.
 Therefore $0 < \kappa$.
 Define ${\leq}_{i}$ to be the transitive closure of $\leq \cup \; {X}_{i}$ in $P \times P$.
 By Lemma \ref{lem:cyclefreeextends}, each ${\leq}_{i}$ is a quasi order on $P$ that extends $\leq$.
 Let $x, y \in P$.
 If $x \leq y$, then $i = 0$ fulfills (\ref{cond:quasi}) of Lemma \ref{lem:quasidimchar1}.
 If $x \nleq y$, then $\pr{y}{x} \in {\AAA}_{\PPP}$, whence $\pr{y}{x} \in {X}_{i}$, for some $i \in \kappa$.
 Then $y \; {\leq}_{i} \; x$ and (\ref{cond:quasi}) of Lemma \ref{lem:quasidimchar1} is fulfilled by $i$.
 It follows that $\odim(\PPP) \leq \dicr\left(\pr{{\AAA}_{\PPP}}{{R}_{\PPP}}\right)$.
\end{proof}
Lemma \ref{lem:odim=H} suggests that the Borel order dimension of a Borel quasi order $\PPP$ \emph{ought} to be $\wbdicr(\pr{{\AAA}_{\PPP}}{{\RRR}_{\PPP}})$, or $\bdicr(\pr{{\AAA}_{\PPP}}{{\RRR}_{\PPP}})$ if a strong version is needed.
In fact, we will now prove that this is exactly what we get if we take the characterization of $\odim(\PPP)$ given by Lemma \ref{lem:quasidimchar1} and add the requirement that the witnessing sequence consist entirely of Borel quasi orders.
The proof of this will use Corollary \ref{cor:reflection}, which will also play a key role in the proofs of the dichotomies in Section \ref{sec:dichotomy}.
The proof of Corollary \ref{cor:reflection}, in turn, will use Theorem \ref{thm:firstreflection}, which can be found in Kechris~\cite{kechrisbook}.
\begin{Definition} \label{def:pi11onsigma11}
 Let $X$ be a Polish space and $\Phi \subseteq \Pset(X)$.
 We say that $\Phi$ is \emph{${\mathbf{\Pi}}^{1}_{1}$ on ${\mathbf{\Sigma}}^{1}_{1}$} if for any Polish space $Y$ and any $A \subseteq Y \times X$ which is ${\mathbf{\Sigma}}^{1}_{1}$, the set ${A}_{\Phi} = \{y \in Y: {A}_{y} \in \Phi\}$ is ${\mathbf{\Pi}}^{1}_{1}$.
 Here ${A}_{y} = \{x \in X: \pr{y}{x} \in A\}$, for every $y \in Y$.
\end{Definition}
\begin{Theorem}[First Reflection Theorem] \label{thm:firstreflection}
 Let $X$ be a Polish space and let $\Phi \subseteq \Pset(X)$ be ${\mathbf{\Pi}}^{1}_{1}$ on ${\mathbf{\Sigma}}^{1}_{1}$.
 Then for any ${\mathbf{\Sigma}}^{1}_{1}$ set $A \in \Phi$, there is a Borel set $B \subseteq X$ such that $A \subseteq B$ and $B \in \Phi$.
\end{Theorem}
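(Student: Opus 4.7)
This is the classical First Reflection Theorem, cited here from Kechris~\cite{kechrisbook}, and my plan is to outline the standard proof, which proceeds by combining a transfinite Borel approximation of $A$ with a boundedness argument.

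\emph{Step 1: Approximate $A$ from outside by Borel sets.} Since the complement $X \setminus A$ is ${\mathbf{\Pi}}^{1}_{1}$, it carries a canonical ${\mathbf{\Pi}}^{1}_{1}$-rank and may therefore be expressed as an increasing union $X \setminus A = {\bigcup}_{\alpha < {\omega}_{1}}{{D}_{\alpha}}$ of Borel level sets. Dually, ${B}_{\alpha} := X \setminus {D}_{\alpha}$ forms a decreasing tower of Borel supersets of $A$ with ${\bigcap}_{\alpha < {\omega}_{1}}{{B}_{\alpha}} = A$. The goal is to exhibit some $\alpha < {\omega}_{1}$ with ${B}_{\alpha} \in \Phi$, since any such ${B}_{\alpha}$ will serve as the required $B$.

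\emph{Step 2: Extract such an $\alpha$ using the hypothesis.} Parametrize Borel sets by the standard Borel codes in $\BS$ in the usual way, and use the hypothesis that $\Phi$ is ${\mathbf{\Pi}}^{1}_{1}$ on ${\mathbf{\Sigma}}^{1}_{1}$ to see that the set of codes $c$ with ${B}_{c} \in \Phi$ is ${\mathbf{\Pi}}^{1}_{1}$ (applied to the universal ${\mathbf{\Sigma}}^{1}_{1}$ set, every Borel set being in particular ${\mathbf{\Sigma}}^{1}_{1}$). I would then argue by contradiction: assume no ${B}_{\alpha}$ lies in $\Phi$. The codes for the ${B}_{\alpha}$'s, being tied to the ${\mathbf{\Pi}}^{1}_{1}$-rank on $X \setminus A$, trace out an $\omega_{1}$-indexed family whose membership in the complement of $\Phi$ would constitute a ${\mathbf{\Sigma}}^{1}_{1}$-definable unbounded subfamily inside a natural ${\mathbf{\Pi}}^{1}_{1}$-rank on well-founded approximations. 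Invoking the boundedness theorem for ${\mathbf{\Pi}}^{1}_{1}$-ranks, together with the hypothesis $A \in \Phi$, one derives a contradiction.

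\emph{Main obstacle.} The crux is Step 2: one must align the transfinite hierarchy of Borel approximations ${B}_{\alpha}$ with a ${\mathbf{\Pi}}^{1}_{1}$-rank on codes in such a way that the standard boundedness theorem is directly applicable, and one must simultaneously carry the ${\mathbf{\Pi}}^{1}_{1}$-on-${\mathbf{\Sigma}}^{1}_{1}$ hypothesis through the coding scheme. This alignment is classical but delicate effective-descriptive-set-theoretic bookkeeping with $\Delta^{1}_{1}$-codes; it is precisely this step that exploits the full strength of the hypothesis on $\Phi$, and it is the reason the paper defers to~\cite{kechrisbook} rather than reproving the result in place.
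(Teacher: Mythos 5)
The paper does not actually prove this statement: Theorem~\ref{thm:firstreflection} is quoted from Kechris~\cite{kechrisbook} and used as a black box (only Lemma~\ref{lem:cyclefreereflection} and Corollary~\ref{cor:reflection} are proved in the text). So there is no in-paper argument to compare yours against; the only question is whether your sketch is a faithful account of the standard proof.

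It essentially is: the decomposition of $A$ as a decreasing $\omega_{1}$-intersection of Borel approximations ${B}_{\alpha} \supseteq A$ coming from a ${\mathbf{\Pi}}^{1}_{1}$-rank on $X \setminus A$, the observation that the hypothesis on $\Phi$ makes the relevant set of codes ${\mathbf{\Pi}}^{1}_{1}$, and a boundedness argument are exactly the three ingredients of the classical proof. What your Step 2 leaves genuinely unspecified is how the contradiction actually closes, and I would not describe it as mere bookkeeping. The standard assembly is: one builds a single ${\mathbf{\Sigma}}^{1}_{1}$ set $P \subseteq \mathcal{N} \times X$ such that ${P}_{w} = {B}_{\lc w \rc}$ for $w \in WO$ and ${P}_{w} = A$ for $w \notin WO$ (this is where $A \subseteq {B}_{\alpha}$ is used to keep the definition ${\mathbf{\Sigma}}^{1}_{1}$, and it is the only place the hypothesis $A \in \Phi$ enters). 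Then $S = \{w : {P}_{w} \in \Phi\}$ is ${\mathbf{\Pi}}^{1}_{1}$ by the hypothesis on $\Phi$, and $\mathcal{N} \setminus WO \subseteq S$ because $A \in \Phi$. If no ${B}_{\alpha}$ were in $\Phi$, then $S$ would equal $\mathcal{N} \setminus WO$ exactly, making $WO$ a ${\mathbf{\Sigma}}^{1}_{1}$ set; that is what contradicts boundedness (a ${\mathbf{\Sigma}}^{1}_{1}$ subset of $WO$ is bounded in rank, but $WO$ is not). Your phrasing ``a ${\mathbf{\Sigma}}^{1}_{1}$-definable unbounded subfamily\ldots together with the hypothesis $A \in \Phi$'' points at this, but the specific device of making the ill-founded sections equal to $A$ is the idea that makes the argument work, and it is missing from your write-up. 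With that supplied, your outline is the proof in the cited reference.
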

\begin{Lemma} \label{lem:cyclefreereflection}
 Let $\GGG = \pr{X}{R}$ be an analytic digraph.
 Let
 \begin{align*}
  \Phi = \{A \subseteq X: A \ \text{does not contain a cycle}\}.
 \end{align*}
 Then $\Phi$ is ${\mathbf{\Pi}}^{1}_{1}$ on ${\mathbf{\Sigma}}^{1}_{1}$.
\end{Lemma}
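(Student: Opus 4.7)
The plan is to verify the condition in Definition \ref{def:pi11onsigma11} by a direct definition-unfolding argument. I would fix an arbitrary Polish space $Y$ and an arbitrary $\mathbf{\Sigma}^1_1$ set $A \subseteq Y \times X$, and reduce the goal to showing that the complementary set $\{y \in Y : A_y \text{ contains an } R\text{-cycle}\}$ is $\mathbf{\Sigma}^1_1$. Unfolding Definition \ref{def:path}, $A_y$ contains an $R$-cycle precisely when there exist some integer $k \geq 1$ and elements $x_0, \ldots, x_k \in X$ such that $(y, x_i) \in A$ for every $i \leq k$, $x_i R x_{i+1}$ for every $i < k$, and $x_k R x_0$.

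For each fixed $k \geq 1$, I would observe that the set of tuples $(y, x_0, \ldots, x_k) \in Y \times X^{k+1}$ satisfying those three conditions is a finite intersection of the preimages of $A$ and of $R$ under the various continuous coordinate maps $Y \times X^{k+1} \to Y \times X$ and $X^{k+1} \to X \times X$. Since each such preimage is $\mathbf{\Sigma}^1_1$ and $\mathbf{\Sigma}^1_1$ is closed under finite intersections, this tuple set is $\mathbf{\Sigma}^1_1$. Projecting onto the first coordinate yields a $\mathbf{\Sigma}^1_1$ subset of $Y$, and taking the countable union over $k \geq 1$ produces a $\mathbf{\Sigma}^1_1$ set which coincides with $\{y \in Y : A_y \text{ contains an } R\text{-cycle}\}$, as required.

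There is no substantive obstacle here: the whole argument is a routine complexity calculation, and the reason it works is simply that an $R$-cycle is witnessed by a \emph{finite} sequence, so the existential quantifiers can be unpacked into a countable union of projections and the overall complexity remains within the analytic pointclass. The only reason this lemma is worth recording separately is that it is precisely what permits the First Reflection Theorem to be applied to cycle-free sets in the subsequent arguments.
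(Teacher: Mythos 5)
Your proposal is correct and follows essentially the same route as the paper: both arguments express $\{y : A_y \text{ contains an } R\text{-cycle}\}$ as a countable union over the cycle length $k$ of projections of $\mathbf{\Sigma}^1_1$ tuple-sets, the only cosmetic difference being that the paper realizes $R$ and $A$ explicitly as continuous images of Polish spaces so that each tuple-set becomes closed before projecting, whereas you invoke the closure of $\mathbf{\Sigma}^1_1$ under finite intersections, continuous preimages, projections, and countable unions directly. Both are valid routine complexity computations.
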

\begin{proof}
 Let $Y$ be Polish and let $A \subseteq Y \times X$ be ${\mathbf{\Sigma}}^{1}_{1}$.
 Fix Polish spaces $M$ and $N$ and continuous functions $f: M \rightarrow X \times X$ and $g: N \rightarrow Y \times X$ with $f''M = R$ and $g''N = A$.
 For $k \in \omega$ define
 \begin{equation*}
  \begin{split}
   {C}_{k} = &\left\{ \left\langle y, {x}_{0}, \dotsc, {x}_{k}, {n}_{0}, \dotsc, {n}_{k}, {m}_{0}, \dotsc, {m}_{k} \right\rangle \in Y \times {X}^{k+1} \times {N}^{k+1} \times {M}^{k+1}:\vphantom{\left( {\bigwedge}_{i \leq k}{g({n}_{i}) = \pr{y}{{x}_{i}}} \right)}\right. \\
   &\left. \left( {\bigwedge}_{i \leq k}{g({n}_{i}) = \pr{y}{{x}_{i}}} \right) \wedge \left( {\bigwedge}_{i < k}{f({m}_{i}) = \pr{{x}_{i}}{{x}_{i+1}}} \right) \wedge f({m}_{k}) = \pr{{x}_{k}}{{x}_{0}} \right\}.
  \end{split}
 \end{equation*}
 Clearly ${C}_{k}$ is a closed subset of $Y \times {X}^{k+1} \times {N}^{k+1} \times {M}^{k+1}$, and so ${D}_{k} = {\proj}_{Y}\left({C}_{k}\right)$ is a ${\mathbf{\Sigma}}^{1}_{1}$ subset of $Y$.
 Therefore $D = {\bigcup}_{k \in \omega}{{D}_{k}}$ is a ${\mathbf{\Sigma}}^{1}_{1}$ subset of $Y$ and $E = Y \setminus D$ is a ${\mathbf{\Pi}}^{1}_{1}$ subset of $Y$.
 It is easily verified that $E = \{y \in Y: {A}_{y} \in \Phi\} = {A}_{\Phi}$.
\end{proof}
\begin{Corollary} \label{cor:reflection}
 Let $\GGG = \pr{X}{R}$ be an analytic digraph.
 If $A \subseteq X$ is ${\mathbf{\Sigma}}^{1}_{1}$ and does not contain cycles, then there exists a Borel set $B \subseteq X$ such that $A \subseteq B$ and $B$ does not contain cycles.
\end{Corollary}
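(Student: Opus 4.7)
The plan is to derive this corollary by a direct two-step application of the results immediately preceding it, with essentially no new work required. First I would set
\[
 \Phi = \{A \subseteq X: A \text{ does not contain a cycle}\},
\]
exactly as in the statement of Lemma \ref{lem:cyclefreereflection}. That lemma tells us that $\Phi$ is ${\mathbf{\Pi}}^{1}_{1}$ on ${\mathbf{\Sigma}}^{1}_{1}$, so the hypotheses of the First Reflection Theorem are verified for this particular $\Phi$.

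Next I would observe that the given set $A$ is ${\mathbf{\Sigma}}^{1}_{1}$ and satisfies $A \in \Phi$ by assumption (it contains no cycles). Theorem \ref{thm:firstreflection} applied to $A$ and $\Phi$ then produces a Borel set $B \subseteq X$ with $A \subseteq B$ and $B \in \Phi$. Unpacking the definition of $\Phi$, this is precisely the conclusion: $B$ is Borel, contains $A$, and contains no $R$-cycles.

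Since all the real work has already been performed in Lemma \ref{lem:cyclefreereflection} (verifying the definability of the class $\Phi$), there is no genuine obstacle remaining in the corollary itself; the argument is a single quotation of the First Reflection Theorem. The only thing that one might want to double-check is that $A \in \Phi$ really is the literal input hypothesis needed by Theorem \ref{thm:firstreflection}, but this is immediate from Definition \ref{def:pi11onsigma11} and the wording of the reflection theorem.
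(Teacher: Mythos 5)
Your proof is correct and is exactly the intended argument: the paper states this as a corollary with no written proof precisely because it is the immediate application of Lemma \ref{lem:cyclefreereflection} followed by Theorem \ref{thm:firstreflection} that you describe. Nothing is missing.
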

\begin{Lemma} \label{lem:borelextension}
 Suppose $\PPP = \pr{P}{\leq}$ is a Borel quasi order.
 If ${\leq}_{0}$ is a ${\mathbf{\Sigma}}^{1}_{1}$ quasi order on $P$ that extends $\leq$, then there exists a Borel quasi order ${\leq}_{1}$ on $P$ that extends ${\leq}_{0}$.
\end{Lemma}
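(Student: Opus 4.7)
The plan is to apply Corollary \ref{cor:reflection} iteratively. Starting from ${\leq}_{0}$, I would build an increasing sequence of Borel cycle-free subsets of ${\AAA}_{\PPP}$ whose union together with $\leq$ forms a Borel quasi order extending ${\leq}_{0}$. The main idea is that although the transitive closure of $\leq$ with a Borel cycle-free set is in general only ${\mathbf{\Sigma}}^{1}_{1}$, if one re-reflects at each stage, the iteration closes off after $\omega$ rounds.

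Concretely, I would recursively define $\seq{R}{n}{<}{\omega}$, a sequence of ${\mathbf{\Sigma}}^{1}_{1}$ quasi orders on $P$ extending $\leq$, and $\seq{B}{n}{<}{\omega}$, a sequence of Borel cycle-free subsets of ${\AAA}_{\PPP}$, as follows. Set ${R}_{0} := \; {\leq}_{0}$. Given ${R}_{n}$, the set $X\left( {R}_{n}, \leq \right) = {R}_{n} \cap {\AAA}_{\PPP}$ is ${\mathbf{\Sigma}}^{1}_{1}$ and, by Lemma \ref{lem:extendscycles}, contains no cycles, so by Corollary \ref{cor:reflection} there is a Borel cycle-free ${B}_{n} \subseteq {\AAA}_{\PPP}$ with $X\left( {R}_{n}, \leq \right) \subseteq {B}_{n}$. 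Let ${R}_{n+1}$ be the transitive closure of $\leq \cup \; {B}_{n}$ in $P \times P$; by Lemma \ref{lem:cyclefreeextends}, ${R}_{n+1}$ is a quasi order on $P$ extending $\leq$. The technical point to verify, which follows in a line or two from ${E}_{{R}_{n}} = {E}_{\leq}$, is ${R}_{n} \setminus \; \leq \; \subseteq X\left( {R}_{n}, \leq \right) \subseteq {B}_{n}$ (if $\pr{x}{y} \in {R}_{n} \setminus \; \leq$ and $y \leq x$, then $\pr{y}{x} \in \; \leq \; \subseteq {R}_{n}$ would force $x \; {E}_{{R}_{n}} \; y$ and hence $x \leq y$, contradicting $\pr{x}{y} \notin \; \leq$). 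This yields ${R}_{n} \subseteq \; \leq \cup \; {B}_{n} \subseteq {R}_{n+1}$ and ${B}_{n} \subseteq {R}_{n+1} \cap {\AAA}_{\PPP} \subseteq {B}_{n+1}$, so both sequences are increasing and every ${R}_{n}$ extends ${\leq}_{0}$.

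Set ${B}_{\omega} := {\bigcup}_{n < \omega}{{B}_{n}}$ and ${\leq}_{1} := {\bigcup}_{n < \omega}{{R}_{n}}$. As an increasing union of Borel cycle-free subsets of ${\AAA}_{\PPP}$, ${B}_{\omega}$ is itself a Borel cycle-free subset of ${\AAA}_{\PPP}$, since any cycle in ${B}_{\omega}$ is finite and hence contained in some ${B}_{N}$. As an increasing union of quasi orders on $P$ all containing ${\leq}_{0}$ with symmetric part ${E}_{\leq}$, ${\leq}_{1}$ is a quasi order on $P$ extending ${\leq}_{0}$. The closing step is to verify ${\leq}_{1} = \; \leq \cup \; {B}_{\omega}$, which gives that ${\leq}_{1}$ is Borel. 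The inclusion $\leq \cup \; {B}_{\omega} \subseteq {\leq}_{1}$ follows from ${B}_{n} \subseteq {R}_{n+1} \subseteq {\leq}_{1}$. For the converse, the key observation ${R}_{n} \setminus \; \leq \; \subseteq {B}_{n}$ noted above yields ${\leq}_{1} \setminus \; \leq \; = {\bigcup}_{n}{\left( {R}_{n} \setminus \; \leq \right)} \subseteq {\bigcup}_{n}{{B}_{n}} = {B}_{\omega}$, whence ${\leq}_{1} = \; \leq \cup \; {B}_{\omega}$. The main obstacle is precisely this closing step: one must ensure that at each stage the Borel superset ${B}_{n}$ produced by reflection already captures every pair of ${R}_{n}$ outside $\leq$, which is why the reflection is applied at stage $n$ to ${R}_{n} \cap {\AAA}_{\PPP}$ rather than to ${R}_{n-1} \cap {\AAA}_{\PPP}$ or merely to ${\leq}_{0} \cap {\AAA}_{\PPP}$.
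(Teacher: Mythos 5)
Your proposal is correct and is essentially the paper's own proof: the same iteration of reflecting $X({R}_{n},\leq)$ to a Borel cycle-free superset, taking the transitive closure with $\leq$, and closing off after $\omega$ stages, with the same key verification that ${R}_{n}\setminus\mathord{\leq}\subseteq X({R}_{n},\leq)$ so that the union equals $\leq\cup\,{\bigcup}_{n}{B}_{n}$ and is therefore Borel.
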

\begin{proof}
 As noted in Definition \ref{def:aprp}, $\pr{{\AAA}_{\PPP}}{{\RRR}_{\PPP}}$ is a Borel digraph.
 Build sequences $\seq{\preceq}{n}{\in}{\omega}$ and $\seq{X}{n}{\in}{\omega}$ satisfying the following:
 \begin{enumerate}
  \item
  ${\preceq}_{0}$ is ${\leq}_{0}$ and ${\preceq}_{n}$ is a ${\mathbf{\Sigma}}^{1}_{1}$ quasi order on $P$ extending $\leq$;
  \item
  ${\preceq}_{n+1}$ extends ${\preceq}_{n}$;
  \item
  ${X}_{n} \subseteq {\AAA}_{\PPP}$ is Borel and does not contain cycles;
  \item
  $X\left({\preceq}_{n}, \leq\right) \subseteq {X}_{n}$ and ${\preceq}_{n+1}$ is the transitive closure of $\leq \cup \; {X}_{n}$ in $P \times P$.
 \end{enumerate}
 To construct the sequences, let ${\preceq}_{0}$ be ${\leq}_{0}$.
 Given ${\preceq}_{n}$, by Lemma \ref{lem:extendscycles}, $X\left({\preceq}_{n}, \leq\right) \subseteq {\AAA}_{\PPP}$ and it does not contain cycles.
 Since ${\preceq}_{n}$ is ${\mathbf{\Sigma}}^{1}_{1}$ and $\leq$ is Borel, $X\left({\preceq}_{n}, \leq\right)$ is ${\mathbf{\Sigma}}^{1}_{1}$.
 By Corollary \ref{cor:reflection}, there exists a Borel set ${X}_{n} \subseteq {\AAA}_{\PPP}$ such that $X\left({\preceq}_{n}, \leq\right) \subseteq {X}_{n}$ and ${X}_{n}$ does not contain cycles.
 Let ${\preceq}_{n+1}$ be the transitive closure of $\leq \cup \; {X}_{n}$ in $P \times P$.
 By Lemma \ref{lem:cyclefreeextends}, ${\preceq}_{n+1}$ is a quasi order on $P$ extending $\leq$.
 Since $\leq$ and ${X}_{n}$ are Borel sets, ${\preceq}_{n+1}$ is ${\mathbf{\Sigma}}^{1}_{1}$.
 Note that ${E}_{{\preceq}_{n+1}} = {E}_{\leq} = {E}_{{\preceq}_{n}}$.
 Let $x, y \in P$ and suppose $x \; {\preceq}_{n} \; y$.
 If $y \leq x$, then $x \; {E}_{{\preceq}_{n}} \; y$, whence $x \; {E}_{{\preceq}_{n+1}} \; y$ and $x \; {\preceq}_{n+1} \; y$.
 If $y \nleq x$, then $\pr{x}{y} \in X\left({\preceq}_{n}, \leq\right) \subseteq {X}_{n}$, whence $x \; {\preceq}_{n+1} \; y$.
 Therefore ${\preceq}_{n+1}$ extends ${\preceq}_{n}$ and (1)--(4) are satisfied, finishing the construction.
 
 Define ${\leq}_{1} = {\bigcup}_{n \in \omega}{{\preceq}_{n}}$.
 By (1) and (2), ${\leq}_{1}$ is a quasi order on $P$ that extends ${\leq}_{0}$.
 Define $X = {\bigcup}_{n \in \omega}{{X}_{n}}$ and $R \; = \; \leq \cup \; X$.
 Then $R$ is Borel.
 It will be verified that $R = {\leq}_{1}$.
 If $\pr{x}{y} \in \; \leq$, then $\pr{x}{y} \in \; {\leq}_{0} = {\preceq}_{0} \subseteq {\leq}_{1}$.
 If $\pr{x}{y} \in {X}_{n}$, for some $n \in \omega$, then $\pr{x}{y} \in {\preceq}_{n+1} \subseteq {\leq}_{1}$.
 This shows $R \subseteq {\leq}_{1}$.
 For the other direction, consider $\pr{x}{y} \in {\preceq}_{n}$, for some $n \in \omega$.
 If $y \leq x$, then $y \; {\preceq}_{n} \; x$, whence $x \; {E}_{{\preceq}_{n}} \; y$, and so $x \; {E}_{\leq} \; y$ and $\pr{x}{y} \in \; \leq \; \subseteq R$.
 If $y \nleq x$, then $\pr{x}{y} \in X\left({\preceq}_{n}, \leq\right) \subseteq {X}_{n} \subseteq X \subseteq R$.
 Therefore, $R = {\leq}_{1}$ and ${\leq}_{1}$ is Borel.
\end{proof}
As our official definition of Borel order dimension, we will take the one derived from Definition \ref{def:dimquasi} and Lemma \ref{lem:quasidimchar1}, and we will show that it agrees with the one we would expect from Lemma \ref{lem:odim=H}.
\begin{Definition} \label{def:borelpartial}
 Let $\PPP = \pr{P}{\leq}$ be a Borel quasi order.
 A partial order $\prec$ on $P \slash {E}_{\leq}$ is said to be \emph{Borel} if there is a Borel quasi order $\preceq$ on $P$ such that ${E}_{\preceq} = {E}_{\leq}$ and $\prec$ is the partial order on $P \slash {E}_{\preceq} = P \slash {E}_{\leq}$ induced by $\preceq$.
\end{Definition}
\begin{Definition} \label{def:boreldim}
 Let $\PPP = \pr{P}{\leq}$ be a Borel quasi order.
 The \emph{Borel order dimension} (or simply \emph{Borel dimension}) \emph{of $\PPP$}, denoted \emph{${\odim}_{B}\left(\PPP\right)$}, is the minimal cardinal $\kappa$ such that there exists a sequence $\seq{<}{i}{\in}{\kappa}$ of Borel partial orders on $P \slash {E}_{\leq}$ extending $<$ such that
 \begin{align}
  \forall x, y \in P \exists i \in \kappa\[\[y\] \; {<}_{i} \; \[x\] \vee \[x\] < \[y\] \vee \[x\] = \[y\]\]. \label{cond:borelpartial}
 \end{align}
\end{Definition}
\begin{Lemma} \label{lem:boreldimquasi}
 Let $\PPP = \pr{P}{\leq}$ be a Borel quasi order.
 Then ${\odim}_{B}\left(\PPP\right)$ is the minimal cardinal $\kappa$ such that there exists a sequence $\seq{\leq}{i}{\in}{\kappa}$ of Borel quasi orders on P extending $\leq$ such that
 \begin{align}
  \forall x, y \in P \exists i \in \kappa\[y \; {\leq}_{i} \; x \vee x \leq y\]. \label{cond:borelquasi}
 \end{align}
\end{Lemma}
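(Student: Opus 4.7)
The proof will mirror the structure of Lemma \ref{lem:quasidimchar1} almost verbatim, merely inserting the word ``Borel'' in the right places. The organizing observation is that Definition \ref{def:borelpartial} sets up a two-way correspondence between Borel partial orders on $P / {E}_{\leq}$ that extend $<$ and Borel quasi orders on $P$ that extend $\leq$. Indeed, starting from a Borel quasi order $\preceq$ on $P$ with ${E}_{\preceq} = {E}_{\leq}$, the induced partial order on $P / {E}_{\leq}$ is Borel by definition, and the equivalent formulation of Definition \ref{def:extendquasi} guarantees that $\preceq$ extends $\leq$ iff this induced partial order extends $<$. Conversely, every Borel partial order on the quotient extending $<$ arises this way from some Borel quasi order on $P$ that extends $\leq$.

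For the inequality asserting that the minimum in the lemma statement is at most ${\odim}_{B}(\PPP)$, I fix a witnessing sequence $\seq{<}{i}{\in}{\kappa}$ of Borel partial orders on $P / {E}_{\leq}$ for (\ref{cond:borelpartial}). Definition \ref{def:borelpartial} supplies, for each $i \in \kappa$, a Borel quasi order ${\preceq}_{i}$ on $P$ with ${E}_{{\preceq}_{i}} = {E}_{\leq}$ inducing ${<}_{i}$; the correspondence above guarantees that ${\preceq}_{i}$ extends $\leq$. Then (\ref{cond:borelpartial}) translates to (\ref{cond:borelquasi}) by cases: both $\[x\] = \[y\]$ and $\[x\] < \[y\]$ yield $x \leq y$, while $\[y\] \; {<}_{i} \; \[x\]$ yields $y \; {\preceq}_{i} \; x$.

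The reverse inequality proceeds symmetrically. Given a sequence $\seq{\leq}{i}{\in}{\kappa}$ of Borel quasi orders on $P$ extending $\leq$ and satisfying (\ref{cond:borelquasi}), the induced partial orders ${<}_{i}$ on $P / {E}_{{\leq}_{i}} = P / {E}_{\leq}$ are Borel partial orders extending $<$ by the correspondence. To verify (\ref{cond:borelpartial}) for this sequence, I case-split (\ref{cond:borelquasi}) exactly as in the closing paragraph of the proof of Lemma \ref{lem:quasidimchar1}: from $x \leq y$ one obtains $\[x\] = \[y\]$ or $\[x\] < \[y\]$ depending on whether $y \leq x$, and from $y \; {\leq}_{i} \; x$ one obtains $\[x\] = \[y\]$ or $\[y\] \; {<}_{i} \; \[x\]$ depending on whether $x \; {\leq}_{i} \; y$. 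There is no real obstacle here; the entire argument is definition chasing, and neither Corollary \ref{cor:reflection} nor Lemma \ref{lem:borelextension} is needed, because Definition \ref{def:borelpartial} already packages the passage between Borel partial orders on the quotient and Borel quasi orders on $P$.
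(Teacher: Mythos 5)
Your proposal is correct and follows essentially the same route as the paper's proof: both directions are the definition-chasing translation between Borel partial orders on $P \slash {E}_{\leq}$ extending $<$ and Borel quasi orders on $P$ extending $\leq$ supplied by Definition \ref{def:borelpartial} and the equivalent form of Definition \ref{def:extendquasi}, with the same case analysis relating (\ref{cond:borelpartial}) and (\ref{cond:borelquasi}). You are also right that neither Corollary \ref{cor:reflection} nor Lemma \ref{lem:borelextension} is needed for this lemma.
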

\begin{proof}
 Suppose $\seq{<}{i}{\in}{\kappa}$ is a sequence of Borel partial orders on $P \slash {E}_{\leq}$ extending $<$ and satisfying (\ref{cond:borelpartial}) of Definition \ref{def:boreldim}.
 Then for every $i \in \kappa$, there is a Borel quasi order ${\leq}_{i}$ on $P$ so that ${E}_{{\leq}_{i}} = {E}_{\leq}$ and ${<}_{i}$ is the partial order on $P \slash {E}_{{\leq}_{i}} = P \slash {E}_{\leq}$ induced by ${\leq}_{i}$.
 %If $x \leq y$, then either ${\[x\]}_{{E}_{\leq}} = {\[y\]}_{{E}_{\leq}}$ or ${\[x\]}_{{E}_{\leq}} < {\[y\]}_{{E}_{\leq}}$, whence either ${\[x\]}_{{E}_{{\leq}_{i}}} = \; {\[y\]}_{{E}_{{\leq}_{i}}}$ or ${\[x\]}_{{E}_{{\leq}_{i}}} \; {<}_{i} \; {\[y\]}_{{E}_{{\leq}_{i}}}$, and so, $x \; {\leq}_{i} \; y$.
 %Thus each ${\leq}_{i}$ extends $\leq$.
 Each ${\leq}_{i}$ extends $\leq$.
 Given $x, y \in P$, let $i \in \kappa$ be such that $\left(\[y\] \; {<}_{i} \; \[x\] \vee \[x\] < \[y\] \vee \[x\] = \[y\] \right)$.
 If $\[y\] \; {<}_{i} \; \[x\]$, then $y \; {\leq}_{i} \; x$, while if $\left( \[x\] < \[y\] \vee \[x\] = \[y\] \right)$, then $x \leq y$.
 Thus (\ref{cond:borelquasi}) holds.
 
 Conversely, suppose $\seq{\leq}{i}{\in}{\kappa}$ is a sequence of Borel quasi orders on $P$ extending $\leq$ and satisfying (\ref{cond:borelquasi}).
 For each $i \in \kappa$, define ${<}_{i}$ to be the partial order on $P \slash {E}_{\leq} = P \slash {E}_{{\leq}_{i}}$ induced by ${\leq}_{i}$.
 Each ${<}_{i}$ is a Borel partial order on $P \slash {E}_{\leq}$.
 %If ${\[x\]}_{{E}_{\leq}} < {\[y\]}_{{E}_{\leq}}$, then ${\[x\]}_{{E}_{\leq}} \neq {\[y\]}_{{E}_{\leq}}$ and $x \leq y$, whence ${\[x\]}_{{E}_{{\leq}_{i}}} \neq {\[y\]}_{{E}_{{\leq}_{i}}}$ and $x \; {\leq}_{i} \; y$, and so, ${\[x\]}_{{E}_{{\leq}_{i}}} \; {<}_{i} \; {\[y\]}_{{E}_{{\leq}_{i}}}$ and ${\[x\]}_{{E}_{\leq}} \; {<}_{i} \; {\[y\]}_{{E}_{\leq}}$.
 %Thus each ${<}_{i}$ extends $<$.
 Each ${<}_{i}$ extends $<$.
 Given $x, y \in P$, there exists $i \in \kappa$ with $\left(y \; {\leq}_{i} \; x \vee x \leq y\right)$.
 If $\[x\] \neq \[y\]$, then $\left(\[y\] \; {<}_{i} \; \[x\] \vee \[x\] < \[y\]\right)$.
 Therefore, (\ref{cond:borelpartial}) of Definition \ref{def:boreldim} is satisfied.
\end{proof}
\begin{Theorem} \label{thm:odimB=HB}
 Let $\PPP = \pr{P}{\leq}$ be a Borel quasi order such that $\lc P \slash {E}_{\leq} \rc > 1$.
 Then ${\odim}_{B}\left(\PPP\right) = \wbdicr\left(\pr{{\AAA}_{\PPP}}{{\RRR}_{\PPP}}\right)$.
\end{Theorem}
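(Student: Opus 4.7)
The plan is to adapt the proof of Lemma~\ref{lem:odim=H} to the Borel setting, using Lemma~\ref{lem:boreldimquasi} to reformulate ${\odim}_{B}(\PPP)$ in terms of Borel quasi orders on $P$ extending $\leq$, and Lemma~\ref{lem:borelextension} to handle the fact that transitive closures need not be Borel.

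For the inequality ${\odim}_{B}(\PPP) \geq \wbdicr(\pr{\AAA_{\PPP}}{\RRR_{\PPP}})$, I would start from a sequence $\seq{\leq}{i}{\in}{\kappa}$ of Borel quasi orders on $P$ extending $\leq$ and satisfying (\ref{cond:borelquasi}), as given by Lemma~\ref{lem:boreldimquasi}. For each $i \in \kappa$, set $X_i = X(\leq_i, \leq)$ in the notation of Lemma~\ref{lem:extendscycles}. Each $X_i$ is Borel because both $\leq$ and $\leq_i$ are Borel, and Lemma~\ref{lem:extendscycles} gives that $X_i \subseteq \AAA_{\PPP}$ and contains no cycles. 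To see that $\{X_i : i \in \kappa\}$ covers $\AAA_{\PPP}$, fix $\pr{x}{y} \in \AAA_{\PPP}$, so $y \nleq x$. Apply (\ref{cond:borelquasi}) to the pair $\pr{y}{x}$: there is $i$ with $x \leq_i y$ or $y \leq x$. Since $y \nleq x$, we must have $x \leq_i y$, and therefore $\pr{x}{y} \in X_i$.

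For the reverse inequality $\wbdicr(\pr{\AAA_{\PPP}}{\RRR_{\PPP}}) \geq {\odim}_{B}(\PPP)$, I would fix a collection $\{X_i : i \in \kappa\}$ of Borel subsets of $\AAA_{\PPP}$ covering $\AAA_{\PPP}$, each containing no cycles. Since $|P/E_\leq| > 1$, $\AAA_{\PPP}$ is nonempty, so $\kappa \geq 1$. For each $i$, let $\preceq_i$ be the transitive closure of $\leq \cup\; X_i$ in $P \times P$. By Lemma~\ref{lem:cyclefreeextends}, $\preceq_i$ is a quasi order on $P$ extending $\leq$, and it is manifestly ${\mathbf{\Sigma}}^{1}_{1}$. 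Now invoke Lemma~\ref{lem:borelextension} to obtain, for each $i$, a Borel quasi order $\leq_i$ on $P$ extending $\preceq_i$. Transitivity of extension then ensures each $\leq_i$ extends $\leq$, and the condition (\ref{cond:borelquasi}) of Lemma~\ref{lem:boreldimquasi} is verified as follows: given $x, y \in P$, if $x \leq y$ we are done using any $i$; otherwise $\pr{y}{x} \in \AAA_{\PPP}$, so $\pr{y}{x} \in X_i$ for some $i \in \kappa$, whence $y \preceq_i x$ and thus $y \leq_i x$. Applying Lemma~\ref{lem:boreldimquasi} yields ${\odim}_{B}(\PPP) \leq \kappa$.

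The main obstacle is the standard one for Borel combinatorics: the transitive closure of a Borel relation is in general only ${\mathbf{\Sigma}}^{1}_{1}$, so the naive construction of $\leq_i$ in the second direction does not immediately land in the Borel class. This is precisely what Lemma~\ref{lem:borelextension} (built on the first reflection theorem via Corollary~\ref{cor:reflection} and Lemma~\ref{lem:cyclefreereflection}) is designed to overcome. Aside from this, the argument is purely a Borel-aware bookkeeping version of Lemma~\ref{lem:odim=H}.
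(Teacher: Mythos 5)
Your proposal is correct and follows essentially the same route as the paper's proof: the forward direction via $X_i = X(\leq_i,\leq)$ exactly as in Lemma~\ref{lem:odim=H} with the added observation of Borelness, and the reverse direction via transitive closures (which are only ${\mathbf{\Sigma}}^{1}_{1}$) upgraded to Borel quasi orders by Lemma~\ref{lem:borelextension}. No gaps.
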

\begin{proof}
 Suppose $\seq{\leq}{i}{\in}{\kappa}$ is a sequence of Borel quasi orders on $P$ extending $\leq$ such that (\ref{cond:borelquasi}) of Lemma \ref{lem:boreldimquasi} is satisfied.
 For each $i \in \kappa$, define ${X}_{i} = X\left({\leq}_{i}, \leq\right)$.
 By the proof of Lemma \ref{lem:odim=H}, ${X}_{i} \subseteq {\AAA}_{\PPP}$, ${X}_{i}$ does not contain any cycles, and ${\AAA}_{\PPP} = {\bigcup}_{i \in \kappa}{{X}_{i}}$.
 Further, since both $\leq$ and ${\leq}_{i}$ are Borel, ${X}_{i}$ is a Borel set.
 It follows that $\wbdicr\left(\pr{{\AAA}_{\PPP}}{{\RRR}_{\PPP}}\right) \leq {\odim}_{B}\left(\PPP\right)$.
 
 For the other direction, suppose ${\AAA}_{\PPP} = {\bigcup}_{i \in \kappa}{{X}_{i}}$, where each ${X}_{i}$ is Borel and contains no cycles.
 The hypothesis that $\lc P \slash {E}_{\leq} \rc > 1$ implies that ${\AAA}_{\PPP} \neq \emptyset$.
 Therefore $0 < \kappa$.
 Define ${\preceq}_{i}$ to be the transitive closure of $\leq \cup \; {X}_{i}$ in $P \times P$.
 By Lemma \ref{lem:cyclefreeextends}, each ${\preceq}_{i}$ is a quasi order on $P$ that extends $\leq$.
 Since $\leq$ and ${X}_{i}$ are both Borel, ${\preceq}_{i}$ is ${\mathbf{\Sigma}}^{1}_{1}$.
 By Lemma \ref{lem:borelextension}, there is a Borel quasi order ${\leq}_{i}$ on $P$ that extends ${\preceq}_{i}$.
 Let $x, y \in P$.
 If $x \leq y$, then $i = 0$ fulfills (\ref{cond:borelquasi}) of Lemma \ref{lem:boreldimquasi}.
 If $x \nleq y$, then $\pr{y}{x} \in {\AAA}_{\PPP}$, whence $\pr{y}{x} \in {X}_{i}$, for some $i \in \kappa$.
 Then $y \; {\preceq}_{i} \; x$ and $y \; {\leq}_{i} \; x$, and (\ref{cond:borelquasi}) of Lemma \ref{lem:boreldimquasi} is fulfilled by $i$.
 It follows that ${\odim}_{B}\left(\PPP\right) \leq \wbdicr\left(\pr{{\AAA}_{\PPP}}{{\RRR}_{\PPP}}\right)$.
\end{proof}
In view of Theorem \ref{thm:odimB=HB}, we could think of ${\odim}_{B}(\PPP)$ as the weak Borel order dimension of $\PPP$.
The strong notion would then be defined as $\bdicr(\pr{{\AAA}_{\PPP}}{{\RRR}_{\PPP}})$ in the case when $\lc P \slash {E}_{\leq} \rc > 1$, and as $0$ or $1$ in the cases when $P \slash {E}_{\leq}$ is empty or a singleton respectively.
Replacing our weaker definition with the stronger notion will not make any difference to the dichotomy theorems in Section \ref{sec:dichotomy}.
\begin{Definition} \label{def:linearizable}
 A Borel quasi order $\PPP = \pr{P}{\leq}$ is said to be \emph{Borel linearizable} if there exists a Borel linear quasi order $\preceq$ on $P$ extending $\leq$.
\end{Definition}
Our definition of ${\AAA}_{\PPP}$ considers all pairs $\pr{x}{y}$ where $y \nleq x$.
If we considered only the induced sub-digraph of $\pr{{\AAA}_{\PPP}}{{\RRR}_{\PPP}}$ on the collection of pairs $\pr{x}{y}$ where $x$ and $y$ are incomparable, then we would be off from the Borel dichromatic number of $\PPP$ by at most $1$.
We will conclude this section by proving this fact, which will be useful in the proof of one of the dichotomies to be presented in the next section.
\begin{Definition} \label{def:BpSp}
 Let $\PPP = \pr{P}{\leq}$ be a Borel quasi order.
 Define
 \begin{align*}
  &{\BBB}_{\PPP} = \left\{ \pr{x}{y} \in {\AAA}_{\PPP}: x \nleq y \right\};\\
  &{\SSS}_{\PPP} = {\RRR}_{\PPP} \cap \left( {\BBB}_{\PPP} \times {\BBB}_{\PPP} \right).
 \end{align*}
 It is clear that $\pr{{\BBB}_{\PPP}}{{\SSS}_{\PPP}}$ is a Borel digraph.
\end{Definition}
\begin{Lemma} \label{lem:H1+}
 For any Borel quasi order $\PPP = \pr{P}{\leq}$, $\wbdicr\left( \pr{{\AAA}_{\PPP}}{{\RRR}_{\PPP}} \right) \leq 1 + \wbdicr\left( \pr{{\BBB}_{\PPP}}{{\SSS}_{\PPP}} \right)$.
\end{Lemma}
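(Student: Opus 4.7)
The plan is to decompose ${\AAA}_{\PPP}$ as the disjoint union of ${\BBB}_{\PPP}$ and ${\AAA}_{\PPP} \setminus {\BBB}_{\PPP}$ and to verify that the latter is a single Borel cycle-free ``color class'' that can be adjoined to any Borel cycle-free cover of ${\BBB}_{\PPP}$.

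First I would note that
\begin{align*}
 {\AAA}_{\PPP} \setminus {\BBB}_{\PPP} = \{\pr{x}{y} \in P \times P : x \leq y \wedge y \nleq x\},
\end{align*}
which is a Borel subset of ${\AAA}_{\PPP}$ because $\leq$ is Borel. The key observation is that this set contains no ${\RRR}_{\PPP}$-cycles: indeed, if $\langle \pr{{x}_{0}}{{y}_{0}}, \dotsc, \pr{{x}_{k}}{{y}_{k}} \rangle$ were such a cycle, then by definition of ${\AAA}_{\PPP} \setminus {\BBB}_{\PPP}$ we would have ${x}_{i} \leq {y}_{i}$ for each $i \leq k$, and by definition of ${\RRR}_{\PPP}$ (together with the cycle condition ${y}_{k}\, {\RRR}_{\PPP}\, {x}_{0}$) we would have ${y}_{i} \leq {x}_{i+1}$ for $i < k$ and ${y}_{k} \leq {x}_{0}$. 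Chaining these via transitivity of $\leq$ gives ${x}_{0} \leq {y}_{0} \leq {x}_{0}$, forcing ${y}_{0}\, {E}_{\leq}\, {x}_{0}$, contradicting ${y}_{0} \nleq {x}_{0}$.

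Next, let $\kappa = \wbdicr\left( \pr{{\BBB}_{\PPP}}{{\SSS}_{\PPP}} \right)$ and fix a family $\{{Y}_{\alpha}: \alpha < \kappa\}$ of Borel subsets of ${\BBB}_{\PPP}$ covering ${\BBB}_{\PPP}$, each of which contains no ${\SSS}_{\PPP}$-cycle. Each ${Y}_{\alpha}$ is Borel as a subset of $P \times P$, and since ${Y}_{\alpha} \subseteq {\BBB}_{\PPP}$, any ${\RRR}_{\PPP}$-cycle in ${Y}_{\alpha}$ would automatically be an ${\SSS}_{\PPP}$-cycle by the definition of ${\SSS}_{\PPP}$; hence each ${Y}_{\alpha}$ is also ${\RRR}_{\PPP}$-cycle-free.

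Finally, setting ${Y}_{\ast} = {\AAA}_{\PPP} \setminus {\BBB}_{\PPP}$, the family $\{{Y}_{\ast}\} \cup \{{Y}_{\alpha} : \alpha < \kappa\}$ is a Borel cover of ${\AAA}_{\PPP}$ of cardinality at most $1 + \kappa$, each member of which is ${\RRR}_{\PPP}$-cycle-free, yielding $\wbdicr\left( \pr{{\AAA}_{\PPP}}{{\RRR}_{\PPP}} \right) \leq 1 + \kappa$ as desired. The only edge case is $\kappa = 0$, where ${\BBB}_{\PPP} = \emptyset$ and so ${\AAA}_{\PPP} = {Y}_{\ast}$ is already cycle-free, giving the bound $1 = 1 + 0$. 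There is no substantial obstacle here; the content of the argument lies entirely in the chaining computation that shows ${\AAA}_{\PPP} \setminus {\BBB}_{\PPP}$ carries no ${\RRR}_{\PPP}$-cycles.
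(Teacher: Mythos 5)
Your proof is correct and follows essentially the same route as the paper: decompose ${\AAA}_{\PPP}$ into ${\AAA}_{\PPP} \setminus {\BBB}_{\PPP}$ (shown cycle-free by the same chaining argument) and a witnessing cover of ${\BBB}_{\PPP}$, whose members remain ${\RRR}_{\PPP}$-cycle-free since ${\SSS}_{\PPP}$ is the restriction of ${\RRR}_{\PPP}$ to ${\BBB}_{\PPP}$. No gaps.
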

\begin{proof}
 Let $\kappa = \wbdicr\left( \pr{{\BBB}_{\PPP}}{{\SSS}_{\PPP}} \right)$ and let $\seq{Y}{\alpha}{<}{\kappa}$ witness this.
 Define ${X}_{0} = {\AAA}_{\PPP} \setminus {\BBB}_{\PPP}$.
 Suppose $\left\langle \pr{{p}_{0}}{{q}_{0}}, \dotsc, \pr{{p}_{k}}{{q}_{k}} \right\rangle$ is an ${\RRR}_{\PPP}$-cycle in ${X}_{0}$.
 Then we have ${p}_{0} \leq {q}_{0} \leq \dotsb \leq {p}_{k} \leq {q}_{k} \leq {p}_{0}$, whence ${q}_{0} \leq {p}_{0}$, contradicting $\pr{{p}_{0}}{{q}_{0}} \in {\AAA}_{\PPP}$.
 Thus ${X}_{0}$ is a Borel subset of ${\AAA}_{\PPP}$ without any ${\RRR}_{\PPP}$-cycles.
 For each $\alpha < \kappa$, define ${X}_{1+\alpha} = {Y}_{\alpha}$.
 It is easy to check that each ${X}_{1+\alpha}$ is a Borel subset of ${\AAA}_{\PPP}$ without any ${\RRR}_{\PPP}$-cycles.
 Since ${\AAA}_{\PPP} = {X}_{0} \cup {\BBB}_{\PPP} = {\bigcup}_{\beta < 1+\kappa}{{X}_{\beta}}$, $\seq{X}{\beta}{<}{1+\kappa}$ witnesses that $\wbdicr\left( \pr{{\AAA}_{\PPP}}{{\RRR}_{\PPP}} \right) \leq 1 + \kappa = 1 + \wbdicr\left( \pr{{\BBB}_{\PPP}}{{\SSS}_{\PPP}} \right)$.
\end{proof}
\begin{Corollary} \label{cor:HBS}
 For a Borel quasi order $\PPP = \pr{P}{\leq}$, if ${\odim}_{B}(\PPP) > {\aleph}_{0}$, then $\wbdicr\left(\pr{{\BBB}_{\PPP}}{{\SSS}_{\PPP}}\right) > {\aleph}_{0}$.
\end{Corollary}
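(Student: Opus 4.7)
The plan is to chain together the two results immediately preceding the corollary, namely Theorem \ref{thm:odimB=HB} and Lemma \ref{lem:H1+}, in a trivial way; the statement really is a corollary in the literal sense, with no extra work required.

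First I would check that the hypothesis ${\odim}_{B}(\PPP) > {\aleph}_{0}$ forces $\lc P \slash {E}_{\leq} \rc > 1$. Indeed, if $P \slash {E}_{\leq}$ is empty or a singleton, then the partial order $<$ on $P \slash {E}_{\leq}$ is empty, so the condition (\ref{cond:borelpartial}) in Definition \ref{def:boreldim} is vacuously satisfied by the empty sequence, giving ${\odim}_{B}(\PPP) = 0$. Hence under our hypothesis Theorem \ref{thm:odimB=HB} applies and yields
\begin{align*}
 \wbdicr\left(\pr{{\AAA}_{\PPP}}{{\RRR}_{\PPP}}\right) = {\odim}_{B}(\PPP) > {\aleph}_{0}.
\end{align*}

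Next I would invoke Lemma \ref{lem:H1+}, which gives
\begin{align*}
 \wbdicr\left(\pr{{\AAA}_{\PPP}}{{\RRR}_{\PPP}}\right) \leq 1 + \wbdicr\left(\pr{{\BBB}_{\PPP}}{{\SSS}_{\PPP}}\right).
\end{align*}
Combining the two displays, $1 + \wbdicr\left(\pr{{\BBB}_{\PPP}}{{\SSS}_{\PPP}}\right) > {\aleph}_{0}$. Since $1 + \kappa \leq {\aleph}_{0}$ whenever $\kappa \leq {\aleph}_{0}$, the contrapositive gives $\wbdicr\left(\pr{{\BBB}_{\PPP}}{{\SSS}_{\PPP}}\right) > {\aleph}_{0}$, as required.

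There is no genuine obstacle here; all of the work has already been done in Theorem \ref{thm:odimB=HB} and Lemma \ref{lem:H1+}. The only small point to keep in mind is handling the trivial cases $\lc P \slash {E}_{\leq} \rc \leq 1$, which are excluded automatically by the hypothesis on ${\odim}_{B}(\PPP)$.
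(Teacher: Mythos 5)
Your proof is correct and is essentially the paper's own argument: derive $\lc P \slash {E}_{\leq} \rc > 1$ from the hypothesis, apply Theorem \ref{thm:odimB=HB}, and finish with Lemma \ref{lem:H1+}. One tiny quibble: when $P \neq \emptyset$ and $P \slash {E}_{\leq}$ is a singleton, the empty sequence does \emph{not} satisfy condition (\ref{cond:borelpartial}) (the existential over an empty index set fails), so ${\odim}_{B}(\PPP) = 1$ rather than $0$ there — but this does not affect your conclusion, since $1 \leq {\aleph}_{0}$ still contradicts the hypothesis.
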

\begin{proof}
 Since ${\odim}_{B}(\PPP) > {\aleph}_{0}$, $\lc P \slash {E}_{\leq} \rc > 1$, and $\wbdicr\left( \pr{{\AAA}_{\PPP}}{{\RRR}_{\PPP}} \right) > {\aleph}_{0}$.
 Since by Lemma \ref{lem:H1+}, $\wbdicr\left( \pr{{\AAA}_{\PPP}}{{\RRR}_{\PPP}} \right) \leq 1 + \wbdicr\left( \pr{{\BBB}_{\PPP}}{{\SSS}_{\PPP}} \right)$, $\wbdicr\left( \pr{{\BBB}_{\PPP}}{{\SSS}_{\PPP}} \right) > {\aleph}_{0}$.
\end{proof}
\section{Three Dichotomies} \label{sec:dichotomy}
This section contains the main results of this paper.
These results take the form of three dichotomy theorems that characterize the Borel and analytic digraphs of uncountable weak Borel dichromatic number as well as the Borel quasi orders of uncountable Borel order dimension in terms of a basis.
The first two dichotomies generalize the celebrated ${\GGG}_{0}$-dichotomy of Kechris, Solecki, and Todorcevic~\cite{MR1667145}.
Kechris, Solecki, and Todorcevic identified a particular Borel graph, called ${\GGG}_{0}$, and showed that an arbitrary analytic graph $\GGG$ has uncountable weak Borel chromatic number if and only if there is a continuous graph homomorphism from ${\GGG}_{0}$ to $\GGG$.
The ${\GGG}_{0}$-dichotomy has found numerous applications in descriptive set theory (see Miller~\cite{MR3053069}) and has inspired an impressive body of work in Borel combinatorics (e.g.\@ \cite{MR2467208}, \cite{MR3454384}, \cite{MR3362223}, \cite{MR4309494}).

The basic object ${\GGG}_{0}$ of \cite{MR1667145} is an acyclic graph defined on ${2}^{\omega}$.
The basic objects in the dichotomies about to be presented will be digraphs defined on finitely branching trees, where the nodes of height $n$ branch into $f(n)$ successors, for some $f \in {\omega}^{\omega}$ that satisfies $f(n) \geq 2$.
Thus instead of one basic object, we get a family of continuum many basic objects parametrized by a real in a uniform way.
In the ${\GGG}_{0}$ dichotomy, the natural morphisms of interest are continuous graph homomorphisms, and it is known that, in general, one cannot ask for an embedding.
In the category of digraphs, it is natural to consider homomorphisms that map every induced copy of the directed cycle ${C}_{n}$ to an induced copy of ${C}_{n}$ (see, for example, \cite{MR3488937} and \cite{MR3567535}).
In particular, such digraph homomorphisms preserve specific non-edges as well.
Our first dichotomy will show that if $\GGG$ is a Borel digraph with $\wbdicr(\GGG) > {\aleph}_{0}$, then one can always ask for a continuous digraph homomorphism from one of our basic objects to $\GGG$ that preserves all induced copies of ${C}_{n}$.
\begin{Definition} \label{def:Fn<omega}
 Define $\FFF = \left\{f \in \BS: \forall k \in \omega\[f(k) \geq 2\]\right\}$.
 For $l \in \omega$, define ${\FFF}_{l} = \left\{s \in {\omega}^{l}: \forall k \in l\[s(k) \geq 2\]\right\}$.
 Define ${\FFF}_{< \omega} = {\bigcup}_{l < \omega}{{\FFF}_{l}}$.
\end{Definition}
\begin{Definition} \label{def:tree}
 Let $X$ be any set.
 $T \subseteq {X}^{< \omega}$ is called a \emph{subtree} if $T$ is downwards closed.
 In other words, $T \subseteq {X}^{< \omega}$ is a subtree if and only if $\forall t \in T \forall l' \leq \dom(t)\[t\restrict l' \in T\]$.
 For a subtree $T \subseteq {X}^{< \omega}$, the \emph{$n$th level of $T$}, denoted \emph{$\lv{T}{n}$}, is $\{s \in T: \dom(s) = n \}$, for all $n \in \omega$.
 For a subtree $T \subseteq {X}^{< \omega}$ and $s \in T$, define
 \begin{align*}
  &T\langle s \rangle = \left\{ t \in T: s \subseteq t \vee t \subseteq s \right\};\\
  &{\succc}_{T}(s) = \left\{ x: {s}^{\frown}{\langle x \rangle} \in T \right\}.
 \end{align*}
 For a subtree $T \subseteq {X}^{< \omega}$, $\[T\]$ denotes the collection of all infinite branches through $T$.
 In other words,
 \begin{align*}
  \[T\] = \left\{ f \in {X}^{\omega}: \forall l \in \omega \[f \restrict l \in T \]\right\}.
 \end{align*}
\end{Definition}
\begin{Definition} \label{def:T[s]Ts}
 For $\sigma \in {\FFF}_{< \omega}$, define
 \begin{align*}
  &{T}_{\[\sigma\]} = \displaystyle\prod_{k \in \dom(\sigma)}{\sigma(k)}\\
  &{T}_{\sigma} = \displaystyle\bigcup_{l \leq \dom(\sigma)}{\displaystyle\prod_{k \in l}{\sigma(k)}}
 \end{align*}
 Note ${T}_{\sigma} \subseteq {\omega}^{\leq \dom(\sigma)} \subseteq {\omega}^{< \omega}$ is a subtree and ${T}_{\[\sigma\]} \subseteq {T}_{\sigma}$.
\end{Definition}
\begin{Definition} \label{def:Tf}
 For $f \in \FFF$, define
 \begin{align*}
  {T}_{f} = \displaystyle\bigcup_{l \in \omega}{\displaystyle\prod_{k \in l}{f(k)}}
 \end{align*}
 Note ${T}_{f} \subseteq {\omega}^{< \omega}$ is a subtree and that $\[{T}_{f}\] = \displaystyle\prod_{k \in \omega}{f(k)}$.
\end{Definition}
The definition of the graph ${\GGG}_{0}$ of~\cite{MR1667145} depends on a parameter $D$, which is a dense subset of ${2}^{< \omega}$ that intersects every level exactly once.
The choice of $D$ does not affect the properties of ${\GGG}_{0}$, but in our case, the choice of a dense set will determine the function $f$, which specifies the extent of the branching at each level.
Hence, we take a general approach and define the following.
\begin{Definition} \label{def:selector}
 A function $E: {\FFF}_{< \omega} \rightarrow {\omega}^{< \omega}$ is called a \emph{selector} if for every $\sigma \in {\FFF}_{< \omega}$, $E(\sigma) \in {T}_{\[\sigma\]}$.
 A selector $E$ is \emph{dense} if for every $f \in \FFF$, the set $\left\{ E(f \restrict l): l \in \omega \right\}$ is dense in ${T}_{f}$.
 In other words, a selector $E$ is dense if and only if
 \begin{align*}
  \forall f \in \FFF \forall s \in {T}_{f} \exists l \in \omega\[s \subseteq E(f \restrict l)\].
 \end{align*}
\end{Definition}
It is an easy exercise to see that dense selectors exist.
We will outline a construction for completeness.
Let $\{{s}_{l}: l \in \omega\}$ be an enumeration of ${\omega}^{< \omega}$ such that $\lc {s}_{l} \rc \leq l$, for every $l \in \omega$.
Define $E: {\FFF}_{< \omega} \rightarrow {\omega}^{< \omega}$ as follows.
Given $\sigma \in {\FFF}_{< \omega}$, let $l = \lc \sigma \rc$.
If ${s}_{l} \in {T}_{\sigma}$, then $E(\sigma) = \tau$, where $\tau \in {T}_{\[\sigma\]}$ is such that $\tau \restrict \lc {s}_{l} \rc = {s}_{l}$ and $\tau(i) = 0$, for all $\lc {s}_{l} \rc \leq i < \lc \sigma \rc$.
If ${s}_{l} \notin {T}_{\sigma}$, then $E(\sigma) = \tau$, where $\tau \in {T}_{\[\sigma\]}$ is such that $\tau(i) = 0$, for all $i < \lc \sigma \rc$.
To see that this works, suppose $f \in \FFF$ and $s \in {T}_{f}$.
Then $s = {s}_{l}$, for some $l \in \omega$.
Let $\sigma = f \restrict l$.
Since $\lc s \rc \leq l$ and $s \in {T}_{f}$, $s \in {T}_{\sigma}$.
So, by definition, $s \subseteq E(\sigma) = E(f \restrict l)$.

We now define a Borel digraph ${\GGG}_{0}(E, f)$ for every selector $E$ and every $f \in {\FFF}_{< \omega}$.
It will be shown that the family $\{{\GGG}_{0}(E, f): f \in \FFF \ \text{and} \ E \ \text{is a dense selector}\}$ characterizes the Borel and analytic digraphs of uncountable weak Borel dichromatic number.
In fact, we will show more.
We will show that if $\GGG$ is any analytic digraph whose weak Borel dichromatic number is uncountable, then for every $E$ there exist $f$ and a continuous homomorphism $\Pi$ from ${\GGG}_{0}(E, f)$ to $\GGG$, with the additional feature that $\Pi$ can be made to preserve copies of ${C}_{n}$ when $\GGG$ is Borel.
Moreover, if there exists $k$ so that $\GGG$ does not contain copies of ${C}_{n}$ for any $n \geq k$, then $f$ is bounded by the constant function $k-1$.
Thus, our dichotomy directly generalizes the ${\GGG}_{0}$ dichotomy (see the proof of Corollary \ref{cor:g0dichotomy}).
\begin{Definition} \label{def:G0}
 Let $E$ be a selector and let $\sigma \in {\FFF}_{< \omega}$.
 Define ${R}_{E, \sigma} \subseteq {T}_{\[\sigma\]} \times {T}_{\[\sigma\]}$ as follows.
 Given $\pr{s}{t} \in {T}_{\[\sigma\]} \times {T}_{\[\sigma\]}$, $\pr{s}{t} \in {R}_{E, \sigma}$ if and only if there exist $k < \dom(\sigma)$ and $x \in {\omega}^{< \omega}$ such that $t(k) = s(k) + 1 \mod \sigma(k)$, $s = \ins{E(\sigma \restrict k)}{s(k)}{x}$, and $t = \ins{E(\sigma \restrict k)}{t(k)}{x}$.
 Define $\KKK(E, \sigma) = \pr{{T}_{\[\sigma\]}}{{R}_{E, \sigma}}$.
 Note that $\KKK(E, \sigma)$ is a digraph.
 
 Let $E$ be a selector let $f \in \FFF$.
 Define ${R}_{E, f} \subseteq \[{T}_{f}\] \times \[{T}_{f}\]$ as follows.
 Given $\pr{y}{z} \in \[{T}_{f}\] \times \[{T}_{f}\]$, $\pr{y}{z} \in {R}_{E, f}$ if and only if there exist $l \in \omega$ and $x \in {\omega}^{\omega}$ such that $z(l) = y(l) + 1 \mod f(l)$, $y = \ins{E(f \restrict l)}{y(l)}{x}$, and $z = \ins{E(f \restrict l)}{z(l)}{x}$.
 Define ${\GGG}_{0}(E, f) = \pr{\[{T}_{f}\]}{{R}_{E, f}}$.
 Note that ${\GGG}_{0}(E, f)$ is a Borel digraph.
\end{Definition}
Note that when $f(n) > 2$, for every $n$, then $\pr{y}{x} \notin {R}_{E, f}$ whenever $\pr{x}{y} \in {R}_{E, f}$.
Also, if $f$ is constantly $2$, then ${\GGG}_{0}(E, f)$ is a copy of the graph ${\GGG}_{0}$.
\begin{Definition} \label{def:covm}
 Recall the following cardinal characteristic known as the \emph{covering number of the meager ideal}, or \emph{covering of meager} for short.
 \begin{align*}
  &\MMM = \{M \subseteq {2}^{\omega}: M \ \text{is Borel and meager}\},\\
  &\cov(\MMM) = \min\left\{\lc \FFF \rc: \FFF \subseteq \MMM \wedge {2}^{\omega} = \bigcup \FFF \right\}.
 \end{align*}
\end{Definition}
\begin{Lemma} \label{lem:covM}
 For every dense selector $E$ and every $f \in \FFF$, $\wbdicr\left( {\GGG}_{0}(E, f) \right) \geq \cov(\MMM)$.
\end{Lemma}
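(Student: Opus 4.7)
The plan is to prove the contrapositive: I will show that any Borel set $X \subseteq \[{T}_{f}\]$ that contains no ${R}_{E,f}$-cycle must be meager. Once this is established, if $\[{T}_{f}\] = {\bigcup}_{\alpha < \kappa}{{X}_{\alpha}}$ is any decomposition witnessing $\wbdicr\left({\GGG}_{0}(E, f)\right)$, then each ${X}_{\alpha}$ is meager in the Polish space $\[{T}_{f}\]$, and since $\[{T}_{f}\]$ is a compact zero-dimensional Polish space without isolated points (hence homeomorphic to ${2}^{\omega}$), we conclude $\kappa \geq \cov(\MMM)$.

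So suppose for contradiction that $X \subseteq \[{T}_{f}\]$ is Borel, non-meager, and contains no ${R}_{E,f}$-cycle. Since $X$ has the Baire property, there is $s \in {T}_{f}$ such that $X$ is comeager in the basic clopen set $V(s) = \{y \in \[{T}_{f}\]: s \subseteq y\}$. Using density of the selector $E$, pick $l \in \omega$ with $s \subseteq E(f \restrict l)$ and set $t = E(f \restrict l)$. Then $V(t) \subseteq V(s)$ is clopen, so $X$ is comeager in $V(t)$, and hence also in each of the clopen pieces $V(\next{t}{j})$ for $j \in f(l)$.

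Next I use a Kuratowski--Ulam style argument. For each $j \in f(l)$ let
\begin{align*}
 {X}_{j} = \left\{ x \in {\textstyle\prod}_{k > l}{f(k)}: \ins{t}{j}{x} \in X \right\}.
\end{align*}
The map $x \mapsto \ins{t}{j}{x}$ is a homeomorphism from ${\prod}_{k > l}{f(k)}$ onto $V(\next{t}{j})$, so each ${X}_{j}$ is comeager in the Polish space ${\prod}_{k > l}{f(k)}$. Since $f(l)$ is finite, ${\bigcap}_{j \in f(l)}{{X}_{j}}$ is still comeager and hence nonempty; pick $x$ in this intersection and let ${y}_{j} = \ins{t}{j}{x} \in X$. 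Each consecutive pair $\pr{{y}_{j}}{{y}_{j+1 \mod f(l)}}$ lies in ${R}_{E, f}$ by inspection of Definition~\ref{def:G0} (with the witnessing $l$ and tail $x$), so $\langle {y}_{0}, \dotsc, {y}_{f(l)-1} \rangle$ is an ${R}_{E, f}$-cycle inside $X$, contradicting the hypothesis.

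The only nontrivial step is verifying that the required comeager-in-a-basic-open reduction really lands inside the image of the selector $E$; this is exactly what density of $E$ is for, and it is the point of the definition. Once we replace $s$ by $t = E(f \restrict l)$, the coordinate $l$ becomes a level at which the adjacency relation ${R}_{E, f}$ forces an entire ${C}_{f(l)}$ cycle in any fiber of $\bigcap_{j} {X}_{j}$, and Baire category produces such a fiber.
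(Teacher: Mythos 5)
Your proof is correct and follows essentially the same route as the paper: reduce to showing every non-meager Baire-measurable subset of $\[{T}_{f}\]$ contains a cycle, use density of $E$ to restrict to the clopen set above $t = E(f\restrict l)$, and intersect the $f(l)$ comeager "rotated" copies of $X$ to find a single tail $x$ producing the cycle $\langle \ins{t}{0}{x}, \dotsc, \ins{t}{f(l)-1}{x} \rangle$. The paper phrases the intersection via iterates of the rotation auto-homeomorphism $\psi$ rather than your fiberwise sets ${X}_{j}$, but these are the same argument.
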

\begin{proof}
 For any $f \in \FFF$, $\[{T}_{f}\]$ is a perfect Polish space and hence it cannot be covered by fewer than $\cov(\MMM)$ many Borel meager sets.
 Therefore it suffices to show that if $X \subseteq \[{T}_{f}\]$ is Borel and non-meager, then $X$ contains a cycle.
 Indeed, we will argue that if $X \subseteq \[{T}_{f}\]$ is Baire measurable and non-meager, then $X$ contains a cycle.
 By Baire measurability, there is a non-empty open subset $U \subseteq \[{T}_{f}\]$ such that $X$ is comeager in $U$.
 By the hypothesis that $E$ is a dense selector, we may assume that there exists $l \in \omega$ so that, letting $s = E(f \restrict l)$, $U = \{x \in \[{T}_{f}\]: s \subseteq x\}$.
 Define $\psi: U \rightarrow U$ by
 \begin{align*}
  \psi(\ins{s}{i}{y}) = \ins{s}{i+1 \mod f(l)}{y},
 \end{align*}
 for each $i$ and $y$ satisfying $i \in f(l)$, $y \in \BS$, and $\ins{s}{i}{y} \in \[{T}_{f}\]$.
 Then $\psi$ and ${\psi}^{-1}$ are auto-homeomorphisms of $U$.
 For $k \in \omega$, define the iterates ${\psi}^{k}: U \rightarrow U$ and ${\psi}^{-k}: U \rightarrow U$ by induction as usual.
 In other words, define ${\psi}^{0}$ as the identity function on $U$, and given ${\psi}^{k}$ and ${\psi}^{-k}$, define ${\psi}^{k+1}$ as $\psi \circ {\psi}^{k}$ and ${\psi}^{-k-1}$ as ${\psi}^{-1} \circ {\psi}^{-k}$.
 Then for each $k$, ${\psi}^{-k}(U \cap X)$ is comeager in $U$.
 So $Y = {\bigcap}_{k < f(l)}{{\psi}^{-k}(U \cap X)}$ is comeager, in particular it is non-empty.
 Choose $x \in Y$.
 Then ${\psi}^{k}(x) \in U \cap X$, for every $k < f(l)$, and by the definitions of $\psi$ and of ${R}_{E, f}$, $\langle x, \psi(x), \dotsc, {\psi}^{f(l)-1}(x) \rangle$ is a cycle in $X$.
\end{proof}
As remarked after Definition \ref{def:H}, this implies that $\bdicr\left({\GGG}_{0}(E, f)\right) = {2}^{{\aleph}_{0}}$, for every dense selector $E$ and $f \in \FFF$.
\begin{Definition} \label{def:minimalcycle}
 Let $\pr{X}{R}$ be a digraph and $Y \subseteq X$.
 A cycle $\langle {x}_{0}, \dotsc, {x}_{n} \rangle$ in $Y$ is said to be \emph{minimal} if
 \begin{align*}
  \pr{{x}_{i}}{{x}_{j}} \in R \iff j = i + 1 \mod n+1,
 \end{align*}
 for all $i, j < n+1$.
 Observe that if $\langle {x}_{0}, \dotsc, {x}_{n} \rangle$ is a minimal cycle, then ${x}_{i} \neq {x}_{j}$, for all $i \neq j$.
\end{Definition}
Observe that a minimal cycle of length $n$ is just an isomorphic copy of the directed cycle ${C}_{n}$ of length $n$ -- in other words, the induced digraph on $\{{x}_{0}, \dotsc, {x}_{n}\}$ is isomorphic to ${C}_{n}$.
We will be interested in homomorphisms of digraphs that preserve copies of ${C}_{n}$.
\begin{Definition} \label{def:homandminhom}
 Let $\GGG = \pr{X}{R}$ and $\HHH = \pr{Y}{E}$ be digraphs.
 We will say that \emph{$\Pi$ is a homomorphism from $\GGG$ to $\HHH$} if $\Pi: X \rightarrow Y$ is a function such that $\forall \pr{x}{y} \in R\[\pr{\Pi(x)}{\Pi(y)} \in E\]$.
 
 We will say that \emph{$\Pi$ is a minimal homomorphism from $\GGG$ to $\HHH$} if $\Pi: X \rightarrow Y$ is a function such that the following hold:
 \begin{enumerate}
  \item
  $\forall \pr{x}{y} \in R\[\pr{\Pi(x)}{\Pi(y)} \in E\]$;
  \item
  for every minimal $R$-cycle $\langle {z}_{0}, \dotsc, {z}_{m} \rangle$ in $X$ and every $i, j < m+1$ such that $\pr{{z}_{i}}{{z}_{j}} \notin R$, $\pr{\Pi({z}_{i})}{\Pi({z}_{j})} \notin E$.
 \end{enumerate}
\end{Definition}
A minimal homomorphism is merely a homomorphism which maps every copy of ${C}_{n}$ to a copy of ${C}_{n}$.
Such homomorphisms are of interest in the theory of digraphs, for example, see \cite{MR3488937, MR3567535}.

The proofs of our dichotomies are based on the same ideas as the proof of the ${\GGG}_{0}$-dichotomy in Miller~\cite{MR3053069} and in Kechris, Solecki, and Todorcevic~\cite{MR1667145}, but our notation and presentation will follow Bernshteyn~\cite{bern}.
\begin{nota} \label{nota:homo}
 Fix the following notation from now until the end of the proof of Lemma \ref{lem:diamter}.
 Let $\pr{X}{R}$ be a fixed Borel digraph.
 Then both $R$ and $\left( X \times X \right) \setminus R$ are ${\mathbf{\Sigma}}^{1}_{1}$.
 Fix Polish spaces $M$ and $N$ as well as continuous functions $p: M \rightarrow X \times X$ and $q: N \rightarrow X \times X$ with $p''M = R$ and $q''N = \left(X \times X\right) \setminus R$.
 Fix also a selector $E$.
 These parameters will be suppressed for as long as they remain fixed.
 In particular, we will write ${R}_{\sigma}$ instead of ${R}_{E, \sigma}$, for $\sigma \in {\FFF}_{< \omega}$.
\end{nota}
\begin{Definition} \label{def:homo}
 For any $\sigma \in {\FFF}_{< \omega}$, ${S}_{\sigma}$ will denote the Polish space
 \begin{align*}
  {X}^{{T}_{\[\sigma\]}} \times {M}^{\left( {T}_{\[\sigma\]} \times {T}_{\[\sigma\]} \right)} \times {N}^{\left( {T}_{\[\sigma\]} \times {T}_{\[\sigma\]} \right)}.
 \end{align*}
 We will say that $\pr{\pi}{\varphi, \psi} \in {S}_{\sigma}$ is a \emph{minimal $S$-homomorphism} if the following hold:
 \begin{enumerate}
  \item
  $\forall \pr{s}{t} \in {R}_{\sigma}\[p(\varphi(\pr{s}{t})) = \pr{\pi(s)}{\pi(t)}\]$;
  \item
  for every minimal cycle $\langle {s}_{0}, \dotsc, {s}_{m} \rangle$ in ${T}_{\[\sigma\]}$ and for every $i, j < m+1$ such that $i \neq j$ and $\pr{{s}_{i}}{{s}_{j}} \notin {R}_{\sigma}$, $q(\psi(\pr{{s}_{i}}{{s}_{j}})) = \pr{\pi({s}_{i})}{\pi({s}_{j})}$.
 \end{enumerate}
 Define $\homo(\sigma) = \left\{ \pr{\pi}{\varphi, \psi} \in {S}_{\sigma}: \pr{\pi}{\varphi, \psi} \ \text{is a minimal $S$-homomorphism} \right\}$.
 
 Note that $\homo(\sigma)$ is a closed subset of the Polish space ${S}_{\sigma}$.
\end{Definition}
\begin{Definition} \label{def:HsHst}
 For $\sigma \in {\FFF}_{< \omega}$, $H \subseteq \homo(\sigma)$, and $s \in {T}_{\[\sigma\]}$, $H(s)$ denotes the set $\{\pi(s): \exists \varphi \exists \psi\[\pr{\pi}{\varphi, \psi} \in H\]\} \subseteq X$.
 For $\pr{s}{t} \in {T}_{\[\sigma\]} \times {T}_{\[\sigma\]}$, ${H}_{M}(s, t)$ denotes the set $\{\varphi(\pr{s}{t}): \exists \pi \exists \psi \[\pr{\pi}{\varphi, \psi} \in H\]\} \subseteq M$, and ${H}_{N}(s, t)$ denotes the set $\{\psi(\pr{s}{t}): \exists \pi \exists \varphi \[\pr{\pi}{\varphi, \psi} \in H\]\} \subseteq N$.
 
 Observe that if $H$ is a Borel subset of $\homo(\sigma)$, then $H(s)$, ${H}_{M}(s, t)$, and ${H}_{N}(s, t)$ are all ${\mathbf{\Sigma}}^{1}_{1}$.
\end{Definition}
\begin{Definition} \label{def:projections}
 For $\sigma \in {\FFF}_{< \omega}$, $2 \leq n < \omega$, and $i < n$, define ${r}_{n, i, \sigma}: {S}_{\next{\sigma}{n}} \rightarrow {S}_{\sigma}$ as follows.
 Given $\pr{\pi}{\varphi, \psi} \in {S}_{\next{\sigma}{n}}$, ${r}_{n, i, \sigma}(\pr{\pi}{\varphi, \psi}) = \pr{\pi'}{\varphi', \psi'}$, where for each $s \in {T}_{\[\sigma\]}$, $\pi'(s) = \pi(\next{s}{i})$, and for each $\pr{s}{t} \in {T}_{\[\sigma\]} \times {T}_{\[\sigma\]}$, $\varphi'(\pr{s}{t}) = \varphi(\pr{\next{s}{i}}{\next{t}{i}})$, and $\psi'(\pr{s}{t}) = \psi(\pr{\next{s}{i}}{\next{t}{i}})$.
 
 It is clear that each ${r}_{n, i, \sigma}$ is continuous.
\end{Definition}
\begin{Definition} \label{def:Hotimesn}
 Let $\sigma \in {\FFF}_{< \omega}$, $2 \leq n < \omega$, and $H \subseteq \homo(\sigma)$.
 Define
 \begin{align*}
  {H}^{\otimes n} = \left\{ \pr{\pi}{\varphi, \psi} \in \homo(\next{\sigma}{n}): \forall i < n \[{r}_{n, i, \sigma}(\pr{\pi}{\varphi, \psi}) \in H\] \right\}.
 \end{align*}
 If $H$ is Borel, then so is ${H}^{\otimes n}$ because ${r}_{n, i, \sigma}$ is continuous, for each $i< n$.
\end{Definition}
\begin{Definition} \label{def:tiny}
 Let $\sigma \in {\FFF}_{< \omega}$.
 $H \subseteq \homo(\sigma)$ is called \emph{tiny} if there exists $s \in {T}_{\[\sigma\]}$ such that $H(s)$ does not contain any cycles.
 $H \subseteq \homo(\sigma)$ is called \emph{small} if $H$ can be covered by countably many tiny Borel subsets of $\homo(\sigma)$.
 $H \subseteq \homo(\sigma)$ is called \emph{large} if $H$ is not small.
 
 Note that a union of countably many small subsets of $\homo(\sigma)$ is small.
 Note also that if $H = \emptyset$, then $H$ is tiny.
\end{Definition}
\begin{Lemma} \label{lem:0islarge}
 If $\wbdicr\left(\pr{X}{R}\right) > {\aleph}_{0}$, then $\homo(\emptyset)$ is large.
\end{Lemma}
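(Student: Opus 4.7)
The plan is to unpack what $\homo(\emptyset)$ looks like and then derive a contradiction from the assumption that it can be covered by countably many tiny Borel sets. First I would observe that when $\sigma = \emptyset$, the tree ${T}_{\[\emptyset\]}$ is the singleton $\{\emptyset\}$ (the empty product), and ${R}_{\emptyset} = \emptyset$ because the condition defining ${R}_{E, \sigma}$ requires $k < \dom(\sigma) = 0$. Consequently the Polish space ${S}_{\emptyset}$ is canonically homeomorphic to $X \times M \times N$, and both clauses of Definition \ref{def:homo} are vacuously satisfied by every triple, so $\homo(\emptyset) = {S}_{\emptyset}$. Under this identification, for $H \subseteq \homo(\emptyset)$ the set $H(\emptyset)$ is simply the projection of $H$ to the $X$ coordinate, so $H$ is tiny iff this projection is cycle-free.

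Next I would argue by contradiction. Suppose $\homo(\emptyset)$ is small, and write $\homo(\emptyset) = {\bigcup}_{n \in \omega}{{H}_{n}}$ where each ${H}_{n}$ is a tiny Borel subset of $\homo(\emptyset)$. Then each ${H}_{n}(\emptyset) \subseteq X$ is a ${\mathbf{\Sigma}}^{1}_{1}$ set that contains no $R$-cycles. By Corollary \ref{cor:reflection}, there exists a Borel set ${B}_{n} \subseteq X$ with ${H}_{n}(\emptyset) \subseteq {B}_{n}$ and ${B}_{n}$ containing no cycles.

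The key remaining point is that the family $\{{B}_{n}: n \in \omega\}$ covers $X$. Given any $x \in X$, define $\pi: \{\emptyset\} \to X$ by $\pi(\emptyset) = x$, and let $\varphi$ and $\psi$ be any elements of $M$ and $N$ respectively (say constants). The triple $\pr{\pi}{\varphi, \psi}$ lies in ${S}_{\emptyset} = \homo(\emptyset)$, so it belongs to some ${H}_{n}$, and then $x = \pi(\emptyset) \in {H}_{n}(\emptyset) \subseteq {B}_{n}$. Hence $X = {\bigcup}_{n \in \omega}{{B}_{n}}$ is a countable cover of $X$ by Borel cycle-free sets, which contradicts $\wbdicr(\pr{X}{R}) > {\aleph}_{0}$.

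I do not expect any serious obstacle here: the whole argument is a straightforward application of Corollary \ref{cor:reflection} once one observes that $\homo(\emptyset)$ is trivially equal to all of ${S}_{\emptyset}$ and that its projection to $X$ is the full space. The only subtlety worth stating explicitly is the degenerate nature of the base case $\sigma = \emptyset$, which makes every triple a minimal $S$-homomorphism.
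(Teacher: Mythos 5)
Your proof is correct and follows essentially the same route as the paper's: cover $\homo(\emptyset)$ by tiny Borel sets, reflect each ${\mathbf{\Sigma}}^{1}_{1}$ projection ${H}_{n}(\emptyset)$ to a cycle-free Borel superset via Corollary \ref{cor:reflection}, and observe that every $x \in X$ occurs as $\pi(\emptyset)$ for some element of $\homo(\emptyset)$ (which, as you note, is all of ${S}_{\emptyset}$ since both clauses of Definition \ref{def:homo} are vacuous at $\sigma = \emptyset$). The one detail the paper makes explicit that you leave implicit is that $M$ and $N$ are non-empty — needed to choose $\varphi$ and $\psi$ — which holds because the hypothesis forces $R \neq \emptyset$, hence $X \neq \emptyset$ and the diagonal lies in $(X \times X) \setminus R$.
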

\begin{proof}
 The hypothesis implies that $R \neq \emptyset$.
 Fix some $\pr{{x}^{\ast}}{{y}^{\ast}} \in R$ as well as $\mm \in M$ and $\nn \in N$ with $p(\mm) = \pr{{x}^{\ast}}{{y}^{\ast}}$ and $q(\nn) = \pr{{x}^{\ast}}{{x}^{\ast}}$.
 Note that ${T}_{\[\emptyset\]} = \{\emptyset\}$ and ${T}_{\[\emptyset\]} \times {T}_{\[\emptyset\]} = \{\pr{\emptyset}{\emptyset}\}$.
 Thus, for any $x \in X$, if we define $\pr{\pi}{\varphi, \psi} \in {S}_{\emptyset}$ by $\pi(\emptyset) = x$, $\varphi(\pr{\emptyset}{\emptyset}) = \mm$, and $\psi(\pr{\emptyset}{\emptyset}) = \nn$, then $\pr{\pi}{\varphi, \psi} \in \homo(\emptyset)$.
 Now assume for a contradiction that there are Borel tiny subsets $\{{H}_{n}: n \in \omega\}$ of $\homo(\emptyset)$ with $\homo(\emptyset) = {\bigcup}_{n \in \omega}{{H}_{n}}$.
 As ${H}_{n}$ is tiny, ${H}_{n}(\emptyset)$ is a ${\mathbf{\Sigma}}^{1}_{1}$ subset of $X$ that does not contain any cycles.
 By Corollary \ref{cor:reflection}, find a Borel set ${H}_{n}(\emptyset) \subseteq {B}_{n} \subseteq X$ such that ${B}_{n}$ does not contain any cycles.
 For any $x \in X$, there are $n \in \omega$ and $\pr{\pi}{\varphi, \psi} \in {H}_{n}$ with $\pi(\emptyset) = x$, whence $x \in {H}_{n}(\emptyset) \subseteq {B}_{n}$.
 Therefore $X = {\bigcup}_{n \in \omega}{{B}_{n}}$, contradicting $\wbdicr\left(\pr{X}{R}\right) > {\aleph}_{0}$.
\end{proof}
\begin{Lemma} \label{lem:Hotimesnnon0}
 Let $\sigma \in {\FFF}_{< \omega}$ and write $s = E(\sigma) \in {T}_{\[\sigma\]}$.
 Suppose $H \subseteq \homo(\sigma)$ and $\langle {x}_{0}, \dotsc, {x}_{m} \rangle$ is a minimal cycle in $H(s)$.
 Then there exists $\pr{\pi}{\varphi, \psi} \in {H}^{\otimes \left(m+1\right)}$ such that $\pi(\next{s}{i}) = {x}_{i}$, for each $i < m+1$.
\end{Lemma}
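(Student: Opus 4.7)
The plan is to construct $\pr{\pi}{\varphi, \psi}$ by ``stacking'' $m+1$ copies of elements of $H$ indexed by the last coordinate of the trees. First, for each $i < m+1$, since ${x}_{i} \in H(s)$, I select $\pr{{\pi}_{i}}{{\varphi}_{i}, {\psi}_{i}} \in H$ with ${\pi}_{i}(s) = {x}_{i}$. Next, using that $\langle {x}_{0}, \dotsc, {x}_{m} \rangle$ is a minimal cycle in $\pr{X}{R}$ (and noting $m + 1 \geq 2$ because self-loops are forbidden), I choose ${\mm}_{i} \in M$ with $p({\mm}_{i}) = \pr{{x}_{i}}{{x}_{i+1 \mod (m+1)}}$ for each $i < m+1$, and ${\nn}_{i,j} \in N$ with $q({\nn}_{i,j}) = \pr{{x}_{i}}{{x}_{j}}$ for each $i, j < m+1$ with $i \neq j$ and $j \neq i + 1 \mod (m+1)$.

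Every element of ${T}_{\[\next{\sigma}{m+1}\]}$ decomposes uniquely as $\next{t}{i}$ with $t \in {T}_{\[\sigma\]}$ and $i < m+1$. I set $\pi(\next{t}{i}) = {\pi}_{i}(t)$, which gives $\pi(\next{s}{i}) = {x}_{i}$ as required. On same-last-coordinate pairs, put $\varphi(\pr{\next{t}{i}}{\next{t'}{i}}) = {\varphi}_{i}(\pr{t}{t'})$ and $\psi(\pr{\next{t}{i}}{\next{t'}{i}}) = {\psi}_{i}(\pr{t}{t'})$. On ``spine'' pairs $\pr{\next{s}{i}}{\next{s}{j}}$ with $i \neq j$, put $\varphi(\pr{\next{s}{i}}{\next{s}{j}}) = {\mm}_{i}$ when $j = i+1 \mod (m+1)$ and $\psi(\pr{\next{s}{i}}{\next{s}{j}}) = {\nn}_{i,j}$ when $j \neq i+1 \mod (m+1)$. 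Extend $\varphi$ and $\psi$ arbitrarily on the remaining pairs. By construction, ${r}_{m+1, i, \sigma}(\pr{\pi}{\varphi, \psi}) = \pr{{\pi}_{i}}{{\varphi}_{i}, {\psi}_{i}} \in H$ for each $i < m+1$, so once I verify $\pr{\pi}{\varphi, \psi} \in \homo(\next{\sigma}{m+1})$, the conclusion $\pr{\pi}{\varphi, \psi} \in {H}^{\otimes (m+1)}$ follows.

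The main obstacle is a combinatorial analysis of the minimal cycles in $\KKK(E, \next{\sigma}{m+1})$. Unwinding Definition~\ref{def:G0} shows that ${R}_{\next{\sigma}{m+1}}$ has exactly two kinds of edges: \emph{type A} edges $\pr{\next{t}{i}}{\next{t'}{i}}$ arising from $\pr{t}{t'} \in {R}_{\sigma}$ via some $k < \dom(\sigma)$, and \emph{type B} edges $\pr{\next{s}{i}}{\next{s}{i+1 \mod (m+1)}}$ on the spine $\{\next{s}{i} : i < m+1\}$ arising from $k = \dom(\sigma)$. The key claim is that every minimal cycle consists entirely of type A edges, in which case it lies in a single copy indexed by some $i < m+1$ and projects to a minimal cycle in ${T}_{\[\sigma\]}$, or entirely of type B edges, in which case it is a cyclic shift of $\langle \next{s}{0}, \dotsc, \next{s}{m} \rangle$. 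To see this, suppose a minimal cycle were mixed and consider a maximal run of type A edges; it is sandwiched between two type B edges, so its two endpoints are spine nodes sharing a common last coordinate, but the spine contains only one node with any given last coordinate, so these endpoints coincide, contradicting distinctness of the nodes of a minimal cycle.

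With this classification in hand, verifying $\pr{\pi}{\varphi, \psi} \in \homo(\next{\sigma}{m+1})$ becomes routine. Condition~(1) of Definition~\ref{def:homo} holds for type A edges by the assumption $\pr{{\pi}_{i}}{{\varphi}_{i}, {\psi}_{i}} \in H \subseteq \homo(\sigma)$, and for type B edges by the defining property of the ${\mm}_{i}$. Condition~(2) on a type A minimal cycle reduces to the same condition on its projection to ${T}_{\[\sigma\]}$, which is supplied by $\pr{{\pi}_{i}}{{\varphi}_{i}, {\psi}_{i}} \in \homo(\sigma)$; on a type B minimal cycle it reduces directly to the defining property of the ${\nn}_{i,j}$.
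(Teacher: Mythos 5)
Your proof is correct and follows essentially the same construction as the paper's: stack one element of $H$ per new coordinate, route the spine pairs $\pr{\next{s}{i}}{\next{s}{j}}$ through witnesses ${\mm}_{i}$, ${\nn}_{i,j}$ for the given minimal cycle, and check membership in $\homo(\next{\sigma}{m+1})$. The only difference is that you explicitly prove the classification of minimal cycles in $\KKK(E,\next{\sigma}{m+1})$ (all type A in one copy, or the spine cycle), which the paper's proof uses implicitly when it asserts there are only two possibilities to check.
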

\begin{proof}
 Write $n = m+1$ and note that since there are no cycles of length $0$, $m \geq 1$, and so $n \geq 2$.
 For each $i < n$, choose $\pr{{\pi}_{i}}{{\varphi}_{i}, {\psi}_{i}} \in H$ with ${\pi}_{i}(s) = {x}_{i}$.
 For each $i, j < n$ such that $i \neq j$, if $j = i+1 \mod n$, then choose ${\mm}_{i} \in M$ with $p({\mm}_{i}) = \pr{{x}_{i}}{{x}_{j}}$, while if $j \neq i+1 \mod n$, then choose ${\nn}_{i, j} \in N$ with $q({\nn}_{i, j}) = \pr{{x}_{i}}{{x}_{j}}$.
 Define $\pi: {T}_{\[\next{\sigma}{n}\]} \rightarrow X$ by setting $\pi(\next{u}{i}) = {\pi}_{i}(u)$, for each $u \in {T}_{\[\sigma\]}$ and $i < n$.
 Define $\varphi: {T}_{\[\next{\sigma}{n}\]} \times {T}_{\[\next{\sigma}{n}\]} \rightarrow M$ and $\psi: {T}_{\[\next{\sigma}{n}\]} \times {T}_{\[\next{\sigma}{n}\]} \rightarrow N$ as follows.
 Suppose $u, t \in {T}_{\[\sigma\]}$ and $i, j < n$ are given.
 Unless $u=t=s$ and $i \neq j$, set $\varphi(\pr{\next{u}{i}}{\next{t}{j}}) = {\varphi}_{i}(\pr{u}{t})$ and $\psi(\pr{\next{u}{i}}{\next{t}{j}}) = {\psi}_{i}(\pr{u}{t})$.
 If $u=t=s$ and $j = i+1 \mod n$, then set $\varphi(\pr{\next{u}{i}}{\next{t}{j}}) = {\mm}_{i}$ and $\psi(\pr{\next{u}{i}}{\next{t}{j}}) = {\psi}_{i}(\pr{u}{t})$.
 If $u=t=s$, $i \neq j$, and $j \neq i+1 \mod n$, then set $\psi(\pr{\next{u}{i}}{\next{t}{j}}) = {\nn}_{i, j}$ and $\varphi(\pr{\next{u}{i}}{\next{t}{j}}) = {\varphi}_{i}(\pr{u}{t})$.
 This concludes the definition of $\pr{\pi}{\varphi, \psi}$.
 Note that by definition, ${r}_{n, i, \sigma}(\pr{\pi}{\varphi, \psi}) = \pr{{\pi}_{i}}{{\varphi}_{i}, {\psi}_{i}} \in H$, for each $i < n$.
 Also, for each $i < n$, $\pi(\next{s}{i}) = {\pi}_{i}(s) = {x}_{i}$.
 Hence we only need to verify that $\pr{\pi}{\varphi, \psi} \in \homo(\next{\sigma}{n})$.
 Suppose $u, t \in {T}_{\[\sigma\]}$, $i, j < n$, and $\pr{\next{u}{i}}{\next{t}{j}} \in {R}_{\next{\sigma}{n}}$.
 There are $2$ cases to consider.
 The first possibility is that $u=t=s$ and $j = i+1 \mod n$.
 Then $p(\varphi(\pr{\next{u}{i}}{\next{t}{j}})) = p({\mm}_{i}) = \pr{{x}_{i}}{{x}_{j}} = \pr{\pi(\next{u}{i})}{\pi(\next{t}{j})}$, as required.
 The other possibility is that $\pr{u}{t} \in {R}_{\sigma}$ and $i=j$.
 Then $p(\varphi(\pr{\next{u}{i}}{\next{t}{j}})) = p({\varphi}_{i}(\pr{u}{t})) = \pr{{\pi}_{i}(u)}{{\pi}_{i}(t)} = \pr{{\pi}_{i}(u)}{{\pi}_{j}(t)} = \pr{\pi(\next{u}{i})}{\pi(\next{t}{j})}$, as needed.
 Next consider some minimal cycle $\langle {u}_{0}, \dotsc, {u}_{k} \rangle$ in ${T}_{\[\next{\sigma}{n}\]}$, and fix $i, j < k+1$ such that $i \neq j$ and $\pr{{u}_{i}}{{u}_{j}} \notin {R}_{\next{\sigma}{n}}$.
 Again there are $2$ possibilities to consider.
 The first possibility is that there exist $i', j' < n$ such that $i' \neq j'$, $j' \neq i'+1 \mod n$, and ${u}_{i} = \next{s}{i'}$ and ${u}_{j} = \next{s}{j'}$.
 Then $q(\psi(\pr{{u}_{i}}{{u}_{j}})) = q(\psi(\pr{\next{s}{i'}}{\next{s}{j'}})) = q({\nn}_{i', j'}) = \pr{{x}_{i'}}{{x}_{j'}} = \pr{\pi(\next{s}{i'})}{\pi(\next{s}{j'})} = \pr{\pi({u}_{i})}{\pi({u}_{j})}$, as required.
 The other possibility is that for some minimal cycle $\langle {t}_{0}, \dotsc, {t}_{k} \rangle$ in ${T}_{\[\sigma\]}$ and for some $i' < n$, ${u}_{l} = \next{{t}_{l}}{i'}$, for every $l < k+1$.
 In this case, $\pr{{t}_{i}}{{t}_{j}} \notin {R}_{\sigma}$ and since $\pr{{\pi}_{i'}}{{\varphi}_{i'}, {\psi}_{i'}} \in \homo(\sigma)$, $q(\psi(\pr{{u}_{i}}{{u}_{j}})) = q(\psi(\pr{\next{{t}_{i}}{i'}}{\next{{t}_{j}}{i'}})) = q({\psi}_{i'}(\pr{{t}_{i}}{{t}_{j}})) = \pr{{\pi}_{i'}({t}_{i})}{{\pi}_{i'}({t}_{j})} = \pr{\pi(\next{{t}_{i}}{i'})}{\pi(\next{{t}_{j}}{i'})} = \pr{\pi({u}_{i})}{\pi({u}_{j})}$, as required.
 This verifies that $\pr{\pi}{\varphi, \psi} \in \homo(\next{\sigma}{n})$ and concludes the proof.
\end{proof}
\begin{Lemma} \label{lem:various}
 Let $\sigma \in {\FFF}_{< \omega}$, $2 \leq n < \omega$, and $H \subseteq \homo(\sigma)$.
 The following hold:
 \begin{enumerate}
  \item
  for each $i < n$, for each $s \in {T}_{\[\sigma\]}$, ${H}^{\otimes n}(\next{s}{i}) \subseteq H(s)$, and for each $\pr{s}{t} \in {T}_{\[\sigma\]} \times {T}_{\[\sigma\]}$, ${H}^{\otimes n}_{M}(\next{s}{i}, \next{t}{i}) \subseteq {H}_{M}(s, t)$ and ${H}^{\otimes n}_{N}(\next{s}{i}, \next{t}{i}) \subseteq {H}_{N}(s, t)$;
  \item
  if $H$ is tiny, then for every $i < n$,
  \begin{align*}
   {H}^{\otimes \pr{i}{n}} = \left\{ \pr{\pi}{\varphi, \psi} \in \homo(\next{\sigma}{n}): {r}_{n, i, \sigma}(\pr{\pi}{\varphi, \psi}) \in H \right\}
  \end{align*}
  is tiny;
  \item
  if $H$ is small, then so is ${H}^{\otimes \pr{i}{n}}$, for every $i < n$;
  \item
  if $G \subseteq H$, $H \setminus G$ is small, and ${G}^{\otimes n}$ is small, then so is ${H}^{\otimes n}$.
 \end{enumerate}
\end{Lemma}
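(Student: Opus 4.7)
The four parts of the lemma are mostly routine bookkeeping with the projection maps ${r}_{n, i, \sigma}$, so the plan is to dispatch (1)--(3) quickly and focus on (4), which contains the actual combinatorial content.

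For part (1), I would just unpack definitions: if $\pr{\pi}{\varphi, \psi} \in {H}^{\otimes n}$, then ${r}_{n, i, \sigma}(\pr{\pi}{\varphi, \psi}) = \pr{\pi'}{\varphi', \psi'} \in H$ with $\pi'(s) = \pi(\next{s}{i})$, $\varphi'(\pr{s}{t}) = \varphi(\pr{\next{s}{i}}{\next{t}{i}})$, and similarly for $\psi'$. So $\pi(\next{s}{i}) = \pi'(s) \in H(s)$, and the $M$, $N$ statements are identical. For part (2), tinyness of $H$ gives some ${s}^{\ast} \in {T}_{\[\sigma\]}$ with $H({s}^{\ast})$ cycle-free. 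Then $\next{{s}^{\ast}}{i} \in {T}_{\[\next{\sigma}{n}\]}$, and by the same projection computation as in (1), ${H}^{\otimes \pr{i}{n}}(\next{{s}^{\ast}}{i}) \subseteq H({s}^{\ast})$, hence it is cycle-free, so ${H}^{\otimes \pr{i}{n}}$ is tiny.

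For part (3), if $H \subseteq {\bigcup}_{k}{{H}_{k}}$ with each ${H}_{k}$ tiny and Borel, then ${H}^{\otimes \pr{i}{n}} \subseteq {\bigcup}_{k}{{H}_{k}^{\otimes \pr{i}{n}}}$, because $\pr{\pi}{\varphi, \psi} \in {H}^{\otimes \pr{i}{n}}$ forces ${r}_{n, i, \sigma}(\pr{\pi}{\varphi, \psi}) \in H$, hence in some ${H}_{k}$. Each ${H}_{k}^{\otimes \pr{i}{n}} = {r}_{n, i, \sigma}^{-1}({H}_{k}) \cap \homo(\next{\sigma}{n})$ is Borel by continuity of ${r}_{n, i, \sigma}$ and closedness of $\homo(\next{\sigma}{n})$, and tiny by (2).

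The key step is (4), and it boils down to the set-theoretic identity
\begin{align*}
 {H}^{\otimes n} \; \subseteq \; {G}^{\otimes n} \; \cup \; \bigcup_{i < n}{{\left(H \setminus G\right)}^{\otimes \pr{i}{n}}}.
\end{align*}
To verify this, I would take $\pr{\pi}{\varphi, \psi} \in {H}^{\otimes n} \setminus {G}^{\otimes n}$; then every ${r}_{n, i, \sigma}(\pr{\pi}{\varphi, \psi})$ lies in $H$, but some $i$ witnesses failure of membership in $G$, so ${r}_{n, i, \sigma}(\pr{\pi}{\varphi, \psi}) \in H \setminus G$, placing $\pr{\pi}{\varphi, \psi}$ in ${\left(H \setminus G\right)}^{\otimes \pr{i}{n}}$. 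Once this inclusion is in hand, (3) applied to $H \setminus G$ (which is small by hypothesis) shows each ${\left(H \setminus G\right)}^{\otimes \pr{i}{n}}$ is small, and since ${G}^{\otimes n}$ is small by hypothesis and smallness is closed under finite (indeed countable) unions by the remark after Definition \ref{def:tiny}, ${H}^{\otimes n}$ is small. There is no real obstacle here; the only mild subtlety is to remember that smallness is a covering condition rather than an equality, so one must note that subsets of small sets are small before chaining the inclusions.
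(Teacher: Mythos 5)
Your proposal is correct and follows essentially the same route as the paper's proof: parts (1)--(3) are handled by the same direct projection computations, and part (4) rests on exactly the same inclusion ${H}^{\otimes n} \subseteq {G}^{\otimes n} \cup {\bigcup}_{i < n}{{\left(H \setminus G\right)}^{\otimes \pr{i}{n}}}$ followed by an appeal to (3) and the closure of smallness under countable unions and subsets. No gaps.
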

\begin{proof}
 For (1): for $\pr{\pi}{\varphi, \psi} \in {H}^{\otimes n}$ and $i < n$, let $\pr{{\pi}_{i}}{{\varphi}_{i}, {\psi}_{i}} = {r}_{n, i, \sigma}(\pr{\pi}{\varphi, \psi}) \in H$.
 Then for all $s \in {T}_{\[\sigma\]}$, $\pi(\next{s}{i}) = {\pi}_{i}(s) \in H(s)$.
 Similarly, for all $\pr{s}{t} \in {T}_{\[\sigma\]} \times {T}_{\[\sigma\]}$, $\varphi(\pr{\next{s}{i}}{\next{t}{i}}) = {\varphi}_{i}(\pr{s}{t}) \in {H}_{M}(s, t)$ and $\psi(\pr{\next{s}{i}}{\next{t}{i}}) = {\psi}_{i}(\pr{s}{t}) \in {H}_{N}(s, t)$.
 (1) follows from this.
 
 For (2): fix $s \in {T}_{\[\sigma\]}$ so that $H(s)$ does not contain any cycles.
 Then $\next{s}{i} \in {T}_{\[\next{\sigma}{n}\]}$, and if $x \in {H}^{\otimes \pr{i}{n}}(\next{s}{i})$, then for some $\pr{\pi}{\varphi, \psi} \in {H}^{\otimes \pr{i}{n}}$, $x = \pi(\next{s}{i})$.
 Since $\pr{\pi'}{\varphi', \psi'} = {r}_{n, i, \sigma}(\pr{\pi}{\varphi, \psi}) \in H$, then $x = \pi(\next{s}{i}) = \pi'(s) \in H(s)$.
 Therefore, ${H}^{\otimes \pr{i}{n}}(\next{s}{i}) \subseteq H(s)$, and so ${H}^{\otimes \pr{i}{n}}(\next{s}{i})$ does not contain any cycles.
 This shows ${H}^{\otimes \pr{i}{n}}$ is tiny.
 
 For (3): let $\{{G}_{l}: l \in \omega\}$ be Borel tiny subsets of $\homo(\sigma)$ so that $H \subseteq {\bigcup}_{l \in \omega}{{G}_{l}}$.
 Fix $i < n$.
 By (2), ${G}^{\otimes \pr{i}{n}}_{l} \subseteq \homo(\next{\sigma}{n})$ is tiny.
 Since ${r}_{n, i, \sigma}$ is continuous and ${G}_{l}$ is Borel, ${G}^{\otimes \pr{i}{n}}_{l}$ is also Borel.
 If $\pr{\pi}{\varphi, \psi} \in {H}^{\otimes \pr{i}{n}}$, then $\pr{\pi}{\varphi, \psi} \in \homo(\next{\sigma}{n})$ and ${r}_{n, i, \sigma}(\pr{\pi}{\varphi, \psi}) \in H$, whence for some $l \in \omega$, ${r}_{n, i, \sigma}(\pr{\pi}{\varphi, \psi}) \in {G}_{l}$ and $\pr{\pi}{\varphi, \psi} \in {G}^{\otimes \pr{i}{n}}_{l}$.
 Therefore, ${H}^{\otimes \pr{i}{n}} \subseteq {\bigcup}_{l \in \omega}{{G}^{\otimes \pr{i}{n}}_{l}}$, and so ${H}^{\otimes \pr{i}{n}}$ is small.
 
 For (4): write $S = H \setminus G$.
 By (3), ${S}^{\otimes \pr{i}{n}}$ is small for every $i < n$.
 Therefore ${G}^{\otimes n} \cup \left( {\bigcup}_{i < n}{{S}^{\otimes \pr{i}{n}}} \right)$ is a small subset of $\homo(\next{\sigma}{n})$.
 Suppose $\pr{\pi}{\varphi, \psi} \in {H}^{\otimes n}$.
 Then $\pr{\pi}{\varphi, \psi} \in \homo(\next{\sigma}{n})$ and for each $i < n$, ${r}_{n, i, \sigma}(\pr{\pi}{\varphi, \psi}) \in H$.
 If for some $i < n$, ${r}_{n, i, \sigma}(\pr{\pi}{\varphi, \psi}) \in S$, then $\pr{\pi}{\varphi, \psi} \in {S}^{\otimes \pr{i}{n}}$.
 Otherwise $\pr{\pi}{\varphi, \psi} \in {G}^{\otimes n}$.
 Therefore, ${H}^{\otimes n} \subseteq {G}^{\otimes n} \cup \left( {\bigcup}_{i < n}{{S}^{\otimes \pr{i}{n}}} \right)$.
 So ${H}^{\otimes n}$, being a subset of a small set, is small.
\end{proof}
\begin{Lemma} \label{lem:removebad}
 Let $\sigma \in {\FFF}_{< \omega}$ and write $s = E(\sigma) \in {T}_{\[\sigma\]}$.
 Suppose $H \subseteq \homo(\sigma)$ is Borel.
 Let $X \subseteq \omega \setminus 2$ be so that for every $n \in X$, ${H}^{\otimes n}$ is small.
 Then there exists $G \subseteq H$ such that $G$ is Borel, $H \setminus G$ is small, and for every $n \in X$, $G(s)$ does not contain a minimal cycle of length $n-1$.
\end{Lemma}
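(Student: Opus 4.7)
The plan is to use the smallness of ${H}^{\otimes n}$, for each $n \in X$, to extract a countable family of tiny Borel subsets of $H$ whose removal destroys every minimal cycle of length $n-1$ in $G(s)$. The bridge is Lemma \ref{lem:Hotimesnnon0}: any minimal cycle $\langle {x}_{0}, \dotsc, {x}_{n-1} \rangle$ in $G(s)$ is realised by some $\pr{\pi}{\varphi, \psi} \in {G}^{\otimes n} \subseteq {H}^{\otimes n}$ with $\pi(\next{s}{i}) = {x}_{i}$ for every $i < n$, so it suffices to ensure that for each $n \in X$ no such realiser survives in ${G}^{\otimes n}$.

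Fix $n \in X$ and cover ${H}^{\otimes n}$ by tiny Borel sets $\{{T}^{(n)}_{l} : l \in \omega\} \subseteq \homo(\next{\sigma}{n})$, choosing for each $l$ a witness ${u}^{(n)}_{l} \in {T}_{\[\next{\sigma}{n}\]}$ for which ${T}^{(n)}_{l}({u}^{(n)}_{l})$ contains no cycles. Necessarily ${u}^{(n)}_{l} = \next{{v}^{(n)}_{l}}{{i}^{(n)}_{l}}$ for some ${v}^{(n)}_{l} \in {T}_{\[\sigma\]}$ and ${i}^{(n)}_{l} < n$. The projection ${T}^{(n)}_{l}({u}^{(n)}_{l}) \subseteq X$ is only ${\mathbf{\Sigma}}^{1}_{1}$, so I would invoke Corollary \ref{cor:reflection} to enlarge it to a Borel set ${B}^{(n)}_{l} \subseteq X$ that still contains no cycles, and then define ${\tilde{Q}}^{(n)}_{l} = \{\pr{\pi}{\varphi, \psi} \in H : \pi({v}^{(n)}_{l}) \in {B}^{(n)}_{l}\}$. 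Each ${\tilde{Q}}^{(n)}_{l}$ is a Borel subset of $H$ (using that evaluation at ${v}^{(n)}_{l}$ is a continuous projection on ${S}_{\sigma}$), and since its ${v}^{(n)}_{l}$-projection is contained in ${B}^{(n)}_{l}$, it is tiny.

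I would then set $G = H \setminus \bigcup_{n \in X,\, l \in \omega}{{\tilde{Q}}^{(n)}_{l}}$, which is Borel (as $X$ and $\omega$ are countable) and satisfies that $H \setminus G$ is a countable union of tiny Borel sets, hence small. To verify that $G(s)$ contains no minimal cycle of length $n-1$ for any $n \in X$, I would argue by contradiction: Lemma \ref{lem:Hotimesnnon0} applied to $G$ supplies $\pr{\pi}{\varphi, \psi} \in {G}^{\otimes n} \subseteq {H}^{\otimes n}$, which must lie in some ${T}^{(n)}_{l}$, so $\pi({u}^{(n)}_{l}) \in {B}^{(n)}_{l}$. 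But ${r}_{n, {i}^{(n)}_{l}, \sigma}(\pr{\pi}{\varphi, \psi})$ lies in $G$ by the definition of ${G}^{\otimes n}$, while its value at ${v}^{(n)}_{l}$ equals $\pi({u}^{(n)}_{l}) \in {B}^{(n)}_{l}$, placing it in ${\tilde{Q}}^{(n)}_{l}$ and contradicting $G \cap {\tilde{Q}}^{(n)}_{l} = \emptyset$.

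The only non-routine step is the appeal to Corollary \ref{cor:reflection} that converts the analytic projection ${T}^{(n)}_{l}({u}^{(n)}_{l})$ into a Borel set still free of cycles; without this reflection the $\tilde{Q}^{(n)}_{l}$, and therefore $G$, would be merely co-analytic. Everything else is a direct unpacking of the definitions of ${(\cdot)}^{\otimes n}$, \emph{tiny}, and \emph{small}, together with the fact already used in the paper that each restriction ${r}_{n, i, \sigma}$ sends $\homo(\next{\sigma}{n})$ into $\homo(\sigma)$.
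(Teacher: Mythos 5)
Your proposal is correct and follows essentially the same route as the paper's proof: cover each ${H}^{\otimes n}$ by tiny Borel sets, use Corollary \ref{cor:reflection} to replace the analytic cycle-free projections by Borel cycle-free supersets, pull these back along evaluation to tiny Borel subsets of $\homo(\sigma)$ whose union is removed from $H$, and derive the contradiction via Lemma \ref{lem:Hotimesnnon0} and the restriction maps ${r}_{n,i,\sigma}$. The only cosmetic difference is that you define the pulled-back tiny sets as subsets of $H$ rather than of $\homo(\sigma)$, which changes nothing.
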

\begin{proof}
 For each $n \in X$, we have that ${H}^{\otimes n} \subseteq {\bigcup}_{l < \omega}{{K}_{n, l}}$, where each ${K}_{n, l}$ is a Borel tiny subset of $\homo(\next{\sigma}{n})$.
 For each $n \in X$ and $l < \omega$, choose ${s}_{n, l} \in {T}_{\[\sigma\]}$ and ${i}_{n, l} < n$ so that ${K}_{n, l}(\next{{s}_{n, l}}{{i}_{n, l}})$ does not contain any cycles.
 Recall that since ${K}_{n, l}$ is Borel, ${K}_{n, l}(\next{{s}_{n, l}}{{i}_{n, l}})$ is a ${\mathbf{\Sigma}}^{1}_{1}$ subset of $X$.
 So by Corollary \ref{cor:reflection}, there exists a Borel set ${K}_{n, l}(\next{{s}_{n, l}}{{i}_{n, l}}) \subseteq {B}_{n, l} \subseteq X$ such that ${B}_{n, l}$ does not contain any cycles.
 Define ${T}_{n, l} = \{\pr{\pi'}{\varphi', \psi'} \in \homo(\sigma): \pi'({s}_{n, l}) \in {B}_{n, l}\}$.
 Since ${B}_{n, l}$ is Borel, ${T}_{n, l}$ is Borel, and since ${T}_{n, l}({s}_{n, l}) \subseteq {B}_{n, l}$, ${T}_{n, l}$ is tiny.
 Therefore, $S = \bigcup\{{T}_{n, l}: n \in X \wedge l < \omega\}$ is Borel and small.
 Define $G = H \setminus S$.
 Then $G \subseteq H$, $G$ is Borel, and $H \setminus G \subseteq S$, so $H \setminus G$ is small.
 Suppose for a contradiction that for some $n \in X$, $G(s)$ contains a minimal cycle of length $n-1$.
 Using Lemma \ref{lem:Hotimesnnon0}, find $\pr{\pi}{\varphi, \psi} \in {G}^{\otimes n} \subseteq {H}^{\otimes n}$.
 Fix $l < \omega$ with $\pr{\pi}{\varphi, \psi} \in {K}_{n, l}$, and note that $\pr{\pi'}{\varphi', \psi'} = {r}_{n, {i}_{n, l}, \sigma}(\pr{\pi}{\varphi, \psi}) \in G$.
 However, $\pi'({s}_{n, l}) = \pi(\next{{s}_{n, l}}{{i}_{n, l}}) \in {K}_{n, l}(\next{{s}_{n, l}}{{i}_{n, l}}) \subseteq {B}_{n, l}$.
 This implies $\pr{\pi'}{\varphi', \psi'} \in {T}_{n, l} \subseteq S$, contradicting $\pr{\pi'}{\varphi', \psi'} \in G$.
\end{proof}
\begin{Corollary} \label{cor:bigotimes}
 Let $\sigma \in {\FFF}_{< \omega}$ and let $H \subseteq \homo(\sigma)$ be Borel and large.
 Then there exists $2 \leq n < \omega$ such that ${H}^{\otimes n}$ is large.
 Furthermore, if $s = E(\sigma)$ and $2 \leq n < \omega$ is minimal such that ${H}^{\otimes n}$ is large, there exists $G \subseteq H$ such that $G$ is Borel, $H \setminus G$ is small, ${G}^{\otimes n}$ is large, and $G(s)$ does not contain any cycles of length less than $n-1$.
\end{Corollary}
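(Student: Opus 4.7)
The plan is to derive both assertions from Lemma~\ref{lem:removebad} with an appropriate choice of $X$, combined with part~(4) of Lemma~\ref{lem:various} to transfer size information between $H^{\otimes n}$ and $G^{\otimes n}$. One auxiliary combinatorial observation will be required to bridge the gap between ``no minimal cycle'' (which is what Lemma~\ref{lem:removebad} supplies) and ``no cycle'' (which is what the definition of tiny and the conclusion of the corollary demand).

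For the first assertion I would argue by contradiction. Suppose $H^{\otimes n}$ is small for every $2 \leq n < \omega$. Applying Lemma~\ref{lem:removebad} with $X = \omega \setminus 2$ yields a Borel set $G \subseteq H$ such that $H \setminus G$ is small and $G(s)$ contains no minimal cycle of length $n-1$ for any $n \geq 2$. Since there are no cycles of length $0$ in a loopless digraph, and since---as I will verify---any shortest cycle in a digraph is automatically minimal, it follows that $G(s)$ contains no cycles at all. Thus $G$ itself is tiny and therefore small, so $H = G \cup (H \setminus G)$ is the union of two small sets, contradicting largeness of $H$.

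For the ``furthermore'' clause, let $n \geq 2$ be minimal such that $H^{\otimes n}$ is large, so that $H^{\otimes m}$ is small for every $2 \leq m < n$. Applying Lemma~\ref{lem:removebad} with $X = \{m \in \omega : 2 \leq m < n\}$ produces a Borel $G \subseteq H$ with $H \setminus G$ small such that $G(s)$ contains no minimal cycle of length $m-1$ for any $2 \leq m < n$. The shortest-cycle observation again promotes this to the statement that $G(s)$ contains no cycle of length less than $n-1$. Finally, if $G^{\otimes n}$ were small, then because $G \subseteq H$ and $H \setminus G$ is small, part~(4) of Lemma~\ref{lem:various} would force $H^{\otimes n}$ to be small, contradicting the choice of $n$; hence $G^{\otimes n}$ is large.

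The only step not reducible to a direct invocation of earlier results is the claim that a shortest cycle in a digraph is minimal, which I regard as the main obstacle. Writing $\langle y_0, \dotsc, y_k \rangle$ for such a cycle, a repeated vertex $y_i = y_j$ with $i < j$ would allow one to excise the segment between them and obtain a strictly shorter cycle; similarly, a chord $y_a R y_b$ with $b \neq a+1 \mod k+1$ would permit a genuine shortcut through the chord, again yielding a shorter cycle. Both cases contradict minimality of the length, so the shortest cycle has pairwise distinct vertices and no chords, i.e., it satisfies the definition of a minimal cycle. This elementary fact is precisely what lets the minimal-cycle language supplied by Lemma~\ref{lem:removebad} be translated into the cycle-free conclusions demanded by tinyness and by the corollary.
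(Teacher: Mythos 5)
Your proof is correct and follows essentially the same route as the paper's: the paper applies Lemma~\ref{lem:removebad} once with $X=\{n\geq 2: H^{\otimes n}\text{ small}\}$ and reads off both conclusions from that single $G$, using the same implicit fact that a cycle of minimal length is a minimal cycle and the same appeal to part~(4) of Lemma~\ref{lem:various}. Your splitting into two applications of Lemma~\ref{lem:removebad} and your explicit verification of the shortest-cycle observation (which the paper leaves unstated) are only cosmetic differences.
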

\begin{proof}
 Let $X = \{2 \leq n < \omega: {H}^{\otimes n} \ \text{is small}\}$.
 Apply Lemma \ref{lem:removebad} to find $G \subseteq H$ such that $G$ is Borel, $H \setminus G$ is small, and $G(s)$ does not contain a minimal cycle of length $n-1$, for any $n \in X$.
 $G$ cannot be tiny because $H$ is large and $H \setminus G$ is small.
 So $G(s)$ contains a cycle, and hence, a minimal cycle of some length $1 \leq m < \omega$.
 Hence $2 \leq n = m+1 < \omega$ and $n \notin X$, whence ${H}^{\otimes n}$ is large.
 
 Suppose $2 \leq n < \omega$ is minimal so that ${H}^{\otimes n}$ is large.
 If ${G}^{\otimes n}$ were small, then by (4) of Lemma \ref{lem:various}, ${H}^{\otimes n}$ would also be small, contradicting the choice of $n$.
 So ${G}^{\otimes n}$ is large.
 Finally, suppose $G(s)$ contains a cycle of some length less than $n-1$.
 Then it contains a minimal cycle of some length $1 \leq m < n-1$.
 Hence $2 \leq l = m+1 < n$ and $l \notin X$, whence ${H}^{\otimes l}$ is large.
 But this is a contradiction as $l < n$.
\end{proof}
\begin{Lemma} \label{lem:diamter}
 Suppose $d$, ${d}_{1}$, and ${d}_{2}$ are compatible complete metrics on $X$, $M$, and $N$ respectively.
 Suppose $\sigma \in {\FFF}_{< \omega}$ and $H \subseteq \homo(\sigma)$ is Borel and large.
 Then for every $\varepsilon > 0$, there exists $G \subseteq H$ so that $G$ is Borel and large, for every $t \in {T}_{\[\sigma\]}$, ${\di}_{d}(\overline{G(t)}) < \varepsilon$, and for every $\pr{u}{t} \in {T}_{\[\sigma\]} \times {T}_{\[\sigma\]}$, ${\di}_{{d}_{1}}(\overline{{G}_{M}(u, t)}) < \varepsilon$ and ${\di}_{{d}_{2}}(\overline{{G}_{N}(u, t)}) < \varepsilon$.
\end{Lemma}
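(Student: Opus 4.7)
The plan is a pigeonhole argument that exploits the finiteness of ${T}_{\[\sigma\]}$. Since $\sigma \in {\FFF}_{<\omega}$ has finite domain and $\sigma(k) < \omega$ for each $k$, the tree ${T}_{\[\sigma\]}$, and hence also the product ${T}_{\[\sigma\]} \times {T}_{\[\sigma\]}$, is finite. On the other side, the Polish spaces $X$, $M$, $N$ are separable, so I can fix countable covers $\{{U}^{X}_{i}: i \in \omega\}$, $\{{U}^{M}_{i}: i \in \omega\}$, $\{{U}^{N}_{i}: i \in \omega\}$ consisting of open balls of $d$-diameter, ${d}_{1}$-diameter, ${d}_{2}$-diameter all less than $\varepsilon$. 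Because ${T}_{\[\sigma\]}$ is finite, there are only countably many triples $\langle f, g, h \rangle$ consisting of a function $f: {T}_{\[\sigma\]} \to \omega$ and functions $g, h : {T}_{\[\sigma\]} \times {T}_{\[\sigma\]} \to \omega$.

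For each such triple I would define
\begin{align*}
 {H}_{f, g, h} = \{\pr{\pi}{\varphi, \psi} \in H : \ & \forall t \in {T}_{\[\sigma\]}\[\pi(t) \in {U}^{X}_{f(t)}\] \; \wedge\\
 & \forall \pr{u}{t}\[\varphi(\pr{u}{t}) \in {U}^{M}_{g(u,t)} \wedge \psi(\pr{u}{t}) \in {U}^{N}_{h(u,t)}\]\}.
\end{align*}
Since the evaluation maps $\pr{\pi}{\varphi, \psi} \mapsto \pi(t)$, $\pr{\pi}{\varphi, \psi} \mapsto \varphi(\pr{u}{t})$, and $\pr{\pi}{\varphi, \psi} \mapsto \psi(\pr{u}{t})$ are continuous on the product space ${S}_{\sigma}$, each ${H}_{f, g, h}$ is a Borel subset of $H$. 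Moreover, every $\pr{\pi}{\varphi, \psi} \in H$ belongs to some ${H}_{f, g, h}$ by choosing $f(t)$, $g(\pr{u}{t})$, $h(\pr{u}{t})$ to be indices of balls in the respective covers containing $\pi(t)$, $\varphi(\pr{u}{t})$, $\psi(\pr{u}{t})$. So $H = {\bigcup}_{f, g, h}{{H}_{f, g, h}}$ is a countable union.

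As observed after Definition \ref{def:tiny}, a countable union of small subsets of $\homo(\sigma)$ is small. Since $H$ is large, at least one ${H}_{f, g, h}$ must be large; let $G$ be such an ${H}_{f, g, h}$. Then $G$ is Borel and large. For every $t \in {T}_{\[\sigma\]}$, $G(t) \subseteq {U}^{X}_{f(t)}$, so $\overline{G(t)} \subseteq \overline{{U}^{X}_{f(t)}}$, and consequently ${\di}_{d}(\overline{G(t)}) = {\di}_{d}(G(t)) \leq {\di}_{d}(\overline{{U}^{X}_{f(t)}}) = {\di}_{d}({U}^{X}_{f(t)}) < \varepsilon$; the corresponding bounds for $\overline{{G}_{M}(u, t)}$ and $\overline{{G}_{N}(u, t)}$ follow identically.

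There is no significant obstacle: the argument is a finite-support pigeonhole combined with the closure-under-countable-unions property of smallness. The only points requiring any care are verifying that the Borel structure of $H$ is preserved under the pointwise conditions defining ${H}_{f, g, h}$ (which follows from continuity of the coordinate projections in ${S}_{\sigma}$) and that the diameter of a set and of its closure agree in a metric space.
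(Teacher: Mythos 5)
Your proof is correct and is essentially the argument the paper has in mind: the paper's proof is a one-sentence sketch citing exactly your three ingredients (a countable cover of each of $X$, $M$, $N$ by sets of diameter $< \varepsilon$, closure of smallness under countable unions, and continuity of the evaluation maps), with the only cosmetic difference that the paper covers by closed sets while you use open balls and invoke ${\di}(\overline{A}) = {\di}(A)$.
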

\begin{proof}
 This easily follows from the fact the Polish spaces $X$, $M$, and $N$ can each be covered by closed sets of diameter $< \varepsilon$, the fact that the union of countably many small sets is small, and the fact that the functions $\pr{\pi}{\varphi, \psi} \mapsto \pi(t)$, $\pr{\pi}{\varphi, \psi} \mapsto \varphi(\pr{u}{t})$, and $\pr{\pi}{\varphi, \psi} \mapsto \psi(\pr{u}{t})$ are all continuous, for every $t \in {T}_{\[\sigma\]}$ and every $\pr{u}{t} \in {T}_{\[\sigma\]} \times {T}_{\[\sigma\]}$.
\end{proof}
\begin{Theorem} \label{thm:dichotomy1}
 Let $\pr{X}{R}$ be any Borel digraph.
 Exactly one of the following alternatives holds:
 \begin{enumerate}[series=dichotomy1]
  \item
  $\wbdicr(\pr{X}{R}) \leq {\aleph}_{0}$;
  \item
  for every selector $E$, there exists $f \in \FFF$ such that both of the following hold:
  \begin{enumerate}
   \item[(a)]
   $\forall n, m \in \omega\[E(f \restrict m) \subseteq E(f \restrict n) \implies f(m) \leq f(n)\]$;
   \item[(b)]
   there is a continuous minimal homomorphism $\Pi$ from ${\GGG}_{0}(E, f)$ to $\pr{X}{R}$.
  \end{enumerate}
 \end{enumerate}
\end{Theorem}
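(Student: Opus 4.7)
The plan is as follows. The mutual exclusion of (1) and (2) is immediate: a continuous homomorphism $\Pi$ from $\GGG_0(E,f)$ to $\pr{X}{R}$ pulls any countable cycle-free Borel cover of $X$ back to one of $\[T_f\]$, since the $\Pi$-image of a cycle is a cycle; this contradicts $\wbdicr(\GGG_0(E,f)) \geq \cov(\MMM) > \aleph_0$ from Lemma \ref{lem:covM}. So assume $\wbdicr(\pr{X}{R}) > \aleph_0$ and fix a selector $E$.

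Adopting Notation \ref{nota:homo} and fixing compatible complete metrics on $X$, $M$, $N$, the plan is to recursively construct $\sigma_n \in \FFF_n$ together with Borel large $H_n \subseteq \homo(\sigma_n)$ and Borel large $G_n \subseteq H_n$ so that, writing $s_n := E(\sigma_n)$: (i) each of $\overline{H_n(t)}$ and the analogous closures of the projections to $M$ and $N$ has diameter less than $2^{-n}$; (ii) $\sigma_{n+1} = \next{\sigma_n}{f(n)}$, where $f(n) \geq 2$ is the minimum integer such that $H_n^{\otimes f(n)}$ is large; and (iii) $G_n(s_n)$ contains no cycle of length less than $f(n) - 1$. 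Lemma \ref{lem:0islarge} starts the induction with $\sigma_0 = \emptyset$ and $H_0 = \homo(\emptyset)$; at each successor step, Corollary \ref{cor:bigotimes} delivers $f(n)$ and $G_n$ meeting (ii) and (iii), after which Lemma \ref{lem:diamter} refines $G_n^{\otimes f(n)}$ to a Borel large $H_{n+1}$ meeting (i).

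Condition (a) is then forced by minimality. Suppose $m < n$ and $s_m \subseteq s_n$. Iterating clause (1) of Lemma \ref{lem:various} down the tower of $\otimes$-operations yields $H_n(s_n) \subseteq G_m(s_m)$, so by (iii) at level $m$, $H_n(s_n)$ contains no cycle of length $< f(m) - 1$. For any $2 \leq k < f(m)$, the canonical minimal cycle $\langle \next{s_n}{i} : i < k \rangle$ in $T_{\[\next{\sigma_n}{k}\]}$, analysed as in the proof of Lemma \ref{lem:Hotimesnnon0}, would force any hypothetical $\pr{\pi}{\varphi, \psi} \in H_n^{\otimes k}$ to witness a cycle of length $k - 1$ in $H_n(s_n)$, a contradiction. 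Hence $H_n^{\otimes k}$ is empty for all such $k$, and (ii) forces $f(n) \geq f(m)$.

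Finally, $\Pi$ is extracted by completeness: for each $y \in \[T_f\]$, the nested closed sets $\overline{H_n(y \restrict n)}$ have diameters tending to zero, and $\Pi(y)$ is the unique point in their intersection, with continuity immediate from (i); analogous $\Phi(y,z) \in M$ and $\Psi(y,z) \in N$ arise from the $M$- and $N$-projections. For $\pr{y}{z} \in R_{E,f}$, eventually $\pr{y \restrict n}{z \restrict n} \in R_{\sigma_n}$, so the defining equation $p(\varphi_n(\pr{y \restrict n}{z \restrict n})) = \pr{\pi_n(y \restrict n)}{\pi_n(z \restrict n)}$ from Definition \ref{def:homo}(1) holds at each large $n$, and continuity of $p$ yields $\pr{\Pi(y)}{\Pi(z)} \in R$. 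For minimality, one observes that any minimal cycle in $\[T_f\]$ has distinct vertices, so for all sufficiently large $n$ its level-$n$ restriction is itself a minimal cycle in $T_{\[\sigma_n\]}$; the parallel limit argument with $q$ and $\Psi$ then converts non-$R_{\sigma_n}$-edges into non-$R$-edges by Definition \ref{def:homo}(2). The main obstacle throughout is verifying condition (a); once it is seen to be forced by the minimality clause of Corollary \ref{cor:bigotimes} combined with the descent $H_n(s_n) \subseteq G_m(s_m)$, the rest of the construction is a routine fusion in the spirit of the $\GGG_0$-dichotomy proof from \cite{bern}.
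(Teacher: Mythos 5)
Your proposal is correct and follows essentially the same route as the paper: mutual exclusivity via Lemma \ref{lem:covM}, a fusion construction driven by Lemma \ref{lem:0islarge}, Corollary \ref{cor:bigotimes}, and Lemma \ref{lem:diamter}, definition of $\Pi$ by completeness, and the limit arguments with $p$ and $q$ for the homomorphism and minimality clauses. Your verification of (2)(a) is a contrapositive rephrasing of the paper's argument (you show $H_n^{\otimes k}=\emptyset$ for $2\le k<f(m)$ rather than exhibiting a cycle of length $f(n)-1$ inside $H_m(s_m)$), but the underlying mechanism is identical.
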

\begin{proof}
 Suppose $E$ is some dense selector and $\Pi: \[{T}_{f}\] \rightarrow X$ is a continuous homomorphism.
 If $X = {\bigcup}_{l \in \omega}{{X}_{l}}$, where each ${X}_{l}$ is Borel and contains no cycles, then $\[{T}_{f}\] = {\bigcup}_{l \in \omega}{{\Pi}^{-1}({X}_{l})}$, where each ${\Pi}^{-1}({X}_{l})$ is Borel and contains no cycles because $\Pi$ is a homomorphism.
 Since this would contradict Lemma \ref{lem:covM} and since dense selectors are easily seen to exist, it follows that (1) and (2) are mutually exclusive.
 
 Now assume that $\wbdicr(\pr{X}{R}) > {\aleph}_{0}$.
 Fix $M$, $N$, $p$, $q$ as in Notation \ref{nota:homo}.
 Fix also a selector $E$.
 As before, these parameters will be suppressed for the remainder of this proof.
 Also, fix compatible complete metrics $d$, ${d}_{1}$, and ${d}_{2}$ on $X$, $M$, and $N$ respectively.
 By Lemma \ref{lem:0islarge}, $\homo(\emptyset)$ is large.
 Construct by induction $f \in \FFF$ and $\seq{H}{n}{\in}{\omega}$ such that:
 \begin{enumerate} [resume=dichotomy1]
  \item
  ${H}_{n} \subseteq \homo({\sigma}_{n})$ is Borel and large, where ${\sigma}_{n} = f \restrict n$, and ${H}_{n}({s}_{n})$ does not contain any cycles of length less than $f(n)-1$, where ${s}_{n} = E({\sigma}_{n})$;
  \item
  ${H}^{\otimes f(n)}_{n}$ is large and ${H}_{n+1} \subseteq {H}^{\otimes f(n)}_{n}$;
  \item
  for each $t \in {T}_{\[{\sigma}_{n}\]}$, ${\di}_{d}(\overline{{H}_{n}(t)}) < {2}^{-n}$ and for each $\pr{u}{t} \in {T}_{\[{\sigma}_{n}\]} \times {T}_{\[{\sigma}_{n}\]}$, ${\di}_{{d}_{1}}(\overline{{({H}_{n})}_{M}(u, t)}) < {2}^{-n}$ and ${\di}_{{d}_{2}}(\overline{{({H}_{n})}_{N}(u, t)}) < {2}^{-n}$;
  \item
  for any $m < n$, $s \in {T}_{\[{\sigma}_{m}\]}$ and $x \in {\omega}^{< \omega}$ such that $\cat{s}{x} \in {T}_{\[{\sigma}_{n}\]}$,
  \begin{align*}
   {H}_{n}(\cat{s}{x}) \subseteq {H}_{m}(s),
  \end{align*}
  and for any $\pr{s}{t} \in {T}_{\[{\sigma}_{m}\]} \times {T}_{\[{\sigma}_{m}\]}$ and $x \in {\omega}^{< \omega}$ such that $\pr{\cat{s}{x}}{\cat{t}{x}} \in {T}_{\[{\sigma}_{n}\]} \times {T}_{\[{\sigma}_{n}\]}$,
  \begin{align*}
   &{\left({H}_{n}\right)}_{M}(\pr{\cat{s}{x}}{\cat{t}{x}}) \subseteq {\left({H}_{m}\right)}_{M}(\pr{s}{t}) \ \text{and}\\ &{\left({H}_{n}\right)}_{N}(\pr{\cat{s}{x}}{\cat{t}{x}}) \subseteq {\left({H}_{m}\right)}_{N}(\pr{s}{t}).
  \end{align*}
 \end{enumerate}
 It is easily seen that such $f$ and $\seq{H}{n}{\in}{\omega}$ can be inductively constructed starting from $\homo(\emptyset)$ and applying Lemma \ref{lem:diamter} and Corollary \ref{cor:bigotimes} at every stage.
 Item (6) is satisfied because of Item (1) of Lemma \ref{lem:various}.
 
 To see that (2)(a) holds, suppose ${s}_{m} \subseteq {s}_{n}$.
 If $m=n$, then there is nothing to show, so assume $m < n$ and let $x \in {\omega}^{< \omega}$ be such that ${s}_{n} = \cat{{s}_{m}}{x}$.
 As ${H}_{n+1} \neq \emptyset$, choose $\pr{\pi}{\varphi, \psi} \in {H}_{n+1}$.
 Then $\langle \pi(\next{{s}_{n}}{0}), \dotsc, \pi(\next{{s}_{n}}{f(n)-1}) \rangle$ is a cycle and each $\pi(\next{{s}_{n}}{i}) \in {H}_{n+1}(\next{{s}_{n}}{i}) \subseteq {H}_{n}({s}_{n}) \subseteq {H}_{m}({s}_{m})$.
 So there is a cycle of length $f(n)-1$ in ${H}_{m}({s}_{m})$, whence $f(m)-1 \leq f(n)-1$ and $f(m) \leq f(n)$, as claimed.
 
 For (2)(b), define $\Pi: \[{T}_{f}\] \rightarrow X$ as follows.
 For $z \in \[{T}_{f}\]$, by (5), (6), and the completeness of the metric $d$, there is a unique element in ${\bigcap}_{n \in \omega}{\overline{{H}_{n}(z\restrict n)}}$.
 Define $\Pi(z)$ to be this unique element of ${\bigcap}_{n \in \omega}{\overline{{H}_{n}(z\restrict n)}}$.
 It is clear that $\Pi$ is continuous.
 To see that $\Pi$ is a homomorphism, suppose $\pr{y}{z} \in {R}_{f}$.
 Then for some $m \in \omega$, $\pr{u}{t} \in {T}_{\[{\sigma}_{m}\]} \times {T}_{\[{\sigma}_{m}\]}$, and $x \in \BS$, we have that $y = \cat{u}{x}$, $z = \cat{t}{x}$, and $\forall n \in \omega\[\pr{\cat{u}{x \restrict n}}{\cat{t}{x \restrict n}} \in {R}_{{\sigma}_{m+n}}\]$.
 By (5), (6), and the completeness of the metric ${d}_{1}$, there exists $\mm \in M$ with
 \begin{align*}
  \{\mm\} = {\bigcap}_{n \in \omega}{\overline{{\left({H}_{m+n}\right)}_{M}(\cat{u}{x\restrict n}, \cat{t}{x \restrict n})}}.
 \end{align*}
 For each $n \in \omega$, choose ${\mm}_{n} \in {\left({H}_{m+n}\right)}_{M}(\cat{u}{x\restrict n}, \cat{t}{x \restrict n})$ with ${d}_{1}(\mm, {\mm}_{n}) < {2}^{-m-n}$ and then choose $\pr{\pi}{\varphi, \psi} \in {H}_{m+n}$ such that $\varphi(\pr{\cat{u}{x\restrict n}}{\cat{t}{x \restrict n}}) = {\mm}_{n}$.
 Observe that $p({\mm}_{n}) = \pr{\pi(\cat{u}{x\restrict n})}{\pi(\cat{t}{x \restrict n})}$.
 Observe also that $d(\pi(\cat{u}{x\restrict n}), \Pi(y)) < {2}^{-m-n}$ and $d(\pi(\cat{t}{x \restrict n}), \Pi(z)) < {2}^{-m-n}$ because $\pi(\cat{u}{x\restrict n}) \in {H}_{m+n}(\cat{u}{x\restrict n})$ and $\pi(\cat{t}{x \restrict n}) \in {H}_{m+n}(\cat{t}{x \restrict n})$.
 By the continuity of $p$, $p(\mm) = \pr{\Pi(y)}{\Pi(z)}$.
 And since $p''M = R$, $\pr{\Pi(y)}{\Pi(z)} \in R$ as needed.
 The verification that $\Pi$ is minimal is similar.
 Let $\langle {z}_{0}, \dotsc, {z}_{m} \rangle$ be a minimal cycle in $\[{T}_{f}\]$ and suppose $i, j < m+1$ such that $\pr{{z}_{i}}{{z}_{j}} \notin {R}_{f}$.
 We may assume $i \neq j$, for otherwise $\pr{\Pi({z}_{i})}{\Pi({z}_{i})} \notin R$ by our definition of a digraph.
 Then for some $l \in \omega$, $\langle {u}_{0}, \dotsc, {u}_{m} \rangle$ in ${T}_{\[{\sigma}_{l}\]}$, and $x \in \BS$, we have that $\forall 0 \leq k \leq m\[{z}_{k} = \cat{{u}_{k}}{x}\]$, and for each $n \in \omega$, $\langle \cat{{u}_{0}}{x \restrict n}, \dotsc, \cat{{u}_{m}}{x \restrict n} \rangle$ is a minimal cycle in ${T}_{\[{\sigma}_{l+n}\]}$ and $\pr{\cat{{u}_{i}}{x \restrict n}}{\cat{{u}_{j}}{x \restrict n}} \notin {R}_{{\sigma}_{l+n}}$.
 By (5), (6), and the completeness of the metric ${d}_{2}$, there exists $\nn \in N$ with
 \begin{align*}
  \{\nn\} = {\bigcap}_{n \in \omega}{\overline{{\left({H}_{l+n}\right)}_{N}(\cat{{u}_{i}}{x \restrict n}, \cat{{u}_{j}}{x \restrict n})}}.
 \end{align*}
 For each $n \in \omega$, choose ${\nn}_{n} \in {\left({H}_{l+n}\right)}_{N}(\cat{{u}_{i}}{x \restrict n}, \cat{{u}_{j}}{x \restrict n})$ with ${d}_{2}(\nn, {\nn}_{n}) < {2}^{-l-n}$ and choose $\pr{\pi}{\varphi, \psi} \in {H}_{l+n}$ such that $\psi(\pr{\cat{{u}_{i}}{x \restrict n}}{\cat{{u}_{j}}{x \restrict n}}) = {\nn}_{n}$.
 By (2) of Definition \ref{def:homo}, $q({\nn}_{n}) = \pr{\pi(\cat{{u}_{i}}{x \restrict n})}{\pi(\cat{{u}_{j}}{x \restrict n})}$.
 Observe that $d(\pi(\cat{{u}_{i}}{x \restrict n}), \Pi({z}_{i})) < {2}^{-l-n}$ and $d(\pi(\cat{{u}_{j}}{x \restrict n}), \Pi({z}_{j})) < {2}^{-l-n}$ because $\pi(\cat{{u}_{i}}{x \restrict n}) \in {H}_{l+n}(\cat{{u}_{i}}{x \restrict n})$ and $\pi(\cat{{u}_{j}}{x \restrict n}) \in {H}_{l+n}(\cat{{u}_{j}}{x \restrict n})$.
 By the continuity of $q$, $q(\nn) = \pr{\Pi({z}_{i})}{\Pi({z}_{j})}$.
 And since $q'' N = \left( X \times X \right) \setminus R$, $\pr{\Pi({z}_{i})}{\Pi({z}_{j})} \notin R$ as needed.
 This verifies all of (2) and concludes the proof. 
\end{proof}
\begin{Definition} \label{def:uniform}
 A digraph $\pr{X}{R}$ is said to be \emph{$k$-uniform} if every minimal cycle in $X$ has length at most $k$.
\end{Definition}
\begin{Theorem} \label{thm:dichotomy2}
 Let $\pr{X}{R}$ be any analytic digraph.
 Exactly one of the following alternatives holds:
 \begin{enumerate}[series=dichotomy2]
  \item
  $\wbdicr(\pr{X}{R}) \leq {\aleph}_{0}$;
  \item
  for every selector $E$, there exists $f \in \FFF$ such that both of the following hold:
  \begin{enumerate}
   \item[(a)]
   $\forall n, m \in \omega\[E(f \restrict m) \subseteq E(f \restrict n) \implies f(m) \leq f(n)\]$;
   \item[(b)]
   there is a continuous homomorphism $\Pi$ from ${\GGG}_{0}(E, f)$ to $\pr{X}{R}$.
  \end{enumerate}
  Furthermore, if $\pr{X}{R}$ is $k$-uniform, then $f(n) \leq k+1$, for every $n \in \omega$.
 \end{enumerate}
\end{Theorem}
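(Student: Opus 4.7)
The strategy is to repeat the proof of Theorem \ref{thm:dichotomy1} after stripping away all the machinery that tracks non-edges. The Borel hypothesis on $\pr{X}{R}$ was used in that proof only to make $(X \times X) \setminus R$ analytic, so that Notation \ref{nota:homo} could fix a continuous surjection $q: N \to (X \times X) \setminus R$, which in turn fed into the third coordinate $\psi$ and condition (2) of Definition \ref{def:homo}, and thereby enforced minimality of the eventual homomorphism. For analytic $R$ the complement need not be $\mathbf{\Sigma}^{1}_{1}$, so the plan is to discard $N$, $q$, $\psi$, and condition (2) of Definition \ref{def:homo} entirely, accepting the loss of minimality.

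Concretely, I would redefine $S_{\sigma} = X^{T_{[\sigma]}} \times M^{T_{[\sigma]} \times T_{[\sigma]}}$ and $\homo(\sigma)$ using only condition (1) of Definition \ref{def:homo}, and adjust Definitions \ref{def:HsHst}--\ref{def:tiny} accordingly. With this setup, each of Lemmas \ref{lem:0islarge}--\ref{lem:diamter} and Corollary \ref{cor:bigotimes} carries over with essentially the same proof; Lemma \ref{lem:Hotimesnnon0} actually simplifies, as one no longer needs to produce the auxiliary elements ${\nn}_{i, j}$ witnessing non-edges. The only nontrivial descriptive set theoretic input is Corollary \ref{cor:reflection}, which is already stated for analytic digraphs and applies without modification.

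Assuming $\wbdicr(\pr{X}{R}) > \aleph_{0}$, I would then run the same recursion as in the proof of Theorem \ref{thm:dichotomy1} to produce $f \in \FFF$ and Borel large sets $H_{n} \subseteq \homo(f \restrict n)$ satisfying the obvious analogs of items (5)--(8), phrased now in terms of $d$ and ${d}_{1}$ only. Clause (2)(a) is verified verbatim, and defining $\Pi(z)$ to be the unique element of $\bigcap_{n \in \omega} \overline{H_{n}(z \restrict n)}$ yields a continuous homomorphism by the argument using $p$ and ${d}_{1}$ from the proof of Theorem \ref{thm:dichotomy1}; the minimality verification is simply omitted. Mutual exclusivity of (1) and (2) is immediate from Lemma \ref{lem:covM}, exactly as before.

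The only genuinely new content is the $k$-uniform clause, and this is the main place where I expect to have to think. By construction, $f(n) \geq 2$ is chosen minimal so that $H_{n}^{\otimes f(n)}$ is large, and the analog of Corollary \ref{cor:bigotimes} arranges that $H_{n}(s_{n})$ contains no $R$-cycles of length less than $f(n) - 1$. Nonemptiness of $H_{n}^{\otimes f(n)}$ produces a cycle of length $f(n) - 1$ in $H_{n}(s_{n})$ whose vertices are $\pi(\next{s_{n}}{i})$ for $i < f(n)$, and any cycle in a digraph contains a minimal cycle on a subset of its vertices. Hence $H_{n}(s_{n})$ contains a minimal $R$-cycle of some length $\ell$; the construction forces $\ell \geq f(n) - 1$, while the $k$-uniform hypothesis on $\pr{X}{R}$ forces $\ell \leq k$, yielding $f(n) \leq k + 1$. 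The delicate point here is that a minimal cycle in the subset $H_{n}(s_{n})$ must be recognized as a minimal cycle in $X$, but this is immediate from Definition \ref{def:minimalcycle}, whose non-edge clause quantifies against $R$ itself rather than against any induced subdigraph.
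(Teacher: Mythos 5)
Your proposal is correct and follows essentially the same route as the paper: strip out $N$, $q$, $\psi$, and the minimality clause, rerun the construction from Theorem \ref{thm:dichotomy1}, and for the $k$-uniform clause use the cycle $\langle \pi(\next{{s}_{n}}{0}), \dotsc, \pi(\next{{s}_{n}}{f(n)-1}) \rangle$ of length $f(n)-1$ in ${H}_{n}({s}_{n})$ together with the fact that ${H}_{n}({s}_{n})$ has no shorter cycles. The paper argues that this cycle is itself minimal, whereas you extract a minimal subcycle and squeeze its length between $f(n)-1$ and $k$; these are the same argument up to rephrasing.
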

\begin{proof}
 Just as in the proof of Theorem \ref{thm:dichotomy1}, alternatives (1) and (2) are mutually exclusive by an application of Lemma \ref{lem:covM} and by the existence of dense selectors.
 
 Now assume that $\wbdicr(\pr{X}{R}) > {\aleph}_{0}$.
 Fix a selector $E$.
 Repeat the proof of Theorem \ref{thm:dichotomy1} without any reference to $N$, $q$, ${d}_{2}$, or the minimality of $S$-homomorphisms.
 More explicitly, let $M$ be a Polish space and let $p: M \rightarrow X \times X$ be a continuous function with $p'' M = R$.
 For any $\sigma \in {\FFF}_{< \omega}$, ${S}_{\sigma}$ denotes the Polish space ${X}^{{T}_{\[\sigma\]}} \times {M}^{\left({T}_{\[\sigma\]} \times {T}_{\[\sigma\]}\right)}$.
 A pair $\pr{\pi}{\varphi} \in {S}_{\sigma}$ is an \emph{$S$-homomorphism} if
 \begin{align*}
  \forall \pr{s}{t} \in {R}_{\sigma}\[p(\varphi(\pr{s}{t})) = \pr{\pi(s)}{\pi(t)}\].
 \end{align*}
 Define $\homo(\sigma) = \{\pr{\pi}{\varphi} \in {S}_{\sigma}: \pr{\pi}{\varphi} \ \text{is an $S$-homomorphism}\}$, and as before, $\homo(\sigma)$ is a closed subset of ${S}_{\sigma}$.
 For $\sigma \in {\FFF}_{< \omega}$, $H \subseteq \homo(\sigma)$, and $s \in {T}_{\[\sigma\]}$, define $H(s) = \{\pi(s): \exists \varphi\[\pr{\pi}{\varphi} \in H\]\} \subseteq X$.
 And for $\pr{s}{t} \in {T}_{\[\sigma\]} \times {T}_{\[\sigma\]}$, define ${H}_{M}(s, t) = \{\varphi(\pr{s}{t}): \exists \pi\[\pr{\pi}{\varphi} \in H\]\} \subseteq M$.
 If $H$ is a Borel subset of $\homo(\sigma)$, then $H(s)$ and ${H}_{M}(s, t)$ are both ${\mathbf{\Sigma}}^{1}_{1}$.
 For $\sigma \in {\FFF}_{< \omega}$, $2 \leq n < \omega$, and $i < n$, define ${r}_{n, i, \sigma}: {S}_{\next{\sigma}{n}} \rightarrow {S}_{\sigma}$ so that for every $\pr{\pi}{\varphi} \in {S}_{\next{\sigma}{n}}$, ${r}_{n, i, \sigma}(\pr{\pi}{\varphi}) = \pr{\pi'}{\varphi'}$, where for each $s \in {T}_{\[\sigma\]}$, $\pi'(s) = \pi(\next{s}{i})$, and for each $\pr{s}{t} \in {T}_{\[\sigma\]} \times {T}_{\[\sigma\]}$, $\varphi'(\pr{s}{t}) = \varphi(\pr{\next{s}{i}}{\next{t}{i}})$.
 Each ${r}_{n, i, \sigma}$ is continuous.
 For $\sigma \in {\FFF}_{< \omega}$, $2 \leq n < \omega$, and $H \subseteq \homo(\sigma)$, define
 \begin{align*}
  {H}^{\otimes n} = \left\{ \pr{\pi}{\varphi} \in \homo(\next{\sigma}{n}): \forall i < n\[{r}_{n, i, \sigma}(\pr{\pi}{\varphi}) \in H\] \right\}.
 \end{align*}
 If $H$ is Borel, then so is ${H}^{\otimes n}$.
 Now, the notions of \emph{tiny}, \emph{small}, and \emph{large} can be defined verbatim as in Definition \ref{def:tiny}.
 Using the same arguments as in the proofs of Lemma \ref{lem:0islarge} to Lemma \ref{lem:diamter}, the following can be proved:
 \begin{enumerate}[resume=dichotomy2]
  \item
  if $\wbdicr(\pr{X}{R}) > {\aleph}_{0}$, then $\homo(\emptyset)$ is large;
  \item
  if $\sigma \in {\FFF}_{< \omega}$, $s = E(\sigma)$, $H \subseteq \homo(\sigma)$, and $\langle {x}_{0}, \dotsc, {x}_{m} \rangle$ is any cycle in $H(s)$, then there exists $\pr{\pi}{\varphi} \in {H}^{\otimes (m+1)}$ such that $\pi(\next{s}{i}) = {x}_{i}$, for each $i < m+1$;
  \item
  if $\sigma \in {\FFF}_{< \omega}$, $s = E(\sigma)$, and $H \subseteq \homo(\sigma)$ is Borel and large, then there exist $2 \leq n < \omega$ and $G \subseteq H$ such that $G$ is Borel, $H \setminus G$ is small, ${G}^{\otimes n}$ is large, and $G(s)$ does not contain any cycles of length less than $n-1$;
  \item
  if $d$ and ${d}_{1}$ are compatible complete metrics on $X$ and $M$ respectively, $\sigma \in {\FFF}_{< \omega}$, and $H \subseteq \homo(\sigma)$ is Borel and large, then for every $\varepsilon > 0$, there exists $G \subseteq H$ so that $G$ is Borel and large, for every $t \in {T}_{\[\sigma\]}$, ${\di}_{d}(\overline{G(t)}) < \varepsilon$, and for every $\pr{u}{t} \in {T}_{\[\sigma\]} \times {T}_{\[\sigma\]}$, ${\di}_{{d}_{1}}(\overline{{G}_{M}(u, t)}) < \varepsilon$.
 \end{enumerate}
 Fixing compatible complete metrics $d$ and ${d}_{1}$ on $X$ and $M$ respectively, it is possible to use (3)--(6) to construct $f \in \FFF$ and $\seq{H}{n}{\in}{\omega}$ satisfying:
 \begin{enumerate}[resume=dichotomy2]
  \item
  ${H}_{n} \subseteq \homo({\sigma}_{n})$ is Borel and large, where ${\sigma}_{n} = f \restrict n$, and ${H}_{n}({s}_{n})$ does not contain any cycles of length less than $f(n)-1$, where ${s}_{n} = E({\sigma}_{n})$;
  \item
  ${H}^{\otimes f(n)}_{n}$ is large and ${H}_{n+1} \subseteq {H}^{\otimes f(n)}_{n}$;
  \item
  for each $t \in {T}_{\[{\sigma}_{n}\]}$, ${\di}_{d}(\overline{{H}_{n}(t)}) < {2}^{-n}$ and for each $\pr{u}{t} \in {T}_{\[{\sigma}_{n}\]} \times {T}_{\[{\sigma}_{n}\]}$, ${\di}_{{d}_{1}}(\overline{{\left({H}_{n}\right)}_{M}(u, t)}) < {2}^{-n}$;
  \item
  for any $m < n$, $s \in {T}_{\[{\sigma}_{m}\]}$ and $x \in {\omega}^{< \omega}$ such that $\cat{s}{x} \in {T}_{\[{\sigma}_{n}\]}$,
  \begin{align*}
   {H}_{n}(\cat{s}{x}) \subseteq {H}_{m}(s),
  \end{align*}
  and for any $\pr{s}{t} \in {T}_{\[{\sigma}_{m}\]} \times {T}_{\[{\sigma}_{m}\]}$ and $x \in {\omega}^{< \omega}$ such that $\pr{\cat{s}{x}}{\cat{t}{x}} \in {T}_{\[{\sigma}_{n}\]} \times {T}_{\[{\sigma}_{n}\]}$,
  \begin{align*}
   {\left( {H}_{n} \right)}_{M}(\pr{\cat{s}{x}}{\cat{t}{x}}) \subseteq {\left( {H}_{m} \right)}_{M}(\pr{s}{t}).
  \end{align*}
 \end{enumerate}
 Now, the verification of (2)(a) is identical to the corresponding argument in the proof of Theorem \ref{thm:dichotomy1}.
 Also, the map $\Pi: \[{T}_{f}\] \rightarrow X$ can be defined exactly as in the proof of Theorem \ref{thm:dichotomy1} and the verification of (2)(b) is similar to the verification of the first half of (2)(b) of Theorem \ref{thm:dichotomy1}.
 To see the final sentence of (2), assume that $\pr{X}{R}$ is $k$-uniform.
 For each $n \in \omega$, ${H}_{n+1}$ is a non-empty subset of ${H}^{\otimes f(n)}_{n}$, so consider $\pr{\pi}{\varphi} \in {H}_{n+1}$.
 Then $\langle \pi(\next{{s}_{n}}{0}), \dotsc, \pi(\next{{s}_{n}}{f(n)-1}) \rangle$ is a cycle of length $f(n)-1$ in ${H}_{n}({s}_{n})$.
 Since ${H}_{n}({s}_{n})$ does not contain any cycle of length less than $f(n)-1$, $\langle \pi(\next{{s}_{n}}{0}), \dotsc, \pi(\next{{s}_{n}}{f(n)-1}) \rangle$ must be a minimal cycle, and so by $k$-uniformity, $f(n)-1 \leq k$, as claimed.
\end{proof}
As noted after Definition \ref{def:H}, Item (1) of Theorems \ref{thm:dichotomy1} and \ref{thm:dichotomy2} is equivalent to the condition that $\bdicr\left(\pr{X}{R}\right) \leq {\aleph}_{0}$.
We do not know whether the homomorphism $\Pi$ can be chosen to be minimal in the case when $\pr{X}{R}$ is analytic, but not Borel.
\begin{Question} \label{q:1}
 Suppose $\pr{X}{R}$ is an analytic digraph with $\wbdicr\left(\pr{X}{R}\right) > {\aleph}_{0}$.
 Is it true that for every selector $E$, there exist $f \in \FFF$ and a continuous minimal homomorphism $\Pi$ from ${\GGG}_{0}(E, f)$ to $\pr{X}{R}$?
\end{Question}
Next, we will show that the ${\GGG}_{0}$ dichotomy of Kechris, Solecki, and Todorcevic is a consequence of Theorem \ref{thm:dichotomy2}.
\begin{Definition} \label{def:g0}
 $D \subseteq {2}^{< \omega}$ is said to be \emph{dense} if $\forall s \in {2}^{< \omega} \exists t \in D\[s \subseteq t\]$.
 
 For $D \subseteq {2}^{< \omega}$, ${\GGG}_{0}(D)$ is the graph $\pr{X}{R}$, where $X = {2}^{\omega}$ and
 \begin{align*}
  R = \left\{ \pr{\ins{s}{i}{x}}{\ins{s}{i+1 \mod 2}{x}}: s \in D \wedge i \in 2 \wedge x \in {2}^{\omega} \right\}.
 \end{align*}
\end{Definition}
\begin{Corollary}[${G}_{0}$-Dichotomy~\cite{MR1667145}] \label{cor:g0dichotomy}
 Let $\pr{X}{R}$ be any analytic graph.
 Exactly one of the following alternatives holds:
 \begin{enumerate}
  \item
  $\wbcr(\pr{X}{R}) \leq {\aleph}_{0}$;
  \item
  for every $D \subseteq {2}^{< \omega}$ with the property that $\forall n \in \omega\[\lc D \cap {2}^{n} \rc = 1\]$, there is a continuous homomorphism from ${\GGG}_{0}(D)$ to $\pr{X}{R}$.
 \end{enumerate}
\end{Corollary}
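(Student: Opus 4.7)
The plan is to derive Corollary \ref{cor:g0dichotomy} from Theorem \ref{thm:dichotomy2} by exploiting that every analytic graph $\pr{X}{R}$ is $1$-uniform in the sense of Definition \ref{def:uniform}. Indeed, if $\langle x_0, \ldots, x_n \rangle$ is a minimal cycle of length $n \geq 2$, then $x_0 R x_1$ forces $x_1 R x_0$ by symmetry, whereas minimality demands $\pr{x_1}{x_0} \notin R$ because $0 \not\equiv 2 \pmod{n+1}$ when $n \geq 2$; hence $\pr{X}{R}$ is $1$-uniform.

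For the main direction, assume $\wbcr(\pr{X}{R}) > \aleph_0$; by the remarks following Definition \ref{def:H} this is equivalent to $\wbdicr(\pr{X}{R}) > \aleph_0$. Given $D \subseteq 2^{< \omega}$ with $\lc D \cap 2^n \rc = 1$ for every $n$, let $d_n$ denote the unique element of $D \cap 2^n$ and define a selector $E \colon \FFF_{< \omega} \to {\omega}^{< \omega}$ by $E(\sigma) = d_{\dom(\sigma)}$ whenever $\sigma$ is constantly $2$, and by choosing $E(\sigma)$ to be any element of ${T}_{\[\sigma\]}$ otherwise. Apply Theorem \ref{thm:dichotomy2} to $E$ to obtain $f \in \FFF$ together with a continuous homomorphism $\Pi \colon {\GGG}_{0}(E, f) \to \pr{X}{R}$; the ``furthermore'' clause combined with $1$-uniformity yields $f(n) \leq 2$ for every $n$, and together with the constraint $f(n) \geq 2$ this forces $f$ to be identically $2$. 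Consequently $E(f \restrict l) = d_l$ for every $l$, and directly unwinding Definitions \ref{def:G0} and \ref{def:g0}---using that $i \mapsto i + 1 \bmod 2$ is an involution, so $R_{E, f}$ is automatically symmetric---shows that ${\GGG}_{0}(E, f)$ and ${\GGG}_{0}(D)$ coincide as graphs on $2^{\omega}$; thus $\Pi$ is the required homomorphism.

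For the mutual exclusivity of alternatives (1) and (2), fix any specific dense $D_0 \subseteq 2^{< \omega}$ with $\lc D_0 \cap 2^n \rc = 1$ for every $n$, and extend its associated selector $E_0$ to be dense on all of $\FFF_{<\omega}$ via the bookkeeping construction sketched after Definition \ref{def:selector}. If (1) and (2) both held, pulling back a countable Borel independent cover of $X$ through the continuous homomorphism ${\GGG}_{0}(D_0) \to \pr{X}{R}$ provided by (2) would produce a countable Borel independent cover of $2^{\omega}$, contradicting the bound $\wbdicr({\GGG}_{0}(E_0, \mathbf{2})) \geq \cov(\MMM) > \aleph_0$ coming from Lemma \ref{lem:covM} together with the identity $\wbcr = \wbdicr$ for graphs. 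There is no serious obstacle in this argument: once $1$-uniformity is observed, the corollary is essentially a specialization of Theorem \ref{thm:dichotomy2}, and the only minor bookkeeping point is the identification ${\GGG}_{0}(E, \mathbf{2}) = {\GGG}_{0}(D)$, which is immediate upon unwinding definitions.
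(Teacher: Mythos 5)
Your proposal is correct and follows essentially the same route as the paper: define the selector $E$ from $D$ on the constantly-$2$ sequences, apply Theorem \ref{thm:dichotomy2}, and use $1$-uniformity together with $f \in \FFF$ to force $f \equiv 2$ and hence ${\GGG}_{0}(E,f) = {\GGG}_{0}(D)$, with mutual exclusivity coming from Lemma \ref{lem:covM} applied to a dense $D$. The only added content is your explicit verification that every graph is $1$-uniform (which the paper asserts without proof), and that verification is correct.
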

\begin{proof}
 The argument of Lemma \ref{lem:covM} shows that if $D \subseteq {2}^{< \omega}$ is dense and has the property that $\forall n \in \omega \[\lc D \cap {2}^{n} \rc = 1\]$, then $\wbcr({\GGG}_{0}(D)) \geq \cov(\MMM)$.
 It follows from this and from the existence of a dense $D \subseteq {2}^{< \omega}$ such that $\forall n \in \omega\[\lc D \cap {2}^{n} \rc = 1\]$ that the alternatives (1) and (2) are mutually exclusive.
 
 Assume that $\wbcr(\pr{X}{R}) > {\aleph}_{0}$.
 This implies $\wbdicr(\pr{X}{R}) > {\aleph}_{0}$.
 Fix any $D \subseteq {2}^{< \omega}$ so that $\forall n \in \omega\[\lc D \cap {2}^{n} \rc = 1\]$.
 Define a selector $E: {\FFF}_{< \omega} \rightarrow {\omega}^{< \omega}$ as follows.
 If $\sigma \in {\FFF}_{< \omega}$ is such that $\forall k \in \dom(\sigma)\[\sigma(k)=2\]$, then define $E(\sigma)$ to be the unique element of $D \cap {2}^{\dom(\sigma)}$.
 Otherwise, define $E(\sigma)$ to be an arbitrary element of ${T}_{\[\sigma\]}$.
 By Theorem \ref{thm:dichotomy2}, there exists $f \in \FFF$ and a continuous homomorphism from ${\GGG}_{0}(E, f)$ to $\pr{X}{R}$.
 Furthermore, since $\pr{X}{R}$ is $1$-uniform, $f(k) \leq 2$ and since $f \in \FFF$, $f(k) = 2$, for all $k \in \omega$.
 Therefore ${\GGG}_{0}(E, f) = {\GGG}_{0}(D)$ and the conclusion follows.
\end{proof}
Once again, as mentioned after Definition \ref{def:H}, Item (1) of Corollary \ref{cor:g0dichotomy} is equivalent to the assertion that $\bcr\left(\pr{X}{R}\right) \leq {\aleph}_{0}$.
Next, we associate a Borel quasi order with every Borel digraph as follows.
\begin{Definition} \label{def:PX}
 Let $\GGG = \pr{X}{R}$ be a digraph.
 Define ${P}_{\GGG} = 2 \times X$ and ${\leq}_{\GGG} = \{\pr{\pr{i}{x}}{\pr{j}{y}} \in {P}_{\GGG} \times {P}_{\GGG}: \left( \pr{i}{x} = \pr{j}{y} \right) \vee \left( i = 0 \wedge j = 1 \wedge \pr{x}{y} \in R \right) \}$.
 Define $\PPP(\GGG) = \pr{{P}_{\GGG}}{{\leq}_{\GGG}}$.
 It is clear that $\PPP(\GGG)$ is a quasi-order and that ${E}_{{\leq}_{\GGG}}$ is equality.
 Furthermore, if $\GGG$ is a Borel digraph, then $\PPP(\GGG)$ is a Borel quasi order.
\end{Definition}
\begin{Lemma} \label{lem:XinAPG}
 Suppose $\GGG = \pr{X}{R}$ is a Borel digraph.
 Then there is a Borel homomorphism from $\GGG$ to $\pr{{\AAA}_{\PPP(\GGG)}}{{\RRR}_{\PPP(\GGG)}}$.
 In particular, if $X \neq \emptyset$, then ${\odim}_{B}(\PPP(\GGG)) \geq \wbdicr(\GGG)$.
\end{Lemma}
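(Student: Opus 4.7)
The plan is to exhibit a direct, continuous map $\Pi \colon X \to {\AAA}_{\PPP(\GGG)}$ and verify the two required properties by unpacking Definitions \ref{def:aprp} and \ref{def:PX}. The map will be $\Pi(x) = \pr{\pr{1}{x}}{\pr{0}{x}}$. The only subtlety is the sign convention: one has to place $\pr{1}{x}$ in the first coordinate and $\pr{0}{x}$ in the second, so that the image lies in ${\AAA}_{\PPP(\GGG)}$ and the edges of $R$ get sent to edges of ${\RRR}_{\PPP(\GGG)}$ rather than reversed.

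First I would check that $\Pi$ indeed takes values in ${\AAA}_{\PPP(\GGG)}$: one needs $\pr{0}{x} \nleq_{\GGG} \pr{1}{x}$, which by Definition \ref{def:PX} would require either $\pr{0}{x} = \pr{1}{x}$ (impossible) or $0 = 0$, $1 = 1$, and $\pr{x}{x} \in R$ (impossible because $\GGG$ is a simple digraph). Next I would verify the homomorphism condition: if $\pr{x}{y} \in R$, then $\pr{\Pi(x)}{\Pi(y)} \in {\RRR}_{\PPP(\GGG)}$ amounts by Definition \ref{def:aprp} to the single inequality $\pr{0}{x} \leq_{\GGG} \pr{1}{y}$, which holds immediately from the clause $i = 0 \wedge j = 1 \wedge \pr{x}{y} \in R$ of Definition \ref{def:PX}. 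Continuity (hence Borelness) of $\Pi$ is clear since it is a coordinate embedding into $2 \times X \times 2 \times X$.

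For the second assertion, assume $X \neq \emptyset$. Since ${E}_{{\leq}_{\GGG}}$ is equality and $\lc {P}_{\GGG} \rc = 2 \lc X \rc \geq 2$, Theorem \ref{thm:odimB=HB} gives ${\odim}_{B}(\PPP(\GGG)) = \wbdicr\left(\pr{{\AAA}_{\PPP(\GGG)}}{{\RRR}_{\PPP(\GGG)}}\right)$. To finish I would pull back covers: given a family $\{ {Y}_{\alpha} : \alpha < \kappa \}$ of Borel ${\RRR}_{\PPP(\GGG)}$-cycle-free subsets with $\bigcup_{\alpha} {Y}_{\alpha} = {\AAA}_{\PPP(\GGG)}$, set ${X}_{\alpha} = {\Pi}^{-1}({Y}_{\alpha})$. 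Each ${X}_{\alpha}$ is Borel and $X = \bigcup_{\alpha} {X}_{\alpha}$. If $\langle {x}_{0}, \dotsc, {x}_{k} \rangle$ were an $R$-cycle in ${X}_{\alpha}$, then by the homomorphism property $\langle \Pi({x}_{0}), \dotsc, \Pi({x}_{k}) \rangle$ would be a ${\RRR}_{\PPP(\GGG)}$-cycle in ${Y}_{\alpha}$, contradicting cycle-freeness. Hence $\wbdicr(\GGG) \leq \kappa$, yielding the stated inequality.

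There is no real obstacle here beyond picking the right orientation in the definition of $\Pi$; everything else is a direct unpacking of the definitions together with an appeal to Theorem \ref{thm:odimB=HB}.
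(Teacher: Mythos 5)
Your proposal is correct and is essentially the paper's own argument: the same map $x \mapsto \pr{\pr{1}{x}}{\pr{0}{x}}$, the same verification that $\pr{x}{y} \in R$ iff $\pr{0}{x} \; {\leq}_{\GGG} \; \pr{1}{y}$ iff the images are ${\RRR}_{\PPP(\GGG)}$-related, and the same appeal to Theorem \ref{thm:odimB=HB} after noting $\lc {P}_{\GGG} \slash {E}_{{\leq}_{\GGG}} \rc > 1$. The only cosmetic difference is that you spell out the cover-pullback step that the paper leaves implicit, and the paper is slightly more careful to say that Borelness of the map is with respect to a Polish topology on ${\AAA}_{\PPP(\GGG)}$ refining the inherited one.
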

\begin{proof}
 Define $f: X \rightarrow {\AAA}_{\PPP(\GGG)}$ by $f(x) = \pr{\pr{1}{x}}{\pr{0}{x}}$, for every $x \in X$.
 Note that $f(x) \in {\AAA}_{\PPP(\GGG)}$ because $\pr{x}{x} \notin R$ and that $f$ is a Borel map with respect to any Polish topology on ${\AAA}_{\PPP(\GGG)}$ which extends the topology on ${\AAA}_{\PPP(\GGG)}$ inherited from ${P}_{\GGG} \times {P}_{\GGG}$.
 Further, for any $x, y \in X$, $\pr{x}{y} \in R$ if and only if $\pr{0}{x} \; {\leq}_{\GGG} \; \pr{1}{y}$ if and only if $f(x) \; {\RRR}_{\PPP(\GGG)} \; f(y)$.
 It follows that $\wbdicr\left( \pr{{\AAA}_{\PPP(\GGG)}}{{\RRR}_{\PPP(\GGG)}} \right) \geq \wbdicr\left( \GGG \right)$.
 If $x \in X$, then $\pr{0}{x}, \pr{1}{x} \in {P}_{\GGG}$, and ${\[\pr{0}{x}\]}_{{E}_{{\leq}_{\GGG}}} \neq {\[\pr{1}{x}\]}_{{E}_{{\leq}_{\GGG}}}$.
 Therefore, if $X \neq \emptyset$, then $\lc {P}_{\GGG} \slash {E}_{{\leq}_{\GGG}} \rc > 1$, and so by Theorem \ref{thm:odimB=HB}, ${\odim}_{B}\left( \PPP(\GGG) \right) = \wbdicr\left( \pr{{\AAA}_{\PPP(\GGG)}}{{\RRR}_{\PPP(\GGG)}} \right) \geq \wbdicr\left( \GGG \right)$.
\end{proof}
\begin{Corollary} \label{cor:dimpg0ef}
 For every dense selector $E$ and for every $f \in \FFF$, we have ${\odim}_{B}\left( \PPP\left( {\GGG}_{0}(E, f) \right) \right) \geq \cov(\MMM)$.
\end{Corollary}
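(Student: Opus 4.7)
The plan is to combine the two preceding results directly. By Lemma \ref{lem:XinAPG}, for any Borel digraph $\GGG = \pr{X}{R}$ with $X \neq \emptyset$ we have ${\odim}_{B}(\PPP(\GGG)) \geq \wbdicr(\GGG)$, and by Lemma \ref{lem:covM}, $\wbdicr({\GGG}_{0}(E, f)) \geq \cov(\MMM)$ whenever $E$ is a dense selector and $f \in \FFF$. Thus chaining these inequalities is all that is required, provided one checks that the vertex set of ${\GGG}_{0}(E, f)$ is non-empty so that Lemma \ref{lem:XinAPG} actually applies.

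First I would observe that the vertex set of ${\GGG}_{0}(E, f)$ is $\[{T}_{f}\] = \prod_{k \in \omega}{f(k)}$, which is non-empty since $f(k) \geq 2 \geq 1$ for every $k \in \omega$ (indeed, $\[{T}_{f}\]$ is a perfect Polish space, as already used in the proof of Lemma \ref{lem:covM}). Therefore Lemma \ref{lem:XinAPG} applies to the Borel digraph ${\GGG}_{0}(E, f)$ and yields
\begin{align*}
 {\odim}_{B}\left( \PPP\left( {\GGG}_{0}(E, f) \right) \right) \geq \wbdicr\left( {\GGG}_{0}(E, f) \right).
\end{align*}
Combining this with Lemma \ref{lem:covM} gives $\wbdicr({\GGG}_{0}(E, f)) \geq \cov(\MMM)$, and the desired inequality follows.

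There is essentially no obstacle here: the substantive content has already been established in Lemma \ref{lem:covM} (the covering-of-meager lower bound on the weak Borel dichromatic number of the basic objects) and in Lemma \ref{lem:XinAPG} (the fact that passing from a digraph $\GGG$ to the associated Borel quasi order $\PPP(\GGG)$ does not decrease the relevant invariant). The corollary is simply the composition of these two inequalities and should amount to one or two lines once non-emptiness of the vertex set is noted.
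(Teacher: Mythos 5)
Your proof is correct and is exactly the argument the paper intends: the corollary is the immediate composition of Lemma \ref{lem:XinAPG} (applied to the Borel digraph ${\GGG}_{0}(E,f)$, whose vertex set $\[{T}_{f}\]$ is indeed non-empty) with Lemma \ref{lem:covM}. The paper omits the proof precisely because it is this two-line chaining, so nothing is missing.
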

We are now ready to state and prove our third and final dichotomy.
The basic objects of this dichotomy are the Borel quasi orders of the form $\PPP\left({\GGG}_{0}(E, f)\right)$.
The morphisms will be continuous order preserving maps that also preserve the order incomparability of certain elements.
The dichotomy characterizes the Borel quasi orders of uncountable Borel order dimension in terms of the existence of such a morphism from one of the basic objects.
\begin{Theorem} \label{thm:borelodimdichotomy}
 For any Borel quasi order $\PPP = \pr{P}{\leq}$, exactly one of the following holds:
 \begin{enumerate}
  \item
  ${\odim}_{B}\left( \PPP \right) \leq {\aleph}_{0}$;
  \item
  for every selector $E$, there exist $f \in \FFF$ and a continuous $\varphi: 2 \times \[{T}_{f}\] \rightarrow P$ such that both of the following hold:
  \begin{enumerate}
   \item[(a)]
    $\forall x, y \in \[{T}_{f}\]\[\pr{0}{x} {\leq}_{{\GGG}_{0}(E, f)} \pr{1}{y} \implies \varphi(\pr{0}{x}) \leq \varphi(\pr{1}{y})\]$;
   \item[(b)]
   for every $x \in \[{T}_{f}\]$, $\varphi(\pr{0}{x})$ and $\varphi(\pr{1}{x})$ are $\leq$-incomparable.
  \end{enumerate}
 \end{enumerate}
\end{Theorem}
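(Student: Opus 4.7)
The plan is to derive this dichotomy from Theorem~\ref{thm:dichotomy1} applied to the auxiliary Borel digraph $\pr{{\BBB}_{\PPP}}{{\SSS}_{\PPP}}$ of pairs of $\leq$-incomparable elements, exploiting the link between ${\odim}_{B}(\PPP)$ and this digraph's weak Borel dichromatic number. Specifically, assuming ${\odim}_{B}(\PPP) > {\aleph}_{0}$, Corollary~\ref{cor:HBS} gives $\wbdicr\left(\pr{{\BBB}_{\PPP}}{{\SSS}_{\PPP}}\right) > {\aleph}_{0}$. Fix any selector $E$. Applying Theorem~\ref{thm:dichotomy1} to the Borel digraph $\pr{{\BBB}_{\PPP}}{{\SSS}_{\PPP}}$ yields some $f \in \FFF$ and a continuous (minimal) homomorphism $\Pi : \[{T}_{f}\] \rightarrow {\BBB}_{\PPP}$ from ${\GGG}_{0}(E, f)$ to $\pr{{\BBB}_{\PPP}}{{\SSS}_{\PPP}}$. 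Writing $\Pi(x) = \pr{{\pi}_{0}(x)}{{\pi}_{1}(x)}$, the inclusion $\Pi(x) \in {\BBB}_{\PPP}$ forces ${\pi}_{0}(x)$ and ${\pi}_{1}(x)$ to be $\leq$-incomparable, and both coordinate maps ${\pi}_{i}$ are continuous into $P$ since the inclusion ${\BBB}_{\PPP} \hookrightarrow P \times P$ is continuous.

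The key observation is that the direction of ${\RRR}_{\PPP}$ (namely $\pr{{x}_{0}}{{y}_{0}} \, {\RRR}_{\PPP} \, \pr{{x}_{1}}{{y}_{1}} \iff {y}_{0} \leq {x}_{1}$) forces a \emph{coordinate swap} when defining $\varphi$. I would set
\begin{equation*}
 \varphi(\pr{0}{x}) = {\pi}_{1}(x), \qquad \varphi(\pr{1}{x}) = {\pi}_{0}(x),
\end{equation*}
which is continuous. Clause~(b) is immediate from the incomparability of ${\pi}_{0}(x)$ and ${\pi}_{1}(x)$. For clause~(a), suppose $\pr{0}{x} \; {\leq}_{{\GGG}_{0}(E, f)} \; \pr{1}{y}$; since the first coordinates differ, Definition~\ref{def:PX} forces $\pr{x}{y} \in {R}_{E, f}$, so $\Pi(x) \, {\SSS}_{\PPP} \, \Pi(y)$, which by the definition of ${\RRR}_{\PPP}$ means exactly ${\pi}_{1}(x) \leq {\pi}_{0}(y)$, i.e.\ $\varphi(\pr{0}{x}) \leq \varphi(\pr{1}{y})$.

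For the mutual exclusivity of (1) and (2), suppose (2) holds for some dense selector $E$. Define $\Psi : \[{T}_{f}\] \rightarrow {\BBB}_{\PPP}$ by $\Psi(x) = \pr{\varphi(\pr{1}{x})}{\varphi(\pr{0}{x})}$; clause~(b) ensures $\Psi(x) \in {\BBB}_{\PPP}$, and clause~(a) together with the definition of ${\RRR}_{\PPP}$ makes $\Psi$ a continuous homomorphism from ${\GGG}_{0}(E, f)$ to $\pr{{\BBB}_{\PPP}}{{\SSS}_{\PPP}}$. Pulling back any cycle-free Borel cover and invoking Lemma~\ref{lem:covM} gives $\wbdicr\left(\pr{{\BBB}_{\PPP}}{{\SSS}_{\PPP}}\right) \geq \cov(\MMM) > {\aleph}_{0}$; since any cycle-free Borel cover of ${\AAA}_{\PPP}$ restricts to one of ${\BBB}_{\PPP}$, also $\wbdicr\left(\pr{{\AAA}_{\PPP}}{{\RRR}_{\PPP}}\right) > {\aleph}_{0}$, and clause~(b) forces $\lc P \slash {E}_{\leq} \rc > 1$, so by Theorem~\ref{thm:odimB=HB}, ${\odim}_{B}(\PPP) > {\aleph}_{0}$, contradicting~(1).

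The proof is really just a bookkeeping exercise once Theorem~\ref{thm:dichotomy1} and Corollary~\ref{cor:HBS} are in hand; the only subtle point is keeping track of the coordinate-swap that is needed to convert the ``${y}_{0} \leq {x}_{1}$'' convention of ${\RRR}_{\PPP}$ into the ``${0}$-type below ${1}$-type'' convention of ${\leq}_{{\GGG}_{0}(E, f)}$. There is no obstacle that requires a genuinely new idea beyond those of Section~\ref{sec:dichotomy}; minimality of $\Pi$ is not needed here, only that $\Pi$ is a homomorphism, so the Borel (as opposed to analytic) hypothesis is used merely to ensure that ${\BBB}_{\PPP}$ is itself Borel and that Theorem~\ref{thm:dichotomy1} applies.
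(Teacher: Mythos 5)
Your proposal is correct and follows essentially the same route as the paper: Corollary~\ref{cor:HBS} plus Theorem~\ref{thm:dichotomy1} applied to $\pr{{\BBB}_{\PPP}}{{\SSS}_{\PPP}}$, with the same coordinate swap to define $\varphi$, and the mutual-exclusivity direction by reading the map backwards as a homomorphism of ${\GGG}_{0}(E,f)$ into the incomparability digraph (the paper lands in ${\AAA}_{\PPP}$ rather than ${\BBB}_{\PPP}$, an immaterial difference). Your observation that minimality of $\Pi$ is not actually used is also consistent with the paper's argument.
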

\begin{proof}
 Let us first see that (1) and (2) are mutually exclusive.
 Assume (2).
 Fix a dense selector $E$.
 Let $f \in \FFF$ and $\varphi: 2 \times \[{T}_{f}\] \rightarrow P$ be a continuous map satisfying (2)(a) and (2)(b).
 Define $\psi: \[{T}_{f}\] \rightarrow {\AAA}_{\PPP}$ by $\psi(x) = \pr{\varphi(\pr{1}{x})}{\varphi(\pr{0}{x})}$, for all $x \in \[{T}_{f}\]$.
 Note that $\psi(x) \in {\AAA}_{\PPP}$ because of (2)(b) and that $\psi$ is a Borel map with respect to any Polish topology on ${\AAA}_{\PPP}$ which extends the topology on ${\AAA}_{\PPP}$ inherited from $P \times P$.
 Further, if $\pr{x}{y} \in {R}_{E, f}$, then $\pr{0}{x} {\leq}_{{\GGG}_{0}(E, f)} \pr{1}{y}$, which by (2)(a) implies $\varphi(\pr{0}{x}) \leq \varphi(\pr{1}{y})$, which in turn implies $\pr{\psi(x)}{\psi(y)} \in {\RRR}_{\PPP}$.
 By (2)(b), $\lc P \slash {E}_{\leq} \rc > 1$.
 It follows that ${\odim}_{B}(\PPP) = \wbdicr\left( \pr{{\AAA}_{\PPP}}{{\RRR}_{\PPP}} \right) \geq \wbdicr({\GGG}_{0}(E, f)) > {\aleph}_{0}$.
 
 Now assume that ${\odim}_{B}(\PPP) > {\aleph}_{0}$.
 By Corollary \ref{cor:HBS}, $\wbdicr\left( \pr{{\BBB}_{\PPP}}{{\SSS}_{\PPP}} \right) > {\aleph}_{0}$.
 Let $E$ be any selector.
 Using Theorem \ref{thm:dichotomy1}, fix $f \in \FFF$ and a continuous minimal homomorphism $\Pi$ from ${\GGG}_{0}(E, f)$ to $\pr{{\BBB}_{\PPP}}{{\SSS}_{\PPP}}$.
 Define $\varphi: 2 \times \[{T}_{f}\] \rightarrow P$ as follows.
 Given $x \in \[{T}_{f}\]$, $\Pi(x) = \pr{{p}_{x}}{{q}_{x}} \in {\BBB}_{\PPP}$.
 By the definition of ${\BBB}_{\PPP}$, ${p}_{x} \nleq {q}_{x}$ and ${q}_{x} \nleq {p}_{x}$.
 Define $\varphi(\pr{0}{x}) = {q}_{x}$ and $\varphi(\pr{1}{x}) = {p}_{x}$.
 It is easily verified that $\varphi$ is continuous and (2)(b) holds by definition.
 For (2)(a), assume $\pr{0}{x} {\leq}_{{\GGG}_{0}(E, f)} \pr{1}{y}$.
 Then $\pr{x}{y} \in {R}_{E, f}$, whence $\pr{\Pi(x)}{\Pi(y)} \in {\SSS}_{\PPP}$, whence $\pr{\Pi(x)}{\Pi(y)} \in {\RRR}_{\PPP}$, whence ${q}_{x} \leq {p}_{y}$, whence $\varphi(\pr{0}{x}) \leq \varphi(\pr{1}{y})$, as needed.
\end{proof}
\begin{Corollary} \label{cor:dimcov}
 For any Borel quasi order $\PPP$, ${\odim}_{B}(\PPP) \leq {\aleph}_{0}$ or ${\odim}_{B}(\PPP) \geq \cov(\MMM)$.
\end{Corollary}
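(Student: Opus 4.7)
The plan is to show that if ${\odim}_{B}(\PPP) > {\aleph}_{0}$, then ${\odim}_{B}(\PPP) \geq \cov(\MMM)$, by chaining together Theorem \ref{thm:borelodimdichotomy}, Lemma \ref{lem:covM}, and Theorem \ref{thm:odimB=HB}. In fact, the mutual-exclusivity paragraph of the proof of Theorem \ref{thm:borelodimdichotomy} already contains essentially the entire argument, so the corollary will fall out immediately.

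I would begin by assuming ${\odim}_{B}(\PPP) > {\aleph}_{0}$ and fixing a dense selector $E$, which exists by the construction sketched after Definition \ref{def:selector}. Applying alternative (2) of Theorem \ref{thm:borelodimdichotomy} to $E$ produces some $f \in \FFF$ together with a continuous map $\varphi \colon 2 \times \[{T}_{f}\] \to P$ satisfying clauses (2)(a) and (2)(b) of that theorem.

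Next, I would define $\psi \colon \[{T}_{f}\] \to {\AAA}_{\PPP}$ by $\psi(x) = \pr{\varphi(\pr{1}{x})}{\varphi(\pr{0}{x})}$. Clause (2)(b) guarantees that $\psi(x)$ really lies in ${\AAA}_{\PPP}$, and $\psi$ is continuous (hence Borel, after equipping ${\AAA}_{\PPP}$ with any compatible Polish topology extending the subspace topology). Clause (2)(a) translates directly into the statement that $\psi$ is a digraph homomorphism from ${\GGG}_{0}(E, f)$ to $\pr{{\AAA}_{\PPP}}{{\RRR}_{\PPP}}$. Since pulling back along a digraph homomorphism sends cycle-free sets to cycle-free sets, any Borel covering of $\pr{{\AAA}_{\PPP}}{{\RRR}_{\PPP}}$ by cycle-free pieces pulls back to such a covering of $\[{T}_{f}\]$, yielding $\wbdicr({\GGG}_{0}(E, f)) \leq \wbdicr(\pr{{\AAA}_{\PPP}}{{\RRR}_{\PPP}})$.

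Finally, I would invoke Lemma \ref{lem:covM} to bound $\wbdicr({\GGG}_{0}(E, f)) \geq \cov(\MMM)$, and observe that the hypothesis ${\odim}_{B}(\PPP) > {\aleph}_{0}$ forces $\lc P \slash {E}_{\leq} \rc > 1$, so Theorem \ref{thm:odimB=HB} applies and gives ${\odim}_{B}(\PPP) = \wbdicr(\pr{{\AAA}_{\PPP}}{{\RRR}_{\PPP}})$. Concatenating the three inequalities yields ${\odim}_{B}(\PPP) \geq \cov(\MMM)$. There is no real obstacle in this proof, since all the difficulty is absorbed by Theorem \ref{thm:borelodimdichotomy}; the only small point to check is that homomorphisms pull back cycle-freeness, and this is immediate from the definitions.
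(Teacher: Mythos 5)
Your proposal is correct and follows exactly the paper's argument: the paper's proof of this corollary simply cites the mutual-exclusivity paragraph of the proof of Theorem \ref{thm:borelodimdichotomy}, which contains precisely the map $\psi(x) = \pr{\varphi(\pr{1}{x})}{\varphi(\pr{0}{x})}$ and the chain ${\odim}_{B}(\PPP) = \wbdicr(\pr{{\AAA}_{\PPP}}{{\RRR}_{\PPP}}) \geq \wbdicr({\GGG}_{0}(E,f)) \geq \cov(\MMM)$ that you describe.
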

\begin{proof}
 If ${\odim}_{B}(\PPP) > {\aleph}_{0}$, then (2) of Theorem \ref{thm:borelodimdichotomy} holds.
 If (2) of Theorem \ref{thm:borelodimdichotomy} holds, then the argument in the first paragraph of the proof of Theorem \ref{thm:borelodimdichotomy} shows that ${\odim}_{B}(\PPP) \geq \wbdicr({\GGG}_{0}(E, f))$, for some dense selector $E$ and $f \in \FFF$.
 By Lemma \ref{lem:covM}, $\wbdicr({\GGG}_{0}(E, f)) \geq \cov(\MMM)$, whence ${\odim}_{B}(\PPP) \geq \cov(\MMM)$.
\end{proof}
Our next task is to obtain further structural information about Borel quasi orders of countable Borel order dimension.
We will show that every such order has a Borel linearization.
Kanovei~\cite{MR1607451} proved a dichotomy that characterized the Borel quasi orders that are Borel linearizable.
The basic object of his dichotomy is the following quasi order.
\begin{Definition} \label{def:kanovei}
 Recall that for $x, y \in {2}^{\omega}$, $x {E}_{0} y$ if and only if $\{n \in \omega: x(n) \neq y(n)\}$ is finite.
 On ${2}^{\omega}$, define the quasi ordering ${\leq}_{0}$ by $x \; {\leq}_{0} \; y$ if and only if $x = y$, or $x \neq y$ and $x {E}_{0} y$ and for the maximal $n \in \omega$ such that $x(n) \neq y(n)$, we have $x(n) < y(n)$.
 Clearly, $\pr{{2}^{\omega}}{{\leq}_{0}}$ is a Borel quasi order.
 It is easily verified that ${E}_{{\leq}_{0}}$ is equality and elements in distinct ${E}_{0}$ classes are incomparable.
\end{Definition}
Kanovei proved that a Borel quasi order has a Borel linearization if and only if there is no continuous injection from $\pr{{2}^{\omega}}{{\leq}_{0}}$ that is order preserving and maps ${E}_{0}$-inequivalent elements to incomparable ones (see Theorem \ref{thm:kanovei} below).
\begin{Lemma} \label{lem:kanovei1}
 Suppose $\preceq$ is a Borel quasi order on ${2}^{\omega}$ extending ${\leq}_{0}$.
 Then $\preceq$ is a meager subset of ${2}^{\omega} \times {2}^{\omega}$.
\end{Lemma}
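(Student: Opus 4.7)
My plan is to assume $\preceq$ is non-meager and derive a contradiction by constructing a short $\preceq$-cycle with the help of a tail-swap involution, against the antisymmetry forced by $\preceq$ extending ${\leq}_{0}$ in the sense of Definition~\ref{def:extendquasi}: since $E_{{\leq}_{0}}$ is equality on ${2}^{\omega}$, $E_{\preceq}$ must also be equality, so $\preceq$ is antisymmetric.

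Assuming non-meagerness and using the Baire property of the Borel set $\preceq$, I will fix a basic open rectangle $[s_{0}] \times [t_{0}]$ in which $\preceq$ is comeager. The first key step is to pass from $(s_{0}, t_{0})$ to an extension $(s, t)$ of common length $n$ for which the maximal position of disagreement $k = n-1$ satisfies $s(k) = 1$ and $t(k) = 0$. This is achieved by first padding to equalize lengths (comeagerness is inherited on sub-boxes) and then appending $\langle 1 \rangle$ to $s_{0}$ and $\langle 0 \rangle$ to $t_{0}$ to obtain $s = \next{s_{0}}{1}$ and $t = \next{t_{0}}{0}$. The new bit strictly dominates any inherited disagreement between $s_{0}$ and $t_{0}$, so at the maximal position of disagreement one has $s(k) > t(k)$, whence $\cat{t}{r} \; {\leq}_{0} \; \cat{s}{r}$ and therefore $\cat{t}{r} \preceq \cat{s}{r}$ for every tail $r \in {2}^{\omega}$. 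Moreover $\preceq$ is still comeager in the sub-box $[s] \times [t]$.

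Next, I consider the tail-swap self-homeomorphism $\phi \colon [s] \times [t] \to [s] \times [t]$ given by $\phi(\cat{s}{a}, \cat{t}{b}) = (\cat{s}{b}, \cat{t}{a})$. Writing $C = \preceq \cap ([s] \times [t])$, both $C$ and $\phi^{-1}(C)$ are comeager in $[s] \times [t]$, while the diagonal $\{(\cat{s}{a}, \cat{t}{a}) : a \in {2}^{\omega}\}$ is meager. Intersecting these produces $a \neq b$ in ${2}^{\omega}$ with $\cat{s}{a} \preceq \cat{t}{b}$ and $\cat{s}{b} \preceq \cat{t}{a}$. Combined with the ${\leq}_{0}$-induced relations $\cat{t}{b} \preceq \cat{s}{b}$ and $\cat{t}{a} \preceq \cat{s}{a}$ from the previous paragraph, this yields the cycle $\cat{s}{a} \preceq \cat{t}{b} \preceq \cat{s}{b} \preceq \cat{t}{a} \preceq \cat{s}{a}$. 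Transitivity and antisymmetry then force $\cat{s}{a} = \cat{s}{b}$, i.e.\ $a = b$, the desired contradiction.

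The main obstacle is getting the orientation of the prefix right. In the ``wrong'' orientation, where the maximal prefix disagreement already favors $t$ over $s$, all four ${\leq}_{0}$-induced arrows between $[s]$-prefixes and $[t]$-prefixes would point from $s$-prefixes to $t$-prefixes, and no cycle could close. The one-bit extension is the trick that always places us in the favorable orientation, regardless of how the initial $[s_{0}] \times [t_{0}]$ was shaped; without it the naive swap argument stalls.
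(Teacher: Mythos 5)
Your proof is correct and is essentially the paper's own argument: both locate a box $[s^\frown 1]\times[t^\frown 0]$ with $\lc s\rc=\lc t\rc$ on which $\preceq$ is comeager, apply the tail-swap auto-homeomorphism of that box to produce a pair related by $\preceq$ in both "crossed" directions, and close a $\preceq$-cycle using the ${\leq}_{0}$-arrows from $\next{t}{0}$-prefixed points to $\next{s}{1}$-prefixed points with the same tail, contradicting ${E}_{\preceq}={E}_{{\leq}_{0}}$ being equality. The only (harmless) cosmetic differences are your extra step of avoiding the diagonal to get $a\neq b$ — unnecessary, since the cycle already identifies the distinct points $\cat{s}{a}$ and $\cat{t}{a}$ — and your explicit discussion of the orientation issue, which the paper handles silently by fixing the bits $1$ and $0$.
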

\begin{proof}
 Suppose not.
 There exist $s, t \in {2}^{< \omega}$ such that $\lc s \rc = \lc t \rc$ and $\preceq \cap \left( U \times V \right)$ is comeager in $U \times V$, where $U = \{z \in {2}^{\omega}: \next{s}{1} \subseteq z\}$ and $V = \{z \in {2}^{\omega}: \next{t}{0} \subseteq z\}$.
 Define $\psi: U \times V \rightarrow U \times V$ by $\psi\left( \pr{\ins{s}{1}{x}}{\ins{t}{0}{y}} \right) = \pr{\ins{s}{1}{y}}{\ins{t}{0}{x}}$, for every $x, y \in {2}^{\omega}$.
 Then $\psi$ is an auto-homeomorphism of $U \times V$, and hence $\preceq \cap \left( U \times V \right) \cap {\psi}^{-1}\left( \preceq \cap \left( U \times V \right) \right)$ is non-empty.
 Find $x, y \in {2}^{\omega}$ so that $\ins{s}{1}{x} \preceq \ins{t}{0}{y}$ and $\ins{s}{1}{y} \preceq \ins{t}{0}{x}$.
 Since $\lc s \rc = \lc t \rc$, we have $\ins{s}{1}{y} \preceq \ins{t}{0}{x} \; {\leq}_{0} \; \ins{s}{1}{x} \preceq \ins{t}{0}{y} \; {\leq}_{0} \; \ins{s}{1}{y}$.
 This means $\pr{\ins{s}{1}{y}}{\ins{t}{0}{x}} \in {E}_{\preceq} = {E}_{{\leq}_{0}}$, which is impossible as ${E}_{{\leq}_{0}}$ is equality.
\end{proof}
\begin{Lemma} \label{lem:kanovei2}
 ${\odim}_{B}\left(\pr{{2}^{\omega}}{{\leq}_{0}}\right) > {\aleph}_{0}$.
\end{Lemma}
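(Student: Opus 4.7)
The plan is to argue by contradiction using a straightforward Baire category argument, exploiting Lemma \ref{lem:kanovei1} which has already done all the real work. Suppose for contradiction that ${\odim}_{B}\left(\pr{{2}^{\omega}}{{\leq}_{0}}\right) \leq {\aleph}_{0}$. By Lemma \ref{lem:boreldimquasi}, there exists a sequence $\seq{\preceq}{i}{\in}{\omega}$ of Borel quasi orders on ${2}^{\omega}$ each extending ${\leq}_{0}$, such that for every $x, y \in {2}^{\omega}$ there exists $i \in \omega$ with $y \; {\preceq}_{i} \; x$ or $x \; {\leq}_{0} \; y$.

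Next I would apply Lemma \ref{lem:kanovei1} to deduce that each ${\preceq}_{i}$ is a meager subset of ${2}^{\omega} \times {2}^{\omega}$. Since ${\leq}_{0}$ trivially extends itself, the same lemma applied to $\preceq \; = \; {\leq}_{0}$ shows that ${\leq}_{0}$ is also meager in ${2}^{\omega} \times {2}^{\omega}$. Moreover the coordinate swap $\pr{x}{y} \mapsto \pr{y}{x}$ is an auto-homeomorphism of ${2}^{\omega} \times {2}^{\omega}$, so each image set $\{\pr{x}{y} \in {2}^{\omega} \times {2}^{\omega} : y \; {\preceq}_{i} \; x\}$ is also meager.

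The witnessing property from Lemma \ref{lem:boreldimquasi} can then be rewritten as the equality
\begin{align*}
 {2}^{\omega} \times {2}^{\omega} \; = \; {\leq}_{0} \; \cup \; \bigcup_{i \in \omega}{\{\pr{x}{y} \in {2}^{\omega} \times {2}^{\omega} : y \; {\preceq}_{i} \; x\}},
\end{align*}
which displays the Polish space ${2}^{\omega} \times {2}^{\omega}$ as a countable union of meager sets, contradicting the Baire category theorem. The only thing requiring care is the translation between the forms of ${\odim}_{B}$ given in Definition \ref{def:boreldim} and Lemma \ref{lem:boreldimquasi}; since the entire combinatorial content is already packaged in Lemma \ref{lem:kanovei1}, there is no real obstacle here.
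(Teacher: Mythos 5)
Your proposal is correct and is essentially the paper's own argument: both rest entirely on Lemma \ref{lem:kanovei1}, with the paper choosing a point of ${2}^{\omega}\times{2}^{\omega}$ outside the meager set $M\cup N$ (where $M=\bigcup_i \mathord{\preceq_i}$ and $N$ is its swap) and observing that the witnessing condition of Lemma \ref{lem:boreldimquasi} fails there, while you equivalently observe that the witnessing condition would express ${2}^{\omega}\times{2}^{\omega}$ as a countable union of meager sets. The two formulations are interchangeable via the Baire category theorem, so there is nothing substantive to add.
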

\begin{proof}
 Suppose $\seq{\preceq}{i}{<}{\omega}$ is a sequence of Borel quasi orders on ${2}^{\omega}$ extending ${\leq}_{0}$.
 Write $M = {\bigcup}_{i < \omega}{{\preceq}_{i}}$ and $N = \{\pr{y}{x}: \pr{x}{y} \in M\}$.
 By Lemma \ref{lem:kanovei1}, $M \cup N$ is a meager subset of ${2}^{\omega} \times {2}^{\omega}$.
 Choose any $\pr{y}{x} \in \left( {2}^{\omega} \times {2}^{\omega} \right) \setminus \left( M \cup N \right)$.
 Then $x \; {\not\leq}_{0} \; y$, and yet $y \; {\not\preceq}_{i} \; x$, for any $i < \omega$.
 Therefore $\seq{\preceq}{i}{<}{\omega}$ cannot witness ${\odim}_{B}\left(\pr{{2}^{\omega}}{{\leq}_{0}}\right) \leq {\aleph}_{0}$.
\end{proof}
\begin{Theorem}[Kanovei~\cite{MR1607451}] \label{thm:kanovei}
 Suppose $\pr{P}{\leq}$ is a Borel quasi order.
 Then exactly one of the following two conditions is satisfied:
 \begin{enumerate}
  \item
  $\pr{P}{\leq}$ is Borel linearizable;
  \item
  there is a continuous 1-1 map $F: {2}^{\omega} \rightarrow P$ satisfying both of the following:
  \begin{enumerate}
   \item[(a)]
   $a \; {\leq}_{0} \; b \implies F(a) \leq F(b)$;
   \item[(b)]
   $a \; {\cancel{E}}_{0} \; b \implies F(a) \ \text{and} \ F(b)$ are $\leq$-incomparable.
  \end{enumerate}
 \end{enumerate}
\end{Theorem}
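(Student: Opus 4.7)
The proof splits into the two directions. $(2) \Rightarrow \neg(1)$ can be handled by adapting the arguments already in the excerpt, while $\neg(1) \Rightarrow (2)$ is the substantive content of Kanovei's theorem.

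For $(2) \Rightarrow \neg(1)$, suppose $F$ witnesses $(2)$ and $\preceq$ is a Borel linearization of $\leq$, and form the pullback $\preceq' = \{\pr{a}{b}: F(a) \preceq F(b)\}$; this is Borel, linear, and satisfies ${\leq}_{0} \, \subseteq \, \preceq'$ by (a). Since $\preceq$ extends $\leq$ we have $E_{\preceq} = E_{\leq}$, and combining this with (b) gives $E_{\preceq'} \subseteq E_{0}$: if $a \, \cancel{E}_{0} \, b$ then $F(a), F(b)$ are $\leq$-incomparable, so $F(a) \, \cancel{E}_{\preceq} \, F(b)$, so $a \, \cancel{E}_{\preceq'} \, b$. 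I now rerun the auto-homeomorphism argument in the proof of Lemma \ref{lem:kanovei1}, but additionally require the pair $\pr{x}{y}$ drawn from the comeager set to satisfy $x \, \cancel{E}_{0} \, y$; this is still a comeager condition because $E_{0}$ is meager in ${2}^{\omega} \times {2}^{\omega}$. The resulting chain $\ins{s}{1}{y} \preceq' \ins{t}{0}{x} \preceq' \ins{s}{1}{x} \preceq' \ins{t}{0}{y} \preceq' \ins{s}{1}{y}$ (using ${\leq}_{0} \, \subseteq \, \preceq'$) forces $\ins{s}{1}{y} \, E_{\preceq'} \, \ins{t}{0}{x}$, and $E_{\preceq'} \subseteq E_{0}$ then forces $y \, E_{0} \, x$, contradicting the choice. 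Hence $\preceq'$ is meager; since the coordinate flip is an auto-homeomorphism of ${2}^{\omega} \times {2}^{\omega}$, ${(\preceq')}^{-1}$ is also meager; but linearity forces $\preceq' \cup {(\preceq')}^{-1} = {2}^{\omega} \times {2}^{\omega}$, contradicting Baire category.

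For $\neg(1) \Rightarrow (2)$, my plan is a Cantor-scheme construction of $F$ by recursion on ${2}^{<\omega}$. I build $\langle {p}_{s} : s \in {2}^{<\omega} \rangle$ in $P$ with shrinking diameter so that $F(z) = \lim {p}_{z \restrict n}$ exists for every $z \in {2}^{\omega}$ and is continuous, subject to the scheme condition that for each pair $s, t$ at the same level, whether ${p}_{s} \leq {p}_{t}$ holds or ${p}_{s}, {p}_{t}$ are $\leq$-incomparable should reflect the sign of the last bit of difference between $s$ and $t$ together with whether the two branches are being set up to agree at infinity or not. At each stage non-linearizability of $\pr{P}{\leq}$ is invoked to produce the required branching: if the prescribed successors cannot be arranged near some current ${p}_{s}$, one would localize to obtain a Borel linearization of $\leq$ on an open piece of $P$ and then patch such pieces into a global Borel linearization, contradicting $\neg(1)$.

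The main difficulty is that ${\leq}_{0}$ is governed by the \emph{last} rather than first position of disagreement, so the sign of $a \, {\leq}_{0} \, b$ versus $b \, {\leq}_{0} \, a$ within an $E_{0}$-class is never decided by any finite initial segment, and the recursion must work with constraints that may have to be revised at later stages. The alternative route through the paper's machinery, combining Theorem \ref{thm:borelodimdichotomy} with the paper's own result that countable Borel order dimension implies Borel linearizability, is blocked by the fact that each basic object $\PPP({\GGG}_{0}(E, f))$ is of height $2$, so the infinite ${\leq}_{0}$-chains sitting inside each $E_{0}$-class cannot be embedded injectively; additional structural input is needed to bridge this gap. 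Kanovei's original proof in \cite{MR1607451} sidesteps these difficulties via the Gandy-Harrington topology, and a proof purely within the Borel combinatorial style of Section \ref{sec:dichotomy} would require a new idea.
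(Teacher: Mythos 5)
This theorem is imported from Kanovei~\cite{MR1607451}: the paper states it with attribution and gives no proof, using it as a black box in the proof of Theorem \ref{thm:ctbledimlin}. So there is no in-paper argument to compare yours against, and the only question is whether your proposal is a complete proof on its own. Your argument for the direction $(2) \Rightarrow \neg(1)$ is correct and is a natural extension of Lemmas \ref{lem:kanovei1} and \ref{lem:kanovei2}: the pullback $\preceq'$ of a putative Borel linearization along $F$ is a total Borel quasi order on ${2}^{\omega}$ containing ${\leq}_{0}$, condition (b) together with ${E}_{\preceq} = {E}_{\leq}$ gives ${E}_{\preceq'} \subseteq {E}_{0}$, and the flip-homeomorphism argument, restricted to pairs outside the meager set ${E}_{0}$, shows that $\preceq'$ and its inverse are both meager, contradicting totality. (One point worth making explicit: with $\lc s \rc = \lc t \rc$, the maximal coordinate of disagreement between $\ins{t}{0}{x}$ and $\ins{s}{1}{x}$ is position $\lc s \rc$, which is why $\ins{t}{0}{x} \; {\leq}_{0} \; \ins{s}{1}{x}$ regardless of how $s$ and $t$ compare below that level; this is what makes the chain close up.)

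The substantive direction $\neg(1) \Rightarrow (2)$ is not proved. The Cantor-scheme plan stops exactly where the work begins: the claim that failure to arrange the prescribed successors would ``localize to a Borel linearization on an open piece'' which can then be ``patched'' into a global one is asserted, not argued, and it is precisely the dichotomy one is trying to establish. As you yourself note, because ${\leq}_{0}$ is governed by the last rather than the first coordinate of disagreement, no finite stage of a fusion construction decides the order relation between $F(a)$ and $F(b)$ for ${E}_{0}$-equivalent $a \neq b$, so a naive closed-under-initial-segments scheme cannot certify condition (a). Kanovei's proof overcomes this with effective methods (Gandy--Harrington forcing and a transfinite derivative analysis), none of which is reconstructed here. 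The proposal therefore establishes only the easy half of the theorem; for the hard half it correctly diagnoses the obstructions but does not remove them --- which is consistent with the paper's choice to cite the result rather than reprove it.
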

\begin{Theorem} \label{thm:ctbledimlin}
 Let $\PPP = \pr{P}{\leq}$ be a Borel quasi order.
 If ${\odim}_{B}(\PPP) \leq {\aleph}_{0}$, then $\PPP$ is Borel linearizable.
\end{Theorem}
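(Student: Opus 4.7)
The plan is to invoke Kanovei's dichotomy (Theorem \ref{thm:kanovei}): it is enough to show that the existence of a continuous injection $F: {2}^{\omega} \rightarrow P$ satisfying (a) and (b) of that theorem is incompatible with ${\odim}_{B}(\PPP) \leq {\aleph}_{0}$. So I assume such an $F$ exists and, towards a contradiction, that $\seq{\leq}{i}{<}{\omega}$ is a sequence of Borel quasi orders on $P$ extending $\leq$ witnessing ${\odim}_{B}(\PPP) \leq {\aleph}_{0}$ via Lemma \ref{lem:boreldimquasi}. For each $i < \omega$, set
\begin{align*}
 {S}_{i} = \left\{\pr{a}{b} \in {2}^{\omega} \times {2}^{\omega}: F(a) \; {\leq}_{i} \; F(b)\right\},
\end{align*}
which is Borel since $F$ is continuous and ${\leq}_{i}$ is Borel.

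The crux is to show that each ${S}_{i}$ is meager by adapting the swap argument from the proof of Lemma \ref{lem:kanovei1}. If ${S}_{i}$ were non-meager, then by Baire measurability there exist $s, t \in {2}^{< \omega}$ with $\lc s \rc = \lc t \rc$ such that ${S}_{i}$ is comeager in $U \times V$, where $U = \{z \in {2}^{\omega}: \next{s}{1} \subseteq z\}$ and $V = \{z \in {2}^{\omega}: \next{t}{0} \subseteq z\}$. The map $\psi: U \times V \rightarrow U \times V$ defined by $\psi\left(\pr{\ins{s}{1}{x}}{\ins{t}{0}{y}}\right) = \pr{\ins{s}{1}{y}}{\ins{t}{0}{x}}$ is an auto-homeomorphism, and since ${E}_{0}$ is a meager subset of ${2}^{\omega} \times {2}^{\omega}$, the set
\begin{align*}
 {S}_{i} \cap {\psi}^{-1}({S}_{i}) \cap \left\{\pr{\ins{s}{1}{x}}{\ins{t}{0}{y}} \in U \times V: x \; {\cancel{E}}_{0} \; y\right\}
\end{align*}
is comeager in $U \times V$ and hence non-empty. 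Fixing such a witness $\pr{\ins{s}{1}{x}}{\ins{t}{0}{y}}$ and noting that $\ins{t}{0}{x} \; {\leq}_{0} \; \ins{s}{1}{x}$ and $\ins{t}{0}{y} \; {\leq}_{0} \; \ins{s}{1}{y}$ (immediate from Definition \ref{def:kanovei}), property (a) of $F$ together with $\leq \; \subseteq \; {\leq}_{i}$ yields the chain
\begin{align*}
 F(\ins{s}{1}{y}) \; {\leq}_{i} \; F(\ins{t}{0}{x}) \; {\leq}_{i} \; F(\ins{s}{1}{x}) \; {\leq}_{i} \; F(\ins{t}{0}{y}) \; {\leq}_{i} \; F(\ins{s}{1}{y}),
\end{align*}
so $F(\ins{s}{1}{y})$ and $F(\ins{t}{0}{x})$ are ${E}_{{\leq}_{i}} = {E}_{\leq}$-equivalent and hence ${\leq}$-comparable. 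But $x \; {\cancel{E}}_{0} \; y$ gives $\ins{s}{1}{y} \; {\cancel{E}}_{0} \; \ins{t}{0}{x}$, so property (b) of $F$ rules out any such comparability, a contradiction.

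With each ${S}_{i}$ meager, the set $M = \left\{\pr{a}{b}: \pr{b}{a} \in {\bigcup}_{i < \omega}{{S}_{i}}\right\}$ is meager as well. Applying (\ref{cond:borelquasi}) of Lemma \ref{lem:boreldimquasi} to $F(a)$ and $F(b)$ shows that for every $\pr{a}{b} \in {2}^{\omega} \times {2}^{\omega}$, either $\pr{a}{b} \in M$ or $F(a) \leq F(b)$; hence $\left\{\pr{a}{b}: F(a) \leq F(b)\right\}$ contains the comeager complement of $M$ and is itself comeager. Intersecting with the comeager set $\left\{\pr{a}{b}: a \; {\cancel{E}}_{0} \; b\right\}$ produces $a, b \in {2}^{\omega}$ with $a \; {\cancel{E}}_{0} \; b$ and $F(a) \leq F(b)$, directly contradicting property (b) of $F$. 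The main obstacle is the meagerness of ${S}_{i}$: one cannot appeal to Lemma \ref{lem:kanovei1} directly because ${S}_{i}$ need not extend ${\leq}_{0}$ (the pullback equivalence ${E}_{{S}_{i}}$ can identify ${E}_{0}$-equivalent but distinct pairs), and the remedy is to additionally arrange $x \; {\cancel{E}}_{0} \; y$ inside the swap step so that property (b) of $F$ can be invoked to close the loop.
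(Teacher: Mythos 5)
Your proof is correct, but it takes a genuinely different route from the paper's. Both arguments enter through Kanovei's dichotomy, but the paper then invokes its main dichotomy (Theorem \ref{thm:borelodimdichotomy}) together with Lemma \ref{lem:kanovei2}: it obtains a continuous map $\varphi$ witnessing alternative (2) for $\pr{{2}^{\omega}}{{\leq}_{0}}$ and composes it with $F$ to witness alternative (2) for $\PPP$, forcing ${\odim}_{B}(\PPP) > {\aleph}_{0}$. You avoid the dichotomy machinery entirely and argue directly from the definition via Lemma \ref{lem:boreldimquasi}, pulling the witnessing Borel quasi orders back through $F$ and running a Baire-category argument on the pullbacks ${S}_{i}$. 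Your key observation is exactly right: the pullback of ${\leq}_{i}$ under $F$ need not extend ${\leq}_{0}$ in the sense of Definition \ref{def:extendquasi} (its induced equivalence can identify distinct ${E}_{0}$-equivalent points, since property (a) of $F$ is one-directional and property (b) is silent on ${E}_{0}$-classes), so Lemma \ref{lem:kanovei1} cannot be quoted verbatim; your fix of intersecting with the comeager set $\{ \pr{x}{y} : x \; {\cancel{E}}_{0} \; y\}$ inside the swap step, so that the resulting ${E}_{\leq}$-equivalence of $F(\ins{s}{1}{y})$ and $F(\ins{t}{0}{x})$ collides with property (b), is sound. The concluding step also works (indeed, one could finish even faster: $\{\pr{a}{b}: F(a) \leq F(b)\}$ is contained in the meager ${S}_{0}$, yet you show its complement is contained in the meager $M$, an immediate contradiction in a Baire space). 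What each approach buys: yours is more self-contained, needing only Kanovei's theorem and elementary category arguments, and in effect re-proves the relevant content of Lemmas \ref{lem:kanovei1} and \ref{lem:kanovei2} in pulled-back form; the paper's is shorter given the machinery already built and showcases the dichotomy as the engine that transfers uncountable Borel dimension along order-and-incomparability-preserving maps.
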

\begin{proof}
 Assume for a contradiction that $\PPP$ is not Borel linearizable.
 By Theorem \ref{thm:kanovei} fix a continuous 1-1 map $F: {2}^{\omega} \rightarrow P$ satisfying (2)(a) and (2)(b) of Theorem \ref{thm:kanovei}.
 Fix any selector $E$.
 By Lemma \ref{lem:kanovei2} and Theorem \ref{thm:borelodimdichotomy}, find $f \in \FFF$ and a continuous $\varphi: 2 \times \[{T}_{f}\] \rightarrow {2}^{\omega}$ such that (2)(a) and (2)(b) of Theorem \ref{thm:borelodimdichotomy} are satisfied with respect to ${\leq}_{0}$.
 Write $\psi = F \circ \varphi$.
 Then $\psi: 2 \times \[{T}_{f}\] \rightarrow P$ is continuous.
 For $x, y \in \[{T}_{f}\]$, if $\pr{0}{x} \; {\leq}_{{\GGG}_{0}(E, f)} \; \pr{1}{y}$, then $\varphi(\pr{0}{x}) \; {\leq}_{0} \; \varphi(\pr{1}{y})$, whence $\psi(\pr{0}{x}) \leq \psi(\pr{1}{y})$.
 For any $x \in \[{T}_{f}\]$, $\varphi(\pr{0}{x})$ and $\varphi(\pr{1}{x})$ are ${\leq}_{0}$-incomparable.
 Note that if $\varphi(\pr{0}{x}) {E}_{0} \varphi(\pr{1}{x})$, then, by the definition of ${\leq}_{0}$, $\varphi(\pr{0}{x})$ and $\varphi(\pr{1}{x})$ would be ${\leq}_{0}$-comparable.
 Hence $\varphi(\pr{0}{x}) {\cancel{E}}_{0} \varphi(\pr{1}{x})$, whence $\psi(\pr{0}{x})$ and $\psi(\pr{1}{x})$ are $\leq$-incomparable.
 Thus (2) of Theorem \ref{thm:borelodimdichotomy} holds for $\PPP$, which means that ${\odim}_{B}(\PPP) > {\aleph}_{0}$, a contradiction.
\end{proof}
By a celebrated theorem of Harrington, Marker, and Shelah~\cite{borelorder}, Borel linearizability provides a good deal of structural information about a Borel quasi order.
In fact, if $\PPP$ is Borel linearizable, then $\PPP$ can be embedded into ${2}^{\alpha}$ equipped with the lexicographical ordering, where $\alpha$ is some countable ordinal.
In Theorem \ref{thm:hms} and Corollary \ref{cor:hms} below $\lex{}{}$ denotes the lexicographical ordering on ${2}^{\alpha}$.
The result of Harrington, Marker, and Shelah applies to all the so called thin quasi orders.
\begin{Definition}[Harrington, Marker, and Shelah~\cite{borelorder}] \label{def:hms}
 $\PPP$ is \emph{thin} if there is no perfect set of pairwise incomparable elements.
\end{Definition}
\begin{Theorem}[Harrington, Marker, and Shelah~\cite{borelorder}] \label{thm:hms}
 If $\PPP = \pr{P}{\leq}$ is a thin Borel quasi order, then for some $\alpha < {\omega}_{1}$, there is a Borel $f: P \rightarrow {2}^{\alpha}$ such that
 \begin{enumerate}
  \item
  $x \leq y \implies \lex{f(x)}{f(y)}$ and
  \item
  $x \; {E}_{\leq} \; y \iff f(x)=f(y)$, for all $x, y \in P$.
 \end{enumerate}
\end{Theorem}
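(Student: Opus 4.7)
The plan is to follow the classical argument using effective descriptive set theory. First I would relativize to a real coding $\leq$ and work with a recursively presented Polish space $P$ on which $\leq$ is $\Delta^1_1$; the ordinal $\alpha < \omega_1$ asserted by the statement will ultimately be $\omega_1^{CK}$ of that parameter. The main technical tool is the Gandy--Harrington topology $\tau_{GH}$ on $P$, whose basic open sets are the non-empty $\Sigma^1_1$ subsets. This topology is strong Choquet, and combined with thinness it forces every $\Sigma^1_1$ antichain to be countable: otherwise, a standard fusion argument inside $\tau_{GH}$ would produce a continuous injection of $2^{\omega}$ into $P$ whose image is pairwise $\leq$-incomparable, contradicting thinness.

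Next I would prove the key splitting dichotomy. For a non-empty $\Sigma^1_1$ set $A \subseteq P$, either all elements of $A$ are pairwise $\leq$-comparable, in which case the induced relation on $A$ is essentially linear modulo ${E}_{\leq}$, or else $A$ splits into two non-empty $\Sigma^1_1$ subsets $A_0, A_1$ that separate some pair of $\leq$-incomparable elements in a persistent way: no $x \in A_0$ is $\leq$-above any $y \in A_1$ after further $\Sigma^1_1$-refinement. Iterating this dichotomy along a $\{0,1\}$-indexed scheme and invoking the first step, the splitting process must terminate at some countable ordinal stage $\alpha$; otherwise the resulting $\omega$-branching tree of $\Sigma^1_1$ sets would again yield a perfect antichain via a fusion argument.

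From the terminating splitting scheme I would read off the Borel map $f: P \to 2^{\alpha}$: for each $x \in P$ and each $\beta < \alpha$, $f(x)(\beta)$ records which side of the $\beta$th split contains $x$. By construction $f$ is Borel, $f$ identifies exactly the ${E}_{\leq}$-classes, and the orientation of each split is arranged so that $x \leq y$ forces $\lex{f(x)}{f(y)}$, yielding both conclusions (1) and (2) simultaneously.

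The main obstacle is the limit stage of the splitting construction. At a countable limit $\lambda < \alpha$, one has to amalgamate countably many earlier $\Sigma^1_1$ partitions into a single $\Sigma^1_1$-refinement that still separates some pair of incomparable elements whenever possible; this is where the strong Choquet property of $\tau_{GH}$, together with effective separation and uniformization theorems, does the heavy lifting. Ensuring that each limit stage remains inside the effective hierarchy is precisely the mechanism that bounds $\alpha$ strictly below $\omega_1$, and verifying this bound is the subtlest part of the proof.
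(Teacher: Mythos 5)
A point of orientation first: the paper does not prove Theorem~\ref{thm:hms} at all. It is quoted from Harrington--Marker--Shelah~\cite{borelorder} and used as a black box (only Corollary~\ref{cor:hms} is derived from it, by composing with Theorem~\ref{thm:ctbledimlin}). So there is no in-paper proof to compare yours against; what you have written is an outline of the original HMS argument, and it has to be judged on its own.

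As such an outline it names the right toolbox (relativization, the Gandy--Harrington topology, thinness excluding perfect antichains -- and indeed thinness does make every analytic antichain countable, by the perfect set property), but two load-bearing steps are gaps rather than proofs. First, your ``splitting dichotomy'' does not actually reduce the problem: in the horn where all elements of $A$ are pairwise comparable you are left with a ${\mathbf{\Sigma}}^{1}_{1}$ \emph{linear} quasi order, and embedding such an order into ${2}^{\alpha}$ with $\llex$ is the heart of the HMS theorem, not a base case one can wave at. The standard mechanism is entirely different from a binary splitting scheme: one works with the ${\mathbf{\Pi}}^{1}_{1}$-parametrized family of codes of $\Delta^1_1$ downward-closed sets, uses reflection (as in Theorem~\ref{thm:firstreflection}) to separate any pair with $y \nleq x$ by such a cut, and uses thinness together with Gandy--Harrington genericity to show these cuts are coherently linearly ordered. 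Second, your termination argument is wrong in kind: a transfinite splitting process of length ${\omega}_{1}$ does not hand you a perfect antichain, so ``otherwise we get a perfect antichain via fusion'' does not bound $\alpha$. The bound $\alpha < {\omega}_{1}$ (in fact $\alpha < \omega_1^{CK}(z)$ for the relevant parameter $z$) comes from $\Sigma^1_1$-boundedness applied to the ${\mathbf{\Pi}}^{1}_{1}$ set of codes just mentioned; you flag this as ``the subtlest part'' but do not supply it. Until the linear case and the ordinal bound are actually carried out, the proposal is a plan for the HMS proof, not a proof.
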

Theorem \ref{thm:hms} has a number of important consequences for Borel quasi orders and linear orders.
Among other things, it implies that there are no Borel realizations of certain combinatorial counterexamples such as Suslin trees or Suslin lattices.
See~\cite{suslinlattices} for further details.
By combining Theorem \ref{thm:ctbledimlin} with Theorem \ref{thm:hms} we get the following corollary which essentially says that every Borel quasi order of countable Borel order dimension is Borel embeddable into $\pr{{2}^{\alpha}}{\llex}$, for some countable $\alpha$.
\begin{Corollary} \label{cor:hms}
 Suppose $\PPP = \pr{P}{\leq}$ is a Borel quasi order with ${\odim}_{B}(\PPP) \leq {\aleph}_{0}$.
 Then for some $\alpha < {\omega}_{1}$, there is a Borel $f: P \rightarrow {2}^{\alpha}$ such that
 \begin{enumerate}
  \item
   $x \leq y \implies \lex{f(x)}{f(y)}$;
  \item
   $x \; {E}_{\leq} \; y \iff f(x) = f(y)$,
 \end{enumerate}
 for all $x, y \in P$.
\end{Corollary}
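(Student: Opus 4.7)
The plan is to combine Theorem \ref{thm:ctbledimlin} with Theorem \ref{thm:hms} in a straightforward manner. First, since ${\odim}_{B}(\PPP) \leq {\aleph}_{0}$, Theorem \ref{thm:ctbledimlin} produces a Borel linear quasi order $\preceq$ on $P$ that extends $\leq$ in the sense of Definition \ref{def:extendquasi}. In particular, this gives us both $\mathord\leq \;\subseteq\; \mathord\preceq$ and ${E}_{\preceq} = {E}_{\leq}$.

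Next, I would observe that the Borel quasi order $\pr{P}{\preceq}$ is thin in the sense of Definition \ref{def:hms}: because $\preceq$ is linear, any two elements of $P$ are $\preceq$-comparable, so there can be no perfect set of pairwise $\preceq$-incomparable elements (in fact, there are no two such elements). Hence Theorem \ref{thm:hms} applies to $\pr{P}{\preceq}$ and yields a countable ordinal $\alpha < {\omega}_{1}$ and a Borel map $f : P \to {2}^{\alpha}$ such that $x \preceq y \implies \lex{f(x)}{f(y)}$ and $x \; {E}_{\preceq} \; y \iff f(x) = f(y)$ for all $x, y \in P$.

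Finally, I would verify the two conclusions of the corollary using the fact that $\preceq$ extends $\leq$. For (1), if $x \leq y$ then $x \preceq y$, so $\lex{f(x)}{f(y)}$. For (2), since ${E}_{\preceq} = {E}_{\leq}$, we have $x \; {E}_{\leq} \; y \iff x \; {E}_{\preceq} \; y \iff f(x) = f(y)$. There is essentially no obstacle here: the whole content of the corollary is already packaged in the two cited theorems, and the only thing to check is that the linearization produced by Theorem \ref{thm:ctbledimlin} satisfies the hypotheses of Theorem \ref{thm:hms}, which follows immediately from the definitions of linear quasi order and of thinness.
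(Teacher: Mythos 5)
Your proof is correct and is exactly the argument the paper intends: apply Theorem \ref{thm:ctbledimlin} to obtain a Borel linearization $\preceq$, observe that a linear quasi order is trivially thin, apply Theorem \ref{thm:hms} to $\pr{P}{\preceq}$, and transfer the conclusions back to $\leq$ via $\mathord\leq \subseteq \mathord\preceq$ and ${E}_{\leq} = {E}_{\preceq}$. The paper presents this as an immediate combination of the two theorems, and your write-up supplies the same routine verification.
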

We will end this section with the following, somewhat ambitious, problem.
\begin{Question} \label{q:2}
 Is there a classification of all the Borel quasi orders of countable Borel order dimension?
 How about those of finite or bounded Borel order dimension?
\end{Question}
\section{Borel order dimension of locally countable Borel quasi orders} \label{sec:consistency}
In this section, we will use the dichotomies proved in Section \ref{sec:dichotomy} to obtain more information about locally countable Borel quasi orders.
Recall this definition.
\begin{Definition} \label{def:locallyc}
 A quasi order $\PPP = \pr{P}{\leq}$ is said to be \emph{locally countable} (\emph{locally finite}) if for every $x \in P$, $\{ y \in P: y \leq x \}$ is countable (finite).
\end{Definition}
Many important naturally occurring examples of Borel quasi orders are locally countable, for instance, the Turing degrees.
\begin{Definition} \label{def:turing}
 For $x, y \in {2}^{\omega}$, \emph{$x \; {\leq}_{T} \; y$} will mean that $x$ is Turing reducible to $y$.
 Define $\DDD = \pr{{2}^{\omega}}{{\leq}_{T}}$.
\end{Definition}
Observe that if $E$ is a dense selector and $f \in \FFF$, then $\PPP({\GGG}_{0}(E, f))$ is locally countable.
This is because for any $z \in \[{T}_{f}\]$, for each $l \in \omega$, there are at most finitely many $y$ such that $z \restrict l = E(f \restrict l) = y \restrict l$ and $y \; {R}_{E, f} \; z$.
Therefore, $\{y \in \[{T}_{f}\]: y \; {R}_{E, f} \; z\}$ is countable, hence $\{\pr{0}{y} \in {P}_{{\GGG}_{0}(E, f)}: \pr{0}{y} \; {\leq}_{{\GGG}_{0}(E, f)} \; \pr{1}{z}\}$ is countable.
So Theorem \ref{thm:borelodimdichotomy} says that for any Borel quasi order $\PPP$, if ${\odim}_{B}(\PPP) > {\aleph}_{0}$, then this fact is witnessed by an order-preserving continuous map from some locally countable Borel quasi order.

The classical order dimension of $\DDD$ was investigated in detail in \cite{posetdim} and in \cite{MR4228343}.
Kumar and Raghavan~\cite{MR4228343} showed that $\DDD$ has the largest order dimension among all locally countable orders of size at most continuum.
Higuchi, Lempp, Raghavan, and Stephan~\cite{posetdim} proved that if ${2}^{{\aleph}_{0}} = {\kappa}^{+}$, where $\cf(\kappa) > \omega$, then $\odim(\DDD) \leq \kappa$.
Hence we obtain the following corollary to the results of Section \ref{sec:dichotomy} that is worth explicitly stating.
\begin{Corollary}\label{cor:pfa}
 If $\PFA$ holds, then $\odim(\DDD) = {\aleph}_{1} < {\aleph}_{2} = {\odim}_{B}(\DDD) = {2}^{{\aleph}_{0}}$.
\end{Corollary}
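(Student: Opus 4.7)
The plan is to combine the set-theoretic consequences of $\PFA$ with the results already at our disposal. First I would record the relevant cardinal arithmetic: $\PFA$ implies ${2}^{{\aleph}_{0}} = {\aleph}_{2}$, and since $\PFA$ implies $\MA$, which implies $\cov(\MMM) = {2}^{{\aleph}_{0}}$, we also have $\cov(\MMM) = {\aleph}_{2}$.

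For the classical equality $\odim(\DDD) = {\aleph}_{1}$, I would invoke the theorem of Higuchi, Lempp, Raghavan, and Stephan~\cite{posetdim} quoted in the discussion preceding this corollary: taking $\kappa = {\aleph}_{1}$, we have $\cf(\kappa) > \omega$ and ${2}^{{\aleph}_{0}} = {\aleph}_{2} = {\kappa}^{+}$, so the cited theorem yields $\odim(\DDD) \leq {\aleph}_{1}$. For the matching lower bound $\odim(\DDD) \geq {\aleph}_{1}$, I would appeal to the $\ZFC$ results from \cite{posetdim, MR4228343} establishing that the classical order dimension of $\DDD$ is uncountable.

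For the Borel side, my strategy is to apply Corollary \ref{cor:dimcov} to $\PPP = \DDD$, which forces ${\odim}_{B}(\DDD) \leq {\aleph}_{0}$ or ${\odim}_{B}(\DDD) \geq \cov(\MMM) = {\aleph}_{2}$. The crucial step is to rule out the first alternative. If ${\odim}_{B}(\DDD) \leq {\aleph}_{0}$, then Corollary \ref{cor:hms} would yield some $\alpha < {\omega}_{1}$ and a Borel map $f: {2}^{\omega} \rightarrow {2}^{\alpha}$ with $x \; {E}_{{\leq}_{T}} \; y \iff f(x) = f(y)$. This would be a Borel reduction of Turing equivalence to equality on the Polish space ${2}^{\alpha}$, making ${E}_{T}$ smooth. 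But ${E}_{T}$ is a well-known non-smooth countable Borel equivalence relation (in fact a universal one, and $\mathord{{E}_{0}} \leq_{B} \mathord{{E}_{T}}$), a contradiction. Hence ${\odim}_{B}(\DDD) \geq {\aleph}_{2}$, and since every Borel quasi order on a Polish space has Borel dimension at most ${2}^{{\aleph}_{0}} = {\aleph}_{2}$ (use, for each incomparable pair $\pr{x}{y}$, the Borel quasi order extending ${\leq}_{T}$ by the single relation $x \leq y$), we conclude ${\odim}_{B}(\DDD) = {\aleph}_{2} = {2}^{{\aleph}_{0}}$.

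The whole argument is an assembly of prior ingredients; the only non-routine point is confirming the non-smoothness of ${E}_{T}$, which is however a standard fact in descriptive set theory. The substantive structural input is Corollary \ref{cor:dimcov}: it is precisely the gap this dichotomy forces between countable Borel dimension and $\cov(\MMM)$ that is responsible for the strict inequality $\odim(\DDD) < {\odim}_{B}(\DDD)$ under $\PFA$, where the classical dimension falls strictly below $\cov(\MMM)$ but the Borel dimension cannot.
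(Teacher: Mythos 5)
Your argument is correct and follows the same overall skeleton as the paper's proof: the classical equality $\odim(\DDD) = {\aleph}_{1}$ is obtained from \cite{posetdim} exactly as you describe, and the Borel side is Corollary \ref{cor:dimcov} together with $\cov(\MMM) = {2}^{{\aleph}_{0}} = {\aleph}_{2}$ under $\PFA$. The one place you diverge is in ruling out the alternative ${\odim}_{B}(\DDD) \leq {\aleph}_{0}$: the paper simply observes that ${\odim}_{B}(\DDD) \geq \odim(\DDD) = {\aleph}_{1}$ (any Borel witnessing family is in particular a witnessing family for the classical dimension), whereas you route through Corollary \ref{cor:hms} and the non-smoothness of Turing equivalence. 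Your route is valid but heavier than necessary, since the monotonicity argument is already available from the lower bound $\odim(\DDD) \geq {\aleph}_{1}$ that you establish anyway. One small caution: your parenthetical assertion that Turing equivalence is a \emph{universal} countable Borel equivalence relation is, as far as is known, an open problem; fortunately the only fact your argument uses is non-smoothness, equivalently that ${E}_{0}$ Borel reduces to ${E}_{{\leq}_{T}}$, and that is standard. Your explicit verification of the upper bound ${\odim}_{B}(\DDD) \leq {2}^{{\aleph}_{0}}$, via one Borel extension of ${\leq}_{T}$ per incomparable pair, is a detail the paper leaves implicit and is correct (the transitive closure of ${\leq}_{T} \cup \{\pr{x}{y}\}$ is Borel by local countability and, by Lemma \ref{lem:cyclefreeextends}, is a genuine extension when $x$ and $y$ are incomparable).
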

\begin{proof}
 Since $\PFA$ implies ${2}^{{\aleph}_{0}} = {\aleph}_{2}$, Proposition 4.3 and Theorem 3.11 of \cite{posetdim} imply that $\odim(\DDD) = {\aleph}_{1}$.
 Since ${\odim}_{B}(\DDD) \geq \odim(\DDD)$, Corollary 3.32 implies that ${\odim}_{B}(\DDD) \geq \cov(\MMM)$.
 Under $\PFA$, $\cov(\MMM) = {\aleph}_{2} = {2}^{{\aleph}_{0}}$, so the result follows.
\end{proof}
The following notions were crucial for the results in Kumar and Raghavan~\cite{MR4228343}.
They were able to use the notion of separation defined below to provide a purely combinatorial characterization of $\odim(\DDD)$.
\begin{Definition} \label{def:separating}
 Let $P$ be a set and $\lambda$ a cardinal.
 A family $\F \subseteq \Pset(P)$ \emph{separates points from elements of $\pc{P}{< \lambda}$} if for every $A \in \pc{P}{< \lambda}$ and $p \in P \setminus A$, there exists $X \in \F$ such that $p \in X$ and $A \cap X = \emptyset$.
 
 A sequence $\seq{<}{i}{\in}{I}$ of partial orders on $P$ \emph{separates points from elements of $\pc{P}{< \lambda}$} if for every $A \in \pc{P}{< \lambda}$ and $p \in P \setminus A$, there exists $i \in I$ so that $\forall q \in A\[q \; {<}_{i} \; p\]$.
\end{Definition}
We will now show that the notion of separation by Borel sets or by Borel partial orders can be used to provide upper bounds on ${\odim}_{B}(\PPP)$, where $\PPP$ is any locally countable Borel quasi order.
\begin{Definition} \label{def:+Xlk}
 Let $X$ be a Polish space and $\lambda, \kappa$ cardinals.
 We say that \emph{$\dagger(X, \lambda, \kappa)$ holds} if there is a sequence $\seq{<}{i}{<}{\kappa}$ of Borel partial orders on $X$ which separates points from elements of $\pc{X}{< \lambda}$.
\end{Definition}
\begin{Lemma} \label{lem:sepext}
 Let $X$ be a Polish space and $\preceq$ a Borel quasi order on $X$ with $\pr{X}{\preceq}$ being locally countable.
 Let $<$ be a Borel partial order on $X$.
 Then there exists a Borel quasi order $\trianglelefteq$ on $X$ that extends $\preceq$ and has the property that for any $x, y \in X$, if $\forall u \preceq x\[u < y\]$, then $x \trianglelefteq y$.
\end{Lemma}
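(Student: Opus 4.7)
The plan is to define $\trianglelefteq$ as a Borel extension (obtained from Lemma \ref{lem:borelextension}) of the transitive closure of $\preceq \cup \; X_0$ in $X \times X$, where
\begin{align*}
 X_0 = \left\{ \pr{x}{y} \in X \times X : \forall u \in X\[u \preceq x \to u < y\] \right\}.
\end{align*}
The heart of the argument is to show that $X_0$ is Borel and that $X_0$ contains no ${\RRR}_{\PPP}$-cycles in the digraph $\pr{{\AAA}_{\PPP}}{{\RRR}_{\PPP}}$ associated to $\PPP = \pr{X}{\preceq}$; once these two points are in hand, the remaining work is handled by machinery already developed in Section~\ref{sec:prelim}.

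I would first verify Borelness of $X_0$. Because $\preceq$ is Borel and locally countable, Luzin--Novikov uniformization supplies countably many Borel partial functions ${f}_{n}: X \to X$ with $\preceq \; = \; {\bigcup}_{n \in \omega}{\mathrm{graph}({f}_{n})}$, so
\begin{align*}
 X_0 = \left\{\pr{x}{y} \in X \times X: \forall n \in \omega\[x \in \dom({f}_{n}) \to {f}_{n}(x) < y\]\right\},
\end{align*}
a countable intersection of Borel sets. Next, $X_0 \subseteq {\AAA}_{\PPP}$, because if $\pr{x}{y} \in X_0$ and $y \preceq x$ held, then substituting $u = y$ in the defining condition would give $y < y$, contradicting the irreflexivity of $<$. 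To rule out ${\RRR}_{\PPP}$-cycles in $X_0$, suppose $\langle \pr{{x}_{0}}{{y}_{0}}, \dotsc, \pr{{x}_{k}}{{y}_{k}} \rangle$ is one. Then ${y}_{i} \preceq {x}_{(i+1) \mod (k+1)}$ for every $i \leq k$, and substituting $u = {y}_{i}$ in the membership condition for $\pr{{x}_{(i+1) \mod (k+1)}}{{y}_{(i+1) \mod (k+1)}}$ gives ${y}_{i} < {y}_{(i+1) \mod (k+1)}$ for every $i$, which chains to ${y}_{0} < {y}_{0}$, again a contradiction.

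With these two facts, Lemma \ref{lem:cyclefreeextends} guarantees that the transitive closure ${\trianglelefteq}_{0}$ of $\preceq \cup \; X_0$ is a quasi order on $X$ with ${E}_{{\trianglelefteq}_{0}} = {E}_{\preceq}$, so ${\trianglelefteq}_{0}$ extends $\preceq$ in the sense of Definition \ref{def:extendquasi}. Being the transitive closure of a Borel relation, ${\trianglelefteq}_{0}$ is ${\mathbf{\Sigma}}^{1}_{1}$, and Lemma \ref{lem:borelextension} then produces a Borel quasi order $\trianglelefteq$ extending ${\trianglelefteq}_{0}$; since $X_0 \subseteq {\trianglelefteq}_{0} \subseteq \trianglelefteq$, this $\trianglelefteq$ has the required property. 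The principal technical step is the Borelness of $X_0$, which genuinely requires the local countability hypothesis via Luzin--Novikov; apart from that, the only non-routine point is the cycle-free verification, which rests solely on the irreflexivity of $<$.
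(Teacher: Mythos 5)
Your argument is correct, but it takes a genuinely different route from the paper's. You work with the relation ${X}_{0} = \left\{ \pr{x}{y} : \forall u \preceq x\,\[u < y\] \right\}$, observe that it is a Borel, cycle-free subset of ${\AAA}_{\PPP}$ for $\PPP = \pr{X}{\preceq}$, pass to the ${\mathbf{\Sigma}}^{1}_{1}$ transitive closure of $\preceq \cup \; {X}_{0}$ via Lemma \ref{lem:cyclefreeextends}, and then invoke Lemma \ref{lem:borelextension} --- and hence, implicitly, the First Reflection Theorem --- to recover Borelness. This is forced on you because $\preceq \cup \; {X}_{0}$ is not transitive: if $\pr{x}{y} \in {X}_{0}$ and $y \preceq w$, nothing guarantees $\pr{x}{w} \in {X}_{0}$, since $<$ and $\preceq$ are unrelated. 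The paper sidesteps the transitive closure entirely by strengthening the defining condition with an existential quantifier: it takes $R$ to consist of the pairs $\pr{x}{y}$ with $\exists u \preceq y \, \forall v \preceq x\,\[v < u\]$ (your ${X}_{0}$ is the special case where the witness $u$ is $y$ itself), and checks that $\preceq \cup \; R$ is already a quasi order: composing on the right with $\preceq$ preserves the witness $u$, composing on the left only shrinks the set of $v$'s, and composing two $R$-steps chains the witnesses through $<$. Since, by Luzin--Novikov, ``$\exists u \preceq y$'' and ``$\forall v \preceq x$'' become a countable union and a countable intersection of Borel conditions, $\preceq \cup \; R$ is outright Borel and no reflection is needed. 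So your proof is correct but routes through heavier machinery; the paper's version buys a direct, purely Borel construction at the cost of a slightly less obvious definition. One small imprecision in your write-up: Luzin--Novikov should be applied to the relation $\left\{ \pr{x}{u} : u \preceq x \right\}$, whose vertical sections are countable by local countability, rather than to $\preceq$ itself; your displayed formula for ${X}_{0}$ shows you intend exactly this.
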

\begin{proof}
 Define $R = \left\{ \pr{y}{x} \in X \times X: \exists u \preceq x \forall v \preceq y \[v < u\]\right\}$.
 Define
 \begin{align*}
  \trianglelefteq \; = \left\{ \pr{x}{y} \in X \times X: \pr{x}{y} \in \mathord{\preceq} \vee \pr{x}{y} \in R \right\}.
 \end{align*}
 It is an easy exercise to verify that $\trianglelefteq$ is a quasi order on $X$ which is an extension of $\preceq$ in the sense of Definition \ref{def:extendquasi}.
 If $x, y \in X$ are such that $\forall u \preceq x\[u < y\]$, then $\pr{x}{y} \in R$ by definition, whence $x \trianglelefteq y$.
 Therefore to complete the proof, it suffices to verify that $\trianglelefteq$ is a Borel subset of $X \times X$, and since it is known that $\preceq$ is a Borel subset of $X \times X$, it suffices for this to check that $R$ is Borel.
 
 To this end, write $Q = \{\pr{y}{x} \in X \times X: x \preceq y\}$.
 $Q$ is Borel and by the hypothesis that $\pr{X}{\preceq}$ is locally countable, ${Q}_{y} = \{x \in X: \pr{y}{x} \in Q\}$ is countable for every $y \in X$.
 By the Luzin--Novikov theorem write $Q = {\bigcup}_{n \in \omega}{{Q}_{n}}$, where each ${Q}_{n}$ is Borel and is the graph of a function.
 Thus for each $n \in \omega$, $\dom({Q}_{n})$ is a Borel subset of $X$.
 We write ${Q}_{n}(x)$ to denote the unique $y \in X$ with $\pr{x}{y} \in {Q}_{n}$, for every $n \in \omega$ and $x \in \dom({Q}_{n})$.
 For $n, m \in \omega$, define
 \begin{align*}
  {B}_{n, m} = \{\pr{x}{y} \in \dom({Q}_{n}) \times \dom({Q}_{m}): \pr{{Q}_{n}(x)}{{Q}_{m}(y)} \in \; <\}.
 \end{align*}
 ${B}_{n, m}$ is Borel.
 For each $n, m \in \omega$, define
 \begin{align*}
  {C}_{n, m} = {B}_{n, m} \cup \left( \left(X \setminus \dom({Q}_{n})\right) \times \dom({Q}_{m})\right),
 \end{align*}
 and note that ${C}_{n, m}$ is also Borel.
 Therefore, ${\bigcup}_{m \in \omega}{\bigcap}_{n \in \omega}{{C}_{n, m}}$ is Borel, and it is easily checked that $R = {\bigcup}_{m \in \omega}{\bigcap}_{n \in \omega}{{C}_{n, m}}$.
\end{proof}
\begin{Lemma} \label{lem:dagger01}
 Suppose $X$ is a Polish space, $\kappa$ is a cardinal, $\lambda \in \{{\aleph}_{0}, {\aleph}_{1}\}$, and that $\dagger(X, \lambda, \kappa)$ holds.
 If $\pr{X}{\preceq}$ is a Borel quasi order with the property that for every $x \in X$, $\lc \{y \in X: y \preceq x\} \rc < \lambda$, then ${\odim}_{B}(\pr{X}{\preceq}) \leq \kappa$.
\end{Lemma}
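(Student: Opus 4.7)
The plan is to use $\dagger(X,\lambda,\kappa)$ to produce witnesses for the characterization of ${\odim}_{B}$ given by Lemma~\ref{lem:boreldimquasi}. Fix a sequence $\seq{<}{i}{<}{\kappa}$ of Borel partial orders on $X$ separating points from elements of $\pc{X}{< \lambda}$, as provided by $\dagger(X, \lambda, \kappa)$. For each $i < \kappa$, I would apply Lemma~\ref{lem:sepext} with $\preceq$ and ${<}_{i}$ to obtain a Borel quasi order ${\trianglelefteq}_{i}$ on $X$ that extends $\preceq$ and has the property that for all $x, y \in X$, if $\forall u \preceq x\,[u <_i y]$, then $x \trianglelefteq_i y$. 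The hypothesis that the fibres $\{u : u \preceq x\}$ have cardinality $< \lambda$ is exactly what is needed to invoke Lemma~\ref{lem:sepext}, which was stated for locally countable $\preceq$ in the case $\lambda = \aleph_1$; for $\lambda = \aleph_0$ (the locally finite case) the same proof applies verbatim, and in either case the fact that $\{u : u \preceq x\} \in \pc{X}{< \lambda}$ for every $x$ is what we will actually exploit.

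Next, I would verify condition (\ref{cond:borelquasi}) of Lemma~\ref{lem:boreldimquasi} for the sequence $\seq{\trianglelefteq}{i}{<}{\kappa}$. Given $x, y \in X$, if $x \preceq y$ then any $i$ works. Otherwise $x \not\preceq y$, and I would set $A = \{u \in X : u \preceq y\}$. Then $|A| < \lambda$ by hypothesis, and $x \notin A$ because $x \not\preceq y$. By the separating property of $\seq{<}{i}{<}{\kappa}$ applied to the pair $(A, x)$, there exists $i < \kappa$ such that $\forall q \in A\,[q <_i x]$, i.e.\@ $\forall u \preceq y\,[u <_i x]$. The defining property of ${\trianglelefteq}_{i}$ from Lemma~\ref{lem:sepext} then yields $y \trianglelefteq_i x$, which is the required alternative.

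Having shown that $\seq{\trianglelefteq}{i}{<}{\kappa}$ is a sequence of Borel quasi orders on $X$ extending $\preceq$ and satisfying (\ref{cond:borelquasi}), I would conclude by Lemma~\ref{lem:boreldimquasi} that ${\odim}_{B}(\pr{X}{\preceq}) \leq \kappa$.

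I do not anticipate a serious obstacle: the proof is essentially a direct translation between the two notions of ``separation'', mediated by Lemma~\ref{lem:sepext}, which does the Borel bookkeeping for us. The only subtle point to verify is that Lemma~\ref{lem:sepext} remains applicable in the case $\lambda = \aleph_0$, but since local finiteness implies local countability, this is automatic. The cardinality bound on the fibres of $\preceq$ is used in exactly one place, namely to guarantee that $A$ lies in $\pc{X}{< \lambda}$ so that separation applies.
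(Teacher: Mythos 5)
Your proposal is correct and follows essentially the same route as the paper: apply Lemma~\ref{lem:sepext} to each Borel partial order ${<}_{i}$ furnished by $\dagger(X,\lambda,\kappa)$, then use the separation property on the set $A$ of $\preceq$-predecessors to verify condition (\ref{cond:borelquasi}) of Lemma~\ref{lem:boreldimquasi}. Your side remark about the $\lambda=\aleph_0$ case is also exactly the paper's observation that the fibre hypothesis implies local countability, so Lemma~\ref{lem:sepext} applies.
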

\begin{proof}
 Note that the hypothesis on $\pr{X}{\preceq}$ implies that it is locally countable, hence Lemma \ref{lem:sepext} is applicable.
 By $\dagger(X, \lambda, \kappa)$, fix a sequence $\seq{<}{i}{<}{\kappa}$ of Borel partial orders on $X$ which separates points from elements of $\pc{X}{< \lambda}$.
 By Lemma \ref{lem:sepext}, for every $i < \kappa$, let ${\trianglelefteq}_{i}$ be a Borel quasi order on $X$ that extends $\preceq$ and has the property that for any $x, y \in X$, if $\forall u \preceq x\[u \; {<}_{i} \; y\]$, then $x \; {\trianglelefteq}_{i} \; y$.
 Consider $x, y \in X$ with $y \npreceq x$.
 Then $A = \{u \in X: u \preceq x\} \in \pc{X}{< \lambda}$ and $y \in X \setminus A$.
 Hence there exists $i < \kappa$ such that $\forall u \in A\[u \; {<}_{i} \; y\]$, whence $x \; {\trianglelefteq}_{i} \; y$.
 Therefore, $\seq{\trianglelefteq}{i}{<}{\kappa}$ is a witness that ${\odim}_{B}(\pr{X}{\preceq}) \leq \kappa$.
\end{proof}
\begin{Lemma} \label{lem:daggerholds}
 For any Polish space $X$, $\dagger(X, {\aleph}_{0}, {\aleph}_{0})$ holds.
\end{Lemma}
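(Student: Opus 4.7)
The plan is to construct a countable family of Borel partial orders on $X$ indexed by a countable basis for the topology, using each basic open set to define a one-step partial order that pushes every point outside it below every point inside it.

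More precisely, I would fix a countable basis $\{U_n : n \in \omega\}$ of open subsets of $X$ (which exists since $X$ is Polish), and for each $n \in \omega$ define
\begin{align*}
 x <_n y \iff x \notin U_n \wedge y \in U_n.
\end{align*}
Each $<_n$ is a Borel subset of $X \times X$ because $U_n$ is open; irreflexivity is immediate since no $x$ can simultaneously belong and not belong to $U_n$; and transitivity is vacuously satisfied, since $x <_n y <_n z$ would force $y \in U_n$ and $y \notin U_n$ simultaneously. Hence each $<_n$ is a Borel partial order on $X$ in the sense of Definition \ref{def:orders}.

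To verify that $\seq{<}{n}{<}{\omega}$ separates points from finite sets, let $A \in \pc{X}{< {\aleph}_{0}}$ and $p \in X \setminus A$. Since $X$ is $T_1$ and $A$ is finite, $X \setminus A$ is open and contains $p$, so there exists $n \in \omega$ with $p \in U_n \subseteq X \setminus A$. For every $q \in A$ we then have $q \notin U_n$ and $p \in U_n$, i.e.\@ $q <_n p$, as required by Definition \ref{def:separating}.

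There is no real obstacle here: the proof is essentially a direct use of the existence of a countable basis. The only mild point to double-check is that $<_n$ qualifies as a partial order in the sense of Definition \ref{def:orders}(2), which is why transitivity being vacuous is worth noting explicitly.
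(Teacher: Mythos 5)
Your proof is correct, but it takes a genuinely more elementary route than the paper's. You work directly with a countable basis $\{U_n : n \in \omega\}$ for $X$ and observe that a finite set is closed in a metrizable space, so some basic open set contains $p$ and misses $A$; the verification that each $<_n$ is a (vacuously transitive) Borel partial order is exactly right. The paper instead fixes a closed $F \subseteq \omega^{\omega}$ and a continuous bijection $f : F \to X$, sets $B_s = f''(F \cap \{r : s \subseteq r\})$ for $s \in \omega^{<\omega}$, and defines the partial orders $<_s$ from these Borel sets in the same two-level way you do; the separation is then achieved upstairs in $\omega^{\omega}$, where any finite set of branches is avoided by a sufficiently long initial segment of the branch coding $p$. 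The two arguments buy slightly different things: yours is shorter and uses only that finite sets are closed, whereas the paper's transfer-via-$f$ device is the same one it reuses in the proof of Theorem \ref{thm:lambdakappa} to push a family of $G_{\delta}$ subsets of $\omega^{\omega}$ separating \emph{countable} sets from points into an arbitrary Polish space — a setting where your open-basis argument would not apply, since countable sets need not be closed and, even when they are, no countable family of open sets can separate all of them. So your proof is a clean simplification for the $\aleph_0$ case, at the cost of not foreshadowing the uncountable argument.
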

\begin{proof}
 Fix a closed $F \subseteq {\omega}^{\omega}$ and a continuous bijection $f: F \rightarrow X$.
 For $s \in {\omega}^{< \omega}$, let ${F}_{s} = F \cap \{r \in {\omega}^{\omega}: s \subseteq r\}$.
 Since $f$ is 1-1 and continuous, ${B}_{s} = f''{F}_{s}$ is a Borel subset of $X$.
 Define ${<}_{s} = \{\pr{x}{y} \in X \times X: x \notin {B}_{s} \wedge y \in {B}_{s}\}$.
 Then ${<}_{s}$ is Borel and it is easily seen to be a partial order on $X$.
 Now suppose $A \subseteq X$ is finite and $y \in X \setminus A$.
 Then ${f}^{-1}(A) \subseteq F$ is finite and if $u \in F$ is unique so that $f(u) = y$, then $u \notin {f}^{-1}(A)$.
 Hence there exists $s \in {\omega}^{< \omega}$ such that $u \in {F}_{s}$, but ${f}^{-1}(A) \cap {F}_{s} = \emptyset$.
 It follows that $y \in {B}_{s}$, but $A \cap {B}_{s} = \emptyset$, whence $\forall x \in A\[x \; {<}_{s} \; y\]$.
 Therefore, $\seq{<}{s}{\in}{{\omega}^{< \omega}}$ witnesses $\dagger(X, {\aleph}_{0}, {\aleph}_{0})$.
\end{proof}
It was pointed out in \cite{posetdim} that a theorem of Kierstead and Milner~\cite{MR1420396} implies that for every locally finite partial order $\pr{P}{<}$ of size at most ${2}^{{2}^{{\aleph}_{0}}}$, $\odim\left( \pr{P}{<} \right) \leq {\aleph}_{0}$.
It turns out that the Borel analog of this true as well.
\begin{Theorem} \label{thm:locallyfinite}
 If $\pr{X}{\preceq}$ is a locally finite Borel quasi order, then ${\odim}_{B}\left( \pr{X}{\preceq} \right) \leq {\aleph}_{0}$.
\end{Theorem}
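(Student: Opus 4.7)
The plan is to observe that this theorem is an immediate consequence of the two lemmas that directly precede it in the text, namely Lemma \ref{lem:dagger01} and Lemma \ref{lem:daggerholds}. Indeed, local finiteness of $\pr{X}{\preceq}$ means that for every $x \in X$, the set $\{y \in X : y \preceq x\}$ is finite, hence has cardinality strictly less than ${\aleph}_{0}$. This is precisely the hypothesis on the cardinality of down-sets needed to apply Lemma \ref{lem:dagger01} with $\lambda = {\aleph}_{0}$.

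So the first and only substantive step I would take is to invoke Lemma \ref{lem:daggerholds}, which asserts that $\dagger(X, {\aleph}_{0}, {\aleph}_{0})$ holds for any Polish space $X$. This gives us a countable sequence $\seq{<}{i}{<}{\omega}$ of Borel partial orders on $X$ that separates points from finite subsets of $X$. Feeding this sequence and the locally finite Borel quasi order $\preceq$ into Lemma \ref{lem:dagger01} with $\lambda = \kappa = {\aleph}_{0}$ immediately yields ${\odim}_{B}(\pr{X}{\preceq}) \leq {\aleph}_{0}$, as desired.

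There is no real obstacle here: all the work has been done in Lemmas \ref{lem:sepext}, \ref{lem:dagger01}, and \ref{lem:daggerholds}. The only thing worth emphasizing in writing this up is that the ``$<\lambda$'' in Lemma \ref{lem:dagger01} is interpreted strictly, so ``finite'' translates to ``$< {\aleph}_{0}$'' and not to ``$\leq {\aleph}_{0}$''; this is exactly what locally finite gives us, and why the argument applies to locally finite, but not in general to locally countable, Borel quasi orders. In fact this gap between the two cases is what motivates the more delicate consistency results that presumably follow for locally countable orders, where one must work to obtain $\dagger(X, {\aleph}_{1}, \kappa)$ for some $\kappa < {2}^{{\aleph}_{0}}$.
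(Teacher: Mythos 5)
Your proposal is correct and is exactly the paper's argument: the paper proves Theorem \ref{thm:locallyfinite} by combining Lemmas \ref{lem:dagger01} and \ref{lem:daggerholds}, applying the former with $\lambda = \kappa = {\aleph}_{0}$ precisely as you describe.
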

\begin{proof}
 By combining Lemmas \ref{lem:dagger01} and \ref{lem:daggerholds}.
\end{proof}
\begin{Corollary} \label{cor:locallyfinitelinear}
 Every locally finite Borel quasi order has a Borel linearization.
\end{Corollary}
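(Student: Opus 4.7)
The plan is to chain together two results already established in the paper. By Theorem \ref{thm:locallyfinite}, any locally finite Borel quasi order $\pr{X}{\preceq}$ satisfies ${\odim}_{B}\left(\pr{X}{\preceq}\right) \leq {\aleph}_{0}$. Then Theorem \ref{thm:ctbledimlin} asserts that any Borel quasi order of countable Borel order dimension is Borel linearizable. Applying these two facts in sequence immediately yields a Borel linear quasi order on $X$ extending $\preceq$, which is exactly what the corollary demands.

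Concretely, the only step that requires thought is invoking the two theorems in the right order and checking that there are no hidden hypotheses to verify. Theorem \ref{thm:locallyfinite} needs only the local finiteness of the Borel quasi order, which is given. Theorem \ref{thm:ctbledimlin} needs only that ${\odim}_{B}\left(\PPP\right) \leq {\aleph}_{0}$, which is the conclusion of the first application. There is no obstacle at all here; the corollary is an immediate combination. The substantive content lies entirely in the two theorems being chained, especially Theorem \ref{thm:locallyfinite}, which itself rests on Lemmas \ref{lem:dagger01} and \ref{lem:daggerholds}, and Theorem \ref{thm:ctbledimlin}, which uses the dichotomy from Theorem \ref{thm:borelodimdichotomy} together with Kanovei's theorem (Theorem \ref{thm:kanovei}).

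Thus the proof is essentially a one-line deduction: given a locally finite Borel quasi order, Theorem \ref{thm:locallyfinite} gives countable Borel order dimension, and then Theorem \ref{thm:ctbledimlin} produces the Borel linearization.
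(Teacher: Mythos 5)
Your proof is correct and is exactly the intended argument: the corollary follows immediately by combining Theorem \ref{thm:locallyfinite} (locally finite implies countable Borel order dimension) with Theorem \ref{thm:ctbledimlin} (countable Borel order dimension implies Borel linearizability). The paper leaves this deduction implicit, and your chaining of the two results, with the hypotheses checked, is precisely what is needed.
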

Next we use ideas from \cite{MR4518086} to show that the Borel order dimension of every locally countable Borel quasi order may consistently be any regular cardinal below the continuum, with the continuum being arbitrarily large.
Thus $\ZFC$ does not prove that there is some locally countable Borel quasi order whose Borel order dimension is equal to ${2}^{{\aleph}_{0}}$.
In this model, we also get that for some fixed $\lambda \leq {2}^{{\aleph}_{0}}$, $\wbdicr({\GGG}_{0}(E, f)) = \lambda$, for every $f \in \FFF$ and dense selector $E$.
Geschke~\cite{MR2779703} has obtained consistency results on the weak Borel chromatic number of Borel graphs.

Harrington introduced a c.c.c.\@ forcing for adding a generic ${G}_{\delta}$ set.
Miller~\cite{MR548475, Mi1} introduced a modification of Harrington's forcing and used it to obtain consistency results about Borel hierarchies.
A new c.c.c.\@ forcing notion, which is an amalgam of a forcing from Kumar and Raghavan~\cite{MR4518086} and the one from Miller~\cite{Mi1}, is introduced below.
Our forcing generically adds a family of ${\aleph}_{1}$ many ${G}_{\delta}$ sets that separates countable sets in the ground model from points.
\begin{Definition} \label{def:alphadelta}
 For ordinals $\alpha, \delta$, define ${\alpha}^{\[< \delta\]} = \{X \subseteq \alpha: \otp(X) < \delta\}$.
\end{Definition}
\begin{Definition} \label{def:fkappadelta}
 Let $\kappa$ be a cardinal and $\delta < {\omega}_{1}$ be an indecomposable ordinal.
 Define
 \begin{align*}
  &{\G}^{\ast}_{\kappa, \delta} = \left\{g: g \ \text{is a function} \ \wedge \dom(g) \in {\kappa}^{\[< \delta\]} \wedge \ran(g) \subseteq \pc{\omega}{< {\aleph}_{0}}\right\},\\
  &{\G}_{\kappa, \delta} = \left\{g \in {\G}^{\ast}_{\kappa, \delta}: \forall n \in \omega\[\lc\left\{\alpha \in \dom(g): n \in g(\alpha)\right\}\rc < {\aleph}_{0}\]\right\}.
 \end{align*}
 For ${g}_{1}, {g}_{2} \in {\G}_{\kappa, \delta}$, define ${g}_{2} \; {\leq}_{\G} \; {g}_{1}$ if and only if $\dom({g}_{2}) \supseteq \dom({g}_{1})$ and $\forall \alpha \in \dom({g}_{1})\[{g}_{2}(\alpha) \supseteq {g}_{1}(\alpha)\]$.
 It is clear that ${\leq}_{\G}$ is a quasi order.
\end{Definition}
\begin{Definition} \label{def:F}
 Define
 \begin{align*}
  \F = \left\{f: f \ \text{is a function} \ \wedge \dom(f) \in \pc{{\omega}^{< \omega}}{< {\aleph}_{0}} \wedge \ran(f) \subseteq \pc{\omega}{< {\aleph}_{0}}\right\}.
 \end{align*}
 For ${f}_{1}, {f}_{2} \in \F$ define ${f}_{2} \; {\leq}_{\F} \; {f}_{1}$ if and only if $\dom({f}_{2}) \supseteq \dom({f}_{1})$ and $\forall s \in \dom({f}_{1})\[{f}_{2}(s) \supseteq {f}_{1}(s)\]$.
 Clearly, ${\leq}_{\F}$ is a quasi order.
\end{Definition}
\begin{Definition} \label{def:RS}
 Let $E: {2}^{{\aleph}_{0}} \rightarrow {\omega}^{\omega}$ be a bijection.
 Let $\delta < {\omega}_{1}$ be indecomposable.
 Define ${\RR}_{E, \delta}$ to be the collection of all $\pr{f}{g, h} \in \F \times {\G}_{{2}^{{\aleph}_{0}}, \delta} \times \pc{{2}^{{\aleph}_{0}}}{< {\aleph}_{0}}$ satisfying the following conditions:
 \begin{enumerate}
  \item
  $h \cap \dom(g) = \emptyset$;
  \item
  $\forall \alpha \in \dom(g) \forall l \in \omega\[E(\alpha) \restrict l \in \dom(f) \implies g(\alpha) \cap f(E(\alpha) \restrict l) = \emptyset\]$.
 \end{enumerate}
 Given $p = \pr{{f}_{p}}{{g}_{p}, {h}_{p}} \in {\RR}_{E, \delta}$ and $q = \pr{{f}_{q}}{{g}_{q}, {h}_{q}} \in {\RR}_{E, \delta}$, define $q \; {\leq}_{\RR} \; p$ if and only if ${f}_{q} \; {\leq}_{\F} \; {f}_{p}$, ${g}_{q} \; {\leq}_{\G} \; {g}_{p}$, and ${h}_{q} \supseteq {h}_{p}$.
 Then it is clear that $\pr{{\RR}_{E, \delta}}{{\leq}_{\RR}}$ is a forcing notion, and when $p = \pr{f}{g, h} \in {\RR}_{E, \delta}$, we will write ${f}_{p}, {g}_{p}$, and ${h}_{p}$ to denote $f, g$, and $h$ respectively.
 
 Define ${\fSS}_{E}$ to be the finite support product of the ${\RR}_{E, \delta}$ as $\delta$ ranges over the countable indecomposable ordinals.
 In other words, ${\fSS}_{E}$ is the collection of all $p$ such that $p$ is a function, $\dom(p)$ is a finite subset of $\{\delta < {\omega}_{1}: \delta \ \text{is indecomposable}\}$, and $\forall \delta \in \dom(p)\[p(\delta) \in {\RR}_{E, \delta}\]$.
 And for $p, q \in {\fSS}_{E}$, $q \; {\leq}_{\fSS} \; p$ if and only if $\dom(q) \supseteq \dom(p)$ and $\forall \delta \in \dom(p)\[q(\delta) \; {\leq}_{\RR} \; p(\delta)\]$.
 Then $\pr{{\fSS}_{E}}{{\leq}_{\fSS}}$ is a forcing notion.
\end{Definition}
\begin{Lemma} \label{lem:density}
 Let $E: {2}^{{\aleph}_{0}} \rightarrow {\omega}^{\omega}$ be a bijection.
 Let $\delta < {\omega}_{1}$ be indecomposable.
 Then the following hold:
 \begin{enumerate}
  \item
  for each $n \in \omega$, ${D}_{n} = \{q \in {\RR}_{E, \delta}: \exists s \in \dom({f}_{q})\[n \in f(s)\]\}$ is dense;
  \item
  for each $p \in {\RR}_{E, \delta}$, $\beta \in {h}_{p}$, and $n \in \omega$,
  \begin{align*}
   {D}_{p, \beta, n} = \left\{r \; {\leq}_{\RR} \; p: \exists l \in \omega\[E(\beta) \restrict l \in \dom({f}_{r}) \wedge n \in {f}_{r}(E(\beta) \restrict l)\]\right\}
  \end{align*}
  is dense below $p$;
  \item
  for each $p \in {\RR}_{E, \delta}$ and $\alpha \in \dom({g}_{p})$, ${D}_{p, \alpha} = \{r \; {\leq}_{\RR} \; p: {g}_{r}(\alpha) \neq \emptyset\}$ is dense below $p$.
 \end{enumerate}
\end{Lemma}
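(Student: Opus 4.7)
The plan is to verify each clause by producing, starting from a given condition (and a $\leq_\RR$-extension in the last two cases), an explicit extension that lies in the target dense set while still satisfying the compatibility requirement (2) of Definition \ref{def:RS}. Two finiteness facts will drive every construction: for each $n \in \omega$ and $g \in \G_{\kappa, \delta}$, the counting set $A_n(g) = \{\alpha \in \dom(g) : n \in g(\alpha)\}$ is finite by the definition of $\G_{\kappa, \delta}$; and $\dom(f)$, together with each value $f(s)$, is finite by the definition of $\F$.

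For (1), given $q \in \RR_{E, \delta}$ and $n \in \omega$, I will extend $f_q$ by assigning $n$ to some new string $s$, keeping $g_q$ and $h_q$ unchanged. The strings to be avoided are exactly those of the form $E(\alpha) \restrict l$ with $\alpha \in A_n(g_q)$ and $l \in \omega$; these form a finite union of branches of $\omega^{<\omega}$, which cannot cover $\omega^{<\omega}$, so an admissible $s$ exists. Any such $s$ yields $r \leq_\RR q$ with $r \in D_n$: condition (2) of Definition \ref{def:RS} is preserved because any $\gamma \in \dom(g_q)$ with $E(\gamma) \restrict l = s$ automatically satisfies $\gamma \notin A_n(g_q)$, hence $n \notin g_q(\gamma)$.

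For (2), density below $p$ is shown by starting from an arbitrary $r \leq_\RR p$. Then $\beta \in h_p \subseteq h_r$ and, by condition (1) of Definition \ref{def:RS}, $\beta \notin \dom(g_r)$, so in particular $\beta \notin A_n(g_r)$. Using the injectivity of $E$, for each $\alpha \in A_n(g_r)$ there is a least coordinate where $E(\alpha)$ and $E(\beta)$ differ; since $A_n(g_r)$ is finite, some $l$ exceeds all these coordinates, and then $E(\beta) \restrict l$ cannot equal $E(\alpha) \restrict l'$ for any $\alpha \in A_n(g_r)$ and any $l'$. Placing $n$ into $f_{r'}(E(\beta) \restrict l)$ and keeping $g_r$, $h_r$ unchanged produces the required $r' \leq_\RR r$ in $D_{p, \beta, n}$; the verification of (2) is exactly as in the previous paragraph.

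For (3), given $r \leq_\RR p$ and $\alpha \in \dom(g_p) \subseteq \dom(g_r)$, the plan is to add a single fresh natural number $n$ to $g_{r'}(\alpha)$ while leaving $f_r$ and $h_r$ alone. Here the finiteness works in the other direction: the set of forbidden values of $n$ is $\bigcup\{f_r(E(\alpha) \restrict l) : l \in \omega,\ E(\alpha) \restrict l \in \dom(f_r)\}$, which is a finite subset of $\omega$ because $\dom(f_r)$ and each $f_r$-value are finite. Any $n$ outside this finite set works; the verification that $g_{r'} \in \G_{{2}^{\aleph_0}, \delta}$ uses that the counting constraint at any value $n' \neq n$ is unchanged while at $n$ it grows by at most one. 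No real obstacle beyond these bookkeeping checks arises in the argument.
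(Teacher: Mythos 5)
Your proof is correct and follows essentially the same route as the paper's: in each clause one extends the condition by a single new assignment, using the finiteness of $\{\alpha \in \dom(g) : n \in g(\alpha)\}$ (for (1) and (2)) or of $\ran(f)$ (for (3)) to avoid violating condition (2) of Definition \ref{def:RS}. The only cosmetic differences are that you phrase the avoidance in terms of the finitely many branches $E(\alpha)$ with $n \in g(\alpha)$ rather than the paper's finite set $X \subseteq \omega^{\omega}$, and in (3) you exclude only the values $f_r(E(\alpha)\restrict l)$ rather than all of $\bigcup\ran(f_r)$; both variants work.
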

\begin{proof}
 For (1): fix $p \in {\RR}_{E, \delta}$.
 Then $X = \{E(\alpha): \alpha \in \dom({g}_{p}) \wedge n \in {g}_{p}(\alpha)\} \subseteq {\omega}^{\omega}$ is finite.
 Hence it is possible to find $s \in {\omega}^{< \omega} \setminus \dom({f}_{p})$ such that $s \not\subseteq x$, for all $x \in X$.
 Define ${f}_{q} = {f}_{p} \cup \{\pr{s}{\{n\}}\}$, ${g}_{q} = {g}_{p}$, and ${h}_{q} = {h}_{p}$.
 To see that (2) of Definition \ref{def:RS} holds, suppose $\alpha \in \dom({g}_{q})$, $l \in \omega$, and that $E(\alpha) \restrict l \in \dom({f}_{q})$.
 If $E(\alpha) \restrict l \in \dom({f}_{p})$, then ${g}_{q}(\alpha) \cap {f}_{q}(E(\alpha)\restrict l) = {g}_{p}(\alpha) \cap {f}_{p}(E(\alpha)\restrict l) = \emptyset$.
 Otherwise, $E(\alpha) \restrict l = s$, which means $E(\alpha) \notin X$, and so $n \notin {g}_{p}(\alpha)$.
 Hence ${g}_{q}(\alpha) \cap {f}_{q}(E(\alpha) \restrict l) = {g}_{p}(\alpha) \cap \{n\} = \emptyset$.
 Therefore, $q \; {\leq}_{\RR} \; p$ and $q \in {D}_{n}$, as required.
 
 For (2): fix any $q \; {\leq}_{\RR} \; p$.
 Once again, $X = \{E(\alpha): \alpha \in \dom({g}_{q}) \wedge n \in {g}_{q}(\alpha)\}$ is a finite set.
 Further, since $\beta \in {h}_{p} \subseteq {h}_{q}$ and ${h}_{q} \cap \dom({g}_{q}) = \emptyset$, $E(\beta) \notin X$.
 Therefore, there exists $l \in \omega$ such that $E(\beta) \restrict l \in {\omega}^{< \omega} \setminus \dom({f}_{q})$ and $E(\beta) \restrict l \not\subseteq x$, for all $x \in X$.
 Define ${f}_{r} = {f}_{q} \cup \left\{\pr{E(\beta) \restrict l}{\{n\}}\right\}$, ${g}_{r} = {g}_{q}$, and ${h}_{r} = {h}_{q}$.
 As before, suppose $\alpha \in \dom({g}_{r})$, $m \in \omega$, and that $E(\alpha) \restrict m \in \dom({f}_{r})$.
 If $E(\alpha) \restrict m \in \dom({f}_{q})$, then ${g}_{r}(\alpha) \cap {f}_{r}(E(\alpha) \restrict m) = {g}_{q}(\alpha) \cap {f}_{q}(E(\alpha) \restrict m) = \emptyset$, while if not, then $E(\beta) \restrict l \subseteq E(\alpha)$, whence $n \notin {g}_{q}(\alpha)$ and ${g}_{r}(\alpha) \cap {f}_{r}(E(\alpha) \restrict m) = {g}_{q}(\alpha) \cap \{n\} = \emptyset$.
 Thus $r \; {\leq}_{\RR} \; q$ and $r \in {D}_{p, \beta, n}$, as needed.
 
 For (3): fix any $q \; {\leq}_{\RR} \; p$.
 Then $\bigcup \ran({f}_{q})$ is a finite subset of $\omega$.
 Choose $n \in \omega \setminus \left( \bigcup \ran({f}_{q}) \right)$.
 Define ${g}_{r}$ to be the function such that $\dom({g}_{r}) = \dom({g}_{q})$, ${g}_{r}(\alpha) = {g}_{q}(\alpha) \cup \{n\}$, and ${g}_{r}(\beta) = {g}_{q}(\beta)$, for all $\beta \in \dom({g}_{q}) \setminus \{\alpha\}$.
 Note ${g}_{r} \in {\G}_{{2}^{{\aleph}_{0}}, \delta}$ and ${g}_{r} \; {\leq}_{\G} \; {g}_{q}$.
 Define ${f}_{r} = {f}_{q}$ and ${h}_{r} = {h}_{q}$.
 Note ${h}_{r} \cap \dom({g}_{r}) = {h}_{q} \cap \dom({g}_{q}) = \emptyset$.
 Suppose $\beta \in \dom({g}_{r})$, $l \in \omega$, and that $E(\beta) \restrict l \in \dom({f}_{r})$.
 Then ${g}_{r}(\beta) \cap {f}_{r}(E(\beta) \restrict l) \subseteq \left({g}_{q}(\beta) \cup \{n\}\right) \cap {f}_{q}(E(\beta) \restrict l) = {g}_{q}(\beta) \cap {f}_{q}(E(\beta) \restrict l) = \emptyset$ because $n \notin {f}_{q}(E(\beta) \restrict l)$.
 Thus $r \; {\leq}_{\RR} \; q$ and $r \in {D}_{p, \alpha}$, as needed.
\end{proof}
\begin{Definition} \label{def:gdelta}
 Suppose $\V$ is a transitive model of a sufficiently large fragment of $\ZFC$.
 In $\V$, suppose that $E: {2}^{{\aleph}_{0}} \rightarrow {\omega}^{\omega}$ is a bijection and that $\delta < {\omega}_{1}$ is indecomposable.
 Suppose $G$ is a $(\V, {\RR}^{\V}_{E, \delta})$-generic filter.
 In $\VG$, define the following sets.
 For each $n \in \omega$, ${U}_{G, n} = \bigcup\left\{\[s\]: \exists p \in G\[s \in \dom({f}_{p}) \wedge n \in {f}_{p}(s)\]\right\}$, where $\[s\] = \{x \in {\omega}^{\omega}: s \subseteq x\}$, for every $s \in {\omega}^{< \omega}$.
 Define ${\F}_{G} = {\bigcap}_{n \in \omega}{{U}_{G, n}}$.
 Define ${\G}_{G} = \{E(\alpha): \exists p \in G\[\alpha \in \dom({g}_{p})\]\}$ and ${\sH}_{G} = \{E(\alpha): \exists p \in G\[\alpha \in {h}_{p}\]\}$.
\end{Definition}
\begin{Lemma} \label{lem:gdelta}
 ${\F}_{G}$ is a ${G}_{\delta}$ set such that ${\sH}_{G} \subseteq {\F}_{G}$ and ${\G}_{G} \cap {\F}_{G} = \emptyset$.
\end{Lemma}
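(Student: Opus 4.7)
The plan is to address the three assertions in order, with the first being immediate and the other two being straightforward applications of Lemma \ref{lem:density} together with the genericity of $G$.

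First I would observe that each $U_{G, n}$ is, by its very definition, a union of basic open sets $\[s\] \subseteq {\omega}^{\omega}$, and hence is open in ${\omega}^{\omega}$. Therefore ${\F}_{G} = {\bigcap}_{n \in \omega}{{U}_{G, n}}$ is a ${G}_{\delta}$ subset of ${\omega}^{\omega}$.

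For the inclusion ${\sH}_{G} \subseteq {\F}_{G}$, take $y \in {\sH}_{G}$, and fix $\beta$ and $p \in G$ with $\beta \in {h}_{p}$ and $y = E(\beta)$. Given $n \in \omega$, Lemma \ref{lem:density}(2) provides a dense set ${D}_{p, \beta, n}$ below $p$, and since $G$ is generic, $G$ meets ${D}_{p, \beta, n}$ below $p$; that is, there exist $r \in G$ with $r \; {\leq}_{\RR} \; p$ and $l \in \omega$ with $E(\beta) \restrict l \in \dom({f}_{r})$ and $n \in {f}_{r}(E(\beta) \restrict l)$. Then $y = E(\beta) \in \[E(\beta)\restrict l\] \subseteq {U}_{G, n}$. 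As $n$ was arbitrary, $y \in {\F}_{G}$.

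For ${\G}_{G} \cap {\F}_{G} = \emptyset$, take $y \in {\G}_{G}$, and fix $\alpha$ and $p \in G$ with $\alpha \in \dom({g}_{p})$ and $y = E(\alpha)$. By Lemma \ref{lem:density}(3) and genericity, there exists $r \in G$ with $r \; {\leq}_{\RR} \; p$ and ${g}_{r}(\alpha) \neq \emptyset$; pick some $n \in {g}_{r}(\alpha)$. I claim $y \notin {U}_{G, n}$, which suffices. Suppose for contradiction that $y \in {U}_{G, n}$, witnessed by some $q \in G$ and $s \in \dom({f}_{q})$ with $n \in {f}_{q}(s)$ and $s \subseteq E(\alpha)$. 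Using that $G$ is a filter, choose $r' \in G$ with $r' \; {\leq}_{\RR} \; r$ and $r' \; {\leq}_{\RR} \; q$. Then ${g}_{r'}(\alpha) \supseteq {g}_{r}(\alpha) \ni n$, and ${f}_{r'}(s) \supseteq {f}_{q}(s) \ni n$; setting $l = \lc s \rc$ so that $s = E(\alpha)\restrict l \in \dom({f}_{r'})$, condition (2) of Definition \ref{def:RS} applied to $r'$ yields ${g}_{r'}(\alpha) \cap {f}_{r'}(E(\alpha) \restrict l) = \emptyset$, contradicting $n$ being a common element.

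No step is genuinely a main obstacle here; the only thing that needs care is writing $s$ in the form $E(\alpha) \restrict l$ when applying condition (2) of Definition \ref{def:RS}, which works because $s \subseteq E(\alpha)$.
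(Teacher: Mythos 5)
Your proposal is correct and follows essentially the same route as the paper: openness of each $U_{G,n}$ gives the ${G}_{\delta}$ claim, density of ${D}_{p,\beta,n}$ gives ${\sH}_{G} \subseteq {\F}_{G}$, and density of ${D}_{p,\alpha}$ plus a common extension in the filter violating condition (2) of Definition \ref{def:RS} gives ${\G}_{G} \cap {\F}_{G} = \emptyset$. The only cosmetic difference is that you exhibit a single $n$ with $E(\alpha) \notin {U}_{G,n}$ rather than assuming $E(\alpha) \in {\F}_{G}$ outright and deriving the contradiction, which is the same argument.
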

\begin{proof}
 It is clear from the definition that ${\F}_{G}$ is a ${G}_{\delta}$ set.
 Suppose $p \in G$, $\alpha \in {h}_{p}$ and $n \in \omega$.
 By (2) of Lemma \ref{lem:density}, ${D}_{p, \alpha, n}$ is dense below $p$, and so, there exist $q \in G$ and $l \in \omega$ with $E(\alpha) \restrict l \in \dom({f}_{q})$ and $n \in {f}_{q}(E(\alpha) \restrict l)$, whence $E(\alpha) \in {U}_{G, n}$.
 As this is for every $n \in \omega$, $E(\alpha) \in {\F}_{G}$.
 This shows ${\sH}_{G} \subseteq {\F}_{G}$.
 Next, suppose for a contradiction that $p \in G$, $\alpha \in \dom({g}_{p})$, and that $E(\alpha) \in {\F}_{G}$.
 By (3) of Lemma \ref{lem:density}, there exist $q \in G$ and $n \in \omega$ with $q \; {\leq}_{\RR} \; p$ and $n \in {g}_{q}(\alpha)$.
 Since we have assumed $E(\alpha) \in {\F}_{G}$, there exist $r \in G$ and $l \in \omega$ so that $E(\alpha) \restrict l \in \dom({f}_{r})$ and $n \in {f}_{r}(E(\alpha) \restrict l)$.
 Find ${r}^{\ast} \in G$ with ${r}^{\ast} \; {\leq}_{\RR} \; q, r$.
 Then we have $\alpha \in \dom({g}_{{r}^{\ast}})$, $l \in \omega$, $E(\alpha) \restrict l \in \dom({f}_{{r}^{\ast}})$, and $n \in {g}_{{r}^{\ast}}(\alpha) \cap {f}_{{r}^{\ast}}(E(\alpha) \restrict l) = \emptyset$ because ${r}^{\ast}$ satisfies (2) of Definition \ref{def:RS}.
 This contradiction shows ${\G}_{G} \cap {\F}_{G} = \emptyset$.
\end{proof}
\begin{Definition} \label{def:PQ}
 Suppose $\delta < {\omega}_{1}$ is indecomposable and that ${\aleph}_{0} \leq \kappa$ is a cardinal.
 Define ${\QQ}_{\kappa, \delta}$ to be the collection of all $p$ such that $p$ is a function, $\dom(p) \in {\kappa}^{\[< \delta\]}$, $\ran(p) \subseteq 2$, and $\{\xi \in \dom(p): p(\xi) = 1\}$ is finite.
 For $p, q \in {\QQ}_{\kappa, \delta}$, define $q \; {\leq}_{\QQ} \; p$ if and only if $q \supseteq p$.
 
 Define ${\PP}_{\kappa}$ to be the finite support product of the ${\QQ}_{\kappa, \delta}$ as $\delta$ ranges over the indecomposable countable ordinals.
 In other words, ${\PP}_{\kappa}$ is the collection of all $p$ such that $p$ is a function, $\dom(p)$ is a finite subset of $\{\delta < {\omega}_{1}: \delta \ \text{is indecomposable}\}$, and $\forall \delta \in \dom(p)\[p(\delta) \in {\QQ}_{\kappa, \delta}\]$.
 For $p, q \in {\PP}_{\kappa}$, $q \; {\leq}_{\PP} \; p$ if and only if $\dom(q) \supseteq \dom(p)$ and $\forall \delta \in \dom(p)\[q(\delta) \; {\leq}_{\QQ} \; p(\delta)\]$.
 
 For an indecomposable $\delta < {\omega}_{1}$ and a bijection $E: {2}^{{\aleph}_{0}} \rightarrow {\omega}^{\omega}$ define ${j}_{\delta}: {\RR}_{E, \delta} \rightarrow {\QQ}_{{2}^{{\aleph}_{0}}, \delta}$ as follows.
 Given $p \in {\RR}_{E, \delta}$, ${j}_{\delta}(p) = q$, where $q$ is the function such that $\dom(q) = \dom({g}_{p}) \cup {h}_{p}$, ${q}^{-1}(\{1\}) = {h}_{p}$, and ${q}^{-1}(\{0\}) = \dom({g}_{p})$.
 Define $j: {\fSS}_{E} \rightarrow {\PP}_{{2}^{{\aleph}_{0}}}$ as follows.
 Given $p \in {\fSS}_{E}$, $j(p) = q$, where $q$ is the function such that $\dom(q) = \dom(p)$ and $\forall \delta \in \dom(q)\[q(\delta) = {j}_{\delta}(p(\delta))\]$.
\end{Definition}
The forcings ${\QQ}_{\kappa, \delta}$ and ${\PP}_{\kappa}$ were first defined by Kumar and Raghavan~\cite{MR4518086}.
They proved that ${\PP}_{\kappa}$ is c.c.c.\@ for all infinite $\kappa$.
We will now use this fact to prove that ${\fSS}_{E}$ is c.c.c.
\begin{Lemma} \label{lem:compatible}
 Suppose $E: {2}^{{\aleph}_{0}} \rightarrow {\omega}^{\omega}$ is a bijection and $\delta < {\omega}_{1}$ is indecomposable.
 For any $p, q \in {\RR}_{E, \delta}$, if ${f}_{p} = {f}_{q}$ and ${j}_{\delta}(p) \; {\not\perp}_{\QQ} \; {j}_{\delta}(q)$, then $p \; {\not\perp}_{\RR} \; q$.
\end{Lemma}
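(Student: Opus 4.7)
The plan is to construct an explicit common lower bound $r \in {\RR}_{E,\delta}$ of $p$ and $q$ by taking componentwise unions: set ${f}_{r} = {f}_{p} = {f}_{q}$, let $\dom({g}_{r}) = \dom({g}_{p}) \cup \dom({g}_{q})$ with ${g}_{r}(\alpha) = {g}_{p}(\alpha) \cup {g}_{q}(\alpha)$ (adopting the convention that ${g}_{p}(\alpha) = \emptyset$ when $\alpha \notin \dom({g}_{p})$, and symmetrically for ${g}_{q}$), and ${h}_{r} = {h}_{p} \cup {h}_{q}$. That $r \; {\leq}_{\RR} \; p$ and $r \; {\leq}_{\RR} \; q$ is then immediate by construction; the real work is to verify that $r$ lies in ${\RR}_{E,\delta}$.

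First I would verify that ${g}_{r} \in {\G}_{{2}^{{\aleph}_{0}}, \delta}$. For each $n \in \omega$, the set $\{\alpha \in \dom({g}_{r}): n \in {g}_{r}(\alpha)\}$ is the union of its analogues for ${g}_{p}$ and ${g}_{q}$, both finite. The harder point is that $\otp(\dom({g}_{r})) < \delta$: since $\delta$ is indecomposable, the class of order types strictly below $\delta$ is closed under the natural (Hessenberg) sum, so $\otp(\dom({g}_{p}) \cup \dom({g}_{q})) < \delta$ follows from $\otp(\dom({g}_{p})), \otp(\dom({g}_{q})) < \delta$. Condition (2) of Definition \ref{def:RS} is then automatic: for any $\alpha \in \dom({g}_{r})$ and $l \in \omega$ with $E(\alpha) \restrict l \in \dom({f}_{r})$, both ${g}_{p}(\alpha)$ and ${g}_{q}(\alpha)$ are disjoint from ${f}_{p}(E(\alpha) \restrict l) = {f}_{q}(E(\alpha) \restrict l) = {f}_{r}(E(\alpha) \restrict l)$ (under the empty-set convention), hence so is their union ${g}_{r}(\alpha)$.

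The only place the hypothesis ${j}_{\delta}(p) \; {\not\perp}_{\QQ} \; {j}_{\delta}(q)$ actually enters is in verifying condition (1), namely ${h}_{r} \cap \dom({g}_{r}) = \emptyset$. The intra-disjointnesses ${h}_{p} \cap \dom({g}_{p}) = {h}_{q} \cap \dom({g}_{q}) = \emptyset$ come for free from $p, q \in {\RR}_{E, \delta}$. For the cross-disjointness, if some $\alpha$ lay in ${h}_{p} \cap \dom({g}_{q})$, then by the definition of ${j}_{\delta}$ one would have ${j}_{\delta}(p)(\alpha) = 1$ while ${j}_{\delta}(q)(\alpha) = 0$, contradicting ${\QQ}$-compatibility at the common coordinate $\alpha$; the case $\alpha \in {h}_{q} \cap \dom({g}_{p})$ is symmetric. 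No step poses a genuine obstacle; the only mildly delicate point is the order-type estimate, which is a standard closure property of indecomposable ordinals, and the proof is otherwise a bookkeeping exercise.
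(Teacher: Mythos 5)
Your proof is correct and follows essentially the same route as the paper's: form the componentwise union $r$, use the compatibility of ${j}_{\delta}(p)$ and ${j}_{\delta}(q)$ only to get the cross-disjointness ${h}_{p} \cap \dom({g}_{q}) = \dom({g}_{p}) \cap {h}_{q} = \emptyset$, and check condition (2) of Definition \ref{def:RS} coordinatewise. The one point where you add detail the paper leaves implicit --- that $\otp(\dom({g}_{p}) \cup \dom({g}_{q})) < \delta$ via closure of indecomposable ordinals under natural sums --- is handled correctly.
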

\begin{proof}
 Define ${g}_{r}$ to be the function such that $\dom({g}_{r}) = \dom({g}_{p}) \cup \dom({g}_{q})$, $\forall \alpha \in \dom({g}_{p}) \setminus \dom({g}_{q})\[{g}_{r}(\alpha) = {g}_{p}(\alpha)\]$, $\forall \alpha \in \dom({g}_{q}) \setminus \dom({g}_{p})\[{g}_{r}(\alpha) = {g}_{q}(\alpha)\]$, and $\forall \alpha \in \dom({g}_{p}) \cap \dom({g}_{q})\[{g}_{r}(\alpha) = {g}_{p}(\alpha) \cup {g}_{q}(\alpha)\]$.
 Note that ${g}_{r} \in {\G}_{{2}^{{\aleph}_{0}}, \delta}$ and that ${g}_{r} \; {\leq}_{\G} \; {g}_{p}, {g}_{q}$.
 Define ${f}_{r} = {f}_{p} = {f}_{q}$ and ${h}_{r} = {h}_{p} \cup {h}_{q}$.
 Since ${j}_{\delta}(p) \; {\not\perp}_{\QQ} \; {j}_{\delta}(q)$, it follows that $\dom({g}_{p}) \cap {h}_{q} = \emptyset = \dom({g}_{q}) \cap {h}_{p}$.
 Therefore, $\dom({g}_{r}) \cap {h}_{r} = \emptyset$.
 Next, fix $\alpha \in \dom({g}_{r})$, $l \in \omega$, and assume that $E(\alpha) \restrict l \in \dom({f}_{r}) = \dom({f}_{p}) = \dom({f}_{q})$.
 Then if $\alpha \in \dom({g}_{p})$, then ${g}_{p}(\alpha) \cap {f}_{p}(E(\alpha) \restrict l) = \emptyset$ and if $\alpha \in \dom({g}_{q})$, then ${g}_{q}(\alpha) \cap {f}_{q}(E(\alpha) \restrict l) = \emptyset$.
 It follows that ${g}_{r}(\alpha) \cap {f}_{r}(E(\alpha) \restrict l) = \emptyset$.
 Thus we have verified that $r \in {\RR}_{E, \delta}$.
 Since $r \; {\leq}_{\RR} \; p, q$, $p \; {\not\perp}_{\RR} \; q$, as claimed.
\end{proof}
\begin{Lemma} \label{lem:seccc}
 ${\fSS}_{E}$ is c.c.c.
\end{Lemma}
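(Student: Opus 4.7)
The plan is to reduce the countable chain condition for $\fSS_{E}$ to that of $\PP_{{2}^{{\aleph}_{0}}}$, which was already established in \cite{MR4518086}, via the map $j$ of Definition \ref{def:PQ} together with Lemma \ref{lem:compatible}. Suppose for contradiction that $\{{p}_{\alpha}: \alpha < {\omega}_{1}\} \subseteq {\fSS}_{E}$ is an antichain.

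First, I would apply the $\Delta$-system lemma to the family of finite sets $\{\dom({p}_{\alpha}): \alpha < {\omega}_{1}\}$ of indecomposable countable ordinals to obtain an uncountable $I \subseteq {\omega}_{1}$ and a finite root $R$ such that $\{\dom({p}_{\alpha}): \alpha \in I\}$ is a $\Delta$-system with root $R$. Next, I would observe that the set $\F$ of Definition \ref{def:F} is countable, since each element is a finite function from ${\omega}^{<\omega}$ to $\pc{\omega}{<{\aleph}_{0}}$; hence for every $\delta \in R$ the collection $\{{f}_{{p}_{\alpha}(\delta)}: \alpha \in I\}$ is countable. By repeatedly thinning $I$ a finite number of times (once for each $\delta \in R$), I may assume that there exist ${f}^{\ast}_{\delta} \in \F$ for each $\delta \in R$ such that ${f}_{{p}_{\alpha}(\delta)} = {f}^{\ast}_{\delta}$ for every $\alpha \in I$ and every $\delta \in R$, while $I$ remains uncountable.

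Now I would consider the images $j({p}_{\alpha}) \in {\PP}_{{2}^{{\aleph}_{0}}}$ for $\alpha \in I$. Since $\{\dom(j({p}_{\alpha})): \alpha \in I\} = \{\dom({p}_{\alpha}): \alpha \in I\}$ still forms a $\Delta$-system with root $R$, and since ${\PP}_{{2}^{{\aleph}_{0}}}$ is c.c.c.\@ by Kumar and Raghavan~\cite{MR4518086}, there exist distinct $\alpha, \beta \in I$ such that $j({p}_{\alpha}) \; {\not\perp}_{\PP} \; j({p}_{\beta})$. Because the supports of $j({p}_{\alpha})$ and $j({p}_{\beta})$ intersect exactly in $R$, this compatibility amounts to: for every $\delta \in R$, ${j}_{\delta}({p}_{\alpha}(\delta)) \; {\not\perp}_{\QQ} \; {j}_{\delta}({p}_{\beta}(\delta))$ in ${\QQ}_{{2}^{{\aleph}_{0}}, \delta}$. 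Combined with the equality ${f}_{{p}_{\alpha}(\delta)} = {f}^{\ast}_{\delta} = {f}_{{p}_{\beta}(\delta)}$, Lemma \ref{lem:compatible} yields ${p}_{\alpha}(\delta) \; {\not\perp}_{\RR} \; {p}_{\beta}(\delta)$ in ${\RR}_{E, \delta}$, for each $\delta \in R$.

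Finally, I would assemble a common extension $r \in {\fSS}_{E}$ of ${p}_{\alpha}$ and ${p}_{\beta}$ as follows: $\dom(r) = \dom({p}_{\alpha}) \cup \dom({p}_{\beta})$; for $\delta \in R$, let $r(\delta)$ be a witness in ${\RR}_{E, \delta}$ to the compatibility of ${p}_{\alpha}(\delta)$ and ${p}_{\beta}(\delta)$; for $\delta \in \dom({p}_{\alpha}) \setminus R$, set $r(\delta) = {p}_{\alpha}(\delta)$; and symmetrically for $\dom({p}_{\beta}) \setminus R$. Then $r \; {\leq}_{\fSS} \; {p}_{\alpha}, {p}_{\beta}$, contradicting that $\{{p}_{\alpha}: \alpha < {\omega}_{1}\}$ was an antichain. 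The only real work is packaged into Lemma \ref{lem:compatible} and the c.c.c.\@ of ${\PP}_{{2}^{{\aleph}_{0}}}$ from \cite{MR4518086}; the rest is a routine $\Delta$-system argument that crucially exploits the countability of $\F$.
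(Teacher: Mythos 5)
Your proposal is correct and follows essentially the same route as the paper: a $\Delta$-system argument exploiting the countability of $\F$ to fix the $f$-parts on the root, followed by a transfer to ${\PP}_{{2}^{{\aleph}_{0}}}$ via $j$ using Lemma \ref{lem:compatible} and the c.c.c.\@ result of \cite{MR4518086}. The only (immaterial) difference is that the paper argues contrapositively, showing the images $j({p}_{\gamma})$ would form an uncountable antichain in ${\PP}_{{2}^{{\aleph}_{0}}}$, whereas you find two compatible images and pull the compatibility back coordinatewise.
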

\begin{proof}
 For a contradiction, assume that $\seq{p}{\gamma}{<}{{\omega}_{1}}$ is an antichain in ${\fSS}_{E}$.
 Write $I = \{\delta < {\omega}_{1}: \delta \ \text{is indecomposable}\}$ and ${D}_{\gamma} = \dom({p}_{\gamma}) \in \pc{I}{< {\aleph}_{0}}$.
 Also, for each $\gamma < {\omega}_{1}$ and each $\delta \in {D}_{\gamma}$, write ${p}_{\gamma}(\delta) = \pr{{f}_{\gamma, \delta}}{{g}_{\gamma, \delta}, {h}_{\gamma, \delta}}$ instead of $\pr{{f}_{{p}_{\gamma}(\delta)}}{{g}_{{p}_{\gamma}(\delta)}, {h}_{{p}_{\gamma}(\delta)}}$ for ease of notation.
 Find $A \in \pc{{\omega}_{1}}{{\aleph}_{1}}$ and $D \in \pc{I}{< {\aleph}_{0}}$ such that $\seq{D}{\gamma}{\in}{A}$ is a $\Delta$-system with root $D$.
 Since ${\F}^{D}$ is a countable set, find $\seq{f}{\delta}{\in}{D} \in {\F}^{D}$ and $B \in \pc{A}{{\aleph}_{1}}$ such that $\forall \gamma \in B \forall \delta \in D\[{f}_{\gamma, \delta} = {f}_{\delta}\]$.
 For each $\gamma \in B$, define ${q}_{\gamma} = j({p}_{\gamma}) \in {\PP}_{{2}^{{\aleph}_{0}}}$ and note that $\dom({q}_{\gamma}) = \dom({p}_{\gamma}) = {D}_{\gamma}$ and that $\forall \delta \in D\[{q}_{\gamma}(\delta) = {j}_{\delta}({p}_{\gamma}(\delta))\]$.
 Now consider any $\gamma, {\gamma}^{\ast} \in B$ with $\gamma \neq {\gamma}^{\ast}$.
 By hypothesis, ${p}_{\gamma} \; {\perp}_{\fSS} \; {p}_{{\gamma}^{\ast}}$.
 So there exists $\delta \in D$ where ${p}_{\gamma}(\delta) \; {\perp}_{\RR} \; {p}_{{\gamma}^{\ast}}(\delta)$.
 Since ${f}_{\gamma, \delta} = {f}_{\delta} = {f}_{{\gamma}^{\ast}, \delta}$, Lemma \ref{lem:compatible} implies that ${j}_{\delta}({p}_{\gamma}(\delta)) \; {\perp}_{\QQ} \; {j}_{\delta}({p}_{{\gamma}^{\ast}}(\delta))$, in other words that ${q}_{\gamma}(\delta) \; {\perp}_{\QQ} \; {q}_{{\gamma}^{\ast}}(\delta)$.
 Therefore, ${q}_{\gamma} \; {\perp}_{\PP} \; {q}_{{\gamma}^{\ast}}$.
 Thus we have shown that $\seq{q}{\gamma}{\in}{B}$ is an uncountable antichain in ${\PP}_{{2}^{{\aleph}_{0}}}$.
 However, it is proved in \cite{MR4518086}[Lemma 4.2.\@ (1), pp.\@ 8-9] that ${\PP}_{\kappa}$ is c.c.c.\@ for all $\kappa \geq \omega$.
 This contradiction concludes the proof.
\end{proof}
\begin{Theorem} \label{thm:gdeltaseparating}
 Let $\V$ be a transitive model of a sufficiently large fragment of $\ZFC$.
 In $\V$, suppose that $E: {2}^{{\aleph}_{0}} \rightarrow {\omega}^{\omega}$ is a bijection.
 Let $G$ be a $(\V, {\fSS}^{\V}_{E})$-generic filter.
 Then in $\VG$ there exists a family $\DDD$ of at most ${\aleph}_{1}$ many ${G}_{\delta}$ subsets of ${\omega}^{\omega}$ such that for any $A \in \V \cap \pc{{\omega}^{\omega}}{< {\aleph}_{1}}$ and any $x \in \left( \V \cap {\omega}^{\omega} \right) \setminus A$, there exists $D \in \DDD$ with $x \in D$ and $A \cap D = \emptyset$.
\end{Theorem}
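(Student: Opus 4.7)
The plan is to build $\DDD$ coordinate-wise from the finite support product $\fSS_E$. For each indecomposable $\delta < \omega_1$, let $G_\delta$ denote the induced $(\V, \RR_{E,\delta}^\V)$-generic filter (the standard coordinate-$\delta$ projection of $G$), and form $\F_{G_\delta}$, $\sH_{G_\delta}$, $\G_{G_\delta}$ as in Definition \ref{def:gdelta}. By Lemma \ref{lem:gdelta}, $\F_{G_\delta}$ is a $G_\delta$ subset of $\omega^\omega$ with $\sH_{G_\delta} \subseteq \F_{G_\delta}$ and $\G_{G_\delta} \cap \F_{G_\delta} = \emptyset$. Set $\DDD := \{\F_{G_\delta} : \delta < \omega_1 \text{ is indecomposable}\}$; since $\fSS_E$ is c.c.c.\@ by Lemma \ref{lem:seccc}, $\omega_1^\V = \omega_1^{\VG}$, so $|\DDD| \leq \aleph_1$ holds in $\VG$.

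Given $A \in \V \cap \pc{\omega^\omega}{<\aleph_1}$ and $x \in (\V \cap \omega^\omega) \setminus A$, transfer to $2^{\aleph_0}$ via $E$: set $\alpha := E^{-1}(x)$ and $B := E^{-1}(A)$ in $\V$, so $\alpha \in 2^{\aleph_0} \setminus B$, $B$ is countable in $\V$, and $\beta := \otp(B) < \omega_1$. By Lemma \ref{lem:gdelta} it suffices to produce an indecomposable $\delta$ with $\alpha \in \sH_{G_\delta}$ and $B \subseteq \G_{G_\delta}$, for then $x = E(\alpha) \in \sH_{G_\delta} \subseteq \F_{G_\delta}$ while $A = E(B) \subseteq \G_{G_\delta}$ is disjoint from $\F_{G_\delta}$. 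This is established by density in $\fSS_E^\V$ of the set $D_{\alpha, B} \in \V$ of all $p \in \fSS_E$ for which some $\delta \in \dom(p)$ satisfies both $\alpha \in h_{p(\delta)}$ and $B \subseteq \dom(g_{p(\delta)})$.

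To verify density, given $q \in \fSS_E^\V$ choose (in $\V$) an indecomposable $\delta < \omega_1$ with $\delta > \beta$ and $\delta \notin \dom(q)$, which exists because $\dom(q)$ is finite while $\omega_1$ contains $\aleph_1$ indecomposable ordinals. Extend $q$ to $p$ by leaving $q \restrict \dom(q)$ unchanged and setting $p(\delta) := \pr{\emptyset}{g_0, \{\alpha\}}$, where $\dom(g_0) = B$ and $g_0(\beta') = \emptyset$ for every $\beta' \in B$. Then $\otp(\dom(g_0)) = \beta < \delta$ gives $g_0 \in \G_{2^{\aleph_0}, \delta}$; $\{\alpha\} \cap B = \emptyset$ since $\alpha \notin B$, establishing condition (1) of Definition \ref{def:RS}; and condition (2) holds vacuously because the $f$-component is empty. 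Hence $p \leq q$ and $p \in D_{\alpha, B}$, so by genericity there is $p^\ast \in G \cap D_{\alpha, B}$ witnessed by an indecomposable $\delta$. By construction $\alpha \in h_{p^\ast(\delta)}$ gives $x = E(\alpha) \in \sH_{G_\delta}$, and $B \subseteq \dom(g_{p^\ast(\delta)})$ gives $A = E(B) \subseteq \G_{G_\delta}$, as required. The one non-routine step is this density verification; it works precisely because the disjointness $\alpha \notin B$ mirrors the structural constraint $h \cap \dom(g) = \emptyset$ in Definition \ref{def:RS}, and because every countable order type is bounded by some indecomposable $\delta < \omega_1$.
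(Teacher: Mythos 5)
Your proposal is correct and follows essentially the same route as the paper: the same coordinate-wise family $\{\F_{G(\delta)}\}$ indexed by indecomposable $\delta$, the same transfer of $(A,x)$ to $(B,\alpha)$ via $E$, and the same density argument using a fresh coordinate $\delta > \otp(B)$ with condition $\pr{\emptyset}{g_0,\{\alpha\}}$. The only detail the paper spells out that you elide is the absoluteness of membership in the $G_\delta$ set between $\V[G(\delta)]$ and $\V[G]$, which is routine.
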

\begin{proof}
 Note that since ${\fSS}_{E}$ is c.c.c.\@ in $\V$, all cardinals are preserved.
 For $p \in {\fSS}_{E}$ and $\delta \in \dom(p)$, write $p(\delta) = \pr{{f}_{p, \delta}}{{g}_{p, \delta}, {h}_{p, \delta}}$ instead of $\pr{{f}_{p(\delta)}}{{g}_{p(\delta)}, {h}_{p(\delta)}}$.
 For each indecomposable $\delta < {\omega}_{1}$, define $G(\delta) = \{p(\delta): p \in G\}$.
 Then $G(\delta)$ is a $(\V, {\RR}^{\V}_{E, \delta})$-generic filter, and let ${\F}^{\V\[G(\delta)\]}_{G(\delta)}$, ${\G}_{G(\delta)}$, and ${\sH}_{G(\delta)}$ be as in definition \ref{def:gdelta}.
 By Lemma \ref{lem:gdelta}, ${\F}^{\V\[G(\delta)\]}_{G(\delta)}$ is a ${G}_{\delta}$ subset of ${\left( {\omega}^{\omega} \right)}^{\V\[G(\delta)\]}$ such that ${\sH}_{G(\delta)} \subseteq {\F}^{\V\[G(\delta)\]}_{G(\delta)}$ and ${\G}_{G(\delta)} \cap {\F}^{\V\[G(\delta)\]}_{G(\delta)} = \emptyset$.
 Define ${D}_{\delta} = {\F}^{\V\[G\]}_{G(\delta)}$.
 Then ${D}_{\delta}$ is a ${G}_{\delta}$ subset of ${\left( {\omega}^{\omega} \right)}^{\V\[G\]}$ and we claim that the family $\DDD = \{{D}_{\delta}: \delta < {\omega}_{1} \wedge \delta \ \text{is indecomposable}\}$ has the required properties.
 Working in $\V$, fix $A \in \pc{{\omega}^{\omega}}{< {\aleph}_{1}}$ and $x \in {\omega}^{\omega} \setminus A$.
 Let $B = {E}^{-1}(A) \in \pc{{2}^{{\aleph}_{0}}}{< {\aleph}_{1}}$ and let $\alpha \in {2}^{{\aleph}_{0}}$ be unique such that $E(\alpha) = x$.
 Note that $\alpha \notin B$.
 Define
 \begin{align*}
  {D}_{A, x} = \left\{ p \in {\fSS}_{E}: \exists \delta \in \dom(p)\[\alpha \in {h}_{p, \delta} \wedge B \subseteq \dom({g}_{p, \delta})\] \right\}.
 \end{align*}
 We will verify that ${D}_{A, x}$ is dense in ${\fSS}_{E}$.
 To see this consider ${p}^{\ast} \in {\fSS}_{E}$ and find $\delta < {\omega}_{1}$ such that $\delta$ is indecomposable, $\delta \notin \dom({p}^{\ast})$, and $\otp(B) < \delta$.
 Define ${f}_{p, \delta} = \emptyset$.
 Define ${g}_{p, \delta}$ to be the function such that $\dom({g}_{p, \delta}) = B$, and $\forall \beta \in B\[{g}_{p, \delta}(\beta) = \emptyset\]$.
 Define ${h}_{p, \delta} = \{\alpha\}$.
 Then $\pr{{f}_{p, \delta}}{{g}_{p, \delta}, {h}_{p, \delta}} \in {\RR}_{E, \delta}$.
 Define $p = {p}^{\ast} \cup \left\{ \pr{\delta}{\pr{{f}_{p, \delta}}{{g}_{p, \delta}, {h}_{p, \delta}}} \right\}$.
 Then $p \in {D}_{A, x}$ and $p \; {\leq}_{\fSS} \; {p}^{\ast}$, showing that ${D}_{A, x}$ is dense.
 Since $G$ is $(\V, {\fSS}_{E})$-generic, there exists $p \in G \cap {D}_{A, x}$.
 Let $\delta \in \dom(p)$ be so that $\alpha \in {h}_{p, \delta}$ and $B \subseteq \dom({g}_{p, \delta})$.
 Then by definition $x \in {\sH}_{G(\delta)}$ and $A \subseteq {\G}_{G(\delta)}$, whence $x \in {\F}^{\V\[G(\delta)\]}_{G(\delta)}$ and $A \cap {\F}^{\V\[G(\delta)\]}_{G(\delta)} = \emptyset$.
 Since this is absolute, $x \in {D}_{\delta}$ and $A \cap {D}_{\delta} = \emptyset$, as required.
\end{proof}
\begin{Theorem} \label{thm:lambdakappa}
 Suppose ${\V}_{0}$ is any transitive model of a sufficiently large fragment of $\ZFC$.
 In ${\V}_{0}$, suppose that ${\aleph}_{1} \leq \lambda \leq \kappa$ are cardinals so that $\lambda$ is regular and ${\kappa}^{{\aleph}_{0}} = \kappa$.
 Then there is a c.c.c.\@ forcing extension in which all of the following hold:
 \begin{enumerate}
  \item
  ${2}^{{\aleph}_{0}} = \kappa$;
  \item
  for every locally countable Borel quasi order $\PPP$, either ${\odim}_{B}(\PPP) \leq {\aleph}_{0}$ or ${\odim}_{B}(\PPP) = \lambda$;
  \item
  for every dense selector $E$ and every $f \in \FFF$, $\wbdicr({\GGG}_{0}(E, f)) = \lambda$.
 \end{enumerate}
\end{Theorem}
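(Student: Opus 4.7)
The plan is to construct a single c.c.c.\ forcing $\PP$ over $\V_0$ that realizes all three conditions simultaneously. After a preliminary c.c.c.\ forcing if needed, I may assume $\V_0 \vDash 2^{\aleph_0} = \kappa$. I would then take $\PP$ to be the finite-support iteration of length $\lambda$ whose $\alpha$th iterand is a name for ${\fSS}_{\dot E_\alpha}$ (Definition \ref{def:RS}), where $\dot E_\alpha$ names a bijection $2^{\aleph_0} \to \omega^{\omega}$ of the intermediate model $\V_0^{\PP_\alpha}$. Lemma \ref{lem:seccc} together with the standard c.c.c.\ preservation theorem for finite-support iterations gives that $\PP$ is c.c.c., hence preserves all cardinals; $|\PP| = \kappa$ and a standard nice-name calculation using $\kappa^{\aleph_0} = \kappa$ then yields $\V_0[G] \vDash 2^{\aleph_0} = \kappa$, establishing item (1).

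For the upper bounds in items (2) and (3), I would establish that $\dagger({\omega}^{\omega}, \aleph_1, \lambda)$ holds in $\V_0[G]$. Theorem \ref{thm:gdeltaseparating} applied at each stage $\alpha < \lambda$ (with $\V = \V_0^{\PP_\alpha}$) produces a family $\DDD_\alpha$ of at most $\aleph_1$-many $G_\delta$ subsets of $\omega^{\omega}$ separating countable subsets and points of $\V_0^{\PP_\alpha} \cap \omega^{\omega}$. Setting $\DDD = \bigcup_{\alpha < \lambda} \DDD_\alpha$ gives a family of size $\lambda$. Because $\PP$ is c.c.c.\ and $\cf(\lambda) = \lambda > \aleph_0$, every countable set $A \subseteq \omega^{\omega}$ and every $x \in \omega^{\omega} \setminus A$ in $\V_0[G]$ jointly lie in some $\V_0^{\PP_{\alpha}}$ with $\alpha < \lambda$, so some $D \in \DDD_{\alpha} \subseteq \DDD$ separates them; the separation persists in $\V_0[G]$ by absoluteness of $G_\delta$-membership. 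Converting each $D \in \DDD$ to the Borel partial order $x <_D y \iff (x \notin D \wedge y \in D)$ yields $\dagger({\omega}^{\omega}, \aleph_1, \lambda)$, and Lemma \ref{lem:dagger01} then gives $\odim_B(\PPP) \leq \lambda$ for every locally countable Borel quasi order $\PPP$.

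For the matching lower bound, I would verify that each iterand adds a real avoiding every ground-model Borel meager set; by Lemma \ref{lem:density}(1) each generic $U_{G,n}$ is dense open, so the $f$-coordinate of conditions in ${\RR}_{E,\delta}$ is essentially a Cohen condition, and a standard argument then yields $\cov(\MMM) \geq \lambda$ in $\V_0[G]$. Combined with Corollary \ref{cor:dimcov}, this gives $\odim_B(\PPP) \geq \lambda$ whenever $\odim_B(\PPP) > \aleph_0$, completing (2). Item (3) then follows: Lemma \ref{lem:covM} yields $\wbdicr({\GGG}_0(E,f)) \geq \cov(\MMM) \geq \lambda$, and Lemma \ref{lem:XinAPG} combined with (2) applied to the locally countable Borel quasi order $\PPP({\GGG}_0(E,f))$ gives $\wbdicr({\GGG}_0(E,f)) \leq \odim_B(\PPP({\GGG}_0(E,f))) \leq \lambda$. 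The principal technical obstacle will be the careful verification of $\cov(\MMM) \geq \lambda$, since it requires tracking how the generic-${G}_{\delta}$ forcing adds Cohen-like reals and showing that this non-meagerness property is preserved through the finite-support iteration; a secondary but routine point is the bookkeeping at the separation step, which is automatic once $\dot E_\alpha$ is chosen to enumerate the entire intermediate continuum so that Theorem \ref{thm:gdeltaseparating} applies directly to arbitrary $A, x \in \V_0^{\PP_\alpha}$.
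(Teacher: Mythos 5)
Your proposal is correct and follows essentially the same route as the paper: a preliminary c.c.c.\@ forcing to arrange ${2}^{{\aleph}_{0}} = \kappa$, then a length-$\lambda$ finite support iteration of the forcings ${\fSS}_{E}$, with $\dagger(\cdot, {\aleph}_{1}, \lambda)$ obtained from Theorem \ref{thm:gdeltaseparating} giving the upper bound and $\cov(\MMM) \geq \lambda$ together with Corollary \ref{cor:dimcov} giving the lower bound. The only differences are minor: the paper transfers $\dagger$ from ${\omega}^{\omega}$ to an arbitrary Polish space via a continuous bijection from a closed subset (a step you should not omit, since $\PPP$ need not live on ${\omega}^{\omega}$), and for $\cov(\MMM) \geq \lambda$ it sidesteps what you call the principal technical obstacle by invoking the standard fact that any non-trivial finite support iteration adds Cohen reals at limit stages of countable cofinality, rather than analyzing the iterands themselves.
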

\begin{proof}
 First, produce a c.c.c.\@ forcing extension $\V \supseteq {\V}_{0}$ so that ${2}^{{\aleph}_{0}} = \kappa$ holds in $\V$ (e.g.\@ force with $\Fn(\kappa \times \omega, 2)$ over ${\V}_{0}$, see \cite{Kunen}[Lemma 5.14]).
 Now work in $\V$ and define a finite support iteration $\left\langle {\PP}_{\alpha}; {\mathring{\QQ}}_{\alpha}: \alpha \leq \lambda \right\rangle$ of c.c.c.\@ forcings such that for each $\alpha < \lambda$, ${\mathring{\QQ}}_{\alpha}$ is a full ${\PP}_{\alpha}$-name such that
 \begin{align*}
  {\forces}_{{\PP}_{\alpha}} \; {\exists E\[E: {2}^{{\aleph}_{0}} \rightarrow {\omega}^{\omega} \ \text{is a bijection and} \ {\mathring{\QQ}}_{\alpha} = {\fSS}_{E}\]}.
 \end{align*}
 By Lemma \ref{lem:seccc}, ${\forces}_{{\PP}_{\alpha}}{{\mathring{\QQ}}_{\alpha} \ \text{is c.c.c.\@}}$, for every $\alpha < \lambda$, so ${\PP}_{\lambda}$ is c.c.c.\@ and all cofinalities and cardinals are preserved.
 Further, for each $\alpha < \lambda$, ${\forces}_{{\PP}_{\alpha}}{\lc {\mathring{\QQ}}_{\alpha} \rc = {2}^{{\aleph}_{0}}}$, so by standard arguments, ${\forces}_{{\PP}_{\lambda}}{{2}^{{\aleph}_{0}} = \kappa}$.
 Therefore, (1) holds.
 
 Suppose $G$ is a $(\V, {\PP}_{\lambda})$-generic filter.
 For $\alpha < \lambda$, define $G(\alpha) = \{p \restrict \alpha: p \in G\}$.
 By Theorem \ref{thm:gdeltaseparating}, there is a family ${\DDD}_{\alpha}$ of at most ${\aleph}_{1}$ many ${G}_{\delta}$ subsets of ${\left( {\omega}^{\omega} \right)}^{\V\[G(\alpha+1)\]}$ with the property that for every $A \in \V\[G(\alpha)\] \cap \pc{{\omega}^{\omega}}{< {\aleph}_{1}}$ and $x \in \left(\V\[G(\alpha)\] \cap {\omega}^{\omega}\right) \setminus A$, there exists $D \in {\DDD}_{\alpha}$ such that $x \in {D}^{\V\[G(\alpha+1)\]}$ and $A \cap {D}^{\V\[G(\alpha+1)\]} = \emptyset$.
 Define $\DDD = \{{D}^{\VG}: \exists \alpha < \lambda\[D \in {\DDD}_{\alpha}\]\}$.
 Then $\DDD$ is a family of at most $\lambda$ many ${G}_{\delta}$ subsets of ${\left({\omega}^{\omega}\right)}^{\VG}$.
 If $A \in \VG \cap \pc{{\omega}^{\omega}}{< {\aleph}_{1}}$ and $x \in \left( \VG \cap {\omega}^{\omega} \right) \setminus A$, then since ${\PP}_{\lambda}$ is c.c.c.\@ and $\cf(\lambda) > \omega$, there exists $\alpha < \lambda$ such that $A, x \in \V\[G(\alpha)\]$, whence for some $D \in {\DDD}_{\alpha}$, $x \in {D}^{\V\[G(\alpha+1)\]}$ and $A \cap {D}^{\V\[G(\alpha+1)\]} = \emptyset$.
 Since this is absolute, $x \in {D}^{\VG}$ and $A \cap {D}^{\VG} = \emptyset$.
 
 Now, working in $\VG$, let $X$ be an arbitrary Polish space.
 Fix a closed $F \subseteq {\omega}^{\omega}$ and a continuous bijection $\varphi: F \rightarrow X$.
 For each $L \in \DDD$, ${B}_{L} = \varphi''\left(F \cap L\right)$ is a Borel subset of $X$ because $\varphi$ is 1-1 and continuous.
 Define
 \begin{align*}
  {<}_{L} = \left\{ \pr{x}{y} \in X \times X: x \notin {B}_{L} \wedge y \in {B}_{L} \right\}.
 \end{align*}
 Then ${<}_{L}$ is a Borel partial order on $X$.
 If $A \in \pc{X}{< {\aleph}_{1}}$ and $u \in X \setminus A$, then ${\varphi}^{-1}(A) \subseteq F$ and $\lc {\varphi}^{-1}(A) \rc < {\aleph}_{1}$.
 If $x \in F$ is unique with $\varphi(x) = u$, then $x \in {\omega}^{\omega} \setminus {\varphi}^{-1}(A)$, and so, for some $L \in \DDD$, $x \in L$, but $L \cap {\varphi}^{-1}(A) = \emptyset$.
 It follows that $u \in {B}_{L}$ and that $A \cap {B}_{L} = \emptyset$, whence $\forall v \in A\[v \; {<}_{L} \; u\]$.
 Thus $\left\{ {<}_{L}: L \in \DDD \right\}$ witnesses $\dagger(X, {\aleph}_{1}, \lambda)$ in $\VG$.
 
 Now suppose in $\VG$ that $\PPP$ is a locally countable Borel quasi order with ${\odim}_{B}(\PPP) > {\aleph}_{0}$.
 By Lemma \ref{lem:dagger01}, ${\odim}_{B}(\PPP) \leq \lambda$.
 On the other hand, by Corollary \ref{cor:dimcov}, ${\odim}_{B}(\PPP) \geq \cov(\MMM)$.
 Since ${\PP}_{\lambda}$ is a finite support iteration of non-trivial forcing notions, a well-known argument (see \cite{Kunen}[Chapter VIII, Exercise J3, pp.\@ 299]) shows that Cohen reals are added at each stage $\alpha < \lambda$ with $\cf(\alpha) = \omega$.
 Since $\lambda$ is regular and ${\PP}_{\lambda}$ is c.c.c.\@, it follows that $\cov(\MMM) \geq \lambda$ holds in $\VG$.
 Therefore, ${\odim}_{B}(\PPP) = \lambda$, as required for (2).
 
 Finally, suppose in $\VG$ that $E$ is a dense selector and that $f \in \FFF$.
 It is clear that $\PPP({\GGG}_{0}(E, f))$ is a locally countable Borel quasi order.
 By (2) and by Lemma \ref{lem:XinAPG}, $\wbdicr({\GGG}_{0}(E, f)) \leq {\odim}_{B}(\PPP({\GGG}_{0}(E, f))) \leq \lambda$.
 On the other hand, by Lemma \ref{lem:covM}, $\lambda \leq \cov(\MMM) \leq \wbdicr({\GGG}_{0}(E, f))$.
 Therefore, $\wbdicr({\GGG}_{0}(E, f)) = \lambda$.
 This concludes the proof of the theorem.
\end{proof}
It was mentioned earlier that Kumar and Raghavan~\cite{MR4228343} showed that $\DDD$ has the largest order dimension among the locally countable quasi orders of size continuum.
We do not know if the Borel analog of this result is provable in $\ZFC$.
\begin{Question} \label{q:boreld}
 Is it provable in $\ZFC$ that for every locally countable Borel quasi order $\PPP$, ${\odim}_{B}(\PPP) \leq {\odim}_{B}(\DDD)$?
\end{Question}
It is worth pointing out that Lutz~\cite{MR4313010} has proved that not all Borel quasi orders are Borel embeddable in $\DDD$.
\def\polhk#1{\setbox0=\hbox{#1}{\ooalign{\hidewidth
  \lower1.5ex\hbox{`}\hidewidth\crcr\unhbox0}}}
\providecommand{\bysame}{\leavevmode\hbox to3em{\hrulefill}\thinspace}
\providecommand{\MR}{\relax\ifhmode\unskip\space\fi MR }
% \MRhref is called by the amsart/book/proc definition of \MR.
\providecommand{\MRhref}[2]{%
  \href{http://www.ams.org/mathscinet-getitem?mr=#1}{#2}
}
\providecommand{\href}[2]{#2}

%\bibliographystyle{amsplain}
%\bibliography{Bibliography}

\begin{thebibliography}{10}

\bibitem{MR1832443}
N.~Alon, J.~Pach, and J.~Solymosi, \emph{Ramsey-type theorems with forbidden
  subgraphs}, Combinatorica \textbf{21} (2001), no.~2, 155--170, Paul Erd\H os
  and his mathematics (Budapest, 1999).

\bibitem{MR0300944}
K.~A. Baker, P.~C. Fishburn, and F.~S. Roberts, \emph{Partial orders of
  dimension {$2$}}, Networks \textbf{2} (1972), 11--28.

\bibitem{bern}
A.~Bernshteyn, \emph{The ${G}_{0}$-dichotomy},
  \url{https://bahtoh-math.github.io/}, Accessed: 2024-08-27.

\bibitem{nextbestpaper}
A.~Blass, N.~Dobrinen, and D.~Raghavan, \emph{The next best thing to a
  {P}-point}, J. Symb. Log. \textbf{80} (2015), no.~3, 866--900.

\bibitem{MR3362223}
C.~A. Di~Prisco and S.~Todorcevic, \emph{Basis problems for {B}orel graphs},
  Zb. Rad. (Beogr.) \textbf{17(25)} (2015), 33--51.

\bibitem{DM}
B.~Dushnik and E.~W. Miller, \emph{Partially ordered sets}, Amer. J. Math.
  \textbf{63} (1941), 600--610.

\bibitem{MR0593699}
P.~Erd{\H o}s, \emph{Problems and results in number theory and graph theory},
  Proceedings of the {N}inth {M}anitoba {C}onference on {N}umerical
  {M}athematics and {C}omputing ({U}niv. {M}anitoba, {W}innipeg, {M}an., 1979),
  Congress. Numer., vol. XXVII, Utilitas Math., Winnipeg, MB, 1980, pp.~3--21.

\bibitem{MR3567535}
T.~Feder, P.~Hell, and C.~Hern{\'a}ndez-Cruz, \emph{Colourings, homomorphisms,
  and partitions of transitive digraphs}, European J. Combin. \textbf{60}
  (2017), 55--65.

\bibitem{MR3601672}
S.~Felsner, I.~Musta\c{t}\u{a}, and M.~Pergel, \emph{The complexity of the
  partial order dimension problem: closing the gap}, SIAM J. Discrete Math.
  \textbf{31} (2017), no.~1, 172--189.

\bibitem{MR2779703}
S.~Geschke, \emph{Weak {B}orel chromatic numbers}, MLQ Math. Log. Q.
  \textbf{57} (2011), no.~1, 5--13.

\bibitem{borelorder}
L.~Harrington, D.~Marker, and S.~Shelah, \emph{Borel orderings}, Trans. Amer.
  Math. Soc. \textbf{310} (1988), no.~1, 293--302.

\bibitem{MR1057041}
L.~A. Harrington, A.~S. Kechris, and A.~Louveau, \emph{A {G}limm-{E}ffros
  dichotomy for {B}orel equivalence relations}, J. Amer. Math. Soc. \textbf{3}
  (1990), no.~4, 903--928.

\bibitem{MR4039600}
A.~Harutyunyan, T.~N. Le, A.~Newman, and S.~Thomass{\' e}, \emph{Coloring dense
  digraphs}, Combinatorica \textbf{39} (2019), no.~5, 1021--1053.

\bibitem{arXiv:2405.00991}
C.~Higgins, \emph{Measurable {B}rooks's theorem for directed graphs},
  arXiv:2405.00991, preprint (2024), 19 pages.

\bibitem{posetdim}
K.~Higuchi, S.~Lempp, D.~Raghavan, and F.~Stephan, \emph{On the order dimension
  of locally countable partial orderings}, Proc. Amer. Math. Soc. \textbf{148}
  (2020), no.~7, 2823--2833.

\bibitem{MR0070681}
T.~Hiraguchi, \emph{On the dimension of partially ordered sets}, Sci. Rep.
  Kanazawa Univ. \textbf{1} (1951), 77--94.

\bibitem{MR973109}
J.~I. Ihoda and S.~Shelah, \emph{Souslin forcing}, J. Symbolic Logic
  \textbf{53} (1988), no.~4, 1188--1207.

\bibitem{MR1607451}
V.~Kanovei, \emph{When a partial {B}orel order is linearizable}, Fund. Math.
  \textbf{155} (1998), no.~3, 301--309.

\bibitem{kechrisbook}
A.~S. Kechris, \emph{Classical descriptive set theory}, Graduate Texts in
  Mathematics, vol. 156, Springer-Verlag, New York, 1995.

\bibitem{MR1667145}
A.~S. Kechris, S.~Solecki, and S.~Todorcevic, \emph{Borel chromatic numbers},
  Adv. Math. \textbf{141} (1999), no.~1, 1--44.

\bibitem{MR1420396}
H.~A. Kierstead and E.~C. Milner, \emph{The dimension of the finite subsets of
  {$\kappa$}}, Order \textbf{13} (1996), no.~3, 227--231.

\bibitem{MR0025541}
H.~Komm, \emph{On the dimension of partially ordered sets}, Amer. J. Math.
  \textbf{70} (1948), 507--520.

\bibitem{MR4228343}
A.~Kumar and D.~Raghavan, \emph{Separating families and order dimension of
  {T}uring degrees}, Ann. Pure Appl. Logic \textbf{172} (2021), no.~5, Paper
  No. 102911, 19 pp.

\bibitem{MR4518086}
\bysame, \emph{Supersaturated ideals}, Topology Appl. \textbf{323} (2023),
  Paper No. 108289, 12 pp.

\bibitem{Kunen}
K.~Kunen, \emph{Set theory: An introduction to independence proofs}, Studies in
  Logic and the Foundations of Mathematics, vol. 102, North-Holland Publishing
  Co., Amsterdam, 1980.

\bibitem{MR4313010}
P.~Lutz, \emph{Results on {M}artin's conjecture [thesis abstract]}, Bull. Symb.
  Log. \textbf{27} (2021), no.~2, 219--220.

\bibitem{MR3454384}
A.~S. Marks, \emph{A determinacy approach to {B}orel combinatorics}, J. Amer.
  Math. Soc. \textbf{29} (2016), no.~2, 579--600.

\bibitem{MR548475}
A.~W. Miller, \emph{On the length of {B}orel hierarchies}, Ann. Math. Logic
  \textbf{16} (1979), no.~3, 233--267.

\bibitem{Mi1}
\bysame, \emph{Descriptive set theory and forcing}, Lecture Notes in Logic,
  vol.~4, Springer-Verlag, Berlin, 1995, How to prove theorems about Borel sets
  the hard way.

\bibitem{MR2467208}
B.~D. Miller, \emph{Measurable chromatic numbers}, J. Symbolic Logic
  \textbf{73} (2008), no.~4, 1139--1157.

\bibitem{MR3053069}
\bysame, \emph{The graph-theoretic approach to descriptive set theory}, Bull.
  Symbolic Logic \textbf{18} (2012), no.~4, 554--575.

\bibitem{MR3593495}
B.~Mohar and H.~Wu, \emph{Dichromatic number and fractional chromatic number},
  Forum Math. Sigma \textbf{4} (2016), Paper No. e32, 14.

\bibitem{MR0693366}
V.~Neumann-Lara, \emph{The dichromatic number of a digraph}, J. Combin. Theory
  Ser. B \textbf{33} (1982), no.~3, 265--270.

\bibitem{MR1310883}
\bysame, \emph{The {$3$}- and {$4$}-dichromatic tournaments of minimum order},
  Discrete Math. \textbf{135} (1994), no.~1-3, 233--243.

\bibitem{tukey}
D.~Raghavan and S.~Todorcevic, \emph{Cofinal types of ultrafilters}, Ann. Pure
  Appl. Logic \textbf{163} (2012), no.~3, 185--199.

\bibitem{suslincombdic}
\bysame, \emph{Combinatorial dichotomies and cardinal invariants}, Math. Res.
  Lett. \textbf{21} (2014), no.~2, 379--401.

\bibitem{suslinlattices}
D.~Raghavan and T.~Yorioka, \emph{Suslin lattices}, Order \textbf{31} (2014),
  no.~1, 55--79.

\bibitem{basic}
S.~Solecki and S.~Todorcevic, \emph{Cofinal types of topological directed
  orders}, Ann. Inst. Fourier (Grenoble) \textbf{54} (2004), no.~6, 1877--1911
  (2005).

\bibitem{MR3488937}
{\'E}.~Sopena, \emph{Homomorphisms and colourings of oriented graphs: an
  updated survey}, Discrete Math. \textbf{339} (2016), no.~7, 1993--2005.

\bibitem{MR3818601}
D.~T. Soukup, \emph{Orientations of graphs with uncountable chromatic number},
  J. Graph Theory \textbf{88} (2018), no.~4, 606--630.

\bibitem{MR4309494}
S.~Todorcevic and Z.~Vidnyanszky, \emph{A complexity problem for {B}orel
  graphs}, Invent. Math. \textbf{226} (2021), no.~1, 225--249.

\bibitem{MR4075403}
S.~Todorcevic and M.~Xiao, \emph{A {B}orel chain condition of {$T(X)$}}, Acta
  Math. Hungar. \textbf{160} (2020), no.~2, 314--319.

\bibitem{MR1169299}
W.~T. Trotter, \emph{Combinatorics and partially ordered sets}, Johns Hopkins
  Series in the Mathematical Sciences, Johns Hopkins University Press,
  Baltimore, MD, 1992, Dimension theory.

\bibitem{MR0666860}
M.~Yannakakis, \emph{The complexity of the partial order dimension problem},
  SIAM J. Algebraic Discrete Methods \textbf{3} (1982), no.~3, 351--358.

\end{thebibliography}
\end{document}